\title
[Collapsing three-dimensional Alexandrov spaces]
{Collapsing three-dimensional closed Alexandrov spaces with a lower curvature bound}
\author{Ayato Mitsuishi}
\author{Takao Yamaguchi}
\address
{Mathematical Institute, Tohoku University, Sendai 980-8578, JAPAN}
\address
{Institute of Mathematics, University of Tsukuba, Tsukuba 305-8571, JAPAN}
\email[A.~Mitsuishi]{mitsuishi@math.tohoku.ac.jp}
\email[T.~Yamaguchi]{takao@math.tsukuba.ac.jp}
\theoremstyle{plain}
\newtheorem{theorem}{Theorem}[section]
\newtheorem{lemma}[theorem]{Lemma}
\newtheorem{corollary}[theorem]{Corollary}
\newtheorem{proposition}[theorem]{Proposition}
\newtheorem{definition}[theorem]{Definition}
\newtheorem{remark}[theorem]{Remark}
\newtheorem{problem}[theorem]{Problem}
\newtheorem{assertion}[theorem]{Assertion}
\newtheorem{example}[theorem]{Example}
\newtheorem{assumption}[theorem]{Assumption}
\newtheorem{conjecture}[theorem]{Conjecture}
\newcommand{\wangle}[0]{\tilde{\angle}}
\newcommand{\Mo}[0]{\mathrm{M\ddot{o}}}
\newcommand{\diam}[0]{\mathrm{diam}\,}
\newcommand{\rad}[0]{\mathrm{rad}\,}
\newcommand{\e}[0]{\varepsilon}
\newcommand{\de}[0]{\delta}
\newcommand{\pt}[0]{\mathrm{pt}}
\newcommand{\dist}[0]{\mathrm{dist}}
\newcommand{\reg}[0]{\mathrm{reg}}
\newcommand{\strrad}[0]{\text{-}\mathrm{str.\, rad}\,}
\newcommand{\ess}[0]{\mathrm{Ess}}
\begin{document}
\begin{abstract}
In the present paper, we determine 
the topologies of three-dimensional closed Alexandrov spaces 
which converge to lower dimensional spaces in the Gromov-Hausdorff topology. 
\end{abstract}
\maketitle
\tableofcontents

\section{Introduction}

The purpose of the present paper is to determine the topologies of 
collapsing three-dimensional Alexandrov spaces.

Alexandrov  spaces are complete length spaces with 
the notion of curvature bounds. 
In this paper, we deal with finite dimensional Alexandrov spaces 
with a lower curvature bound (see Definition \ref{def of Alex sp}). 
Alexandrov spaces naturally appear in 
convergence and collapsing phenomena of Riemannian manifolds 
with a lower curvature bound (\cite{SY}, \cite{Y 4-dim}), 
and have played important roles 
in the study of collapsing Riemannian manifolds with a lower curvature bound.

For a positive integer $n$, $D > 0$, $\kappa \in \mathbb{R}$, 
let us consider the following two families: 
$\mathcal{M}^n(D,\kappa)$ is the family of all isometry classes 
of complete $n$-dimensional Riemannian manifolds $M$  whose diameters and 
sectional curvatures satisfy $\diam(M) \leq D$ and  $sec(M)\geq \kappa$. 
$\mathcal{A}^n(D,\kappa)$ is the family of all isometry classes 
of $n$-dimensional Alexandrov spaces with $\diam \leq D$ and curvature $\geq \kappa$. 
It follows from the definition of Alexandrov spaces
that $\mathcal{M}^n(D,\kappa) \subset \mathcal{A}^n(D,\kappa)$. 
By Gromov's precompactness theorem, $\mathcal{A}^n(D,\kappa)$ has a nice property that 
$\bigcup_{k \leq n} \mathcal{A}^k(D,\kappa)$ is compact  in the Gromov-Hausdorff
topology, while  $\bigcup_{k \leq n}\mathcal{M}^k(D,\kappa)$ is precompact.
Therefore, it is quite natural to study the convergence and collapsing
phenomena in 
$\mathcal{A}^n (D, \kappa)$.
Thus, the following problem naturally appears:
\begin{problem} \upshape \label{convergence theory}
Let $\{M_i^n\}_{i = 1}^{\infty}$ be a sequence in $\mathcal{A}^n( D,
\kappa)$  converging to  an Alexandrov  space $X$.
Can one describe the topological structure of $M_i$ by using the
geometry and topology of $X$ for large $i$\,?
\end{problem}

In this paper, we consider Problem \ref{convergence theory} for $n =
3$ when $M_i$ has no boundary.
We exhibit previously known results related to Problem \ref{convergence theory}.
Let us fix the following setting:
$M_i := M_i^n \in \mathcal{A}^n(D, \kappa)$ converges to $X$ as $i \to
\infty$, and fix a sufficiently large integer $i$.

If the non-collapsing case arises, i.e. $\dim X = n$,
Perelman's stability theorem \cite{Per Alex II}
(cf. \cite{Kap stab}) shows that $M_i$ is homeomorphic to $X$.

In the collapsing case, we know the following results in 
the general dimension:
If $M_i$ and $X$ are Riemannian manifolds, then Yamaguchi
proved that there is a locally trivial fiber bundle (smooth
submersion) $f_i : M_i \to X$ whose fiber is a quotient of torus by
some finite group action (\cite{Yam collapsing and pinching}, 
\cite{Y  convergence}).
Fukaya and Yamaguchi proved that if $M_i$ are Riemannian manifolds and
$X$ is a single-point set, then $\pi_1(M_i)$  has a  nilpotent
subgroup of finite index \cite{FY}.
This statement also goes through even if $M_i$ is an Alexandrov space
(\cite{Y  convergence}).

In the lower dimensional cases, we know the following conclusive results:
In dimension three,  Shioya and Yamaguchi \cite{SY} gave a complete classification of
three-dimensional closed (orientable) Riemannian manifolds $M_i$ 
collapsing in $\mathcal M^3(D, \kappa)$.
It is also proved that volume collapsed closed orientable
Riemannian three-manifolds $M_i$ with no diameter bound
are graph-manifolds, or have small diameters and finite fundamental
groups (\cite{SY vol collapse}, \cite{Pr:entropy}).
For more recent works, see Morgan and Tian \cite{MT}, Cao and Ge
\cite{CaGe}, Kleiner and Lott \cite{KL}.
In dimension four, 
Yamaguchi \cite{Y 4-dim} gave a classification of four-dimensional
orientable closed Riemannian manifolds $M_i$ collapsing in $\mathcal M^4(D, \kappa)$.

\subsection{Main results}
To state our results, we fix notations in this paper.
$D^n$ is a closed $n$-disk.
$D^1$ is written as $I$, called an (bounded closed) interval.
$P^n$ is an $n$-dimensional real projective space.
$T^n$ is an $n$-dimensional torus.
$K^2$ is a Klein bottle, $\Mo$ is a Mobius band.
$K^2 \tilde{\times} I$ is an orientable (non-trivial) $I$-bundle over $K^2$. 
$K^2 \hat{\times} I$ is a non-orientable non-trivial $I$-bundle over $K^2$. 
A solid Klein bottle $S^1 \tilde{\times} D^2$ is obtained by
$\mathbb{R} \times D^2$ with identification $(t, x) = (t + 1,
\bar{x})$.
Here, we consider $D^2$ as the unit disk on the complex plane and $\bar{x}$ is the complex conjugate of $x$. 
Note that a solid Klein bottle is homeomorphic to $\Mo \times I$.

Let us first provide an important example of a collapsing Alexandrov space $M_{\mathrm{pt}}$ which is not a manifold.
We observe that this space $M_\pt$ can be regard as a ``circle
fibration'' over a cone with a singular interval fiber.

\begin{example} \upshape \label{M_pt}
Let $S^1 \times \mathbb{R}^2$ be a flat manifold with product metric.
For the isometric involution $\alpha$ defined by 
\[
\alpha (e^{i\theta}, x) = (e^{-i\theta}, -x),
\]
we consider the quotient space $M_{\mathrm{pt}} := S^1 \times
\mathbb{R}^2 / \langle \alpha \rangle$ which is an Alexandrov space
with nonnegative curvature.
This space $M_{\pt}$ has the two topologically singular points,
i.e. non-manifold points, $p_+ := [(1, 0)]$ and $p_- := [(-1, 0)]$
which correspond to fixed points $(1, 0)$ and $(-1, 0)$ of $\alpha$. 
We consider a standard projection $p : M_\pt \to \mathbb{R}^2 / x \sim
-x = K(S^1_\pi)$ from $M_\pt$ to the cone $K(S_\pi^1)$ over the circle
$S_\pi^1$ of length $\pi$.
This is an $S^1$-fiber bundle over $K(S^1_\pi)$ except the vertex $o \in K(S^1_\pi)$.
Remark that the fiber $p^{-1}(\partial B(o, r))$ over a metric circle at $o$ is topologically a Klein bottle. 
The fiber $p^{-1}(o)$ over the origin is an interval joining the
topologically singular points $p_+$ and $p_-$.
Thus, we may regard $M_\pt$ as a circle fibration, with the singular
fiber $p^{-1}(o)$, over the cone $K(S^1_\pi)$.
We rescale the ``circle orbits'' of $M_\pt$ as $M_\pt(\e) := (\e S^1)
\times \mathbb{R}^2 / \langle \alpha \rangle$. 
Then, as $\e \to 0$, $M_\pt(\e)$ collapse to the cone $K(S^1_\pi)$.
\end{example}

We obtain the following results.

A compact Alexandrov space is called {\it closed} if it has no  boundary.
An {\it essential singular point} of an Alexandrov space is a point at which the space of directions 
has radius not greater than $\pi /2$.
\begin{theorem} \label{2-dim interior}
Let $M^3_i$ be a sequence of three-dimensional closed Alexandrov
spaces with curvature $\geq -1$ and $\diam M_i \leq D$.
Suppose that $M_i$ converges to an Alexandrov surface $X$ without boundary.
Then, for sufficiently large $i$, $M_i$ is homeomorphic to a generalized Seifert fiber space over $X$.
Further, singular orbits may occur over essential singular points in $X$.%
\end{theorem}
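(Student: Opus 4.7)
The plan is to stratify $X$ according to the type of singularity and to describe the preimages in $M_i$ over each stratum separately, then glue. Since $X$ is a closed $2$-dimensional Alexandrov space with curvature $\geq -1$, every space of directions $\Sigma_x X$ is a $1$-dimensional Alexandrov space of curvature $\geq 1$, hence a circle of length $L_x \leq 2\pi$; the point $x$ is essential singular precisely when $L_x \leq \pi$. A Gauss--Bonnet argument on $X$ shows that the set $X_{\mathrm{ess}}$ of essential singular points is finite, and I set $X_{\reg} := X \setminus X_{\mathrm{ess}}$.

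Over $X_{\reg}$ I would apply the Yamaguchi fibration theorem in its Alexandrov form. Cover $X_{\reg}$ by small balls $B(x, r)$ whose radii are much smaller than the distance to $X_{\mathrm{ess}}$ and than the scale at which $\Sigma_x X$ appears as a circle of length $> \pi$. The rescaled pointed spaces $(M_i, p_i, r^{-1} d_{M_i})$, with $p_i \to x$, are almost non-negatively curved and collapse to $T_x X$, which is either $\mathbb{R}^2$ or a flat cone of angle $> \pi$. The fibration theorem then yields a locally trivial $S^1$-fibration $f_i : f_i^{-1}(B(x, r)) \to B(x, r)$ for large $i$. At topologically singular but non-essential points (where $L_x \in (\pi, 2\pi)$) a $\mathbb{Z}_2$-isotropy appears in the fiber over $x$, producing the Seifert-fibered structure on $M_i$ over $X_{\reg}$.

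The core analysis is at each essential singular point $x_0 \in X_{\mathrm{ess}}$, where the tangent cone $T_{x_0} X = K(S^1_{L_{x_0}})$ has $L_{x_0} \leq \pi$, which is exactly the regime of Example \ref{M_pt}. I would fix $r > 0$ with $B(x_0, r) \cap X_{\mathrm{ess}} = \{x_0\}$, rescale $M_i$ and $X$ by $r^{-1}$, and pass to a subsequential limit $Y_\infty$ of the preimages, which is a non-negatively curved $3$-dimensional Alexandrov space collapsing to $K(S^1_{L_{x_0}})$ and circle-fibered away from the vertex. Exploiting the equivariant structure visible in $M_\pt$---the isometric $\mathbb{Z}_2$-involution $\alpha$ whose quotient produces the cone $K(S^1_\pi)$---together with the local classification of such collapses, I would identify $Y_\infty$ with a rescaling of $M_\pt$ (or one of its obvious variants when $L_{x_0} < \pi$). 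Perelman's stability theorem, applied to the non-collapsed identification of $Y_\infty$, then gives a homeomorphism between a neighborhood of the fiber over $x_0$ in $M_i$ and the corresponding piece of the model, which is a solid Klein bottle $S^1 \tilde{\times} D^2$ whose central fiber is a singular interval joining two topologically singular points of $M_i$.

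The main obstacle will be this identification step: one must rule out more complicated limits $Y_\infty$, for instance with a singular fiber that is not an interval, or with more than two topologically singular points, and show that up to rescaling the only non-negatively curved $3$-dimensional Alexandrov space that circle-fibers over $K(S^1_\theta)$ with $\theta \leq \pi$ and has a singular fiber at the vertex is $M_\pt$. I expect this to reduce to an equivariant soul-type argument for non-negatively curved $3$-dimensional Alexandrov spaces together with explicit analysis of isometric involutions on $S^1 \times \mathbb{R}^2$. Once the local models near $X_{\mathrm{ess}}$ are pinned down, gluing them to the Seifert-fibered piece over $X_{\reg}$ is routine, since the boundary $\partial (S^1 \tilde{\times} D^2) = K^2$ matches the $S^1$-bundle over $\partial B(x_0, r) \simeq S^1$ produced by the previous step, yielding the generalized Seifert fiber space over $X$ asserted in the theorem.
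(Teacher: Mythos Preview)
There are two genuine errors in your local analysis at an essential singular point $x_0$. First, you have misidentified the model piece: the space $B(\pt)=S^1\times D^2/\langle\alpha\rangle$ with $\alpha(e^{i\theta},x)=(e^{-i\theta},-x)$ is \emph{not} the solid Klein bottle $S^1\tilde\times D^2$. The latter is a manifold (the $\mathbb Z$-action is free), whereas $B(\pt)$ is a genuine orbifold with two non-manifold points coming from the fixed points $(\pm 1,0)$ of $\alpha$; it is these, not any points of a solid Klein bottle, that are joined by the singular interval fiber. In fact one of the two key lemmas in the paper's proof (Lemma~\ref{ball is not a solid Klein bottle}) shows, via equivariant Gromov--Hausdorff convergence of universal covers, that $B(p_i,r)$ is \emph{never} a solid Klein bottle---precisely the opposite of your conclusion. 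Second, you have omitted an entire case: the rescaling limit $Y$ can have a one-dimensional soul (a circle), in which case $B(p_i,r)\approx S^1\times D^2$ with boundary $T^2$, and one obtains an ordinary Seifert solid torus of type $(\mu,\nu)$ with $\mu$ possibly larger than $2$; your stratification and your ``$\mathbb Z_2$-isotropy'' remark over $X_{\reg}$ do not account for this.

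Your plan also underestimates the gluing step. Matching the circle fibration $\pi_i'$ on the annulus to a fiber structure on $B(p_i,r)$ is not routine: in the solid-torus case one must invoke the Seifert extension argument of \cite[Lemma~4.4]{SY}, and in the $B(\pt)$ case the paper needs a delicate $\mathbb Z_2$-equivariant isotopy argument on $\partial V_{1,1}$ (Lemma~\ref{ball is B(pt)} and its Assertion) to show the induced fiber type on the double cover is $(1,1)$ rather than $(2,1)$, and to extend the fibration $\hat\sigma_i$-equivariantly across the ball. This is where most of the technical work lies, and it does not follow from stability alone.
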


Here, a {\it generalized} Seifert fiber space is a Seifert fiber space in a generalized 
sense, which possibly has singular interval fibers just as in Example \ref{M_pt}. 
For the precise definition, see Definition \ref{generalized Seifert fiber space}.

To describe the topologies of $M_i^3$ converging to an Alexandrov surface with nonempty boundary, we define the notion of generalized solid tori and generalized solid Klein bottles.
Let $K(A)$ be the cone over a topological space $A$, obtained from $A \times [0, +\infty)$ smashing $A \times \{0\}$ to a point.
Let $K_1(A)$ be the closed cone over $A$, obtained from $A \times [0, 1]$ smashing $A \times \{0\}$ to a point.
We put $\partial K_1(A) := A \times \{1\}$.

\begin{definition} \label{generalized solid} \upshape
We will construct a certain three-dimensional topological orbifold
whose boundary is homeomorphic to a torus or a Klein bottle.

We first observe that the closed cone $K_1(P^2)$ over $P^2$ 
can be regarded as a ``fibration''\footnote{In fact, it is NOT a Serre fibration, 
because the fibers $D^2$ and $\Mo$ are not weak homotopy equivalence.} 
over $I$ as follows.
Let $\Gamma \cong \mathbb{Z}_2$ be the group generated by the involution $\gamma$ on $\mathbb{R}^3$ defined by $\gamma(v) = -v$.
Then $\mathbb{R}^3 / \Gamma = K(P^2)$.

We consider the following families of surfaces in $\mathbb{R}^3$,
\begin{align*}
A(t) &:= \{ v = (x,y,z) \,|\, 
x^2 + y^2 - z^2 = t^2, |z| \leq 1 \}, \\
B(t) &:= \{ v = (x,y,z) \,|\, 
x^2 + y^2 - z^2 = -t^2, x^2 + y^2 \leq 1 \}.
\end{align*}
and set 
\begin{align*}
D(t) := \left\{
\begin{aligned}
& A(t) / \Gamma \text{ if } t > 0 \\
& B(t) / \Gamma \text{ if } t \leq 0
\end{aligned}
\right.
\end{align*}
Then $D(t)$ is homeomorphic to a Mobius band for $t > 0$, and is homeomorphic to a disk for $t \leq 0$. 
Remark that $\bigcup_{t \in [-1,1]} \partial D(t)$ is homeomorphic to $S^1 \times I$.
The union $D(1) \cup \bigcup_{t \in [-1, 1]} \partial D(t) \cup D(-1)$
corresponds to $P^2 \times \{1\} = \partial K_1(P^2) \subset
K_1(P^2)$. 
Define a projection 
\begin{equation} \label{projection of K_1}
\pi : K_1 (P^2) \approx \bigcup_{t \in [-1, 1]} D(t) \to [-1, 1] \text{ as } \pi(D(t)) = t.
\end{equation}
This is a ``fibration'' stated as above.

For a positive integer $N \geq 1$, let us consider a circle $S^1 = [0, 2 N] / \{0\} \sim \{2 N\}$.
Let $I_j$ be a sub-arc in $S^1$ corresponding to $[j-1, j] \subset [0, 2 N]$ for $j = 1, \dots, 2 N$.
We consider a sequence $B_j$ of topological spaces such that each $B_j$ is homeomorphic to $K_1(P^2)$. 
We take a sequence of projections $\pi_j : B_j \to I_j$ obtained as
above such that there are homeomorphisms $\phi_j : \pi_j^{-1} (j)
\approx \pi_{j+1}^{-1} (j)$ for all $j = 1, \dots, 2 N$.
Then we obtain a topological space $Y := \bigcup_{j = 1}^{2N} B_j$ glued by $\phi_j$'s.
Define a projection 
\begin{equation} \label{projection of gen solid}
\pi : Y = \bigcup_{j = 1}^{2 N} B_j \to S^1 \text{ by } \pi (\pi_j^{-1}(t)) = t
\end{equation}
for any $t \in S^1$.
By the construction, $Y$ has $2 N$ topologically singular points.
Remark that the restriction $\pi|_{\partial Y} : \partial Y \to S^1$ is a usual $S^1$-fiber bundle.
Then we obtain an topological orbifold $Y$ whose boundary $\partial Y$ is
homeomorphic to a torus or a Klein bottle. 
If $\partial Y$ is a torus, then $Y$ is called a {\it generalized solid torus of type} $N$.
If $\partial Y$ is a Klein bottle, then $Y$ is called a {\it generalized solid Klein bottle of type} $N$.
We regard a solid torus $S^1 \times D^2$ and the product $S^1 \times
\Mo$ as generalized solid tori of type $0$. 
We also regard a solid Klein bottle $S^1 \tilde{\times} D^2$ and
non-trivial $\Mo$-bundle $S^1 \tilde{\times} \Mo$ over $S^1$ as
generalized solid Klein bottles of type $0$.
Note that $S^1 \tilde{\times} \Mo$ is homeomorphic to a non-orientable
$I$-bundle $K^2 \hat{\times} I$ over $K^2$.
\end{definition}


For a two-dimensional Alexandrov space $X$, a boundary point $x \in
\partial X$ is called a {\it corner} point if $\diam \Sigma_x \le
\pi$, in other word, if it is an essential singular point.

\begin{theorem} \label{2-dim boundary}
Let $\{ M_i \}_{i =1}^{\infty}$ be a sequence of three-dimensional
closed Alexandrov spaces with curvature $\geq -1$ and $\diam M_i \leq
D$.
Suppose that $M_i$ converges to an Alexandrov surface $X$ with
non-empty boundary.
Then, for large $i$, there exist a generalized Seifert fiber space
$\mathrm{Seif}_i\,(X)$ over $X$ and generalized solid tori or
generalized solid Klein bottles $\pi_{i,k} : Y_{i,k} \to (\partial
X)_k$ over each component $(\partial X)_k$ of $\partial X$ such that
$M_i$ is homeomorphic to a union of $\mathrm{Seif}_i\,(X)$ and
$Y_{i,k}$'s glued along their  boundaries, where the fibers of
$\mathrm{Seif}_i\,(X)$ over a boundary points $x \in (\partial X)_k$
are identified with $\partial \pi_{i,k}^{-1}(x) \approx S^1$.
\end{theorem}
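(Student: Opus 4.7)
The strategy is to reduce to Theorem~\ref{2-dim interior} on an interior region of $X$, separately model the preimage of a collar of each boundary component, and glue along the common boundary surfaces.

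Fix $\e > 0$ small enough that the collars $N_\e((\partial X)_k)$ are pairwise disjoint, and for each $k$ enumerate the essential singular (corner) points $c_1,\dots,c_{2N_k} \in (\partial X)_k$ cyclically; they decompose $(\partial X)_k$ into smooth arcs $J_1,\dots,J_{2N_k}$ along which $\Sigma_x$ is an arc of length exactly $\pi$. Let $f_i : M_i \to X$ denote a Gromov--Hausdorff approximation. Set $X^{\circ} := X \setminus \bigcup_k \mathrm{int}\,N_\e((\partial X)_k)$, so $\partial X^{\circ}$ consists of smooth curves at distance $\e$ from $\partial X$, containing no essential singular points. By the construction of Theorem~\ref{2-dim interior} adapted to surfaces with regular boundary (via doubling $X^{\circ}$ across $\partial X^{\circ}$ together with a compatible $\mathbb{Z}_2$-symmetric lift of the approximating sequence, or by a direct relative fibration theorem) one produces, for large $i$, a generalized Seifert fibration on $f_i^{-1}(X^{\circ})$ whose boundary is an $S^1$-bundle over $\partial X^{\circ}$. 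This fibration will extend, after gluing the boundary models below, to the desired $\mathrm{Seif}_i(X) \to X$.

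To model $Y_{i,k} := f_i^{-1}(N_\e((\partial X)_k))$ I analyze the rescaled local structure. At a smooth point $x \in J_j$, the tangent cone $T_x X$ is a half-plane, and since $M_i$ has no boundary the preimage $f_i^{-1}(x)$ is a single regular point of $M_i$; applying Perelman's stability theorem to the rescaled picture identifies $f_i^{-1}(N_\e(J_j))$ with $D^2 \times J_j$, the circles over interior points of $X$ contracting to points along $J_j$. At a corner $c_j$, the tangent cone is $K(\Sigma_{c_j})$ with $\Sigma_{c_j}$ an arc of length $\leq \pi$, and it is covered two-to-one by the plane; the corresponding rescaled cover of $f_i^{-1}(B_\e(c_j))$ is $\mathbb{R}^3$, so the rescaled limit of $f_i^{-1}(B_\e(c_j))$ itself is $\mathbb{R}^3 / \langle v \mapsto -v \rangle = K(P^2)$. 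Stability then identifies $f_i^{-1}(B_\e(c_j))$ with $K_1(P^2)$ equipped with the fibration $\pi$ of equation~(\ref{projection of K_1}), each corner contributing the two topologically singular points of $M_i$ that appear as the endpoints of the singular interval fiber.

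Concatenating these local pieces around $(\partial X)_k$ using the gluing maps $\phi_j$ of Definition~\ref{generalized solid} assembles $Y_{i,k}$ as a generalized solid torus or Klein bottle of type $N_k$; the dichotomy between the two is determined by the orientation character of the $S^1$-bundle structure along the loop $(\partial X)_k$. Its boundary is a torus or Klein bottle which, via a fiber-preserving identification, agrees with the corresponding $S^1$-bundle component of $\partial\,\mathrm{Seif}_i(X)$, and gluing along these surfaces recovers $M_i$.

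The principal obstacle lies in the local analysis at corners: while Perelman's stability theorem yields an abstract homeomorphism $f_i^{-1}(B_\e(c_j)) \approx K_1(P^2)$, upgrading this to a \emph{fibered} homeomorphism compatible simultaneously with the fibration $\pi$ of~(\ref{projection of K_1}) and with the $D^2 \times J_{j-1}$, $D^2 \times J_j$ structures on the two adjacent smooth arcs requires constructing a coherent Seifert-like circle field in a neighborhood of the full preimage of $(\partial X)_k$. A secondary technical point is to organize the resulting choices of $\phi_j$ around $(\partial X)_k$ so that the orientation monodromy of the boundary $S^1$-bundle is correctly computed, determining whether $Y_{i,k}$ is a generalized solid torus or Klein bottle and verifying that the type $N_k$ equals the corner count of $(\partial X)_k$.
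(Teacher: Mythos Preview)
Your local analysis near the boundary is incorrect in both the smooth and the corner cases, and this breaks the argument.

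At a smooth point $x\in J_j$ you assert that $f_i^{-1}(x)$ is a single regular point and hence that $f_i^{-1}(N_\e(J_j))\approx D^2\times J_j$. This is false in general: the fiber over a boundary point of $X$ can be a circle, and the transverse $2$-disk you expect can instead be a M\"obius band. This is exactly the dichotomy in Corollary~\ref{2-dim boundary corollary}(2), and the paper treats the two cases $F_i\approx D^2$ and $F_i\approx\Mo$ separately (Sections~\ref{F_i = D^2} and~\ref{F_i = Mo}). Your argument simply omits the M\"obius case.

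At a corner $c_j$ your covering argument is not valid. The double cover of the tangent cone $T_{c_j}X$ by a plane is a statement about $X$; it does not induce a double cover of the rescaled ball in $M_i$, so you cannot conclude that the rescaled limit of $f_i^{-1}(B_\e(c_j))$ is $K(P^2)$. In the paper's rescaling argument (Lemma~\ref{ball near boundary}) the limit $Y$ can have a $0$- or $1$-dimensional soul, and one obtains $B_i\approx D^3$, $\Mo\times I$, or $K_1(P^2)$; the option $B(\pt)$ must be explicitly excluded by an equivariant argument. In particular $B_i$ need not contain any topologically singular points at all, so your picture in which every corner contributes a $K_1(P^2)$ and two singular points is wrong. (Note also that $(\partial X)_k$ may have no corners, or an odd number of them; your enumeration $c_1,\dots,c_{2N_k}$ already presupposes the conclusion.)

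The paper's proof avoids these pitfalls by working with an $\e$-regular covering of $(\partial X)_k$ (Definition~\ref{def of regular covering}), using the Flow Theorem to replace metric balls by domains with controlled boundary, and then carrying out a case analysis on the topology of the cross-section $F_i$ together with careful gradient-like flow arguments to make the pieces compatible with the Seifert fibration $\pi_i'$. The corner analysis and the fiber-change mechanism are the substance of the proof, not details that can be read off from stability.
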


It should be remarked that in Theorem \ref{2-dim boundary},
the fiber of   $\pi_{i,k} : Y_{i,k} \to (\partial
X)_k$  may change at  a corner point of $(\partial X)_k$, and that
The type of $Y_{i, k}$ is less than or equals to the half of the 
number of corner points in $(\partial X)_k$.

\begin{corollary}\label{2-dim boundary corollary}
Under the same assumption and notation of Theorem \ref{2-dim boundary}, 
for large $i$, there exists a continuous surjection $f_i : M_i \to X$
which is a $\theta(i)$-approximation satisfying the following.
\begin{itemize}
\item[(1)] 
$f_i : f_i^{-1}(\mathrm{int}\, X) \to \mathrm{int} X$ is a generalized Siefert fibration.
\item[(2)] 
For $x \in \partial X$, $f_i^{-1} (x)$ is homeomorphic to a one-point set or a circle. 
The fiber of $f_i$ may change over a corner point in $\partial X$.
\item[(3)]
For any collar neighborhood $\varphi : (\partial X)_k \times [0,1] \to
X$ of a component $(\partial X)_k$ of $\partial X$, which contains no
interior essential singular points, 
$f_i^{-1} (\mathrm{image}\, \varphi)$ is a generalized solid torus or a generalized solid Klein bottle.
\end{itemize}
\end{corollary}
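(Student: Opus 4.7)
The plan is to construct the map $f_i$ explicitly from the decomposition of Theorem~\ref{2-dim boundary} by combining a generalized Seifert projection with a radial collapse on each $Y_{i,k}$. Fix collar neighborhoods $\varphi_k : (\partial X)_k \times [0, 1] \to X$ satisfying the hypothesis of (3), set $X_0 := X \setminus \bigsqcup_k \varphi_k((\partial X)_k \times [0, 1))$, and invoke Theorem~\ref{2-dim boundary} to obtain a homeomorphism $M_i \approx \mathrm{Seif}_i(X) \cup \bigcup_k Y_{i,k}$ in which each $\partial Y_{i,k}$ is identified with the torus or Klein bottle of $S^1$-Seifert fibers over $(\partial X)_k$. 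Interpret $\mathrm{Seif}_i(X)$ as a generalized Seifert fiber space over $X_0$ (with possibly singular interval fibers over interior essential singular points of $X_0$), and define $f_i$ on $\mathrm{Seif}_i(X)$ to be this Seifert projection.

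The key step is to extend $f_i$ to each $Y_{i,k}$ by the formula
\[
f_i(y) := \varphi_k(\pi_{i,k}(y),\, r_{i,k}(y)),
\]
where $r_{i,k} : Y_{i,k} \to [0, 1]$ is a continuous ``radial'' function with $r_{i,k}^{-1}(1) = \partial Y_{i,k}$ and with the following behavior on each $\pi_{i,k}$-fiber $F$: $r_{i,k}^{-1}(0) \cap F$ is the center of $F$ when $F \approx D^2$, the core circle when $F \approx \Mo$, and the topologically singular point of $Y_{i,k}$ contained in $F$ when $F$ is a corner fiber. I construct $r_{i,k}$ fiberwise in the $\mathbb{R}^3 / \Gamma$ model of $K_1(P^2)$ (Definition~\ref{generalized solid}) by taking $r_{i,k} = |z|$ on each Mobius fiber $A(t)/\Gamma$ and $r_{i,k} = \sqrt{x^2 + y^2}$ on each disk fiber $B(t)/\Gamma$; these two formulas coincide on the corner fiber $D(0) = \{x^2 + y^2 = z^2\}/\Gamma$ and give a continuous function on each $B_j$, with $r_{i,k}^{-1}(0)$ in the corner fiber equal to the origin. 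After replacing the gluing homeomorphisms $\phi_j$ by fiber-preserving isotopic ones if needed, $r_{i,k}$ glues continuously across all $B_j$'s. Since $r_{i,k}|_{\partial Y_{i,k}} \equiv 1$ and the Seifert projection sends $\partial \mathrm{Seif}_i(X)$ to $\partial X_0 = \bigsqcup_k \varphi_k((\partial X)_k \times \{1\})$, the two pieces of $f_i$ agree on the gluing locus and yield a continuous surjection $f_i : M_i \to X$.

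Properties (1)--(3) now follow immediately from the construction: on $\mathrm{int}\, X_0$ the fibers are those of the Seifert part, and on each $\varphi_k((\partial X)_k \times (0, 1))$ they are the circles $r_{i,k}^{-1}(t) \cap \pi_{i,k}^{-1}(x)$ for $t > 0$, together giving a global generalized Seifert fibration over $\mathrm{int}\, X$; over $x \in (\partial X)_k$ the fiber is a point (in the disk and corner cases) or a circle (in the Mobius case), switching precisely at corners of $(\partial X)_k$; and $f_i(Y_{i,k}) = \varphi_k((\partial X)_k \times [0, 1])$ by construction. For the $\theta(i)$-approximation property, the decomposition in Theorem~\ref{2-dim boundary} is produced compatibly with the collapsing $M_i \to X$, which forces all Seifert fibers and all level sets $r_{i,k}^{-1}(t) \cap \pi_{i,k}^{-1}(x)$ to have diameters tending to zero; hence so do the fibers of $f_i$, and $f_i$ is a $\theta(i)$-approximation.

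The main technical obstacle is arranging $r_{i,k}$ to be continuous at corner fibers, where the $\pi_{i,k}$-fiber type transitions between $D^2$ and $\Mo$ and a topologically singular point of $Y_{i,k}$ lies; this is precisely the reason for working in the explicit $\mathbb{R}^3/\Gamma$ model, where the two piecewise formulas glue along the corner locus $\{x^2 + y^2 = z^2\}$ and the origin serves as the natural unique preimage of $0$ in that fiber.
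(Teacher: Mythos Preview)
Your proposal is correct and follows essentially the same route as the paper: the paper likewise defines $f_i$ as the Seifert projection on $M_i'$ and as $\varphi \circ (\pi_i, \eta_i)$ on each generalized solid torus/Klein bottle, building the radial function $\eta_i$ in the $\mathbb R^3/\Gamma$ model of $K_1(P^2)$ via $z^2$ on M\"obius fibers and $x^2+y^2$ on disk fibers (your $|z|$ and $\sqrt{x^2+y^2}$ are the same up to reparametrization). The one place the paper is more explicit is the gluing across the $B_j$'s: rather than isotoping the $\phi_j$, it observes that $\mathrm{MCG}(D^2)\cong\mathrm{MCG}(\Mo)\cong\mathbb Z_2$ with the nontrivial class represented by a reflection that preserves the radial function, so $\eta$ extends continuously regardless of the isotopy class of the gluing; your phrasing ``replacing $\phi_j$ by fiber-preserving isotopic ones'' amounts to the same thing but would benefit from that MCG observation to justify it.
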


Under the same notation of Corollary \ref{2-dim boundary corollary},
we remark that, for $x \in (\partial X)_k$, 
\begin{align*}
f_i^{-1} (\varphi (\{x\} \times [0,1])) &\approx D^2 \text{ if } f_i^{-1}(x) \approx \{\pt\} \text{; and } \\ 
f_i^{-1} (\varphi (\{x\} \times [0,1])) &\approx \Mo \text{ if } f_i^{-1}(x) \approx S^1.
\end{align*}

The structure of $M_i$ collapsing to one-dimensional space is determined as follows.

\begin{theorem} \label{1-dim circle}
Let $M^3_i$ be a sequence of three-dimensional closed Alexandrov spaces with curvature $\geq -1$ and $\diam M_i \leq D$.
Suppose that $M_i^3$ converges to a circle.
Then, for large $i$, $M_i$ is homeomorphic to a total space of an $F_i$-fiber bundle over $S^1$, 
where the fiber $F_i$ is homeomorphic to 
one of $S^2$, $P^2$, $T^2$ and $K^2$.
\end{theorem}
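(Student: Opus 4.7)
My strategy is to construct, for each large $i$, a continuous surjection $f_i : M_i \to S^1$ that is simultaneously a $\theta(i)$-Gromov--Hausdorff approximation and a locally trivial topological fiber bundle, and then to identify the fiber topologically. Since the base $S^1$ is a smooth one-dimensional manifold with no boundary and no essential singular points, this is the simplest possible collapsing situation, considerably cleaner than those of Theorems \ref{2-dim interior} and \ref{2-dim boundary}. I would construct $f_i$ by covering $S^1$ with two open arcs whose intersection is a disjoint union of two short arcs and, on each arc, defining an $S^1$-valued coordinate as an appropriately smoothed distance function from a point $p_i \in M_i$ that Gromov--Hausdorff-approximates a fixed point in the complement; gluing these coordinates via a partition of unity and applying an Alexandrov-space analog of Yamaguchi's fibration theorem (in the form used in the proofs of the preceding theorems) should yield local triviality of $f_i$ over each arc.

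To identify the fiber $F_i := f_i^{-1}(t_0)$ for a fixed $t_0 \in S^1$, I would rescale a neighborhood of $F_i$ in $M_i$ by $\lambda_i := 1/\diam F_i \to \infty$. The rescaled fibers have diameter $1$ and ambient curvature $\geq -\lambda_i^{-2} \to 0$, so by Gromov's precompactness theorem a subsequence converges in the Gromov--Hausdorff topology to a closed two-dimensional Alexandrov space $F_\infty$ of nonnegative curvature. Since this subsequential convergence preserves dimension, Perelman's stability theorem applies and yields $F_i \approx F_\infty$ for large $i$. Because $F_\infty$ is a closed topological surface admitting a nonnegatively curved Alexandrov metric, one has $\chi(F_\infty) \geq 0$, and the classification of closed surfaces leaves only $S^2$, $P^2$, $T^2$, and $K^2$ as possibilities.

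The main obstacle is producing $f_i$ as an honest topological fiber bundle rather than merely an approximation, which requires verifying the regularity hypotheses of the Alexandrov fibration theorem uniformly over $S^1$. However, because $S^1$ has no boundary, no corner points, and no essential singular points, none of the singular-fiber phenomena of the generalized Seifert or generalized solid torus constructions of the previous theorems can appear, so the fibration is honestly locally trivial with a uniform fiber $F_i$. Combining this bundle structure with the stability identification above completes the proof.
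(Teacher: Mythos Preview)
Your proposal has a genuine gap at the very first step. Yamaguchi's fibration theorem (Theorem \ref{fibration theorem}) requires not only that the base $X$ be almost regular but that the total space $M_i$ satisfy $R_{\de_n}(M_i) = M_i$, i.e.\ that \emph{every} point of $M_i$ be $(3,\de)$-strained. You argue that ``$S^1$ has no boundary, no corner points, and no essential singular points,'' but this concerns only the base: regularity of $X$ does not force the collapsing Alexandrov spaces $M_i$ to be $(3,\de)$-regular. The paper says this explicitly in the introduction to Section \ref{proof of 1-dim circle}: if $M_i$ happens to have no $\e$-singular points the fibration theorem finishes the proof immediately, but in general $M_i$ can have such points, and one must proceed differently. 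What the paper does instead is cut $M_i$ into four pieces $B_i^{\pm}, D_i^{\pm}$ by metric spheres around lifts of antipodal points of $S^1$, and use the rescaling argument (Theorem \ref{rescaling argument}) plus the Soul/Stability theorems, and Theorems \ref{2-dim interior} and \ref{2-dim boundary} in the case of a two-dimensional rescaled limit, to prove each piece is a product $F_i \times I$. The fiber type emerges from this rescaling analysis, not from rescaling a pre-existing fiber.

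Your fiber identification also has a problem even if the bundle map existed. A fiber $F_i = f_i^{-1}(t_0)$ with the induced metric is not known to be an Alexandrov space with any lower curvature bound, so you cannot invoke Gromov precompactness or Perelman stability on the rescaled $F_i$'s directly; and even if you could, nothing prevents the rescaled limit $F_\infty$ from being one-dimensional (further collapse), in which case stability does not apply. The paper circumvents this by rescaling \emph{metric balls} in $M_i$, which are genuine Alexandrov spaces, and by choosing the rescaling so that the limit contains a line, which forces a splitting $Y \cong S \times \mathbb{R}$ and allows the topology of each piece (and hence of the fiber) to be read off from the classification of nonnegatively curved spaces in lower dimension.
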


To describe the structures of $M_i$ converging to an interval $I$, we prepare certain topological orbifolds.
First, we provide
\[
B(\pt) := S^1 \times D^2/\langle \alpha \rangle
\]
Here, the involution $\alpha$ is the restriction of one provided in Example \ref{M_pt}.
Remark that $\partial B(\pt) \approx S^2$.
We also need  to consider three-dimensional open Alexandrov spaces $L_2$ and  $L_4$
with two-dimensional souls $S_2$ and $S_4$ respectively,
where $S_2$ (resp. $S_4$) is homeomorpshic to $S^2$ or $P^2$ (resp. to $S^2$).
For their definition, see Example \ref{L_i}.
The space $L_i$ $(i=2,4)$ has $i$ topologically singular points, which are contained 
in $S_i$. We denote by $B(S_i)$  a metric ball around $S_i$ in $L_i$. Here we point out that
$\partial B(S_2) \approx S^2$  (resp.$\approx K^2$)  if $S_2 \approx S^2$ (resp.
if $S_2 \approx P^2$), and $\partial B(S_4) \approx T^2$.

\begin{theorem} \label{1-dim interval}
Let $M^3_i$ be a sequence of three-dimensional closed Alexandrov spaces with curvature $\geq -1$ and $\diam M_i \leq D$.
Suppose that $M_i^3$ converges to an interval.
Then, for large $i$, $M_i$ is the union of $B_i \cup B_i'$ glued along
their boundaries. $\partial B_i$ is homeomorphic to one of $S^2$,
$P^2$, $T^2$ and $K^2$.
The topologies of $B_i$ (and $B_i'$) are determined as follows:
\begin{itemize}
\item[(1)]
If $\partial B_i \approx S^2$, then $B_i$ is homeomorphic to one of
$D^3$, $P^3 - \mathrm{int}\, D^3$, $B(S_2)$ with $S_2 \approx S^2$.
\item[(2)]
If $\partial B_i \approx P^2$, then $B_i$ is homeomorphic to $K_1(P^2)$. 
\item[(3)]
If $\partial B_i \approx T^2$, then $B_i$ is homeomorphic to one of 
$S^1 \times D^2$, 
$S^1 \times \Mo$, 
$K^2 \tilde{\times} I$, and 
$B(S_4)$.
\item[(4)]
If $\partial B_i \approx K^2$, then $B_i$ is homeomorphic to one of 
$S^1 \tilde{\times} D^2$, 
$K^2 \hat{\times} I$, 
$B(\mathrm{pt})$, and 
$B(S_2)$ with $S_2 \approx P^2$.
\end{itemize}
\end{theorem}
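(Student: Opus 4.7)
The plan is to decompose $M_i$ via a Gromov--Hausdorff approximation $f_i \colon M_i \to I = [0,L]$ into two caps joined along a closed surface, and then classify each cap by a pointed rescaling argument at an endpoint of~$I$. Fix a small $\delta > 0$ and set $B_i := f_i^{-1}([0,\delta])$, $B_i' := f_i^{-1}([L-\delta,L])$, and $C_i := f_i^{-1}([\delta,L-\delta])$. The open interval $(0,L)$ contains no essential singular points of $I$ viewed as a one-dimensional Alexandrov space, so the general fibration theorem for collapsing Alexandrov spaces applies over $(0,L)$: for large $i$, the middle region $C_i$ is homeomorphic to $\Sigma_i \times [\delta,L-\delta]$ for some closed topological surface $\Sigma_i$. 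Consequently $M_i \approx B_i \cup_{\Sigma_i} B_i'$ with $\partial B_i \approx \partial B_i' \approx \Sigma_i$.

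The surface $\Sigma_i$ inherits an Alexandrov structure with curvature bounded below which, after rescaling, collapses to a point. A Gauss--Bonnet argument in the two-dimensional Alexandrov category then forces $\chi(\Sigma_i) \geq 0$, leaving exactly the four possibilities $S^2, P^2, T^2, K^2$ asserted in the theorem.

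To identify each $B_i$, I would pick $p_i \in f_i^{-1}(0)$ and scales $r_i \to \infty$ so that $(r_i M_i, p_i)$ converges in the pointed Gromov--Hausdorff topology to a complete noncompact three-dimensional Alexandrov space $L$ of nonnegative curvature. By the soul theorem in the Alexandrov setting, $L$ retracts onto a compact soul $S$ with $0 \leq \dim S \leq 2$, and by Perelman's stability theorem $B_i$ is homeomorphic to a closed tubular neighborhood of $S$ in $L$. A case analysis in $\dim S$ and in the internal topology of $S$ (together with its topologically singular points inherited from $L$) then produces each listed model. When $\dim S = 0$, $B_i$ is a closed cone over $\Sigma_i$, giving $D^3$ or $K_1(P^2)$; when $\dim S = 1$, the soul is a circle and $B_i$ is a $D^2$- or $\Mo$-bundle over it, producing $S^1 \times D^2$, $S^1 \times \Mo$, $S^1 \tilde{\times} D^2$, or $K^2 \hat{\times} I$; and when $\dim S = 2$ the remaining models arise, namely the smooth normal bundles $P^3 - \mathrm{int}\,D^3$ and $K^2 \tilde{\times} I$ in the absence of topologically singular points on $S$, and the models $B(S_2)$, $B(S_4)$, $B(\pt)$ introduced earlier when topologically singular points do lie on $S$. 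Matching each model's boundary surface against the four options for $\Sigma_i$ yields the partition into lists (1)--(4).

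The principal obstacle is establishing completeness of the list: controlling exactly which configurations of topologically singular points of $M_i$ accumulate at the endpoint of $I$, and excluding any cap not listed. This requires a delicate analysis of the spaces of directions at topologically singular points of the rescaled limit $L$, together with orientability bookkeeping and double-cover arguments that, where possible, reduce the nonorientable or non-manifold candidates to the Shioya--Yamaguchi Riemannian classification in \cite{SY}. Given the earlier theorems of the paper, the decomposition $M_i = B_i \cup B_i'$ and the determination of $\Sigma_i$ are comparatively direct; the bulk of the technical weight lies in enumerating the $\dim S \in \{1,2\}$ subcases and verifying that the listed examples exhaust them.
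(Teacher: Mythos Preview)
The principal gap is in your rescaling step for the cap $B_i$. You assert that one can choose scales $r_i \to \infty$ so that $(r_i M_i, p_i)$ converges to a \emph{three}-dimensional open nonnegatively curved space $L$, and then read off $B_i$ from the soul of $L$ via stability. But the rescaling argument (Theorem~\ref{rescaling argument}) only produces a limit $Y$ of dimension at least $\dim X + 1 = 2$; there is in general no scale at which the limit is three-dimensional and simultaneously governs all of $B(p_i,r)$. The paper's proof treats both possibilities. When $\dim Y = 3$ the soul theorem and stability do give the list you describe. When $\dim Y = 2$, however, the argument instead feeds the pair $(B_i, Y)$ into the already-proved Theorems~\ref{2-dim interior} and~\ref{2-dim boundary} (collapse to a surface, with or without boundary), and then runs a case analysis on the topology of $Y$ and on the number of essential singular points in $\mathrm{int}\,Y$ and $\partial Y$. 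It is precisely this two-dimensional branch that identifies $B_i$ with $B(\pt)$, $B(S_2)$, $B(S_4)$, $K^2\tilde\times I$, $K^2\hat\times I$, $S^1\times\Mo$, $P^3 - \mathrm{int}\,D^3$, etc., via the generalized Seifert and generalized solid torus/Klein bottle structures, and this occupies most of the proof. Your outline omits this branch entirely, so the claimed exhaustiveness of the list is not established.

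A secondary issue: invoking the fibration theorem directly over $(0,L)$ to get $C_i \approx \Sigma_i \times I$ is not justified as stated, since Theorem~\ref{fibration theorem} requires the total space to be $(3,\delta)$-strained, and a one-dimensional base only supplies a $(1,\e)$-strainer. The paper obtains the product structure on the middle piece by the same rescaling mechanism used for the circle case (cf.\ Lemmas~\ref{B_i is product} and~\ref{D_i is product}), not by a direct fibration.
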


\begin{corollary} \label{0-dim}
Let $M_i$ be a sequence of three-dimensional closed Alexandrov space with curvature $\geq -1$ and diameter $\leq D$.
Suppose $M_i$ converges to a point.
Then, for large $i$, $M_i$ is homeomorphic to one of 
\begin{itemize}
\item
generalized Seifert fiber spaces in the conclusion of Theorem \ref{2-dim interior} with a base Alexandrov surface having nonnegative curvature,
\item
spaces in the conclusion of Theorem \ref{2-dim boundary} with a base Alexandrov surface having nonnegative curvature,
\item
spaces in the conclusion of Theorem \ref{1-dim circle} and \ref{1-dim interval}, and
\item 
closed Alexandrov spaces with nonnegative curvature having finite fundamental groups.
\end{itemize}
\end{corollary}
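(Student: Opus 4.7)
The plan is a rescaling argument that reduces the point-collapse situation to one of the lower-dimensional collapse scenarios already handled by the preceding theorems. Set $r_i := \diam M_i$, which tends to $0$ by hypothesis, and form the rescaled Alexandrov spaces $\tilde{M}_i$ obtained from $M_i$ by multiplying the distance function by $r_i^{-1}$. Each $\tilde{M}_i$ has diameter $1$ and curvature $\geq -r_i^2$, hence $\tilde{M}_i \in \mathcal{A}^3(1, -1)$ for large $i$. By Gromov's precompactness theorem, after passing to a subsequence, $\tilde{M}_i$ converges in the Gromov-Hausdorff topology to a compact Alexandrov space $Y$ with $\diam Y = 1$ and nonnegative curvature; in particular $\dim Y \in \{1, 2, 3\}$.

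Next, branch on $\dim Y$. If $\dim Y = 1$, then $Y$ is a circle or a closed interval, and Theorem \ref{1-dim circle} or \ref{1-dim interval} applied to the convergence $\tilde{M}_i \to Y$ places $M_i$ in the third bullet. If $\dim Y = 2$, then $Y$ is a nonnegatively curved Alexandrov surface, and Theorem \ref{2-dim interior} (when $\partial Y = \emptyset$) or Theorem \ref{2-dim boundary} (otherwise) identifies $M_i$ with a space of the form in the first or second bullet, with nonnegatively curved base. If $\dim Y = 3$, then the convergence is non-collapsing, so Perelman's stability theorem yields a homeomorphism $M_i \approx Y$ for large $i$; since $Y$ is a closed Alexandrov $3$-space of nonnegative curvature, this matches the fourth bullet whenever $\pi_1(Y)$ is finite.

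The main obstacle is the sub-case $\dim Y = 3$ with $\pi_1(Y)$ infinite, as happens when $Y$ is homeomorphic to a flat $3$-torus. In this regime, by the Alexandrov-space extension of the Fukaya-Yamaguchi theorem, $\pi_1(M_i) \cong \pi_1(Y)$ is virtually nilpotent; I would then appeal to the classification of closed nonnegatively curved Alexandrov $3$-spaces with infinite virtually nilpotent fundamental group to show that each such $Y$ is either a generalized Seifert fiber space over a nonnegatively curved surface, or admits a decomposition $Y \approx B \cup B'$ of the kind produced in Theorem \ref{1-dim interval}, with the fibration realized geometrically by the almost free isometric torus action forced by the interplay of virtual nilpotency and nonnegative curvature. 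Carrying out this classification and matching its pieces precisely with the forms appearing in Theorems \ref{2-dim interior}, \ref{2-dim boundary}, \ref{1-dim circle}, and \ref{1-dim interval}, so that $M_i \approx Y$ lands in one of the first three bullets, is the delicate part.
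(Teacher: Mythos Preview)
Your rescaling argument and the branching on $\dim Y$ are exactly what the paper does. The only place you diverge is in the treatment of the case $\dim Y = 3$ with $|\pi_1(Y)| = \infty$.

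The paper handles this case not through virtual nilpotency but via a short splitting lemma (stated as Lemma \ref{closed 3-dim}): since $Y$ is closed, nonnegatively curved, and has infinite $\pi_1$, its universal cover contains a line and hence splits isometrically as $\mathbb{R}^k \times X_0$ with $1 \le k \le 3$. This forces a finite cover of $Y$ to be $T^3$ (if $k=3$) or $S^1 \times S^2$ (if $k=1$, so $X_0 \approx S^2$), the case $k=2$ being excluded. Shrinking the line factor then exhibits $Y$ itself as a collapsing limit of lower dimension, so $M_i \approx Y$ lands in one of the first three bullets. Your route through the Fukaya--Yamaguchi almost-nilpotency theorem and a hypothetical ``almost free isometric torus action'' reaches the same destination but is considerably more indirect; the splitting theorem already delivers the needed Euclidean factor without any appeal to nilpotency, and the matching with the earlier theorems becomes essentially immediate once the product structure on the universal cover is in hand. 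In short, your plan is correct, but for the infinite-$\pi_1$ sub-case you should replace the nilpotency machinery with the splitting theorem, which is both simpler and exactly what the paper does.
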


We remark that all spaces appeared in the conclusion of 
Theorems \ref{2-dim interior}, \ref{2-dim boundary}, \ref{1-dim
  circle} and \ref{1-dim interval} 
and Corollary \ref{0-dim} actually have sequences of  metrics as
Alexandrov spaces collapsing
to such respective limit spaces descrived there.

By Corollary \ref{0-dim}, to achieve a complete classification of the
topologies of collapsing three-dimensional closed Alexandrov spaces, 
we provide a version of ``Poincare conjecture'' for three-dimensional
closed Alexandrov spaces
 with nonnegative curvature.

 For Alexandrov spaces $A$ and $A'$ having  boundaries isometric to each other,
$A \cup_\partial A'$ denotes the gluing of $A \cup A'$ via an
isometry 
$\phi : \partial A \to \partial A'$. Note that $A \cup_\partial A'$ is
an  Alexandrov space (see \cite{Pet Appl})


\begin{conjecture} \label{Poincare conjecture}
A simply connected three-dimensional closed Alexandrov space with nonnegative curvature is homeomorphic to 
an isometric gluing $A \cup_\partial A'$ for $A$ and $A'$ chosen in
the following list \eqref{Poincare conjecture list} of nonnegatively
curved Alexandrov spaces:
\begin{equation} \label{Poincare conjecture list}
\begin{aligned}
& D^3, K_1(P^2), B(\pt ), B(S_2), B(S_4). 
\end{aligned}
\end{equation}
\end{conjecture}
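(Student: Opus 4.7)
The decomposition list \eqref{Poincare conjecture list} matches precisely the possible ``halves'' $B_i$ that appear in Theorem \ref{1-dim interval}, so the plan is to cut $M$ along a suitable separating surface $\Sigma$ and identify each of the two resulting pieces as one of the spaces on the list, in direct parallel to the interval-collapse case.

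First I would split into two cases according to whether $M$ contains topologically singular points. If $M$ is a topological $3$-manifold, then $M$ is a closed, simply connected $3$-manifold carrying a metric of nonnegative curvature in the Alexandrov sense; invoking Perelman's resolution of the Poincar\'e conjecture gives $M \approx S^3 = D^3 \cup_{S^2} D^3$, which already belongs to the target class. The real content is in the singular case: the set $\ess(M)$ (or rather the set of topologically singular points) is finite, and the space of directions at each such point is isometric to $S^2/\mathbb{Z}_2 \cong P^2$, so a small metric ball around it is homeomorphic to $K_1(P^2)$.

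Next I would construct a separating surface. Let $p_+, p_- \in M$ realize $\diam M$ and consider the midset $\Sigma := \{x \in M : d(x, p_+) = d(x, p_-)\}$. Using semiconcavity of distance functions on nonnegatively curved Alexandrov spaces together with Perelman's Morse theory, one expects $\Sigma$ to be a closed topological surface whose complementary halves $A := \{d(\cdot, p_+) \leq d(\cdot, p_-)\}$ and $A' := \{d(\cdot, p_+) \geq d(\cdot, p_-)\}$ are compact nonnegatively curved Alexandrov $3$-spaces with $\partial A = \partial A' = \Sigma$. Van Kampen applied to $\pi_1(M) = 1$ restricts the possibilities to $\Sigma \in \{S^2, P^2, T^2, K^2\}$, exactly the boundaries appearing in Theorem \ref{1-dim interval}. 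Sorting each piece by the value of $\partial = \Sigma$ and by the distribution of topologically singular points it contains, one identifies $A, A'$ with entries of \eqref{Poincare conjecture list}: $D^3$ when the piece is singularity-free, $K_1(P^2)$ when it contains a single topologically singular point at the ``apex,'' $B(\pt)$ when two topologically singular points sit on a common short geodesic, and $B(S_2)$ or $B(S_4)$ when the topologically singular points collapse onto a $2$-dimensional soul of type $S^2, P^2$, or $S^2$ respectively.

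The hardest step will be controlling the midset $\Sigma$ and the way topologically singular points distribute between $A$ and $A'$. A priori $\Sigma$ could fail to be a topological manifold (for example if it meets the singular set in an uncontrolled way), or could be disconnected; and even once $\Sigma$ is known, each half might in principle admit an ``exotic'' nonnegatively curved structure not on the list, such as a soul whose boundary is not among $\{S^2, P^2, T^2, K^2\}$. Excluding these requires a refined structure theory for compact nonnegatively curved Alexandrov $3$-spaces with boundary, analogous to the classification used inside the proof of Theorem \ref{1-dim interval}, and it is exactly the difficulty of this structure theory --- together with the absence of a Ricci flow adapted to genuinely singular Alexandrov spaces --- that leaves the statement at the level of a conjecture.
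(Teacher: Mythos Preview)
The statement you are addressing is labeled as a \emph{Conjecture} in the paper, and the paper does not provide a proof; it is stated as an open problem motivated by Corollary \ref{0-dim}. So there is no ``paper's own proof'' to compare against.

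Your proposal is not a proof either, and you correctly recognize this in your final paragraph: the midset $\Sigma$ need not be a topological surface in an Alexandrov space (the distance functions $d_{p_\pm}$ are not smooth, and Perelman's Morse theory gives local triviality only away from critical points, which you have not excluded on $\Sigma$), and even granting a nice $\Sigma$, identifying each half with an entry of \eqref{Poincare conjecture list} would require exactly the classification of compact nonnegatively curved Alexandrov $3$-spaces with boundary that is not available. Your outline is a reasonable heuristic for why the conjecture is plausible and why its shape mirrors Theorem \ref{1-dim interval}, but it should be presented as motivation for the conjecture rather than as a proof attempt.
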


We also remark that any connected sum of those spaces admits a metric of Alexandrov space 
having a lower curvature bound by some constant. 

\begin{conjecture} \label{positive Poincare conjecture}
A simply connected three-dimensional closed Alexandrov space with curvature $\geq 1$ is homeomorphic to 
a three-sphere $S^3$ or a suspension $\Sigma(P^2)$ over $P^2$. 
\end{conjecture}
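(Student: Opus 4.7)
The argument naturally splits according to whether $M$ contains any topologically singular point, i.e.\ a point $p$ with $\Sigma_p \not\approx S^2$. If no such point exists, then $M$ is a simply connected closed topological $3$-manifold and Perelman's resolution of the Poincar\'e conjecture immediately gives $M \approx S^3$. The remainder of the work is in the singular case, which is where the curvature bound $\geq 1$ is genuinely used.

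In the singular case I would first pin down the local model. The space of directions $\Sigma_p$ at a topologically singular point is a closed two-dimensional Alexandrov space of curvature $\geq 1$ that is not $S^2$; Gauss--Bonnet for singular surfaces forces $\chi(\Sigma_p) = 1$, hence $\Sigma_p \approx P^2$, and each topologically singular point has a conical neighborhood homeomorphic to $K_1(P^2)$. List the topologically singular points as $p_1,\dots,p_k$ with $k \geq 1$.

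The core construction is a branched double cover $\pi\colon \tilde M \to M$ with branch set $\{p_1,\dots,p_k\}$. The complement $M_0 := M \setminus \{p_1,\dots,p_k\}$ is a connected open $3$-manifold that is non-orientable, since a deleted neighborhood of each $p_i$ is $P^2 \times (0,1)$. Its orientation double cover $\tilde M_0 \to M_0$ extends across each $p_i$ using the standard model $\mathbb R^3 \to \mathbb R^3/\{\pm 1\} = K(P^2)$ (filling in $K_1(S^2) = D^3$ over each $K_1(P^2)$), yielding a closed topological $3$-manifold $\tilde M$ whose lifted metric is Alexandrov with curvature $\geq 1$. By Myers' theorem for Alexandrov spaces $\pi_1(\tilde M)$ is finite and its universal cover is $S^3$ by Perelman, so $\tilde M$ is a spherical space form; and $M = \tilde M / \langle \iota\rangle$ for an orientation-reversing isometric involution $\iota$ with exactly $k$ isolated fixed points. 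Combining $\pi_1(M) = 1$ with the classification of such involutions should force $\tilde M \approx S^3$, $k = 2$, and $\iota$ conjugate to the suspension of the antipodal map on $S^2$, yielding $M \approx \Sigma(P^2)$.

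The hardest step is this final classification: extracting $\tilde M \approx S^3$ and $k=2$ from $\pi_1(M) = 1$ requires ruling out non-trivial spherical space forms admitting an orientation-reversing involution whose quotient is simply connected. The Euler characteristic identity $\chi(\tilde M) = 2\chi(M) - k = 0$ forces $k$ to be even and constrains $\chi(M)$, while a van Kampen analysis of the cone decomposition $M = M_0 \cup \bigsqcup_i K_1(P^2)$ imposes additional relations among the meridian $\mathbb Z_2$'s around the $p_i$, but a clean case-free exclusion of the remaining possibilities looks delicate. A geometric alternative I would pursue in parallel is to show directly that some topologically singular point $p_i$ admits an antipode at distance exactly $\pi$, using the positive curvature together with the $P^2$-singularity structure; the spherical suspension (maximal diameter) theorem for Alexandrov spaces would then conclude $M \approx \Sigma(\Sigma_{p_i}) \approx \Sigma(P^2)$ directly, bypassing the branched cover altogether.
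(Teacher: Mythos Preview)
This statement appears in the paper as a \emph{Conjecture}; the paper offers no proof, so there is nothing to compare your proposal against. Your outline is in fact the natural strategy (and is essentially the one by which the conjecture was later settled), but as written it has two genuine gaps beyond the one you flag.

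First, you assert without argument that the branched double cover $\tilde M$, equipped with the lifted length metric, is an Alexandrov space with curvature $\geq 1$. Away from the branch points this is clear since $\tilde M_0 \to M_0$ is a local isometry, but the map is not a local isometry at the branch points, and the Toponogov comparison for triangles through $\tilde p_i$ does not follow automatically from the comparison downstairs. This is true (the space of directions at $\tilde p_i$ is the orientation double cover of $\Sigma_{p_i}$, hence still has curvature $\geq 1$, and one then invokes globalization), but it requires a real argument; without it you cannot apply Bonnet--Myers to conclude $\pi_1(\tilde M)$ is finite, and the rest of the chain collapses.

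Second, your acknowledged gap is more serious than you indicate. Even granting that $\tilde M$ is a spherical space form, the involution $\iota$ is an isometry only of the lifted Alexandrov metric, not of any round metric on $\tilde M$, so ``classification of such involutions'' is a topological problem (orientation-reversing involutions with isolated fixed points on elliptic $3$-manifolds with simply connected quotient), not a spherical-geometry one. Your Euler-characteristic identity gives only $k = 2\chi(M)$, which is circular without an independent computation of $\chi(M)$; and the van Kampen analysis tells you $\pi_1(M_0)$ is normally generated by order-$2$ elements $\mu_i$ with $\pi_1(\tilde M) = \ker(\pi_1(M_0) \to \mathbb Z_2)$, but extracting $\pi_1(\tilde M) = 1$ from this needs real work. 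The clean route is equivariant (orbifold) geometrization applied to $(\tilde M, \iota)$, which puts $\iota$ in standard form; you do not mention this. Your alternative via the maximal-diameter theorem would indeed bypass everything, but you give no mechanism for forcing $\mathrm{diam}\, M = \pi$ from the presence of a $P^2$-singularity, and none is apparent.
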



The organization of this paper and 
basic ideas of the proofs of our results are as follows: 

In Section \ref{Preliminaries}, we review 
some basic notations and results on Alexandrov spaces. 
We provide a three-dimensional topological orbifold having a circle fiber
structure with singular arc fibers, and call it a generalized Seifert
fiber space.
At the end of this section, we prove fundamental properties
on the topologically  singular point set.

In Section \ref{proof of flow theorem}, 
for any $n \in \mathbb{N}$, 
we consider $n$-dimensional closed Alexandrov spaces $M_i^n$
collapsing to a space $X^{n-1}$ of co-dimension one.
Assume that all points in $X$ are almost regular, 
except finite points $x_1, \dots, x_m$.  
For any fixed $p \in \{x_{\alpha}\}$, 
we take a sequence $p_i \in M_i$ converging to $p$.
By Yamaguchi's Fibration Theorem \ref{fibration theorem}, 
for large $i$, there is a fiber bundle $\pi_i : A_i \to A$, 
where $A$ is a small metric annulus $A= A(p; r, R)$ around $p$ 
and $A_i$ is a some corresponding domain.
Here, $r$ and $R$ are small positive numbers so that $r \ll R$.

Although $A_i$ is not a metric annulus in general,  
it is expected that $A_i$ is homeomorphic to an standard annulus $A(p_i; r, R)$.
Moreover, we may expect that 
there exist an isotopy $\phi: M_i \times [0,1] \to M_i$ such that, 
putting $\phi_t := \phi(\cdot, t)$, 
\begin{equation}\label{isotopy intro}
\left\{
\begin{aligned}
&\phi_0 = id_{M_i},  \\
&\phi_1 \bigg( B \bigg(p_i, \frac{r + R}{2}\bigg) \cup A_i \bigg) 
= B(p_i, R), \text{ and }\\
&\phi_1 (x) = x \text{ if } x \not\in B(p_i, R+\de)
\end{aligned}
\right.
\end{equation}
for any fixed $\de > 0$.

If we consider the case that 
all $M_i$ are Riemannian manifolds, 
then we can obtain a smooth flow $\Phi_t$
of a gradient-like vector field $V$ of the distance function $\dist_{p_i}$ from $p_i$. 
Then, by using integral curves of $V$, 
we can obtain such an isotopy $\phi$ from $id_{M_i}$
satisfying the property \eqref{isotopy intro}. 

We will prove that 
such an argument of flow goes through 
on Alexandrov spaces $M_i$ as well.
To do this, we first prove a main result, Flow Theorem \ref{flow theorem},  in this section. 
Theorem \ref{flow theorem} implies the existence of an integral flow
$\Phi_t$ of a gradient-like vector field of a distance function
$\dist_{p_i}$ on $A(p_i; r, R)$ in a suitable sense. 
This flow leads an isotopy 
$\phi$ satisfying the property \eqref{isotopy intro}.
Theorem \ref{flow theorem} is important 
throughout the paper.

\vspace{1em}
In Sections \ref{proof of 2-dim interior} -- \ref{proof of 0-dim}, 
we prove Theorems \ref{2-dim interior} -- \ref{1-dim interval} and
Corollary \ref{0-dim}.
To explain the arguments used in those proofs, 
let us fix a sequence $M_i = M_i^3$ of 
three-dimensional closed Alexandrov spaces  
in $\mathcal{A}^3(-1, D)$ converging to $X$ of dimension $\leq 2$.

In Section \ref{proof of 2-dim interior}, 
we consider the case that $\dim X = 2$ and $\partial X = \emptyset$.
Let $p_1, \dots, p_m$ be all $\de$-singular points in $X$ for a fixed small $\de > 0$.
Let us take a converging sequence $p_{i,\alpha} \to p_\alpha$ ($i \to \infty$) 
for each $\alpha = 1, \dots, m$.
Let us fix any $\alpha$ and set $p := p_{\alpha}$, $p_i := p_{i,
  \alpha}$. We take $r = r_p >0$ such that all points in $B(p, 2 r) -
\{p\}$ are $(2, \e)$-strained. Then, all points in an annulus $A(p_i;
\e_i, 2 r-\e_i)$ are $(3, \theta(i, \e))$-strained.
Here, $\e_i$ is a sequence of positive numbers converging to zero.
Then, by Fibration Theorem \ref{fibration theorem}, 
we have an $S^1$-fiber bundle $\pi_i : A_i \to A(p; r, 2r)$.
On the other hand, by the rescaling argument \ref{rescaling argument}, 
we obtain the conclusion that 
$B_i := B(p_i, r)$ is homeomorphic to a solid torus or $B(\pt)$.
Here, we can exclude the possibility that 
$B_i$ is topologically a solid Klein bottle. 
Theorem \ref{flow theorem} implies that 
there exists an isotopy carrying 
the fiber $\pi_i^{-1}(\partial B(p, r))$ to $\partial B_i$.
If $B_i \approx S^1 \times D^2$ then we can prove an  argument similar to  \cite{SY} that $B_i$ has 
the structure of a Seifert fibered torus in the usual sense, extending $\pi$. 
If $B_i \approx B(\pt)$, then by some new observation on the topological structure of $B(\pt)$,  
we can prove that $B_i$ has the standard ``circle fibration'' structure provided in Example \ref{M_pt},
compatible with $\pi$.
In this way, we obtain the structure of a generalized Seifert fiber space on $M_i$.

In Section \ref{proof of 2-dim boundary}, 
we consider the case that $\dim X = 2$ and $\partial X \neq \emptyset$.
Take a decomposition of $\partial X$ to 
connected components $\bigcup_{\beta} (\partial X)_{\beta}$.
Put $X_0 := X - U(\partial X, r)$ for some small $r >0$.
By Theorem \ref{2-dim interior}, 
we have a generalized Seifert fibration $\pi_i : M_{i, 0} \to X_0$ 
for some closed domain $M_{i, 0} \subset M_i$.
For any fixed $\beta$, we take points $p_{\alpha}$ in $(\partial X)_{\beta}$
so fine that $\{p_\alpha\}$ contains all $\e$-singular points in $(\partial X)_{\beta}$.
Let $p_{i, \alpha}\in M_i$ be a sequence converging to $p_\alpha$.
Deform a metric ball $B(p_{i, \alpha}, r)$ to 
a neighborhood $B_{i, \alpha}$ of $p_{i, \alpha}$ 
by an isotopy obtained in Theorem \ref{flow theorem}.
Because of the existence of $\partial X$, 
we need a bit complicated construction 
of flows of gradient-like vector fields of distance functions.

In Section \ref{proof of 1-dim circle}, 
we consider the case that $X$ is isometric to a circle $S^1(\ell)$ of length $\ell$. 
If $M_i$ has no $\e$-singular points, 
by Fibration Theorem \ref{fibration theorem}, 
we obviously obtain the conclusion of Theorem \ref{1-dim circle}.
But, in general, $M_i$ has $\e$-singular points.
Therefore, we use Perelman's Morse theory to construct a fibration over $S^1$.

In Section \ref{proof of 1-dim interval},
we consider the case that $X$ is isometric to 
an interval $[0, \ell]$ of some length $\ell$.
We use rescaling arguments around the end points of interval $X$ and an argument  similar to 
Theorem \ref{1-dim circle} to prove Theorem \ref{1-dim interval}.

In Section \ref{proof of 0-dim}, 
we consider the case of  $\dim X = 0$ 
and prove Corollary \ref{0-dim}.

\par
\medskip
For three-dimensional Alexandrov spaces with non-empty boundary
collapsing to lower dimensional spaces,
considering their doubles, one could make use of the results in the
present paper to obtain the structure of collapsing in that case. 
This will appear in a forthcoming paper.


\section{Preliminaries} \label{Preliminaries}

\subsection{Definitions, Conventions and Notations}

In the present paper, we use the following notations. 
\begin{itemize}
\item $\theta(\de)$ is a function depending on 
$\de = (\de_1, \dots, \de_k)$
such that $\lim_{\de \to 0} \theta(\de) = 0$.
$\theta(i, \de)$ is a function depending on 
$\de \in \mathbb{R}^k$ and $i \in \mathbb{N}$ 
such that $\lim_{i \to \infty, \de \to 0} \theta(i, \de) = 0$.
When we write $A < \theta(\de)$ for a nonnegative number $A$, 
we always assume that $\theta(\de)$ is taken to be nonnegative. 

\item $X \approx Y$ means that $X$ is homeomorphic to $Y$.
For metric spaces $X$ and $Y$, $X \equiv Y$ means that $X$ is isometric to $Y$.

\item For metric spaces $X$ and $Y$, the direct product $X \times Y$ has the product metric if nothing stated.

\item For continuous mappings $f_1 : X_1 \to Y$, $f_2 : X_2 \to Y$ and $g : X_1 \to X_2$, we say that {\it $g$ represents $f_1$ and $f_2$} if $f_1 = f_2 \circ g$ holds.

\item Denote by $d(x,y)$, $|x,y|$, and $|x y|$ 
the distance between $x$ and $y$ in a metric space $X$.
Sometimes we mark $X$ as lower index $|x,y|_X$.

\item For a subset $S$ of a topological space, 
$\overline{S}$ is the closure of $S$ in the whole space. 

\item For a metric space $X = (X, d)$ and $r > 0$, 
denote the rescaling metric space $r X = (X, r d)$. 

\item For a subset $Y$ of a metric space, 
denote by $\dist_{Y}$ the distance function from $Y$.
When $Y = \{x\}$ we denote $\dist_x := \dist_{\{x\}}$.
For a subset $Y$ of a metric space $X$ and a subset $I$ of $\mathbb{R}_+$, 
define a subset $B(Y; I) := B_X(Y; I) := \dist_Y^{-1} (I) \subset X$.
For special cases, we denote and call those sets in the following way: 
$B(Y, r) := B(Y; [0,r])$ the closed ball, $U(Y, r) := B(Y; [0,r))$ the open ball, 
$A(Y; r', r) := B(Y; [r', r])$ the annulus, 
and $\partial B(Y, r) := B(Y; \{r\})$ the metric sphere.
For $Y = \{x\}$, we set $B(x, r) := B(\{x\}, r)$, 
$U(x,r) := U(\{x\}, r)$ and $A(x; r', r) := A(\{x\}; r', r)$.

\item For a topological space $X$, 
the {\it cone} $K(X)$ over $X$ is obtained from 
$X \times [0, \infty)$ by smashing $X \times \{0\}$ to a point.
An equivalent class $[(x,a)] \in K(X)$ of 
$(x,a) \in X \times [0, +\infty)$ is denoted by $ax$, 
or often simply written by $(x,a)$.
A special point $(x, 0) = 0 x \in K(X)$ is denoted by $o$ or $o_X$, 
called the {\it origin} of $K(X)$. 
A point $v \in K(X)$ is often called a vector.
$K_1(X)$ denotes the (unit) closed cone over $X$, i.e.
\[
K_1(X) := \{ ax \in K(X)  \,|\, x \in X, 0 \leq a \leq 1\}.
\]
$K_1(X)$ is homeomorphic to the join between $X$ and a single-point. 

\item For a metric space $X$, 
$K(X)$ often denotes the {\it Euclidean metric cone}, 
which is equipped the metric as follows:
for two points $(x_1,r_1), (x_2, r_2) \in X \times [0, \infty)$
the distance between them is defined by 
\[
d((x_1,r_1), (x_2, r_2))^2 := r_1^2 + r_2^2 -2 r_1 r_2 \cos \min \{d(x_1, x_2), \pi\}.
\]

And for $v \in K(X)$,
we put $|v| := d(x, o)$ and call it the {\it norm} of $v$.
Define an {\it inner product} $\langle v, w \rangle$ of $v, w \in K(X)$ by
$\langle v, w \rangle := |v| |w| \cos \angle vow$.


\item When we write $M^n$ marked upper index $n$, 
this means that $M$ is an $n$-dimensional Alexandrov space.
\end{itemize}

For a curve $\gamma: [0, 1] \to X$ in a metric space $X$, 
the {\it length} $L(\gamma)$ of $\gamma$ is defined by 
\[
L(\gamma) := 
\sup_{ 0 = t_0 < t_1 < \cdots < t_m = 1} 
\sum_{i = 1}^{m} d(\gamma(t_{i-1}), \gamma(t_i)) \in [0, +\infty].
\]
A metric space $X$ is called a {\it length space} if 
for any $x$, $y \in X$ and $\e > 0$, there exists 
a curve $\gamma : [0,1] \to X$ such that 
$\gamma(0) = x$, $\gamma(1) = y$ and $0 \leq L(\gamma) - d(x,y) \leq \e$.
A curve is called a {\it geodesic} if it is an isometric embedding from some interval 
Sometime a geodesic $\gamma$ defined on 
a bounded closed interval $[0, \ell]$ is called a geodesic {\it segment}. 
A geodesic defined on $\mathbb{R}$ is called a {\it line}, 
a geodesic defined on $[0, +\infty)$ is called a {\it ray}.
For a geodesic $\gamma : I \to X$ in a metric space $X$, 
we often regard $\gamma$ itself as 
the subset $\gamma(I) \subset X$.

\subsection{Alexandrov spaces}

From now on, throughout this paper, 
we always assume that a metric space is {\it proper}, 
namely, any closed bounded subset is compact.
A proper length space is a {\it geodesic space}, namely 
any two points are jointed by a geodesic. 

For three points $x_0$, $x_1$, $x_2$ in a metric space, the {\it size} of $(x_0, x_1, x_2)$ is $\mathrm{size}\,(x_0, x_1, x_2) := |x_0 x_1| + |x_1 x_2| + |x_2x_0|$.
The {\it size} of four points $(x_0; x_1, x_2, x_3)$ (centered at $x_0$) is defined by the maximum of $\mathrm{size}\, (x_0, x_i, x_j)$ for $1 \leq i \neq j \leq 3$, denoted by $\mathrm{size}\, (x_0; x_1, x_2, x_3)$. 

\begin{definition}\upshape \label{def of comparison angle}
For three points $x_0, x_1, x_2$ in a metric space $X$ with 
$\mathrm{size}\, (x_0, x_1,x_2) < 2 \pi/ \sqrt{\kappa}$, 
the $\kappa$-{\it comparison angle} of $(x_0; x_1, x_2)$, 
written by $\wangle_{\kappa} x_1 x_0 x_2$ or $\wangle_{\kappa} (x_0; x_1, x_2)$, 
is defined as following:
Take three points $\tilde{x}_i$ ($i = 0, 1, 2$) in $\kappa$-plane $\mathbb M^2_{\kappa}$, 
which is a simply connected complete surface with constant curvature $= \kappa$,
such that $d(x_i, x_j) = d(\tilde{x}_i, \tilde{x}_j)$ for $0 \leq i, j \leq 2$
and put $\wangle_{\kappa} x_1 x_0 x_2 := 
\angle \tilde{x}_1\tilde{x}_0\tilde{x}_2$.
Sometime we write $\wangle$ omitting $\kappa$ in the notation $\wangle_{\kappa}$. 
\end{definition}

\begin{definition}\upshape \label{def of Alex sp}
For $\kappa \in \mathbb{R}$, 
a complete metric space $X$ is called 
an {\it Alexandrov space with curvature} $\geq \kappa$ 
if $X$ is a length space and, 
for every four points $x_0, x_1, x_2, x_3 \in X$ 
(with $\mathrm{size}\, (x_0; x_1, x_2, x_3) < 2 \pi/\sqrt{\kappa}$ if $\kappa > 0$), 
we have the next inequality:
\[
\wangle_{\kappa} x_1x_0x_2 +
\wangle_{\kappa} x_2x_0x_3 +
\wangle_{\kappa} x_3x_0x_1 \leq 2 \pi.
\]
\end{definition}

The {\it dimension} of an Alexandrov space means its Hausdorff dimension. 
The Hausdorff dimension and the topological dimension 
are equal to each other (\cite{BGP}, \cite{PP QG}, \cite{Plaut}).
Throughout this paper, we always assume that an Alexandrov space is finite dimensional.
\begin{remark}
If $X$ is an Alexandrov space with curvature $\geq \kappa$, 
then the rescaling space $r X$ is an Alexandrov space with curvature $\geq \kappa/r^2$.
\end{remark}

For two geodesics $\alpha, \beta : [0, \e] \to X$ 
emanating at $\alpha(0) = \beta(0) =p \in X$ in an Alexandrov space $X$, 
the {\it angle} $\angle(\alpha, \beta)$ at $p$ is defined by 
\[
\angle(\alpha, \beta) := 
\angle_p(\alpha, \beta) :=
\lim_{s, t \to 0} \wangle(p; \alpha(t), \beta(s)).
\]
The set of all non-trivial geodesics emanating at $p$ 
in an Alexandrov space $X$ is denoted by $\Sigma_p' X$. 
The angle $\angle_p$ at $p$ satisfies the triangle inequality on this set.
Its metric completion is denoted by $\Sigma_p = \Sigma_p X$, 
called the {\it space of directions} at $p$.
For a geodesic $\gamma : [0, \ell] \to X$ starting from $x = \gamma(0)$ to $y = \gamma(\ell)$, 
we denote $\gamma^{+}(0) = \gamma'(0) = \gamma_x' = \gamma_x^+ = \uparrow_x^y$ the direction of $\gamma$ at $x$.
By $xy$, we denote some segment $xy = \gamma : [0, |xy|] \to X$ 
joining from $\gamma (0) =x$ to $\gamma(|xy|) = y$.
For a subset $A \subset X$, the closure of a set of all directions from $x$ to $A$ 
is denoted by $A_x'$, i.e., 
\[
A_x' := \{ \xi \in \Sigma_x \,|\,
\exists a_i \in A \text{ such that } \lim_{i \to \infty} |x a_i| = |x, A| 
\text{ and } \lim_{i \to \infty} \uparrow_x^{a_i} = \xi \}.
\]
When $x \in A$, we put $\Sigma_x(A) := A_x'$.
For $x, y \in X$, we denote as $y_x' := \{y\}_x'$.
Or sometimes we denote by $y_x'$ an element belong with $y_x'$. 
For $x \in X$ and $y, z \in X -\{x\}$, 
we denote by $\angle yxz$ the angle $\angle (xy, xz) = \angle (\uparrow_x^y, \uparrow_x^z)$ between some fixed segments $xy$, $xz$.

\begin{definition}\label{strainer} \upshape
A {\it $(k, \de)$-strainer} at $x \in M$ is a collection of points 
$\{p_{\alpha}^{\pm}\}_{\alpha = 1}^{k} 
= \{p_{\alpha}^+, p_{\alpha}^- \,|\, \alpha = 1, \dots, k \}$
satisfying the following.
\begin{align}
\wangle p_{\alpha}^+ x p_{\beta}^+ &> \pi / 2 - \de \\
\wangle p_{\alpha}^+ x p_{\beta}^- &> \pi / 2 - \de \\
\wangle p_{\alpha}^- x p_{\beta}^- &> \pi / 2 - \de \\
\wangle p_{\alpha}^+ x p_{\alpha}^- &> \pi - \de
\end{align}
for all $1 \leq \alpha \neq \beta \leq k$.

The {\it length} of a strainer $\{p_{\alpha}^{\pm}\}$ at $x$ is 
$\min_{1 \leq \alpha \leq k} \{ |p_{\alpha}^+, x|, |p_{\alpha}^-, x| \}$.
The {\it $(k, \de)$-strained radius} of $x$, 
denoted by $(k,\de)\text{-}\mathrm{str. rad}\, x$, is 
the supremum of lengths of $(k, \de)$-strainers at $x$.
A $(k, \de)$-strained radius $(k,\de)\text{-}\mathrm{str. rad}\, A$ of a subset $A \subset M$ is defined by
\[
(k,\de)\text{-}\mathrm{str. rad}\, A := \inf_{x \in A}\, (k,\de)\text{-}\mathrm{str. rad}\, x.
\]
If there is a $(k,\de)$-strainer at $x$, 
then $x$ is called $(k ,\de)${\it -strained}. 
Denotes by $R_{k, \de} (M)$ 
the set of all $(k, \de)$-strained points in $M$.
$R_{k, \de}(M)$ is an open subset.
Put $S_{k, \de} (M) := M - R_{k, \de} (M)$.
Any point in $S_{k, \de}(M)$ is called a $(k,\de)$-{\it singular point}.   
When we consider an $n$-dimensional Alexandrov space $M^n$
and $\de$ is sufficiently small with respect to $1/n$, 
we simply say $\de$-strained, $\de$-singular, etc. 
instead of $(n, \de)$-strained, $(n, \de)$-singular, etc., 
and we omit to write $R_{\de}(M)$, $S_{\de}(M)$ 
instead of $R_{n, \de}(M)$, $S_{n, \de}(M)$.
For an $n$-dimensional Alexandrov space $M^n$, 
put $R(M^n) := \bigcap_{\de > 0} R_{\de} (M^n)$ and 
$S(M^n) := \bigcup_{\de > 0} S_{\de} (M^n) = M^n - R(M^n)$.
\end{definition}

\begin{theorem}[\cite{BGP}, \cite{OS}]
For any $n$-dimensional Alexandrov space $M^n$, 
we have $\dim_{H} S(M)$ $\leq n-1$ and
$\dim_{H} S(M) - \partial M \leq n-2$.
\end{theorem}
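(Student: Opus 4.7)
The plan is to establish both bounds by stratifying the singular set by strained radius and combining a covering estimate with induction on $n$. Since $S(M)=\bigcup_{k\geq 1} S_{n,1/k}(M)$ and Hausdorff dimension is stable under countable unions, it suffices to bound $\dim_H S_{n,\delta}(M)\leq n-1$, and then $\dim_H(S_{n,\delta}(M)\setminus \partial M)\leq n-2$, uniformly for each fixed small $\delta>0$.

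For the first bound I fix $p\in S_{n,\delta}(M)$ and choose a maximal $(k,\delta)$-strainer $\{q_\alpha^\pm\}_{\alpha=1}^k$ at $p$, with $k\leq n-1$. The BGP distance chart $F=(\dist_{q_1^+},\dots,\dist_{q_k^+})\colon B(p,r)\to \mathbb{R}^k$ is an open $\theta(\delta)$-almost submetry which is bilipschitz onto its image along the $\delta$-strained directions, and maximality of the strainer forces every point of $S_{n,\delta}(M)\cap B(p,r)$ to lie inside the fibers of $F$ with transverse size $\theta(\delta)r$. A Vitali-type covering then shows that the number of $\varepsilon$-balls needed to cover $S_{n,\delta}(M)\cap B(p,r)$ is at most $C(r/\varepsilon)^k\leq C(r/\varepsilon)^{n-1}$; summing over the countable stratification yields $\dim_H S(M)\leq n-1$.

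For the refined interior bound I would appeal to the inductive characterization (BGP, Perelman) that $p\in \partial M$ if and only if $\Sigma_p M$ has non-empty boundary. At an interior singular point $p$ the space $\Sigma_p$ is an $(n-1)$-dimensional Alexandrov space with curvature $\geq 1$ and empty boundary, so applying the induction hypothesis to $\Sigma_p$ gives $\dim_H S(\Sigma_p)\leq n-3$. Transferring this improvement back to $M$ through the cone chart at $p$, the maximal local strainer length along $S_{n,\delta}(M)\cap \mathrm{int}\,M$ can be taken $\leq n-2$, and the same packing computation now produces $\dim_H(S(M)\setminus \partial M)\leq n-2$.

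The main obstacle is the quantitative fiber-control supporting the packing step: one needs $F$ to be not merely open and Lipschitz but an almost submetry whose $F$-fibers over a \emph{single} base point genuinely pin down $S_{n,\delta}(M)$ up to $\theta(\delta)r$, and likewise that interior-singularity in $M$ propagates to interior-singularity in $\Sigma_p$. These are the substance of the BGP distance-map lemma and of its refinement in \cite{OS}, which in an actual write-up I would cite rather than re-derive.
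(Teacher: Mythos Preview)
The paper does not give its own proof of this theorem; it is stated as a cited result from \cite{BGP} and \cite{OS} with no argument supplied. So there is nothing in the paper to compare your attempt against.

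That said, your sketch follows the broad strategy of the original sources: stratify $S(M)$ by the strainer number, use the distance-coordinate map $F$ to control the packing of $\delta$-singular points, and use the space of directions to handle the interior refinement. Two points deserve tightening. First, the assertion that maximality of a $(k,\delta)$-strainer at $p$ forces $S_{n,\delta}(M)\cap B(p,r)$ into a set of transverse size $\theta(\delta)r$ in each $F$-fiber is not quite what BGP prove; the actual argument shows that the set of points in $B(p,r)$ \emph{not} admitting a $(k+1,\delta')$-strainer (for suitable $\delta'$) meets each $F$-fiber in at most one point near $p$, and this is what drives the packing estimate. Your phrasing conflates this with a metric-thickness bound. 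Second, the inductive step for the interior bound is more delicate than ``$\dim_H S(\Sigma_p)\le n-3$ implies the maximal strainer length is $\le n-2$''; one needs the quantitative statement that an interior point with only an $(n-1,\delta)$-strainer is in fact $(n,\delta')$-strained (this is \cite[Corollary 12.8]{BGP}, restated in the present paper as Theorem \ref{regular interior}), which is what forces interior singular points down to strainer number $\le n-2$. You gesture at both of these in your final paragraph, and they are indeed the substantive content of \cite{BGP} and \cite{OS} that you would cite rather than reprove.
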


Here, the boundary $\partial M$ of an Alexandrov space $M$ 
is defined inductively in the following manner.

\begin{definition}\upshape \label{def of boundary}
A one-dimensional Alexandrov space $M^1$ 
is a manifold, and the boundary of $M^1$ is 
the boundary of $M^1$ as a manifold.
Now let $M^n$ be an $n$-dimensional Alexandrov space with $n > 1$.
A point $p$ in $M^n$ is called a {\it boundary point} if
$\Sigma_p$ has a boundary point.
The set of all boundary points is denoted by 
$\partial M^n$, called the {\it boundary} of $M^n$.
Its complement is denoted by $\mathrm{int}\, M^n  = M^n - \partial M^n$, 
called the {\it interior} of $M^n$.
A point in $\mathrm{int}\, M^n$ is called an {\it interior point} of $M^n$. 
$\partial M^n$ is a closed subset in $M^n$ (\cite{BGP}, \cite{Per Alex II}).

A compact Alexandrov space without boundary is called 
a {\it closed} Alexandrov space, 
and a noncompact Alexandrov space without boundary is called 
an {\it open} Alexandrov space.
\end{definition}

\begin{definition} \upshape
For an $n$-dimensional Alexandrov space $M^n$, 
we say that $p \in M$ is a {\it topologically regular point} (or a {\it manifold-point}) if there is a neighborhood of $p$, which is homeomorphic to $\mathbb R^n$ or $\mathbb R^{n-1} \times [0, \infty)$.
$p$ is called a {\it topologically singular point} if $p$ is not a topologically regular point.
We denote by $S_{\mathrm{top}}(M)$ the set of all topologically singular points.
\end{definition}

\begin{definition}\upshape
For an Alexandrov space $M$, 
a point $p \in M$ is called an {\it essential singular point}
if $\rad \Sigma_p \leq \pi/2$.
A set of whole essential singular points in $M$ is denoted by $\ess(M)$.
We define the set of interior (resp. boundary) essential singular points 
$\ess(\mathrm{int}\,M)$ (resp. $\ess(\partial M)$) as follows:
\begin{align*}
\ess(\mathrm{int}\,M) 
&:= \ess(M) \cap \mathrm{int}\,M, \\ 
\ess(\partial M) 
&:= \ess(M) \cap \partial M.
\end{align*}
Remark that if $\dim M = 1$ then 
$\ess(\mathrm{int}\,M) = \emptyset$ and
$\ess(\partial M) = \partial M$.
\end{definition}

\begin{remark} \upshape
By Theorem \ref{radius sphere theorem} and Stability Theorem \ref{stability theorem},  
we can check the following.
\[
S_{\mathrm{top}}(M) \subset 
\ess(M) \subset 
S(M).
\]
\end{remark}

For small $\de \ll 1/n$, any $(n, \de)$-regular point  
in an $n$-dimensional Alexandrov space $M^n$
is an interior point.

\begin{theorem}[{\cite[Corollary 12.8]{BGP}}]
\label{regular interior}
An $(n-1, \de)$-regular interior point 
in an $n$-dimensional Alexandrov space 
is an $(n, \de')$-regular point.
Here, $\de' \to 0$ as $\de \to 0$.
\end{theorem}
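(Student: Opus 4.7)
The plan is to extend the given $(n-1,\de)$-strainer $\{p_\alpha^\pm\}_{\alpha=1}^{n-1}$ at $x$ to an $(n,\de')$-strainer by producing one more pair $p_n^\pm$. First I would pass to the space of directions $\Sigma_x$, an $(n-1)$-dimensional Alexandrov space of curvature $\geq 1$, by setting $\xi_\alpha^\pm := \uparrow_x^{p_\alpha^\pm}$. Because $\angle \geq \wangle_\kappa$ in an Alexandrov space of curvature $\geq \kappa$, the strainer inequalities lift to $d(\xi_\alpha^+,\xi_\beta^+)>\pi/2-\de$, $d(\xi_\alpha^+,\xi_\alpha^-)>\pi-\de$, and so on, showing $\{\xi_\alpha^\pm\}$ to be an almost-orthonormal, almost-antipodal configuration of $2(n-1)$ points in $\Sigma_x$. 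Producing the desired $p_n^\pm$ then reduces to locating a pair of directions $\xi_n^\pm\in\Sigma_x$ with $|d(\xi_n^\pm,\xi_\alpha^{\pm})-\pi/2|<\de'$ for every $\alpha$ and $d(\xi_n^+,\xi_n^-)>\pi-\de'$; short geodesic segments from $x$ in these directions terminate at the required $p_n^\pm$.

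The crucial input will be that the interior hypothesis $x\in\mathrm{int}\,M$ forces, by the inductive definition of boundary, $\partial\Sigma_x=\emptyset$. To exhibit the polar pair, I would introduce the coordinate map
\begin{equation*}
\Phi:\Sigma_x \to \mathbb{R}^{n-1},\qquad \Phi(\eta):=\bigl(\cos d(\eta,\xi_1^+),\ldots,\cos d(\eta,\xi_{n-1}^+)\bigr).
\end{equation*}
Toponogov comparison in the model sphere against the almost-orthonormal frame $\{\xi_\alpha^+\}$ shows that $\Phi$ is $\theta(\de)$-Lipschitz and lands in a $\theta(\de)$-neighborhood of the closed unit disk $D^{n-1}$. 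Moreover, over the interior of $D^{n-1}$ the map $\Phi$ should be essentially two-sheeted, the two sheets corresponding to the two ``hemispheres'' of $\Sigma_x$ split by the approximate $(n-2)$-dimensional equator spanned by $\{\xi_\alpha^\pm\}$. The desired directions $\xi_n^\pm$ will then be the two preimages of the origin $\mathbf{0}\in D^{n-1}$ under $\Phi$.

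The hard part will be to justify rigorously that $\Phi^{-1}(\mathbf{0})$ contains two directions at mutual angle $>\pi-\theta(\de)$ rather than collapsing to a single direction. Here the hypothesis $\partial\Sigma_x=\emptyset$ is decisive: were $\Sigma_x$ to possess a boundary (equivalently, had $x\in\partial M$ been allowed), one of the two hemispheres could be absent and the polar set would contain only a single direction, which is why the conclusion does fail in general at boundary points. Following the inductive structure theory of BGP, I would combine Bishop--Gromov volume comparison on $(n-1)$-dimensional spaces of curvature $\geq 1$ with boundarylessness to conclude that $\Sigma_x$ is $\theta(\de)$-Gromov--Hausdorff close to the round sphere $S^{n-1}$; pulling back its antipodal poles then supplies the sought-for $\xi_n^\pm$. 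With $\de'=\theta(\de)$, the augmented collection $\{p_\alpha^\pm\}_{\alpha=1}^{n}$ is the required $(n,\de')$-strainer at $x$.
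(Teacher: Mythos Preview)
The paper does not supply a proof of this statement; it is quoted directly as \cite[Corollary~12.8]{BGP}, so there is no in-paper argument to compare your attempt against. Your outline has the right architecture---pass to $\Sigma_x$, use $\partial\Sigma_x=\emptyset$, and manufacture the missing pair $\xi_n^\pm$---but the decisive step is asserted rather than argued.

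The gap is the clause ``combine Bishop--Gromov volume comparison \ldots\ with boundarylessness to conclude that $\Sigma_x$ is $\theta(\de)$-Gromov--Hausdorff close to the round sphere $S^{n-1}$.'' That conclusion is at least as strong as what you are asked to prove: an $(n,\de')$-strainer at $x$ amounts to $2n$ directions in $\Sigma_x$ in approximate orthonormal position, which is tantamount to $\Sigma_x$ being $\theta(\de')$-close to $S^{n-1}$. So you have restated the target, not established it. Nor does volume comparison do the work you suggest: from the $2(n-1)$ well-placed directions you have not extracted any volume lower bound on $\Sigma_x$ near $\mathrm{vol}(S^{n-1})$, which is what a volume-rigidity statement would require; and nothing you wrote rules out that $\Phi^{-1}(\mathbf{0})$ is a single point---the ``two-sheetedness'' of $\Phi$ is exactly the issue and is merely asserted. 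The actual BGP argument does not pass through global closeness of $\Sigma_x$ to a sphere. One first produces \emph{one} direction $\xi_n^+$ with $|d(\xi_n^+,\xi_\alpha^\pm)-\pi/2|<\theta(\de)$ for all $\alpha$ (this exists by a dimension count, since otherwise $\dim\Sigma_x\le n-2$), and then obtains its near-antipode $\xi_n^-$ by induction on dimension together with the definition of boundary: were no such $\xi_n^-$ to exist, $\xi_n^+$ would itself lie in $\partial\Sigma_x$, contradicting the interior hypothesis on $x$. To repair your approach you should replace the volume hand-wave with this inductive boundary mechanism.
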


The boundary of an Alexandrov space is 
determined by its topology:

\begin{theorem}[{\cite[Theorem 13.3(a)]{BGP}}, \cite{Per Alex II}] \label{boundary is top inv}
Let $M_1$ and $M_2$ be $n$-dimensional Alexandrov spaces 
with homeomorphism $\phi : M_1 \to M_2$.
Then $\phi(\partial M_1) = \partial M_2$.
\end{theorem}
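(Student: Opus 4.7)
My plan is to prove this by induction on $n = \dim M_1 = \dim M_2$, detecting the boundary via a homeomorphism-invariant local topological property. For the base case $n=1$, both spaces are topological $1$-manifolds and the Alexandrov boundary of Definition \ref{def of boundary} coincides with the manifold boundary, which is obviously preserved by any homeomorphism.

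For the inductive step, I fix $p \in M_1$. By Perelman's conical neighborhood theorem (a consequence of the stability theorem from \cite{Per Alex II}), some open neighborhood of $p$ in $M_1$ is homeomorphic to the open cone $K(\Sigma_p)$ with $p$ corresponding to the vertex $o$, and similarly for $\phi(p)$ in $M_2$. Using this cone structure together with excision, I would compute the local $\mathbb{Z}_2$-homology and obtain
\[
H_n(M_1, M_1 \setminus \{p\}; \mathbb{Z}_2) \cong \tilde{H}_{n-1}(\Sigma_p; \mathbb{Z}_2),
\]
because the open cone is contractible while its punctured version deformation-retracts onto $\Sigma_p$. The heart of the argument then reduces to the following claim about closed $(n-1)$-dimensional Alexandrov spaces $\Sigma$: the group $H_{n-1}(\Sigma; \mathbb{Z}_2)$ equals $\mathbb{Z}_2$ when $\partial \Sigma = \emptyset$ and vanishes when $\partial \Sigma \neq \emptyset$. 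Granted the claim, the characterization ``$p \in \partial M_1$ iff $H_n(M_1, M_1 \setminus \{p\}; \mathbb{Z}_2) = 0$'' is intrinsic to the topology of $M_1$ near $p$ and hence preserved by $\phi$, giving $\phi(\partial M_1) = \partial M_2$.

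To establish the claim, I would use that the regular set of $\Sigma$ is a topological $(n-1)$-manifold of full Hausdorff measure, while the singular set has Hausdorff codimension at least one (codimension two away from $\partial \Sigma$). Together with Perelman's conical neighborhood theorem applied at each point of $\Sigma$, this yields on $\Sigma$ the structure of a $\mathbb{Z}_2$-pseudomanifold; when $\partial \Sigma = \emptyset$ the pseudomanifold is closed and carries a fundamental class generating $\mathbb{Z}_2$, while when $\partial \Sigma \neq \emptyset$ the inductive hypothesis provides collar-like neighborhoods around the boundary points, and a standard deformation retraction onto a lower-dimensional subset forces the top homology to vanish.

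The main obstacle is exactly this claim on $\mathbb{Z}_2$-fundamental classes: Alexandrov spaces are not manifolds in general, so one must argue that the stratification by regular, singular, and boundary points is well-behaved enough to support the pseudomanifold calculation. This requires combining the codimension bounds on $S(\Sigma)$ and on $S(\Sigma) \setminus \partial \Sigma$ with the conical neighborhood theorem, and running the induction so that the already established topological invariance of the boundary in dimensions below $n$ justifies treating $\partial \Sigma$ as a genuine codimension-one subset when constructing the collar. Once this bookkeeping is completed, the inductive step closes without further difficulty.
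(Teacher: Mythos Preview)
The paper does not give its own proof of this statement: Theorem \ref{boundary is top inv} is quoted from \cite[Theorem 13.3(a)]{BGP} and \cite{Per Alex II} without argument, so there is nothing in the paper to compare against. Your outline is essentially the standard argument from \cite{BGP}: detect boundary points via the vanishing of the local relative $\mathbb Z_2$-homology in the top degree, reduce via the conical neighborhood theorem to the top $\mathbb Z_2$-homology of the space of directions, and prove by induction on dimension that this group distinguishes the presence or absence of boundary.

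Your identification of the main obstacle is accurate. The delicate point is exactly the claim that a closed $(n-1)$-dimensional Alexandrov space $\Sigma$ satisfies $H_{n-1}(\Sigma;\mathbb Z_2)\cong\mathbb Z_2$ when $\partial\Sigma=\emptyset$ and $H_{n-1}(\Sigma;\mathbb Z_2)=0$ when $\partial\Sigma\neq\emptyset$. In \cite{BGP} this is handled (for the dimensions then available) via the stratified/MCS structure: the regular part is an open dense manifold, the interior singular set has codimension at least two, and near a boundary point one has a cone over a space that by induction already has boundary, which kills the top class. Your sketch of the pseudomanifold argument is correct in spirit, but to make it rigorous you should invoke Perelman's result that $\Sigma$ is an MCS-space (so in particular a compact $\mathbb Z_2$-homology manifold away from a codimension-two set when $\partial\Sigma=\emptyset$), rather than trying to build the fundamental class directly from the Hausdorff-dimension bounds on $S(\Sigma)$; Hausdorff codimension alone does not immediately give the homological codimension you need. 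With that substitution the induction closes as you describe.
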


\subsection{The Gromov-Hausdorff convergence}
For metric spaces $X$ and $Y$, and $\e > 0$, 
an $\e$-{\it approximation} $f$ from $X$ to $Y$
is a map $f: X \to Y$ such that 
\begin{itemize}
\item[(1)]
$|d(x,x') - d(f(x), f(x'))| \le \e$ for any $x, x' \in X$,
\item[(2)]
$Y = B(\mathrm{Image}\, (f), \e)$.
\end{itemize}
The {\it Gromov-Hausdorff distance} $d_{GH}(X,Y)$ 
between $X$ and $Y$ is defined by the infimum of 
those $\e >0$ that  
there exist $\e$-approximations 
from $X$ to $Y$ and from $Y$ to $X$.
We say that a sequence of metric spaces $X_i$, $i = 1, 2, \dots$ 
converges to a metric space $X$ as $i \to \infty$ if
$d_{GH}(X_i, X) \to 0$ as $i \to \infty$.

For two pointed metric spaces $(X,x)$, $(Y,y)$, 
a {\it pointed $\e$-approximation} $f$ from $(X,x)$ to $(Y,y)$ is 
a map $f : B_X(x, 1/\e) \to Y$ such that 
\begin{itemize}
\item[(1)] $f(x) = y$,
\item[(2)] $|d(x',x'') - d(f(x'), f(x''))| \le \e$ for $x', x'' \in B_X(x, 1/\e)$,
\item[(3)] $B_Y(y, 1/\e) \subset B(\mathrm{Image}\, (f), \e )$.
\end{itemize}
The {\it pointed Gromov-Hausdorff distance} $d_{GH}((X,x), (Y,y))$ between 
$(X,x)$ and $(Y,y)$ is defined by 
the infimum of those $\e > 0$ that 
there exist pointed $\e$-approximations
from $(X,x)$ to $(Y,y)$ and from $(Y,y)$ to $(X,x)$. 

For an $n$-dimensional Alexandrov space $X^n$,
the (Gromov-Hausdorff) {\it tangent cone $T_x X$ of $X$ at $x$} is defined by 
the pointed Gromov-Hausdorff limit of $(1/r_i X, x)$ 
for some sequence $(r_i)$ converging to zero.
Thus, $T_x X$ is an $n$-dimensional 
noncompact Alexandrov space with nonnegative curvature.
And, $T_x X$ is isometric to the metric cone $K(\Sigma_x)$ 
over the space of directions $\Sigma_x$. 

For a locally Lipschitz map $f : X \to M$ between Alexandrov spaces, 
and a curve $\gamma : [0,a] \to X$ 
starting at $p = \gamma(0)$ with direction $\gamma^{+}$ at $p$,
We say that $f$ has 
the {\it directional derivative $df(\gamma^{+})$ in the direction $\gamma^{+}$}
if there exists the limit 
\[
df(\gamma^+) := (f \circ \gamma)^{+} := \frac{d}{dt} f \circ \gamma (0+).
\]

A distance function 
on an Alexandrov space has the directional derivative in any direction.

For a local Lipschitz function $f$ on a metric space, 
the {\it absolute gradient} $|\nabla f|_p $ of $f$ at $p$ is defined by
\[
|\nabla f|_p := |\nabla f|(p) :=
\max \bigg\{ 
\limsup_{x \to p} \frac{f(x) - f(p)}{d(x,p)}, 
0 \bigg\}.
\]

\begin{definition} \upshape
$f$ is called {\it regular} at $p$ if $|\nabla f|_p > 0$.
Such a point $p$ is a {\it regular point} for $f$.
Otherwise, $f$ is called {\it critical} at $p$.
\end{definition}

Let $X$ be an Alexandrov space and $U$ be an open subset of $X$.
Let $f : U \to \mathbb R$ be a locally Lipschitz function. 
For $\lambda \in \mathbb R$, $f$ is said to be {\it $\lambda$-concave} if for every segment $\gamma : [0, \ell] \to U$, the function
\[
f \circ \gamma (t) - \frac{\lambda}{2} t^2
\]
is concave in $t$.
A $0$-concave function is said to be concave.
$f$ is said to be {\it semiconcave} if for every $x \in U$ there are an open neighborhood $V$ of $x$ in $U$ and a constant $\lambda \in \mathbb R$ such that $f|_V$ is $\lambda$-concave.

For a semiconcave function $f$ on a finite dimensional Alexandrov space, the gradient vector $\nabla f$ of $f$ is defined in the tangent cone:
\begin{definition}[\cite{PP QG}] \upshape 
Let $X$ be a finite dimensional Alexandrov space.
Let $f : U \to \mathbb{R}$ be a semiconcave function defined on an open neighborhood $U$ of $p$.
A vector $v \in T_p X$ is called the {\it gradient} of $f$ at $p$ if the following holds.
\begin{itemize}
\item[(i)]
For any $w \in T_p X$, we have $d_p f (w) \leq \langle v ,w \rangle$.
\item[(ii)]
$d_p f (v) = |v|^2$.
\end{itemize}
The gradient of $f$ at $p$ is shortly denoted by $\nabla_p f$.
\end{definition}
Remark that $\nabla_p f$ is uniquely determined as the following manner: 
If $|\nabla f|_p = 0$ then $\nabla_p f = o_p$, and otherwise,
\[
\nabla_p f = d_p f (\xi_{\mathrm{max}} ) \xi_{\mathrm{max}} , 
\] 
where $\xi_{\mathrm{max}} \in \Sigma_p$ is the uniquely determined unit vector such that $d_p f(\xi_{\mathrm{max}}) = \max_{\xi \in \Sigma_p} d_p f(\xi)$.

We can show that the absolute gradient $|\nabla f|(p)$ of $f$ is equals to the norm $|\nabla_p f|$ of gradient vector $\nabla_p f$ in $T_p X$.

\subsection{Ultraconvergence}
We will recall the notion of ultrafilters and ultralimits.
For more details, we refer to \cite{BH}.
A (non-principle) \textit{ultrafilter} $\omega$ on the set of natural numbers $\mathbb{N}$ is a finitely additive measure on the power set $2^{\mathbb{N}}$ of $\mathbb{N}$ that has values 0 or 1 and contains no atoms. 
For each sequence $\{y_i\}=\{y_i\}_{i \in \mathbb{N}}$ in a compact Hausdorff space $Y$, an \textit{ultralimit} $\lim_{\omega} y_i = y \in Y$ of this sequence is uniquely determined by the requirement $\omega (\{i \in \mathbb{N} \mid y_i \in U \}) = 1$ for all neighborhood $U$ of $y$.
If $f:Y \to Z$ is a continuous map between topological spaces, then $\lim_{\omega} f(y_i) = f(\lim_{\omega}y_i)$.

For a sequence $\{(X_i,x_i)\}$ of pointed metric spaces, consider the set of all sequence $\{y_i\}$ of points $y_i \in X_i$ with $\lim_{\omega} |x_iy_i| < \infty$. 
And provide the pseudometric $|\{y_i\}\{z_i\}| = \lim_{\omega} |y_iz_i|$ on the set.
The \textit{ultralimit} $(X,x) = \lim_{\omega}(X_i,x_i)$ of $\{(X_i,x_i)\}$ is defined to be the metric space arising from this pseudometric, and the equivalence class of a sequence $\{y_i\}$ is denoted by $(y_i)$.
The ultralimit of a constant sequence $\{(X,x)\}$ of a metric space $(X,x)$ is called the \textit{ultrapower} of $(X,x)$ and is denoted by $X^{\omega} = (X^{\omega},x)$.
The natural map $X \ni y \mapsto (y) = (y,y,y,\dots) \in X^{\omega} $ is an isometric embedding. 

We review a relation between the ultraconvergence and the usual convergence.
A sequence $(\e_i)$ of positive numbers is said to be a {\it scale} if $\lim_{i \to \infty} \e_i = 0$.
\begin{lemma} \label{liminf}
For a real number $A$ and a function $h : \mathbb{R}_+ \to \mathbb{R}$, the following are equivalent:
\begin{itemize}
\item[$(i)$]
$\displaystyle{ \liminf_{t \searrow 0 } h(t) \geq A}. $
\item[$(ii)$]For any scale $(o) = (t_i)$, we have
$\displaystyle{ \lim_{\omega} h(t_i) \geq A}.$
\end{itemize}
\end{lemma}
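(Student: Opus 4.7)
The plan is to prove the two implications directly, treating the ultralimit as an element of the compact Hausdorff space $[-\infty,+\infty]$ so that $\lim_{\omega} h(t_i)$ is always defined.

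For the direction (i) $\Rightarrow$ (ii), I would fix any scale $(t_i)$ and any $\e > 0$. By the definition of $\liminf$, there exists $\delta > 0$ such that $h(t) > A - \e$ for every $t \in (0,\delta)$. Since $t_i \to 0$, there is $N \in \mathbb{N}$ with $t_i \in (0,\delta)$ for all $i \ge N$, hence $h(t_i) > A - \e$ for all $i \ge N$. The set $\{i \in \mathbb{N} \mid h(t_i) > A - \e\}$ is therefore cofinite, so it has $\omega$-measure $1$, and the ultralimit satisfies $\lim_{\omega} h(t_i) \ge A - \e$. Letting $\e \searrow 0$ yields $\lim_{\omega} h(t_i) \ge A$, as required.

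For the direction (ii) $\Rightarrow$ (i), I would argue by contrapositive: assume $\liminf_{t \searrow 0} h(t) < A$ and produce a scale violating (ii). By the definition of $\liminf$, there is $\e > 0$ such that for every $\delta > 0$ the set $\{t \in (0,\delta) \mid h(t) < A - \e\}$ is non-empty. Choosing $\delta = 1/i$ and picking any $t_i$ from the corresponding set yields a scale $(t_i)$ with $h(t_i) < A - \e$ for every $i$. Since the continuous function $y \mapsto y$ is monotone and the constant sequence $A - \e$ has ultralimit $A - \e$, we get $\lim_{\omega} h(t_i) \le A - \e < A$, contradicting (ii).

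I do not anticipate any real obstacle here: the only mild point is making sure the ultralimit is well-defined for arbitrary real-valued sequences, which is handled by taking values in the two-point compactification $[-\infty,+\infty]$, whose order topology is compatible with the inequalities $\ge A$ we need. Both implications are then short consequences of the definitions of $\liminf$ and of the fact, recalled just before the lemma, that ultralimits of sequences in compact Hausdorff spaces exist and are characterised by $\omega$-measure $1$ of neighborhood preimages.
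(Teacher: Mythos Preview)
Your proof is correct and follows essentially the same approach as the paper: the $(i)\Rightarrow(ii)$ direction is argued identically via cofinite sets having full $\omega$-measure, and for $(ii)\Rightarrow(i)$ you use the contrapositive where the paper argues directly by choosing a sequence realizing the $\liminf$, but both amount to producing a single scale witnessing the inequality. Your explicit use of $[-\infty,+\infty]$ to make the ultralimit well-defined is a nice touch that the paper leaves implicit.
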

\begin{proof}
($(i) \Rightarrow (ii)$). We assume $(i)$. Then, for any $\e > 0$, there is $t_0 > 0$ such that 
\[
\inf_{0 < t \leq t_0} h(t) > A - \e.
\]
Let us take any scale $(t_i)$. Then there is $i_0$ such that, for all $i \geq i_0$, we have
\[
h(t_i) \geq \inf_{0 < t \leq t_0} h(t).
\]
Therefore, taking an ultralimit, we have 
\[
\lim_\omega h(t_i) \geq A - \e.
\]
The above inequality holds for all $\e > 0$. Then we obtain $(ii)$.

$((ii) \Rightarrow (i))$. We assume $(ii)$. We take a sequence $(t_i)$ tending to $0$ such that 
\[
\lim_{i \to \infty} h(t_i) = \liminf_{t \searrow 0} h(t).
\]
Then, taking an ultralimit, we obtain $(i)$:
\[
A \leq \lim_\omega h(t_i) = \lim_{i \to \infty} h(t_i) 
= \liminf_{t \searrow 0} h(t).
\]
\end{proof}


Let $(X_i, x_i)$ and $(Y_i, y_i)$ be sequences of pointed metric spaces and let $f_i : (X_i, x_i) \to (Y_i, y_i)$ be a sequence of maps.
Then the {\it ultralimit} $f_\omega = \lim_\omega f_i$ of $\{f_i\}$ is defined by 
\[
\lim_{\omega} X_i \ni a_\omega = (a_i) \mapsto f_\omega (a_\omega) := (f_i (a_i)) \in \lim_\omega Y_i,
\]
if it is well-defined. 
For instance, if $f_i$ is a $L_i$-Lipschitz map with $L_\omega := \lim_\omega L_i < \infty$ then 
the ultralimit $f_\omega$ is well-defined and $L_\omega$-Lipschitz.
If $f_i : (X_i, x_i) \to (Y_i, y_i)$ is a pointed $\tau_i$-approximation with $\tau_\omega := \lim_\omega \tau_i < \infty$, then the ultralimit $f_\omega$ is well-defined and a $\tau_\omega$-approximation.
Remark that if $f_i : (X_i, x_i) \to (Y_i, y_i)$ and $g_i : (Y_i, y_i) \to (Z_i, z_i)$ have the ultralimits $f_\omega := \lim_\omega f_i$ and $g_\omega := \lim_\omega g_i$, then $\lim_\omega (g_i \circ f_i) = g_\omega \circ f_\omega$.
For $a_\omega = (a_i), a_\omega' = (a_i') \in \lim_\omega X_i$, we have $|f_\omega (a_\omega), f_\omega (a_\omega') | = \lim_\omega |f_i(a_i), f_i(a_i')|$.

For a pointed metric space $(X, x)$ and a scale $(o) = (\e_i)$, 
we define the {\it blow-up} $X_x^{(o)} = (X_x^{(o)}, o_x)$ of $(X, x)$ by 
\[
(X_x^{(o)}, o_x) := \lim_{\omega} (1 / \e_i X, x).
\]
For a map $f : (X, x) \to (Y, y)$ between pointed metric spaces, 
we consider a sequence $\{f_i\}$ of maps defined by 
\[
f_i = f : (1 / \e_i X, x) \to (1 / \e_i Y, y).
\]
The {\it blow-up} $f_x^{(o)} : X_x^{(o)} \to Y_y^{(o)}$ of $f$ is defined by $f_x^{(o)} := \lim_\omega f_i$ if it is well-defined.

Let $X$ be an Alexandrov space and $x \in X$,
and let $(o) = (\e_i)$ be a scale.
We consider the exponential map at $x$
\[
\exp_x : 
(\mathrm{dom} (\exp_x), o_x) 
\ni (\gamma, t) \mapsto 
\exp_x(\gamma, t) := \gamma(t) \in (X, x).
\]
Here, $\mathrm{dom} (\exp_x) \subset T_x X$ is the domain of $\exp_x$.
Since $\exp_x$ is locally Lipschitz, the blow-up of $\exp_x$ is well-defined and written by 
\[
\exp_x^{(o)} := (\exp_x)_{o_x}^{(o)} : (T_x X, o_x) \to (X_x^{(o)}, o_x).
\]
The domain of $\exp_x^{(o)}$ is the blow-up of $(\mathrm{dom} (\exp_x), o_x)$, which is identified as $(T_x X, o_x)$.

\begin{lemma}[\cite{Lyt}, \cite{BGP}] \label{exponential map}
\begin{itemize}
Let $(o) = (\e_i)$ be an arbitrary scale.
\item[(i)] Let $X$ be a (possibly infinite dimensional) Alexandrov space.
Then $\exp_x^{(o)}$ is an isometric embedding.
\item[(ii)]
If $X$ be a finite dimensional Alexandrov space, then 
$\exp_x^{(o)} : K(\Sigma_x) \to X_x^{(o)}$ is surjective, for any $x \in X$.
\end{itemize}
\end{lemma}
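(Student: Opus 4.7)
The plan is to unwind the blow-up definition and reduce both parts to the first variation formula in Alexandrov spaces, and, for (ii), to compactness of $\Sigma_x$. For elements of $T_xX = K(\Sigma_x)$ I use the cone notation $v = s\xi$, so that $|v,w|_{T_xX}^2 = s^2 + t^2 - 2st\cos\angle(\xi,\eta)$ for $v = s\xi$ and $w = t\eta$.

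For (i), a typical element of $(T_xX)^{(o)}_{o_x}$ is represented by a sequence $(v_i) = (r_i\xi_i)$ with $\xi_i \in \Sigma_x$ and $r_i/\e_i$ $\omega$-bounded, and by definition of the blow-up
\[
\exp^{(o)}_x\bigl((v_i)\bigr) = \bigl(\exp_x(v_i)\bigr) = \bigl(\gamma_{\xi_i}(r_i)\bigr),
\]
where $\gamma_\xi$ denotes a geodesic with initial direction $\xi$. Hence for two such sequences $(v_i) = (r_i\xi_i)$ and $(w_i) = (s_i\eta_i)$ the squared distance in $X^{(o)}_x$ between their images is $\lim_\omega \e_i^{-2}|\gamma_{\xi_i}(r_i),\gamma_{\eta_i}(s_i)|^2_X$, while the squared distance on the source is $\lim_\omega \e_i^{-2}(r_i^2 + s_i^2 - 2r_is_i\cos\angle(\xi_i,\eta_i))$. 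The key step is to show these two limits coincide, which is the standard first variation estimate: by Toponogov's $\kappa$-comparison and the definition of the angle between directions one has
\[
|\gamma_\xi(r),\gamma_\eta(s)|^2 = r^2 + s^2 - 2rs\cos\angle(\xi,\eta) + o(r^2+s^2)
\]
as $r,s \to 0$, and the error term vanishes after division by $\e_i^2$ since $r_i, s_i = O(\e_i)$. No finite-dimensionality is used here.

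For (ii), I use that in the finite-dimensional case $\Sigma_x$ is compact, so $T_xX = K(\Sigma_x)$ is proper; accordingly, the natural isometric embedding $T_xX \hookrightarrow (T_xX)^{(o)}_{o_x}$ is surjective, because every $\omega$-bounded sequence in a proper pointed space represents a genuine limit point. Given any $q = (y_i)\in X^{(o)}_x$ with $r_i := |xy_i|$, I choose a minimizing geodesic from $x$ to $y_i$ with initial direction $\xi_i \in \Sigma_x$ and set $v_i := r_i\xi_i \in T_xX$. Since $|v_i|_{T_xX}/\e_i = r_i/\e_i$ is $\omega$-bounded, $(v_i)$ determines an element $v = t\xi \in T_xX$ with $t := \lim_\omega r_i/\e_i$ and $\xi := \lim_\omega \xi_i$, and by construction $\exp_x(v_i) = y_i$, whence $\exp^{(o)}_x(v) = q$.

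The main obstacle is the scaling limit underlying (i): one has to verify that the $o(r^2+s^2)$ error in the approximation by the Euclidean law of cosines is uniform enough to survive division by $\e_i^2$. This amounts to a careful application of Toponogov's comparison, exploiting that the curvature bound $\kappa$ is independent of $i$; once this is in hand, both statements follow by essentially formal manipulation of ultralimits.
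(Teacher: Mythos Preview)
Your approach to (i) is essentially the paper's, but you work harder than necessary and this creates the ``main obstacle'' you flag. The domain of $\exp_x^{(o)}$ is identified with $T_xX$ via $v \mapsto (\e_i v)$, so it suffices to check the isometry on constant-direction sequences $v_i = (s\e_i)\xi$, $w_i = (t\e_i)\eta$ with $\xi,\eta$ \emph{fixed}. For those, the first variation estimate is exactly the definition of the angle $\angle(\xi,\eta) = \lim_{r,s\to 0}\wangle(x;\gamma_\xi(r),\gamma_\eta(s))$, and no uniformity in $\xi,\eta$ is needed; this is precisely what the paper does in one line. Your concern about uniformity is real for \emph{variable} $\xi_i,\eta_i$ in the full ultralimit of $T_xX$ (and Toponogov alone does not give the lower bound uniformly there, only the upper bound via hinge comparison), but that stronger statement is not what is being claimed.

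For (ii) you take a genuinely different route. The paper invokes the BGP fact that the scaled logarithm $\log_x:(\frac{1}{\e_i}X,x)\to(\frac{1}{\e_i}T_xX,o_x)$ is a $\tau_i$-approximation with $\tau_i\to 0$, then observes $\exp_x\circ\log_x = \mathrm{id}$, so $\exp_x^{(o)}\circ\log_x^{(o)} = \mathrm{id}$ on $X_x^{(o)}$. You instead argue directly from compactness of $\Sigma_x$: given $(y_i)$, pick directions $\xi_i\in (y_i)_x'$, pass to $\omega$-limits $\xi = \lim_\omega\xi_i$ and $t = \lim_\omega |xy_i|/\e_i$, and check $(r_i\xi_i)\sim (t\e_i\xi)$ in the blow-up so that $\exp_x^{(o)}(t\xi) = (y_i)$. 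Your argument is correct and more self-contained (it does not quote the full GH-convergence of rescalings to the tangent cone), while the paper's argument makes the role of the log map explicit. Both rely on finite-dimensionality through compactness of $\Sigma_x$, just packaged differently.
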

\begin{proof}
(i) By the definition of the angle between geodesics, 
for any $(\gamma, s)$ and $(\eta, t) \in \Sigma_x' \times [0,\infty)$, we have 
\[
\frac{|\gamma(s \e_i), \eta(t \e_i)|_X}{\e_i}\, \mathop{\longrightarrow}\limits^{i \to \infty}\, 
|s \gamma, t \eta|_{K (\Sigma_x)}.
\]

(ii) By \cite{BGP}, the Gromov-Hausdorff tangent cone $T_x X$ 
and the cone $K(\Sigma_x)$ over space of directions are isometric to each other.
More precisely, the scaled logarithmic map 
\[
\log_x = \exp_x^{-1} : \left( \frac{1}{\e_i} X, x \right) \to \left( \frac{1}{\e_i} T_x X, o_x \right)
\]
is $\tau_i$-approximation for some sequence $\{ \tau_i \}$ of positive numbers converging to zero.
And $\exp_x \circ \log_x = id$. 
Then we have, for each $(x_i) \in X_x^{(o)}$, 
\[
\exp_x^{(o)} (\log_x (x_i) ) = (\exp_x \circ \log_x (x_i)) = (x_i).
\]
Therefore, $\exp_x^{(o)}$ is surjective.
\end{proof}

\subsection{Preliminaries from the geometry of Alexandrov spaces}
In this subsection, we review 
the basic facts on the geometry and topology of Alexandrov spaces.
We refer to mainly \cite{BGP}, \cite{Per Alex II}.

\subsubsection{Local structure around an almost regular point}
Burago, Gromov and Perelman proved 
that a neighborhood of an almost regular point 
is almost isometric to an open subset of Euclidean space.
 
\begin{theorem}[\cite{BGP}, \cite{OS}]
\label{around regular point}
For $n \in \mathbb{N}$, there exists a positive number 
$\de_n >0$ satisfying the following:
Let $X$ be an $n$-dimensional Alexandrov space with curvature $\geq -1$.
For $0 < \de \leq \de_n$, if $x \in X$ is an $(n, \de)$-strained point with 
a strainer $\{p_{\alpha}\}_{\alpha = \pm 1, \dots , \pm n}$ 
of length $\ell$, then the two maps
\begin{align}
\varphi &:= (d(p_{\alpha}, \cdot) )_{\alpha = 1, \dots, n} \\
\tilde{\varphi} &:= 
\biggl( 
\frac{1}{\mathcal{H}^n (B(p_{\alpha}, r))} 
\int_{B(p_{\alpha}, \e )} d(y, \cdot ) d \mathcal{H}^n (y)
\biggr)_{\alpha = 1, \dots , n}
\end{align}
on $B(x, r)$ for small $r > 0$ are both 
$(\theta_n (\de) + \theta_n (r / \ell) )$-almost isometries, 
where $\e$ is so small with $\e \ll r / \ell$.
Here, $\theta_n(\de)$ is a positive function depending on $n$ and $\de$ such that 
$\lim_{\de \to 0} \theta_n(\de) = 0$.
\end{theorem}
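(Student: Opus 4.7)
The plan is to use the $(n,\de)$-strainer $\{p_\alpha^{\pm}\}_{\alpha=1}^n$ at $x$ to set up an almost-Euclidean coordinate system in $B(x,r)$. The first step is to observe that the directions $\xi_\alpha^\pm := \uparrow_x^{p_\alpha^\pm}$ form a $\theta_n(\de)$-almost orthonormal frame inside $\Sigma_x$: the strainer inequalities $\wangle p_\alpha^\pm x p_\beta^\pm > \pi/2-\de$ and $\wangle p_\alpha^+ x p_\alpha^- > \pi-\de$, combined with the Alexandrov inequality $\angle \ge \wangle_{-1}$ and the trivial upper bound $\angle \le \pi$, force $\angle \xi_\alpha^+ \xi_\alpha^- = \pi + O(\de)$ and $\angle \xi_\alpha^+ \xi_\beta^+ = \pi/2 + O(\de)$ for $\alpha \ne \beta$.

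Next, for any $y \in B(x,r)$, I would compare $d(p_\alpha^+, y)$ with the Euclidean prediction $d(p_\alpha^+, x) - |xy|\cos\angle p_\alpha^+ x y$. Since the triangle $(p_\alpha^+, x, y)$ has a short side $|xy| \le r$ opposite two long sides of length at least $\ell$, the deviation between the $\wangle_{-1}$ comparison angle and its Euclidean counterpart at $x$ is $O(r/\ell)$, while the near-antipodality of $\xi_\alpha^\pm$ together with the quadruple inequality turns the one-sided Alexandrov bound into a two-sided estimate of quality $\theta_n(\de)$. Combining these inputs yields
\begin{equation*}
d(p_\alpha^+, y) - d(p_\alpha^+, x) = -|xy|\cos\angle\bigl(\uparrow_x^y, \xi_\alpha^+\bigr) + O\bigl(r(\theta_n(\de) + \theta_n(r/\ell))\bigr).
\end{equation*}

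Now for two points $y, z \in B(x,r)$, I would apply the previous estimate with $y$ in the role of $x$, noting that $\{p_\alpha^\pm\}$ is still a strainer at $y$ with quality $\theta_n(\de)+\theta_n(r/\ell)$. Squaring the resulting expression for $d(p_\alpha^+, z) - d(p_\alpha^+, y)$, summing over $\alpha = 1, \ldots, n$, and invoking the Euclidean identity $\sum_\alpha \cos^2 \theta_\alpha = 1$ for the unit vector $\uparrow_y^z$ expressed in the almost-orthonormal frame $\{\uparrow_y^{p_\alpha^+}\}$ (with error $\theta_n(\de)$), gives
\begin{equation*}
\sum_{\alpha = 1}^n \bigl(d(p_\alpha^+, z) - d(p_\alpha^+, y)\bigr)^2 = |yz|^2 \bigl(1 + \theta_n(\de) + \theta_n(r/\ell)\bigr),
\end{equation*}
which is the asserted almost-isometry property of $\varphi$. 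For $\tilde{\varphi}$, since $\e \ll r$, every $q \in B(p_\alpha, \e)$ still serves as one end of a strainer of comparable length and quality at every $y \in B(x, r)$, so the pointwise estimate above is uniform in $q$ and survives averaging against $d\mathcal{H}^n(q)/\mathcal{H}^n(B(p_\alpha,\e))$; the smoothed version also inherits the bonus of being semiconcave, although this is not needed for the bare almost-isometry conclusion.

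The main obstacle is the bookkeeping of the two independent error sources, $\theta_n(\de)$ (from the orthogonality defect of the strainer) and $\theta_n(r/\ell)$ (from the deviation of the $\kappa=-1$ law of cosines from the Euclidean one on triangles of diameter $r$ anchored at distance $\ge \ell$). The quantifier order must be rigidly respected: one first fixes $\de \le \de_n$, then chooses $r$ small against $\de\, \ell$, and finally picks the averaging scale $\e \ll r$, so that the two errors combine additively and neither one feeds back on itself through the strainer-transport step from $x$ to $y$.
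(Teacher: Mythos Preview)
The paper does not give its own proof of this theorem; it is quoted as a known result from \cite{BGP} and \cite{OS}. Your sketch is precisely the classical argument of \cite[\S 5, \S 9]{BGP}: almost-orthonormality of the strainer directions in $\Sigma_x$, first-variation control with the two independent error sources $\theta_n(\de)$ and $\theta_n(r/\ell)$, stability of the strainer under moving the base point inside $B(x,r)$, and the approximate Pythagorean identity in the space of directions. The one step you pass over quickly is why $\sum_{\alpha} \cos^2\angle(\uparrow_y^z,\uparrow_y^{p_\alpha^+}) = 1 + \theta_n(\de)$ holds; this is not automatic since $\Sigma_y$ is not a priori a round sphere, but it does follow because an $(n,\de)$-strainer forces $\Sigma_y$ to be $\theta_n(\de)$-Gromov--Hausdorff close to $S^{n-1}$ (this is essentially \cite[Lemma 9.3, Corollary 7.10]{BGP}). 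With that point made explicit, your argument matches the proof in the cited references.
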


\begin{lemma}[{\cite[Lemma 1.8]{Y convergence}}]
\label{lemma 1.8}
Let $M$ be an $n$-dimensional Alexandrov space 
and $\de$ be taken in Theorem \ref{around regular point}.
For any $(n, \de)$-strained point $p \in M$, 
there exists $r > 0$ satisfying the following:
For every $q \in B(p, r/2)$ and $\xi \in \Sigma_q$ 
there exists $x, y \in B(p, r)$ such that 
\begin{align}
|x q| ,|y q| \geq r/4, \\
|x'_q, \xi| \leq \theta(\de, r), \\
\wangle x q y \geq \pi - \theta(\de, r).
\end{align} 
\end{lemma}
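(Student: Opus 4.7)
The strategy is to reduce to Euclidean geometry via the almost-isometric chart provided by Theorem \ref{around regular point}. Fix an $(n,\de)$-strainer $\{p_\alpha^\pm\}$ at $p$ of length $\ell$, and choose $r > 0$ with $r/\ell \ll 1$; the theorem then produces a chart $\varphi : B(p,r) \to U \subset \mathbb{R}^n$ with distortion at most $\theta_n(\de) + \theta_n(r/\ell)$ on the scale of $r$. The Euclidean model for the claim is immediate: given $q' \in B(\varphi(p), r/2)$ and a unit vector $v \in \mathbb{R}^n$, the points $x' := q' + (3r/8)v$ and $y' := q' - (3r/8)v$ lie in $B(\varphi(p), r)$, are at distance $3r/8$ from $q'$, and make a straight angle at $q'$. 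The plan is to pull this back to $M$ while tracking the error.

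Given $q \in B(p, r/2)$ and $\xi \in \Sigma_q$, observe that $\{p_\alpha^\pm\}$ remains an approximate $(n,\de')$-strainer at $q$ with $\de' = \de + \theta(\de, r/\ell)$, so by the construction underlying Theorem \ref{around regular point} the space $\Sigma_q$ is $\theta(\de, r/\ell)$-close to $S^{n-1}$ in the Gromov--Hausdorff topology and there is an almost-isometric chart $\varphi_q$ on a small ball around $q$ compatible with $\varphi$. Choose a unit vector $v \in \mathbb{R}^n$ corresponding to $\xi$ under this identification (up to error $\theta$), and set $x := \varphi^{-1}(\varphi(q) + (3r/8)v)$ and $y := \varphi^{-1}(\varphi(q) - (3r/8)v)$. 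Both lie in $B(p, r)$ by the choice of $r$. Since $\varphi$ is an almost isometry at scale $r$, the distances $|xq|$ and $|yq|$ lie within $\theta \cdot r$ of $3r/8$, while $|xy|$ lies within $\theta \cdot r$ of $3r/4$; this gives $|xq|, |yq| \geq r/4$, and substituting into the law of cosines yields $\wangle xqy \geq \pi - \theta(\de, r)$.

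The remaining estimate, $|x'_q, \xi|_{\Sigma_q} \leq \theta(\de, r)$, is the main obstacle, because the angle at $q$ is the supremum of comparison angles at infinitesimal scale and so cannot be read off directly from $\varphi$, whose additive error $\theta \cdot r$ becomes relatively large once we zoom to scales much smaller than $r$. The resolution is to invoke the local chart $\varphi_q$ at $q$, applied on a ball of radius $r'' \ll r$: since $\varphi$ and $\varphi_q$ are built from the same distance functions $d(p_\alpha^\pm, \cdot)$, the Euclidean direction of $\varphi(x) - \varphi(q)$ agrees, up to error $\theta$, with the initial Euclidean direction of the $\varphi_q$-image of any geodesic from $q$ to $x$, so this identifies both $\uparrow_q^x$ and $\xi$ with the vector $v$ in $S^{n-1}$ up to error $\theta(\de, r)$. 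Once this compatibility between the charts at $p$ and at $q$ is in hand, all three estimates follow from routine triangle comparisons in an almost-Euclidean ball.
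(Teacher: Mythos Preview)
The paper does not supply its own proof of this lemma; it is quoted verbatim from \cite[Lemma 1.8]{Y convergence} and used as a black box. So there is nothing in the present paper to compare against.

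Your argument is the standard one and is essentially correct. The approach via the almost-isometric chart $\varphi$ and the Euclidean model points $\varphi(q) \pm (3r/8)v$ is exactly how this is done in \cite{BGP} and \cite{Y convergence}. One remark on the step you flag as the main obstacle: the cleanest way to pin down $|\uparrow_q^x, \xi| \le \theta$ is not to introduce a second chart $\varphi_q$, but to observe directly that for each strainer point $p_\alpha$ one has
\[
d(p_\alpha, x) - d(p_\alpha, q) \;\approx\; -|qx|\cos\angle((p_\alpha)'_q, \uparrow_q^x)
\]
via the first variation formula together with the comparison $\angle \approx \wangle$ at a strained point (this is precisely the content of Lemma~\ref{lemma 1.9}, which in \cite{Y convergence} is proved independently of Lemma~1.8). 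Since the left side equals $(3r/8)v_\alpha$ by construction and $v_\alpha = -\cos\angle((p_\alpha)'_q, \xi)$ by the choice of $v$, you get $\cos\angle((p_\alpha)'_q, \uparrow_q^x) \approx \cos\angle((p_\alpha)'_q, \xi)$ for every $\alpha$; because the $(p_\alpha)'_q$ form an almost-orthonormal net in $\Sigma_q \approx S^{n-1}$, this forces $\uparrow_q^x$ and $\xi$ to be $\theta$-close. Your ``compatibility of charts'' formulation encodes the same computation, just packaged differently.
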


\begin{lemma}[{\cite[Lemma 1.9]{Y convergence}}] \label{lemma 1.9}
Let $M$, $p$, $r$ and $\de$ be taken in Lemma \ref{lemma 1.8}.
For every $q \in M$ with $r / 10 \le |pq| \le r$ and for every $x \in M$ with $|px| \ll r$, we have 
\[
|\angle x p q - \wangle x p q | < \theta(\de, r, |p x| / r).
\]
\end{lemma}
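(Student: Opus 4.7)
The plan is to establish the two one-sided bounds separately. The lower bound $\wangle xpq \ge \angle xpq$ is automatic from the Alexandrov curvature condition, since the comparison angle $\wangle xpq$ is monotonically non-increasing as $x$ moves toward $p$ along the segment $px$, with limit equal to $\angle xpq$. Hence only the upper bound $\wangle xpq - \angle xpq \le \theta(\de, r, |px|/r)$ requires argument.

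The first step would be to use the $(n, \de)$-strainer at $p$, together with Theorem \ref{around regular point}, to transport the problem into a Euclidean chart. The distance chart
\[
\varphi = (d(p_{\alpha}^+, \cdot))_{\alpha=1,\dots,n} : B(p, r) \to \mathbb{R}^n
\]
is a $\theta(\de, r)$-almost isometry (for $r$ small compared to the strainer length), so that $\Sigma_p$ is Gromov--Hausdorff close to $S^{n-1}$ and the geometry of $B(p,r)$ is within $\theta(\de, r)$ of Euclidean geometry. In particular, angles at $p$ between segments are well approximated by Euclidean angles in the image chart.

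The second step is to establish a quantitative first variation formula at the strained point $p$: letting $\gamma:[0, |px|] \to M$ denote the segment from $p$ to $x$, I expect to obtain
\[
\bigl| |xq| - ( |pq| - |px| \cos \angle xpq ) \bigr| \le \theta(\de, r) \cdot |px|.
\]
This refines the usual first variation formula (which only gives an $o(|px|)$ remainder, without rate) by exploiting the almost-Euclidean structure: in $\mathbb R^n$ the relation holds as equality, and transporting via $\varphi$ introduces only the multiplicative distortion $\theta(\de, r)$. One can also invoke Lemma \ref{lemma 1.8} at $p$ to realize the direction $\uparrow_p^q$ by an almost-antipodal strainer pair centered at $p$, which converts the estimate into a purely strainer-based one.

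Finally, feeding this estimate into the hyperbolic law of cosines for $\wangle_{-1} xpq$ (whose hyperbolic correction is of order $|px|^2 + |pq|^2$ and hence absorbed at the relevant small scales), a direct computation yields
\[
\cos \wangle xpq - \cos \angle xpq = O(\theta(\de, r)) + O(|px|/|pq|),
\]
and since $|pq| \ge r/10$ this converts to $|\wangle xpq - \angle xpq| \le \theta(\de, r, |px|/r)$, as required. The main obstacle will be the second step: the almost-isometry $\varphi$ only controls distances up to additive error $\theta(\de, r)\cdot r$ at the macroscopic scale $r$, which would naively swamp the small quantity $|px|$. The remedy is to exploit that the strainer conditions are angular, not merely metric, so the first variation error can be bounded linearly in $|px|$ with coefficient $\theta(\de, r)$, rather than by a fixed $\theta(\de, r)\cdot r$ that gives no useful bound on the comparison angle.
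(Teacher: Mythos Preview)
The paper does not give its own proof of this lemma; it is quoted verbatim from \cite[Lemma 1.9]{Y convergence} and used as a black box (for instance in the proof of Lemma \ref{lemma 5 Otsu}). So there is no paper-side argument to compare against, only the original reference.

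Your opening paragraph has the Alexandrov comparison backwards. In a space with curvature bounded \emph{below}, angle monotonicity says that $\wangle x'pq'$ is non-\emph{decreasing} as $x'\to p$ along $px$ and $q'\to p$ along $pq$; the actual angle $\angle xpq$ is therefore the supremum of the comparison angles, so the automatic inequality is $\angle xpq \ge \wangle xpq$, not $\wangle xpq \ge \angle xpq$. (What you wrote is the CAT($\kappa$) direction.) Consequently the genuinely nontrivial bound is $\angle xpq - \wangle xpq \le \theta(\de, r, |px|/r)$, the opposite of what you identified.

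Fortunately this does not wreck the rest of your plan, because your second step is stated as a \emph{two-sided} estimate
\[
\bigl| \, |xq| - ( |pq| - |px| \cos \angle xpq ) \, \bigr| \le \theta(\de, r)\,|px|,
\]
and feeding this into the law of cosines yields two-sided control of $\cos\wangle xpq - \cos\angle xpq$, hence both inequalities at once. Your diagnosis of the obstacle (the additive almost-isometry error of $\varphi$ being at scale $r$, not $|px|$) and the cure (exploit the angular form of the strainer conditions, via Lemma \ref{lemma 1.8}, to get error linear in $|px|$) are exactly right and match the argument in \cite{Y convergence}. Just correct the first paragraph so the reader is not misled about which half is free.
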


\subsubsection{Splitting Theorem}
Splitting theorem is an important tool 
to study the structure of nonnegatively curved spaces.

\begin{theorem}[Splitting Theorem \cite{Milka}] \label{splitting theorem}
Let $X$ be an Alexandrov space of curvature $\geq 0$.
Suppose that there exists a line $\gamma : \mathbb{R} \to X$.
Then there exists an Alexandrov space $Y$ of curvature $\geq 0$ such that 
$X$ is isometric to the product $Y \times \mathbb{R}$.
\end{theorem}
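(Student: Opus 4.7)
The plan is to adapt the classical Cheeger--Gromoll splitting argument to the Alexandrov setting using Busemann functions. Fix the line $\gamma : \mathbb{R} \to X$ and define the two Busemann functions
\[
b^{+}(x) := \lim_{t \to \infty} (t - d(x, \gamma(t))), \qquad b^{-}(x) := \lim_{t \to \infty} (t - d(x, \gamma(-t))).
\]
Both limits exist because $t \mapsto t - d(x,\gamma(\pm t))$ is monotone nondecreasing and bounded above by $d(x,\gamma(0))$ via the triangle inequality, and both functions are $1$-Lipschitz.

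First I would show that $b^{\pm}$ are concave on $X$. For any geodesic $\sigma:[0,\ell] \to X$ and any $t$, Toponogov hinge comparison in curvature $\geq 0$ controls $d(\sigma(s), \gamma(t))$ from above by the corresponding Euclidean quantity obtained from the triangle with vertices $\sigma(0), \sigma(\ell), \gamma(t)$. Letting $t \to \infty$, the Euclidean comparison term becomes linear in $s$, and the resulting inequality for $b^{+}\circ \sigma$ is exactly concavity. This is where the curvature assumption enters decisively.

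Next, the triangle inequality $d(x,\gamma(t)) + d(x,\gamma(-t)) \geq d(\gamma(t),\gamma(-t)) = 2t$ yields $b^{+} + b^{-} \leq 0$ everywhere, with equality along $\gamma$. The concavity of $b^{+} + b^{-}$ together with attainment of the maximum $0$ on $\gamma$ will force $b^{+} + b^{-} \equiv 0$ by a maximum principle for semiconcave functions on Alexandrov spaces. Concretely, one checks that along $\gamma$ one has $\nabla b^{+}|_{\gamma(s)} = \gamma'(s)$ and $\nabla b^{-}|_{\gamma(s)} = -\gamma'(s)$, so $\nabla (b^{+} + b^{-}) = 0$ on $\gamma$; combining this with concavity and the Perelman--Petrunin theory of gradient curves of semiconcave functions, the set $\{b^{+} + b^{-} = 0\}$ is closed, forward invariant under the gradient flow of $b^{+}+b^{-}$, and contains a neighborhood of every point of $\gamma$, and from there one propagates equality to all of $X$.

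Consequently $b^{+} = -b^{-}$ is simultaneously concave and convex, hence affine along every geodesic, with $|\nabla b^{+}| \equiv 1$. The level set $Y := (b^{+})^{-1}(0)$ is totally convex, hence inherits the structure of a closed Alexandrov space of curvature $\geq 0$ and dimension $\dim X - 1$. The gradient flow $\Phi_{s}$ of $b^{+}$ is then a one-parameter family of isometries of $X$ translating levels of $b^{+}$, and the map $Y \times \mathbb{R} \to X$, $(y,s) \mapsto \Phi_{s}(y)$, is an isometry because for orthogonal variations of an affine $1$-Lipschitz function in a nonnegatively curved space the Pythagorean identity holds exactly. The main obstacle compared to the smooth Riemannian proof is the maximum principle step: showing that a concave function touching its supremum on a geodesic is constant on the whole space requires the theory of gradient flows for semiconcave functions and a careful propagation argument, whereas in the Riemannian case one appeals directly to Hopf's strong maximum principle for elliptic operators. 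Once the affinity of $b^{+}$ is established, the construction of the isometric splitting is essentially mechanical.
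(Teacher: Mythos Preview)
The paper does not give its own proof of this result; it is stated in the preliminaries with a citation to \cite{Milka} and then used as a black box. So there is nothing in the paper to compare your proposal against.

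Your outline is the standard Busemann-function approach and is structurally sound. One correction: in curvature $\geq 0$ the Toponogov \emph{triangle} comparison bounds $d(\sigma(s),\gamma(t))$ from \emph{below} by the corresponding Euclidean value (triangles are fat), not from above; your sentence conflates the hinge and triangle versions. Consequently the standard Busemann function $\lim_{t\to\infty}(d(x,\gamma(t))-t)$ is concave, and with your sign convention $b^{+}$ is \emph{convex}. The argument then reads: $b^{+}+b^{-}$ is convex, $\leq 0$, and attains its maximum $0$ on $\gamma$, after which everything proceeds as you wrote. You correctly identify the propagation step ($b^{+}+b^{-}\equiv 0$ on all of $X$) as the crux in the non-smooth setting; besides the gradient-flow route you sketch, a common alternative is to build, through every $x$, a line $\ell_x$ as a sublimit of geodesics $[x,\gamma(\pm t)]$ (the angle at $x$ tends to $\pi$ by comparison) and show directly that the $1$-Lipschitz concave functions $b^{\pm}$ are affine with unit slope along $\ell_x$.
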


\begin{theorem} \label{maximal diameter}
If an Alexandrov space $\Sigma$ of curvature $\geq 1$ 
has the maximal diameter $\pi$, 
then $\Sigma$ is isometric to the metric suspension $\Sigma (\Lambda)$ 
over some Alexandrov space $\Lambda$ of curvature $\geq 1$.
\end{theorem}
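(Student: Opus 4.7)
The plan is to deduce the conclusion from the Splitting Theorem \ref{splitting theorem} applied to the Euclidean cone $K(\Sigma)$ over $\Sigma$. Since $\Sigma$ has curvature $\geq 1$, $K(\Sigma)$ is an Alexandrov space of curvature $\geq 0$. Thus if I can produce a line in $K(\Sigma)$, splitting will do most of the work.

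First I would pick two points $p_+,p_- \in \Sigma$ realizing the maximal distance $\pi$. In $K(\Sigma)$ consider the two rays $\gamma_{\pm}: [0,\infty) \to K(\Sigma)$ defined by $\gamma_\pm(t) = (p_\pm, t)$, together with the vertex $o$. By the definition of the cone metric,
\begin{equation*}
 |\gamma_+(s), \gamma_-(t)|_{K(\Sigma)}^2 = s^2 + t^2 - 2 s t \cos \min\{|p_+ p_-|_\Sigma, \pi\} = (s+t)^2,
\end{equation*}
so the concatenation $\gamma := \gamma_- \cup \{o\} \cup \gamma_+$ is an isometric embedding $\mathbb{R} \to K(\Sigma)$, i.e.\ a line. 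By the Splitting Theorem \ref{splitting theorem}, there is an Alexandrov space $Y$ of curvature $\geq 0$ with $K(\Sigma) \equiv \mathbb{R} \times Y$, and I can arrange that the line $\gamma$ corresponds to $\mathbb{R}\times\{y_0\}$ for some $y_0 \in Y$; this $y_0$ must equal the image of the cone vertex $o$.

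Next I want to recognize $Y$ itself as a cone. Because $K(\Sigma)$ carries the natural rescaling homotheties $\mu_\lambda(x,t) = (x,\lambda t)$ fixing $o$, and because $o$ corresponds to $(0, y_0)$, the homotheties preserve the $\mathbb{R}$-factor direction (they fix the line $\gamma$ setwise up to reparametrization) and act on $Y$ as dilations fixing $y_0$. This shows $Y$ is a metric cone over its unit sphere around $y_0$: there exists an Alexandrov space $\Lambda$ of curvature $\geq 1$ with $Y \equiv K(\Lambda)$. Hence $K(\Sigma) \equiv \mathbb{R} \times K(\Lambda)$, which is exactly the Euclidean cone over the spherical suspension $\Sigma(\Lambda)$. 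Comparing the unit spheres about the vertices yields $\Sigma \equiv \Sigma(\Lambda)$.

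The main obstacle I expect is the compatibility step between the cone structure and the splitting, namely verifying that the splitting factor $Y$ inherits a cone structure with apex at the image of $o$ and that the splitting line is really the cone axis. One has to check that the rescaling flow on $K(\Sigma)$ respects the product decomposition—this follows from the uniqueness of the splitting direction (any line through $o$ has its tangent in $T_o K(\Sigma) = K(\Sigma)$ itself, and Toponogov comparison applied to the triangle with vertices $o, \gamma_+(s), \mu_\lambda \gamma_+(s)$ forces the image to stay on the line). Once this is clarified, reading off $\Lambda$ as the unit distance sphere from $y_0$ in $Y$ and checking that its curvature bound lifts from the $0$-curvature bound on $K(\Lambda)$ is immediate from the cone-curvature correspondence used throughout \cite{BGP}.
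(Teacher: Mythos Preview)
The paper does not actually prove this theorem; it is quoted in the Preliminaries (alongside the Splitting Theorem and the maximal radius corollary) as a known result without argument. Your derivation via the cone $K(\Sigma)$ and the Splitting Theorem is exactly the classical route to this statement and is correct in outline.

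The one place that deserves tightening is the compatibility step you flag as the ``main obstacle''. Your justification there is muddled: the triangle $o,\gamma_+(s),\mu_\lambda\gamma_+(s)$ is degenerate since $\mu_\lambda\gamma_+(s)=\gamma_+(\lambda s)$ already lies on $\gamma$, so that comparison says nothing. The clean way is to observe that under any splitting $K(\Sigma)\equiv\mathbb{R}\times Y$ coming from the line $\gamma$, the $\mathbb{R}$-coordinate is $-b_{\gamma_+}$, and a direct computation in the cone metric gives
\[
b_{\gamma_+}\bigl((x,r)\bigr)=\lim_{s\to\infty}\bigl(\sqrt{r^2+s^2-2rs\cos|xp_+|}-s\bigr)=-r\cos|xp_+|.
\]
Hence $\mu_\lambda$ multiplies the $\mathbb{R}$-coordinate by $\lambda$, so in the product it has the form $(t,y)\mapsto(\lambda t,f_\lambda(y))$; since $\mu_\lambda$ is a global $\lambda$-homothety, $f_\lambda$ is a $\lambda$-homothety of $Y$ fixing $y_0$, which forces $Y$ to be a metric cone $K(\Lambda)$ with apex $y_0$. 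The identity $\mathbb{R}\times K(\Lambda)\equiv K(\Sigma(\Lambda))$ and passage to unit spheres then finish as you wrote.
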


\begin{corollary} \label{maximal radius}
If an $n$-dimensional Alexandrov space $\Sigma$ of curvature $\geq 1$ has the maximal radius $\pi$, then $\Sigma$ is isometric to a unit $n$-sphere of constant curvature $=1$.
\end{corollary}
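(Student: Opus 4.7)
My plan is to prove Corollary \ref{maximal radius} by induction on the dimension $n$, reducing each step to Theorem \ref{maximal diameter}.

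\textbf{Base case $n=1$.} An Alexandrov space of curvature $\geq 1$ has diameter $\leq \pi$ by Toponogov, so its radius is at most $\pi$. A connected $1$-dimensional Alexandrov space is either an interval or a circle. On an interval of length $\ell\leq\pi$ every interior point has $\sup_q d(p,q)<\ell$, so the radius is strictly less than $\pi$ in all cases. On a circle of length $\ell$, the radius equals $\ell/2$, which equals $\pi$ iff $\ell=2\pi$, i.e. iff $\Sigma\equiv S^1$.

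\textbf{Inductive step.} Assume the statement for dimension $n-1$ and let $\Sigma$ be an $n$-dimensional Alexandrov space with curvature $\geq 1$ and $\rad\Sigma = \pi$. Since curvature $\geq 1$ forces $\diam\Sigma\leq \pi$ and $\Sigma$ is compact, for every $p\in\Sigma$ the supremum $\sup_{q}d(p,q)\geq\pi$ is attained, yielding an antipode $p^{*}\in\Sigma$ with $d(p,p^{*})=\pi$. In particular, $\diam\Sigma=\pi$. By Theorem \ref{maximal diameter}, $\Sigma$ is isometric to a metric suspension $\Sigma(\Lambda)$ for some $(n-1)$-dimensional Alexandrov space $\Lambda$ with curvature $\geq 1$, with $p,p^{*}$ being the two suspension poles.

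\textbf{Reducing the radius condition to $\Lambda$.} The key computation is this: using the suspension metric on $\Sigma(\Lambda)$, I would show that an arbitrary equatorial point $(\pi/2,x)$ with $x\in\Lambda$ admits an antipode at distance $\pi$ only when there exists $y\in\Lambda$ with $d_\Lambda(x,y)=\pi$. Indeed, writing a point of $\Sigma(\Lambda)$ as $(s,y)$ with $s\in[0,\pi]$, the standard formula
\[
\cos d_{\Sigma(\Lambda)}\bigl((\pi/2,x),(s,y)\bigr)=\sin s\,\cos\bigl(\min\{d_\Lambda(x,y),\pi\}\bigr)
\]
equals $-1$ only when $s=\pi/2$ and $d_\Lambda(x,y)=\pi$. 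Hence the hypothesis $\rad\Sigma=\pi$ applied to every equatorial point $(\pi/2,x)$ gives $\rad\Lambda=\pi$. By the inductive hypothesis, $\Lambda$ is isometric to the unit $(n-1)$-sphere, so $\Sigma\equiv\Sigma(S^{n-1})\equiv S^{n}$.

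\textbf{Main obstacle.} The step that I regard as most delicate is the reduction $\rad\Lambda=\pi$: one must verify the suspension distance formula carefully, ruling out that the antipode of an equatorial point lives on a meridian through a pole (which would not produce a pair of antipodal points in $\Lambda$). Once this is done, the rest of the proof is a clean induction driven entirely by Theorem \ref{maximal diameter} and the base-case classification of $1$-dimensional Alexandrov spaces.
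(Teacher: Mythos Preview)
Your argument is correct and is precisely the route the paper intends: the corollary is stated immediately after Theorem \ref{maximal diameter} without proof, and the implied derivation is exactly your induction---use $\rad\Sigma=\pi$ to force $\diam\Sigma=\pi$, apply the maximal diameter theorem to get $\Sigma\equiv\Sigma(\Lambda)$, then read off $\rad\Lambda=\pi$ from the suspension distance formula at equatorial points and iterate. One cosmetic remark: the clause ``with $p,p^*$ being the two suspension poles'' is neither needed nor immediately justified by Theorem \ref{maximal diameter} as stated (in $\Sigma(\Lambda)$ there may be many antipodal pairs once $\diam\Lambda=\pi$); you never use it, so you can simply drop it.
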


\begin{remark}[\cite{Mitsuishi}] \upshape
Splitting theorem and Corollary \ref{maximal radius} hold even for infinite dimensional Alexandrov spaces.
\end{remark}

\subsubsection{Convergence and Collapsing theory}

Yamaguchi proved the following two 
Theorems \ref{Lipschitz submersion theorem} and \ref{fibration theorem},
for Alexandrov spaces converging to an almost regular Alexandrov spaces,
which are counterparts of Fibration theorem \cite{Yam collapsing and pinching} in the Riemannian geometry.

\begin{definition}\upshape
A surjective map $f: X \to Y$ between Alexandrov spaces 
is called an {\it $\e$-almost Lipschitz submersion} 
if $f$ is an $\e$-approximation, and 
for any $x, y \in X$ setting $\theta := \angle_x (y_x', \Sigma_x \Pi_x )$, 
then we have 
\begin{equation*}
\biggl| \frac{|f(x) f(y)|}{|x y|} - \sin \theta \biggr| < \e
\end{equation*}
where $\Pi_x := f^{-1} (f(x))$. 

A surjective map $f: X \to Y$ is called an {\it $\e$-almost isometry} if 
for any $x$, $y \in X$ we have 
\[
\biggl| \frac{|f(x)f(y)|}{|xy|} - 1 \biggr| < \e.
\]
\end{definition}

\begin{theorem}[Lipschitz submersion theorem, \cite{Y convergence}]
\label{Lipschitz submersion theorem} 
For $n \in \mathbb{N}$ and $\eta > 0$, 
there exist $\de_n$, $\e_n(\eta) > 0$ satisfying the following.
Let $M^n$, $X^k$ be Alexandrov spaces with curvature $\geq -1$, 
$\dim M^n = n$, and $\dim X^k = k$. 
Suppose that $\de$-strain radius of $X > \eta$. 
Then if the Gromov-Hausdorff distance between $M$ and $X$ 
is less than $\e \leq \e_n(\eta)$, 
then there is a $\theta(\de, \e)$-almost Lipschitz submersion $f: M \to X$. 
Here, $\theta(\de, \e)$ denotes a positive constant depending on $n, \eta$
and $\de, \e$ and satisfying 
$\lim_{\de, \e \to 0} \theta(\de, \e) = 0$.
\end{theorem}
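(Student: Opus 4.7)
The plan is to build the map $f: M \to X$ by patching together local near-isometric coordinate maps obtained from strainer distance functions, and then to verify the almost Lipschitz submersion inequality from the almost-Euclidean behavior of distance functions at strained points.

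First I would apply Theorem \ref{around regular point} on $X$: by hypothesis every point of $X$ admits a $(k,\delta)$-strainer of length $> \eta$, so around each $x_0 \in X$ there is a ball $B(x_0,r_0)$, with $r_0$ depending only on $\eta$, on which the averaged distance map $\tilde\varphi_{x_0}=(\tilde d_{p_\alpha})_{\alpha=1}^{k}$ is a $\theta_k(\delta,r_0/\eta)$-almost isometry onto its image in $\mathbb{R}^k$. Using compactness, fix a finite cover $\{B(x_j,r_j/3)\}_j$ of $X$ and for each $j$ record the strainer $\{p_\alpha^{\pm,j}\}$. Via a Gromov--Hausdorff $\varepsilon$-approximation $h: X \to M$, the points $\hat p_\alpha^{\pm,j}:=h(p_\alpha^{\pm,j})$ form $(k,\delta+\theta(\varepsilon/\eta))$-strainers at every point of the region $\hat U_j\subset M$ corresponding to $B(x_j,r_j/2)$, so the analogous averaged map $\tilde\varphi_j^{M}$ on $\hat U_j$ is a $\theta(\delta,\varepsilon)$-almost isometry onto essentially the same open subset of $\mathbb{R}^k$. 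The composition $f_j:=\tilde\varphi_{x_j}^{-1}\circ \tilde\varphi_j^{M}: \hat U_j \to B(x_j,r_j)$ is then a local $\theta(\delta,\varepsilon)$-approximation.

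Next I would glue the $f_j$ into a global $f: M \to X$. The point is that on overlaps $\hat U_j \cap \hat U_{j'}$ both maps land in a common region covered by the almost-Euclidean chart $\tilde\varphi_{x_j}$, so one can average $\{\tilde\varphi_{x_j}\circ f_j\}_j$ in $\mathbb{R}^k$ against a partition of unity subordinate to the cover and pull back by $\tilde\varphi_{x_j}^{-1}$ (iterated center-of-mass). Because each $f_j$ is uniformly $\theta(\delta,\varepsilon)$-close to the others and the charts are almost isometries, the resulting $f$ is a globally defined $\theta(\delta,\varepsilon)$-approximation $M\to X$.

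Finally I would verify the submersion property. Fix $x\in M$, a nearby $y\in M$, write $\Pi_x:=f^{-1}(f(x))$ and $\theta:=\angle_x(y_x',\Sigma_x\Pi_x)$. In a strainer chart centered at $f(x)$ the global $f$ differs from $\tilde\varphi_j^{M}$ only by a $\theta(\delta,\varepsilon)|xy|$ error, and $\tilde\varphi_{x_j}$ is a $\theta(\delta,\varepsilon)$-almost isometry, so $|f(x)f(y)|_X$ equals the Euclidean norm of $\tilde\varphi_j^M(y)-\tilde\varphi_j^M(x)$ up to $\theta(\delta,\varepsilon)|xy|$. The strainer coordinates $\dist_{\hat p_\alpha^{\pm,j}}$ play the role of orthonormal linear functions on $M$ at scale $|xy|$, and by Lemma \ref{lemma 1.9} the rate of change of each along a segment from $x$ to $y$ is $\cos\angle \hat p_\alpha^{\pm,j} x y$ up to $\theta(\delta,\varepsilon)$. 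Combining with the near-orthonormality of the strainer directions and the definition of $\Sigma_x\Pi_x$ (the directions along which all strainer distances are infinitesimally preserved), the Euclidean length of the displacement in $\mathbb{R}^k$ equals $\sin\theta\cdot |xy|$ up to an error of the same order, giving $\bigl||f(x)f(y)|/|xy|-\sin\theta\bigr|<\theta(\delta,\varepsilon)$. The main obstacle is this last step: identifying the fiber tangent cone $\Sigma_x\Pi_x$ with the ``kernel'' of the strainer-coordinate differential on a non-manifold $M$, which is what forces the reliance on both the $(k,\delta)$-strainer angle estimates and Lemma \ref{lemma 1.9} to relate angle and comparison angle.
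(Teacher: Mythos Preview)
The paper does not contain a proof of this theorem: it is quoted from \cite{Y convergence} (Yamaguchi's convergence theorem) and used as a black-box tool throughout. There is therefore no proof in the present paper to compare your proposal against.

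That said, your outline is a reasonable high-level sketch of Yamaguchi's original argument: local strainer-coordinate maps on $X$ and $M$, a gluing/averaging step, and then the verification of the almost Lipschitz submersion inequality via the almost-orthonormality of strainer directions together with estimates of the type in Lemma~\ref{lemma 1.9}. One point worth flagging is your gluing step: you propose averaging the maps $\tilde\varphi_{x_j}\circ f_j$ in $\mathbb{R}^k$ against a partition of unity and then pulling back by a single chart $\tilde\varphi_{x_j}^{-1}$. This only makes sense on the domain of that one chart, so as written it does not produce a global map; Yamaguchi instead performs a center-of-mass construction intrinsically in $X$ (using that $X$ is locally almost Euclidean at the relevant scale), which is what lets the patched map be globally defined and still a $\theta(\delta,\varepsilon)$-approximation. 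Your final paragraph also correctly identifies the genuine difficulty, namely relating $\Sigma_x\Pi_x$ to the kernel of the strainer differential; in \cite{Y convergence} this is handled by a separate sequence of lemmas (the analogues of Lemmas~4.3--4.7 there, some of which are invoked in the present paper as Lemmas~\ref{lemma 1.8}--\ref{lemma 1.9} and in Proposition~\ref{almost linear}), and your sketch would need those in full rather than the single Lemma~\ref{lemma 1.9}.
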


When $M$ is almost regular (and $X$ has nonempty boundary), 
Theorem \ref{Lipschitz submersion theorem} deforms as 
Theorem \ref{fibration theorem} below.
Let $X$ be a $k$-dimensional complete Alexandrov space with curvature $\geq -1$
having nonempty boundary. 
Let $X^{\ast}$ be another copy of $X$.
Take the double $\mathrm{dbl}\,(X) = X \cup X^{\ast}$ of $X$.
The double $\mathrm{dbl}\,(X)$ is also an Alexandrov space of curvature $\leq -1$.
A $(k,\de)$-strainer $\{(a_i,b_i)\}$ of $\mathrm{dbl}(X)$ at $p \in X$ 
is called {\it admissible} if $a_i, b_j \in X$ 
for $1 \leq i \leq k$, $1 \leq j \leq k-1$
($b_k$ may be in $X^{\ast}$ if $p \in \partial X$ for instance).
Put $R_{\de}^D(X)$ the set of all admissible $(k,\de)$-strained points in $X$.

Let $Y$ be a closed domain of $R_{\de}^{D} (X)$.
For a small $\nu > 0$, we put 
\[
Y_{\nu} := \{x \in Y \,|\, d(x, \partial X) \geq \nu \}.
\]
And we put 
\[
\partial_0 Y_{\nu} := Y_{\nu} \cap \{d_{\partial X} = \nu \},\,
\mathrm{int}_0 Y_{\nu} := Y_{\nu} - \partial_0 Y_{\nu}.
\]

The admissible $\de$-strained radius $\de^D\text{-}\mathrm{str. rad}\, x$ at $p \in X$ is the supremum of the length of all admissible $\de$-strainers at $p$.
The admissible $\de$-strained radius $\de^D\text{-}\mathrm{str. rad}\, (Y)$ of a subset $Y \subset X$ is 
\[
\de^D\text{-}\mathrm{str. rad}\, (Y) := \inf_{p \in Y} \de^D\text{-}\mathrm{str. rad}\, p.
\]

\begin{theorem}[Fibration Theorem ({\cite[Theorem 1.2]{Y 4-dim}})]
\label{fibration theorem}
Given $k$ and $\mu > 0$, there exist positive numbers 
$\de = \de_k$, $\e_k(\mu)$ and 
$\nu = \nu_k(\mu)$ satisfying the following:
Let $X^{k}$ be an Alexandrov space with curvature $\geq -1$ of dimension $k$.
Let $Y \subset R_{\de}^{D}(X)$ be closed domain such that 
$\de_{D}$-$\mathrm{str.rad}\,(Y) \geq \mu$.
Let $M^n$ be an Alexandrov spaces with curvature $\geq -1$ of dimension $n$.
Suppose that $R_{\de_n}(M^n) = M^n$ for some small $\de_n > 0$.
If $d_{GH}(M, X) < \e$ for some $\e \leq \e_k(\mu)$, 
then there exists a closed domain $N \subset M$ and a decomposition 
\[
N = N_{\mathrm{int}} \cup N_{\mathrm{cap}} 
\]
of $N$ into two closed domains glued along their boundaries and 
a Lipschitz map $f : N \to Y_{\nu}$ such that 
\begin{itemize}
\item[(1)] $N_{\mathrm{int}}$ is the closure of $f^{-1} (\mathrm{int}_0 \, Y_{\nu})$, 
and $N_{\mathrm{cap}} = f^{-1} (\partial_0 Y_{\nu})$;
\item[(2)] both the restrictions 
$f_{\mathrm{int}} := f|_{N_{\mathrm{int}}} : 
N_{\mathrm{int}} \to Y_{\nu}$ and 
$f_{\mathrm{cap}} := f|_{N_{\mathrm{cap}}} : 
N_{\mathrm{cap}} \to \partial_0 Y_{\nu}$ are 
\begin{itemize}
\item[(a)] locally trivial fiber bundles (see Definition \ref{topological submersion});
\item[(b)] $\theta(\de, \nu, \e/\nu)$-Lipschitz submersions.
\end{itemize}
\end{itemize}
\end{theorem}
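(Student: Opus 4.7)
The plan is to follow Yamaguchi's approach to the fibration theorem in \cite{Y convergence} and \cite{Y 4-dim}, adapted to handle the admissibility condition that allows one strainer leg to cross $\partial X$. Around each $p \in Y_\nu$ I would fix an admissible $(k,\de)$-strainer $\{(a_\alpha, b_\alpha)\}_{\alpha = 1}^{k}$ of length at least $\mu/2$ and, via the Gromov--Hausdorff $\e$-approximation, pick corresponding points $\{(\tilde a_\alpha, \tilde b_\alpha)\}$ in $M$. By Theorem \ref{around regular point}, the smoothed distance function map $\tilde\varphi_p = (\tilde\varphi_1, \ldots, \tilde\varphi_k)$ built from the $\tilde a_\alpha$ is a $\theta(\de, r/\mu)$-almost isometry from a small ball in $M$ onto its image in $\mathbb R^k$; the hypothesis $R_{\de_n}(M) = M$ ensures that its fibers are almost flat of dimension $n-k$.

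Next I would glue these local charts into a global Lipschitz map $f : N \to Y_\nu$ by a center-of-mass construction in $Y_\nu$ itself, which is almost Euclidean by Theorem \ref{around regular point}. Here $N$ consists of the points of $M$ which are $\theta(\de,\e)$-close under the approximation to $Y_\nu$. The decomposition $N = N_{\mathrm{int}} \cup N_{\mathrm{cap}}$ is dictated by the type of admissible strainer available: at interior points $p \in \mathrm{int}_0\,Y_\nu$ all $b_\alpha$ can be chosen inside $X$, whereas for $p \in \partial_0 Y_\nu$ the last point $b_k$ must lie in the opposite copy $X^\ast$ of $\mathrm{dbl}(X)$. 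On the cap part the final coordinate $\tilde\varphi_k$ then essentially plays the role of the distance from $\partial X$, so $f_{\mathrm{cap}} = f|_{N_{\mathrm{cap}}}$ drops one dimension and maps into $\partial_0 Y_\nu$.

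To verify the Lipschitz-submersion estimate I would apply the strainer inequalities of Definition \ref{strainer} together with Lemmas \ref{lemma 1.8} and \ref{lemma 1.9}. For $x \in N$ and nearby $y$, these give $|\wangle \tilde a_\alpha x y - \angle \tilde a_\alpha x y| < \theta(\de, \nu, \e/\nu)$ for each strainer leg, and combining the $k$ resulting estimates in the almost-Euclidean image yields
\[
\bigl| \, |f(x)\,f(y)| / |xy| - \sin\theta \, \bigr| < \theta(\de, \nu, \e/\nu),
\]
where $\theta = \angle_x(y_x', \Sigma_x \Pi_x)$ with $\Pi_x = f^{-1}(f(x))$; this is exactly the almost-Lipschitz submersion condition.

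Finally I would upgrade the Lipschitz submersion to a locally trivial topological fiber bundle. The standard method is to build a gradient-like vector field for each coordinate $\tilde\varphi_\alpha$ using the semiconcave function calculus of Perelman--Petrunin, integrate the resulting transverse system, and then invoke Siebenmann's deformation theorem to extract local product charts; the submersion estimate supplies uniform transversality and speed. The main obstacle I expect is the boundary behavior: ensuring that $N_{\mathrm{int}}$ and $N_{\mathrm{cap}}$ glue consistently along their common boundary $f^{-1}(\partial_0 Y_\nu)$ forces the flows for the first $k-1$ coordinates to remain continuous across the transition where $b_k$ switches copies in $\mathrm{dbl}(X)$, and this is what pins down the choice of $\nu = \nu_k(\mu)$: it must be small enough in $\mu$ so that any admissible strainer of length $\mu$ at a point with $d(\cdot, \partial X) \leq \nu$ remains valid throughout the cap region.
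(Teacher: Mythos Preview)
This theorem is not proved in the present paper at all: it is quoted verbatim as \cite[Theorem 1.2]{Y 4-dim} in the Preliminaries section and used as a black box throughout. There is therefore no ``paper's own proof'' to compare your sketch against. Your outline is a reasonable summary of Yamaguchi's strategy in \cite{Y convergence} and \cite{Y 4-dim}, but if your goal is to reproduce what this paper does, you should simply cite the result and move on.
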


\begin{remark}
If $\partial X = \emptyset$ then $N_{\mathrm{cap}} = \emptyset$
in the statement of Theorem \ref{fibration theorem}.
\end{remark}

The following theorem is a fundamental and important tool 
to study a local structure of collapsing Alexandrov spaces.

\begin{theorem}%
[Rescaling Argument \cite{Y essential}, \cite{SY}, \cite{Y 4-dim}]
\label{rescaling argument} 
Let $M_i$, $i = 1, 2, \dots$ be a sequence of Alexandrov spaces 
of dimension $n$ with curvature $\geq -1$
and $X$ be an Alexandrov space 
of dimension $k$ with curvature $\geq -1$ and $k < n$.
Let $p_i \in M_i$ and $p \in X$.
Assume that $(M_i, p_i)$ converges to $(X, p)$.
And $r > 0$ is small number depending on $p$.
Assume that the following:
\begin{assumption} \label{rescaling assumption}
For any $\tilde{p}_i$ with $d(p_i, \tilde{p}_i) \to 0$ and for any sufficiently large $i$,
$B(\tilde{p}_i, r)$ has a critical point for $\dist_{\tilde{p}_i}$
\end{assumption}
\vspace{-1.2em} 
\mbox{}\\ 
Then there exist a sequence $\de_i \to 0$ of positive numbers and 
$\hat{p}_i \in M_i$ such that
\begin{itemize}
\item $d(p_i , \hat{p}_i) \to 0$ as $i \to \infty$;
\item for any limit $Y$ of $(\frac{1}{\de_i}M_i, \hat{p}_i )$, 
we have $\dim Y \geq k + 1$.
\item $\dim S \leq \dim Y - \dim X$, where $S$ is a soul of $Y$.
\end{itemize}
\end{theorem}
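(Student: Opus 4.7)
The plan is to rescale the sequence $(M_i, p_i)$ at appropriate centers $\hat p_i$ near $p_i$ and scales $\de_i \to 0$ so that the rescaled pointed limit $Y$ simultaneously inherits (a) nonnegative curvature from the vanishing rescaled lower bound, (b) an $\mathbb{R}^k$-factor reflecting the original convergence $M_i \to X$, and (c) a critical point at distance $1$ from the base point, which forces $\dim Y \geq k+1$ and pins down the soul to the non-Euclidean factor.

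\textbf{Step 1: Selection of $(\hat p_i, \de_i)$.} Applying Assumption \ref{rescaling assumption} with $\tilde p_i := p_i$, for each sufficiently large $i$ we obtain a critical point $q_i$ of $\dist_{p_i}$ in $B(p_i, r) \setminus \{p_i\}$, and we set $\de_i^{(0)} := d(p_i, q_i) \in (0, r]$. If $\de_i^{(0)} \to 0$ along a subsequence, we simply put $(\hat p_i, \de_i) := (p_i, \de_i^{(0)})$. Otherwise, one iterates: the hypothesis is re-applied to auxiliary sequences $\tilde p_i \to p_i$ (produced by walking toward previously found critical points at shrinking scales), and a diagonal extraction yields $\hat p_i$ with $d(p_i, \hat p_i) \to 0$ and $\de_i \to 0$ such that $\dist_{\hat p_i}$ still possesses a critical point at distance exactly $\de_i$.

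\textbf{Step 2: Nonnegative limit.} The rescaled Alexandrov space $\frac{1}{\de_i} M_i$ has curvature $\geq -\de_i^2 \to 0$. By Gromov's precompactness, after passing to a subsequence $(\frac{1}{\de_i} M_i, \hat p_i)$ converges to a pointed Alexandrov space $(Y, y_0)$ of nonnegative curvature with $\dim Y \leq n$.

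\textbf{Step 3: $\mathbb{R}^k$-splitting from the base.} Fix a $(k, \e)$-strainer $\{a_\alpha^\pm\}_{\alpha=1}^k$ at $p$ in $X$. Lifting via the Gromov-Hausdorff approximations $M_i \to X$, we obtain points $\tilde a_\alpha^{\pm,i} \in M_i$ forming a $(k, \theta(\e))$-strainer at $\hat p_i$ whose distances to $\hat p_i$ are bounded below by a positive constant. After rescaling by $1/\de_i$, these points recede to infinity along $k$ almost-orthogonal directions, producing in the limit $k$ mutually orthogonal lines through $y_0$ in $Y$. Iterating the Splitting Theorem \ref{splitting theorem} gives an isometric decomposition $Y = \mathbb{R}^k \times Y'$ for a nonnegatively curved Alexandrov space $Y'$.

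\textbf{Step 4: Dimension and soul bound.} The critical point $q_i$ at distance $\de_i$ from $\hat p_i$ rescales to a point $q_\infty \in Y$ with $d(y_0, q_\infty) = 1$, and criticality of $\dist_{\hat p_i}$ passes to the limit, so $q_\infty$ is a critical point of $\dist_{y_0}$ on $Y$. A direct direction-computation in the product $\mathbb{R}^k \times Y'$ shows that any critical point of $\dist_{y_0}$ must have trivial $\mathbb{R}^k$-component: otherwise, the outward radial direction in the $\mathbb{R}^k$-factor makes angle $> \pi/2$ with every direction toward $y_0$. Hence $q_\infty \in \{0\} \times Y'$, which forces $Y'$ to contain a point at distance $1$ from its base point; in particular $Y'$ is not a single point, so $\dim Y \geq k + 1 = \dim X + 1$. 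A soul $S$ of $Y = \mathbb{R}^k \times Y'$ is contained in $\{0\} \times Y'$, and thus $\dim S \leq \dim Y' = \dim Y - \dim X$.

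\textbf{Main obstacle.} The delicate step is Step 1: Assumption \ref{rescaling assumption} only supplies critical points at scales $\leq r$, and bootstrapping this to produce critical points at arbitrarily small scales while simultaneously keeping $d(p_i, \hat p_i) \to 0$ requires a careful iterated rescaling and diagonal extraction. Once Step 1 is in place, Steps 2--4 are routine applications of Gromov's precompactness, the Splitting Theorem \ref{splitting theorem}, and the soul structure of nonnegatively curved Alexandrov spaces.
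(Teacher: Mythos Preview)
The paper does not itself prove Theorem~\ref{rescaling argument}; it is quoted from \cite{Y essential}, \cite{SY}, \cite{Y 4-dim} without argument, so there is no in-paper proof to compare against. Your Steps~2 and~4 are fine, but Steps~1 and~3 both fail.

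\textbf{Step 3 is wrong, not merely incomplete.} You assume a $(k,\e)$-strainer at $p\in X$, but the theorem places no regularity hypothesis on $p$, and in this paper it is invoked precisely at $\de$-singular points of $X$ (Sections~\ref{proof of 2-dim interior}--\ref{proof of 1-dim interval}). More decisively, the splitting $Y=\mathbb{R}^k\times Y'$ is \emph{false in general}: take $M_i=M_\pt(\e_i)$ from Example~\ref{M_pt} collapsing to $X=K(S^1_\pi)$ with $p$ the cone vertex, set $\hat p_i$ equal to one of the two topologically singular points, and rescale by $\de_i$ proportional to $\e_i$; the limit $Y$ is (a rescaling of) $M_\pt$, which has two isolated topologically singular points and hence admits no isometric $\mathbb{R}$-factor at all. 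The theorem's conclusions do hold in this example ($\dim Y=3\ge k+1$ and $\dim S=0\le 1$), but not by your mechanism. Even the rays-to-lines passage is a gap on its own: lifting a strainer and rescaling produces in $Y$ only pairs of rays from $y_0$ with angle $\ge\pi-\e$, and in nonnegative curvature two rays from a point with angle $\pi$ need not concatenate to a line (opposite meridians from the vertex of the paraboloid $z=x^2+y^2$ are rays whose union is not minimizing between far endpoints).

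\textbf{Step 1 is, as you say, the crux, and it is not done.} The Assumption guarantees a critical point somewhere in $B(\tilde p_i,r)$, not at distance $o(1)$; ``one iterates \ldots\ a diagonal extraction yields'' is a placeholder for exactly the nontrivial selection of $\hat p_i$ and $\de_i$ that constitutes the rescaling argument in the cited references.
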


\begin{remark} \upshape
If a sequence of $B(p_i, r)$ metric balls 
dose not satisfies Assumption \ref{rescaling assumption}, 
then by Stability Theorem \ref{stability theorem}, 
$B(\tilde{p_i}, r)$ (resp. $U(\tilde{p_i}, r)$) 
is homeomorphic to the closed cone $K_1(\Sigma_{\tilde{p}_i})$
(resp. the open cone $K(\Sigma_{\tilde{p}_i})$) over the space of directions $\Sigma_{\tilde{p_i}}$
for some $\tilde{p}_i \in M_i$ with $d(p_i, \tilde{p}_i)$ tending to zero.
\end{remark}

Fukaya and Yamaguchi proved the following.

\begin{theorem}[\cite{FY}, \cite{Y convergence}] 
\label{almost nilpotent}
For $n \in \mathbb N$, there exists $\e_n > 0$ satisfying the following.
Suppose that an $n$-dimensional Alexandrov space $M^n$
with curvature $\geq -1$ and $\diam M^n < \e_n$.
Then, the fundamental group $\pi_1(M^n)$ is almost nilpotent, i.e.
$\pi_1(M^n)$ has a nilpotent subgroup of finite index. 
\end{theorem}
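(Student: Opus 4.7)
The plan is to argue by contradiction combined with induction on the dimension $n$. For the base case $n=1$, a closed one-dimensional Alexandrov space of sufficiently small diameter is a short circle or short interval, hence has fundamental group $\mathbb{Z}$ or trivial, in particular almost nilpotent. Assuming the theorem holds in every dimension strictly below $n$, suppose for contradiction there is a sequence $M_i^n$ of $n$-dimensional closed Alexandrov spaces of curvature $\geq -1$ with $\diam(M_i) \to 0$ and $\pi_1(M_i)$ not almost nilpotent for every $i$.

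I would pass to the universal covers $\tilde M_i \to M_i$ with the pulled-back length structures, on which $\Gamma_i := \pi_1(M_i)$ acts freely, properly discontinuously, and by isometries with quotient of vanishing diameter. Choose lifts $\tilde p_i$ of basepoints $p_i \in M_i$. Gromov's short-generators construction yields an ordered generating system $\{\gamma_k^{(i)}\}$ of $\Gamma_i$, where each $\gamma_k^{(i)}$ minimizes displacement at $\tilde p_i$ among elements outside $\langle \gamma_1^{(i)}, \dots, \gamma_{k-1}^{(i)} \rangle$. Set $\ell_i := d(\tilde p_i, \gamma_1^{(i)} \tilde p_i) \leq 2 \diam(M_i) \to 0$ and rescale the metric on $M_i$ by $1/\ell_i$. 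The rescaled spaces $M_i' := (M_i, d/\ell_i)$ have curvature $\geq -\ell_i^2 \to 0$, while $\gamma_1^{(i)}$ acts on $\tilde M_i'$ with minimum displacement exactly one at the basepoint. Passing to an equivariant pointed Gromov--Hausdorff limit through a subsequence, one obtains a pointed nonnegatively curved Alexandrov space $(Y, y_\infty)$ together with an isometric action by a limit group $G$ containing an element $\gamma_1^{(\infty)}$ realizing displacement one at $y_\infty$.

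The geometric core is to exhibit a line in $Y$. Since $\gamma_1^{(\infty)}$ realizes its minimum displacement at $y_\infty$, the orbit $\{(\gamma_1^{(\infty)})^k y_\infty\}_{k \in \mathbb{Z}}$ is collinear: consecutive orbit points are joined by minimizing geodesics of length one that concatenate into a bi-infinite geodesic through $y_\infty$. Splitting Theorem \ref{splitting theorem} then decomposes $Y$ isometrically as $\mathbb{R} \times Y'$, with $Y'$ a nonnegatively curved Alexandrov space of dimension $\leq n-1$. A finite-index subgroup $G_0 \leq G$ preserves this splitting. Quotienting by the translation subgroup in the $\mathbb{R}$-factor, together with choosing a $G_0$-invariant bounded region in $Y'$, produces a small-diameter collapse of dimension strictly less than $n$, to which the inductive hypothesis applies. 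Almost-nilpotency of the resulting quotient combines with the $\mathbb{Z}$-action on the line factor to give almost-nilpotency of $G_0$. A stability/approximation argument then transfers this back to $\Gamma_i$ for all large $i$, contradicting the standing assumption.

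The main obstacle is carrying out the equivariant limit step rigorously in the Alexandrov setting: one must (i) set up an equivariant pointed Gromov--Hausdorff convergence theorem for sequences of discrete isometric actions, (ii) justify that the unit-length minimizing geodesic from $y_\infty$ to $\gamma_1^{(\infty)} y_\infty$ concatenates across the $\gamma_1^{(\infty)}$-orbit to an honest bi-infinite line (using the minimum-displacement property together with convexity of distance to orbits in nonnegatively curved Alexandrov geometry), and (iii) extract from the limit picture a genuine closed small-diameter Alexandrov space of dimension strictly less than $n$ to feed into the induction, even though the orbit space $Y'/G_1$ for an appropriate $G_1$ need not a priori be closed. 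The Splitting Theorem \ref{splitting theorem} is the key geometric mechanism making the dimension reduction possible in the Alexandrov category, while the \emph{almost-nilpotent} (rather than nilpotent) conclusion reflects the need to pass to finite-index subgroups repeatedly in order to preserve the splittings at each inductive step.
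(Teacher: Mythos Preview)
The paper does not supply a proof of this statement: Theorem \ref{almost nilpotent} is quoted as a known result from \cite{FY} and \cite{Y convergence}, with no argument given. There is therefore no ``paper's own proof'' to compare your proposal against.

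That said, your outline is recognizably the Fukaya--Yamaguchi strategy (rescale so the shortest generator has unit displacement, pass to an equivariant pointed Gromov--Hausdorff limit, split off a line, and induct on dimension), but one step is not right as written. You claim that because $\gamma_1^{(\infty)}$ realizes its minimum displacement at $y_\infty$, the orbit $\{(\gamma_1^{(\infty)})^k y_\infty\}$ is automatically collinear and the concatenated segments form a line. This is a CAT(0)-style flat-torus/Clifford-translation argument and does not go through verbatim for Alexandrov spaces of curvature bounded \emph{below}: minimizing the displacement function does not by itself force consecutive segments to extend geodesically. In \cite{FY} the line is produced differently: since $\diam(M_i)\to 0$, the limit group $G$ acts transitively on $Y$ (the quotient $Y/G$ is a point), and one extracts a line from the resulting homogeneity together with noncompactness, after which the Splitting Theorem applies. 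Your point (iii) already anticipates the other genuinely delicate issue---manufacturing a \emph{closed} lower-dimensional Alexandrov space to feed into the induction---and that is indeed where most of the work in \cite{FY}, \cite{Y convergence} lies; your sketch acknowledges but does not resolve it.
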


\begin{remark}[\cite{Y convergence}] \upshape
In Fibration Theorems 
\ref{Lipschitz submersion theorem} and \ref{fibration theorem}, 
the fiber is connected and
has an almost nilpotent fundamental group. 
\end{remark}

\subsubsection{Perelman's Morse theory and Stability theorem}
In this section, we mainly refer to \cite{Per Alex II}.

\begin{definition}[\cite{Per Alex II}] \upshape
Let $f = (f_1, \dots f_m) : U \to \mathbb{R}^m$ be a map on an open subset $U$ of an Alexandrov space $X$ defined by $f_i = d(A_i, \cdot)$ for compact subsets $A_i \subset X$. 
The map $f$ is said to be {\it $(c, \e)$-regular at $p \in U$} if there is a point $w \in X$ such that: 
\begin{itemize}
\item[$(1)$]
$\angle ((A_i)_p', (A_j)_p') > \pi / 2 - \e$. 
\item[$(2)$]
$\angle (w_p', (A_i)_p') > \pi /2 + c$.
\end{itemize}
\end{definition}

\begin{theorem}[\cite{Per Alex II}] \label{Morse theory}
Let $X$ be an finite dimensional Alexandrov space, $U \subset X$ an open subset, and 
let $f$ be $(c, \e)$-regular at each point of $U$. If $\e$ is small compared with $c$, then we have:
\begin{itemize}
\item[$(1)$]
$f$ is a topological submersion (see Definition \ref{topological submersion}).
\item[$(2)$]
If $f$ is proper in addition, then the fibers of $f$ are $MCS$-spaces. 
Hence $f$ is a fiber bundle over its image.
\end{itemize} 
\end{theorem}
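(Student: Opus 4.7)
My plan follows Perelman's original strategy: construct a gradient-like vector field adapted to $f$ and use its flow to produce local trivializations, then identify the fibers as MCS-spaces by induction on dimension.

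First I would show the following local openness statement. Fix $p \in U$ with the escape point $w \in X$ witnessing $(c,\e)$-regularity. By lower semicontinuity of angles between directions to compact sets and continuity of $d(A_i, \cdot)$, the regularity conditions persist on a neighborhood $V$ of $p$ with the same $w$ (after possibly shrinking). The key construction is a continuous map $\Phi : V \times [0, \tau) \to X$ such that, along each orbit $t \mapsto \Phi(q, t)$, one has $f_i(\Phi(q,t)) = f_i(q) - t \sin c + o(t)$ uniformly in $q$. One produces $\Phi(q,t)$ by moving $q$ along a segment toward $w$ (or a quasigeodesic in that direction); the angle inequality $\angle(w_p', (A_i)_p') > \pi/2 + c$ together with first variation gives the required uniform decrease of each $f_i$, while smallness of $\e$ relative to $c$ prevents the individual $f_i$'s from interfering with each other.

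Next, to obtain a trivialization, I would use several such flows simultaneously. For each unit vector $e$ in an open cone $C \subset \mathbb{R}^m$ about $-(1,\dots,1)/\sqrt m$, a flow $\Phi^e$ moving $f$-values in the direction $e$ can be constructed by choosing an appropriate $w^e \in X$ (using the regularity conditions $\angle((A_i)_p',(A_j)_p')>\pi/2-\e$ to solve the resulting linear inequalities for the first-variation estimates). Composing such flows in different directions and using the implicit function argument of Siebenmann/Perelman, one fibers a neighborhood of $p$ in $X$ as $f^{-1}(f(p)) \cap V' \times f(V')$ by a continuous homeomorphism intertwining $f$ with the second projection. This is exactly the definition of a topological submersion, establishing (1).

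For (2), properness makes the fibers $F = f^{-1}(y)$ locally compact. I would show by induction on $\dim X$ that each $F$ is an MCS-space. Near a point $q \in F$, the space of directions $\Sigma_q X$ has the subset $\Sigma_q F$ of directions tangent to $F$; the regularity of $f$ at $q$ forces $\Sigma_q F$ to be an MCS-space of one lower dimension (apply Theorem \ref{Morse theory} inductively to the spherical analogue of $f$ on $\Sigma_q$). The local structure of an Alexandrov space as a topological cone over $\Sigma_q$, combined with the already-established trivialization of $f$, then exhibits a neighborhood of $q$ in $F$ as a cone over $\Sigma_q F$, yielding the MCS property. Once the fiber is a fixed MCS-space, the topological submersion of (1) with compact fiber is automatically a locally trivial fiber bundle.

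The main obstacle is the simultaneous construction in the second step: verifying that the composition of the flows $\Phi^e$ is well-defined, continuous, and produces a homeomorphism onto a product neighborhood. This is where the quantitative hypothesis $\e \ll c$ enters decisively, since the first-variation errors from the individual flows must not swamp the controlled decrease $\sin c$ when they are combined. The inductive step for the MCS-structure is also delicate, because one must check that the angle conditions defining regularity of $f$ descend to appropriate conditions on $\Sigma_q$.
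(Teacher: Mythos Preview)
The paper does not give its own proof of this theorem; it is quoted from Perelman's preprint \cite{Per Alex II} and used as a black box. So there is no in-paper argument to compare against.

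Your sketch is broadly faithful to Perelman's original strategy: a gradient-like flow toward the escape point $w$ gives controlled first-variation of each $f_i$, several such flows combine to produce a product chart, and the MCS property of the fiber is obtained by induction on dimension via the space of directions. Two small corrections are worth noting. First, the final clause ``the topological submersion of (1) with compact fiber is automatically a locally trivial fiber bundle'' is not automatic from general topology; this is exactly where Siebenmann's theory (Theorem~\ref{union lemma} and the theorem following it in the paper) is needed, and it requires the fibers to be WCS-spaces, which is why the MCS property matters. Second, one does not need the fiber to be ``a fixed MCS-space'' a priori: Siebenmann's theorem yields local triviality from the submersion and WCS-fiber hypotheses alone, after which the fiber type is constant on connected components of the image. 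With these adjustments your outline matches the argument in \cite{Per Alex II}.
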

Here, a metrizable space $X$ is called an {\it $n$-dimensional MCS-space} if any point $p \in X$ has an open neighborhood $U$ and there exists an $(n-1)$-dimensional compact MCS-space $\Sigma$ such that $(U, p)$ is a pointed homeomorphic to the cone $(K(\Sigma), o)$, where $o$ is the apex of the cone. 
Here, we regarded the $(-1)$-dimensional MCS-space as the empty-set, and its cone as the single-point set.


Perelman proved the stability theorem: 

\begin{theorem}[Stability theorem \cite{Per Alex II} (cf. \cite{Kap stab})] 
\label{stability theorem}
Let $X^n$ be a compact $n$-dimensional Alexandrov space with curvature $\geq \kappa$.
Then there exists $\de > 0$ depending on $X$ such that if $Y^n$ is an $n$-dimensional Alexandrov space with curvature $\geq \kappa$ and $d_{G H}(X, Y) < \de$, then $Y$ is homeomorphic to $X$.

In addition, let $A \subset X$ be a compact subset, and let $A' \subset Y$ be a compact subset.
Then there exists $\de > 0$ depending $(X, A)$ satisfying the following.
Suppose that there is a $\de$-approximation $f : Y \to X$ such that $f(A') \subset A$ and
$f|_{A'}$ is a $\de$-approximation. 
If $t \in (0, \sup d_A)$ is a regular value of $d_A$, then $S(A, t)$ is homeomorphic to $S(A', t)$.
Here, we say that $t$ is a regular value if $d_A$ is regular on $S(A, t)$.
\end{theorem}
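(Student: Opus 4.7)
The plan is to prove the Stability theorem by combining Perelman's Morse theory (Theorem \ref{Morse theory}) with an inductive gluing procedure, using induction on the dimension $n$. The base case $n=0$ is trivial, and I would assume the theorem holds in all dimensions less than $n$. First, I establish local conical structure: for every $p \in X$ there exists $r>0$ such that $B(p,r)$ is homeomorphic to the closed cone $K_1(\Sigma_p X)$, with $p$ mapping to the apex. To see this, I construct auxiliary admissible functions on a punctured neighborhood of $p$ so that, together with $\dist_p$, they yield a $(c,\e)$-regular map. Theorem \ref{Morse theory} then shows that $\dist_p$ is a topological submersion on $B(p,r)\setminus\{p\}$ whose fibers are $(n-1)$-dimensional MCS-spaces homeomorphic to $\Sigma_p X$; since $\Sigma_p X$ is an $(n-1)$-dimensional Alexandrov space with curvature $\geq 1$, it inherits a local cone structure by induction, which globalizes to $B(p,r) \approx K_1(\Sigma_p X)$.

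Given the $\de$-approximation $\varphi : Y \to X$, cover $X$ by finitely many balls $B(p_i, r_i)$, each equipped with an admissible $(c,\e)$-regular map $f_i = (\dist_{A^i_1},\dots,\dist_{A^i_{k_i}})$ built from distances to suitable compact sets $A^i_j \subset X$. For each $i$, pick $q_i \in Y$ within $\de$ of $\varphi^{-1}(p_i)$, and form the corresponding distance-functions to compact sets $(A^i_j)' \subset Y$ approximating $A^i_j$; this produces maps $f_i'$ on $B(q_i, r_i) \subset Y$ which are still $(c,\e')$-regular for $\e'$ small when $\de$ is small. Theorem \ref{Morse theory} gives fiber bundle structures over the common image of $f_i$ and $f_i'$; the fibers are MCS-spaces of strictly lower dimension, hence homeomorphic by the induction hypothesis. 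Assembling fiberwise homeomorphisms yields a local homeomorphism $h_i : B(p_i, r_i) \to B(q_i, r_i)$ compatible with the bundle projections and close to $\varphi^{-1}$ in the Gromov-Hausdorff sense.

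The main obstacle is the gluing step: the local homeomorphisms $h_i$ will generally disagree on the overlaps, so they do not immediately assemble into a global homeomorphism. I would employ Perelman's inductive patching argument: having already built a homeomorphism $H_{i-1}$ from $U_{i-1}:=\bigcup_{j<i} B(p_j,r_j)$ onto its image in $Y$, perturb $H_{i-1}$ on a collar of $\partial U_{i-1} \cap B(p_i,r_i)$ so that it agrees with $h_i$ there, and then extend across $B(p_i,r_i)$. The crucial technical device is a relative isotopy-extension lemma for $(c,\e)$-regular topological submersions supplied by Theorem \ref{Morse theory}, applied to the lower-dimensional fiber bundles whose fibers are stable by induction; this lemma lets one straighten a nearly-identity self-homeomorphism of the total space rel a closed collar. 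Iterating through the finite cover produces the desired global homeomorphism $X \approx Y$. The relative statement follows along the same lines: since $t$ is a regular value of $\dist_A$, the function $\dist_A$ is $(c,\e)$-regular on a neighborhood of $S(A,t)$ and hence a topological submersion there by Theorem \ref{Morse theory}, and the hypotheses on $f$ and $A'$ transport this regularity to $\dist_{A'}$ near $S(A',t)$; applying the absolute stability just proved, fiberwise over a neighborhood of $t$, yields the homeomorphism $S(A,t) \approx S(A',t)$.
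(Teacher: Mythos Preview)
The paper does not give a proof of this theorem: it is quoted in the Preliminaries (\S 2.5.4) as a background result of Perelman \cite{Per Alex II}, with a reference to Kapovitch's exposition \cite{Kap stab}, and is used throughout without further argument. So there is no ``paper's own proof'' to compare against.

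Your sketch is a reasonable high-level outline of the Perelman--Kapovitch proof, and the overall architecture (induction on dimension, local conical structure, transporting admissible regular maps through the approximation, and the inductive gluing) is correct. A few points deserve more care if you intend this as more than an outline. First, the local cone structure $B(p,r)\approx K_1(\Sigma_p)$ is not obtained by complementing $\dist_p$ with auxiliary functions to get a single $(c,\e)$-regular map on the whole punctured ball; rather one shows that $\dist_p$ alone is regular on $B(p,r)\setminus\{p\}$ and then appeals to the fibration part of Theorem~\ref{Morse theory}. Second, the heart of the argument is the gluing step, and your description (``a relative isotopy-extension lemma for $(c,\e)$-regular topological submersions'') hides the real work: Perelman's proof requires a carefully stated \emph{stability of maps} theorem (not just of spaces), asserting that a nearby admissible map on $Y$ can be made to agree with the given one on $X$ via a homeomorphism close to the approximation, and the induction is on a package of such relative statements rather than on the bare homeomorphism statement. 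Without formulating and proving that stronger inductive hypothesis, the patching on overlaps cannot be carried out. Finally, for the relative part about $S(A,t)\approx S(A',t)$, one must also verify that regularity of $\dist_A$ at level $t$ transfers to regularity of $\dist_{A'}$ near level $t$; this uses lower semicontinuity of angles under Gromov--Hausdorff convergence and is not automatic from the hypotheses as stated.
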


In particular, every points in a finite dimensional Alexandrov space have a cone neighborhood over their spaces of directions.

\begin{theorem}[\cite{Per Alex II}] 
\label{diameter suspension theorem}
If an $n$-dimensional Alexandrov space $\Sigma^n$ of curvature $\geq 1$ 
has diameter greater than $\pi / 2$,
then $\Sigma$ is homeomorphic to a suspension 
over an $(n-1)$-dimensional Alexandrov space of curvature $\geq 1$.
\end{theorem}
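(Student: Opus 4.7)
The plan is to realize $\Sigma$ as a topological suspension by applying Perelman's Morse theory to a distance function whose only critical points are two ``poles''. First, choose $p, q \in \Sigma$ with $|pq| = \diam \Sigma > \pi/2$ and set $f := \dist_p$. The heart of the argument is to show that $f$ is $(c, \e)$-regular in Perelman's sense (see Theorem \ref{Morse theory}) at every $x \in \Sigma \setminus \{p, q\}$, with the witness direction pointing toward $q$. Concretely, this reduces to proving
\[
\angle p x q > \pi / 2 \quad \text{for every } x \in \Sigma \setminus \{p, q\}.
\]
The hinge form of the Toponogov comparison for curvature $\geq 1$ applied to the triangle $(p, x, q)$ gives $\cos |pq| \geq \cos|xp| \cos|xq| + \sin|xp| \sin|xq| \cos \angle pxq$. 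Combining this with the diameter maximality $|xp|, |xq| \leq |pq|$ and with $\cos|pq| < 0$, I expect to derive a contradiction from $\angle pxq \leq \pi/2$.

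Granting the regularity of $f$ on $\Sigma \setminus \{p, q\}$, Theorem \ref{Morse theory} immediately yields that $f\colon \Sigma \setminus \{p, q\} \to (0, |pq|)$ is a proper locally trivial fiber bundle over an interval; let $L := f^{-1}(\pi/2)$ denote its fiber, which is a compact $(n-1)$-dimensional MCS-space. Meanwhile the Stability Theorem \ref{stability theorem} furnishes cone neighborhoods $B(p, \e) \approx K_1(\Sigma_p)$ and $B(q, \e) \approx K_1(\Sigma_q)$ for small $\e > 0$, where $\Sigma_p$ and $\Sigma_q$ are $(n-1)$-dimensional Alexandrov spaces of curvature $\geq 1$. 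Glueing these two cones along the fiber trivialization identifies $\Sigma_p \approx L \approx \Sigma_q$, and gives
\[
\Sigma \approx K_1(\Sigma_p) \cup_L K_1(\Sigma_p) = \Sigma(\Sigma_p),
\]
the desired topological suspension.

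The main obstacle is the angle estimate in the first step: the plain Toponogov inequality only forces $|xp|$ and $|xq|$ to lie on opposite sides of $\pi/2$, and does not by itself exclude $\angle pxq \leq \pi/2$. Ruling this out requires exploiting the extremality of the pair $(p, q)$ more carefully --- for instance, by a first-variation argument along a segment from $x$ extended past $p$ or $q$, or by iterating the hinge comparison on auxiliary triangles inside $\Sigma$. This is the Alexandrov-geometric analogue of the classical Grove--Shiohama diameter sphere argument, and handling it rigorously in the non-Riemannian setting, where $\Sigma_x$ itself may be singular, is where the real work lies.
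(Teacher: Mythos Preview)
The paper does not supply a proof of Theorem~\ref{diameter suspension theorem}; the result is quoted from Perelman~\cite{Per Alex II}. Your outline is the standard argument and is in fact complete --- the ``main obstacle'' you flag in the last paragraph is not an obstacle at all.

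Suppose $x \neq p, q$ and, toward a contradiction, $\angle pxq \le \pi/2$. Dropping the nonnegative sine term in your hinge inequality gives
\[
\cos d \;\ge\; \cos a\,\cos b,
\]
where $a=|xp|$, $b=|xq|$, $d=|pq|$ satisfy $0<a,b\le d$ and $d\in(\pi/2,\pi)$. But one always has the \emph{reverse} strict inequality $\cos a\cos b>\cos d$ under these constraints. If $\cos a$ and $\cos b$ have the same sign, their product is nonnegative while $\cos d<0$. In the mixed case, say $\cos a\ge 0>\cos b\ge\cos d$, multiplying the nonpositive number $\cos b$ by $\cos a\in[0,1]$ can only increase it, so $\cos a\cos b\ge\cos b\ge\cos d$; equality throughout would force $\cos b=\cos d\ne 0$ and $\cos a=1$, i.e.\ $a=0$, contradicting $x\ne p$. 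Thus $\cos d\ge\cos a\cos b>\cos d$, which is absurd, and hence $\angle pxq>\pi/2$ for every $x\in\Sigma\setminus\{p,q\}$. On any compact set bounded away from $p$ and $q$ the comparison angle $\wangle pxq$ is uniformly bounded above $\pi/2$, giving the constant $c$ needed for Perelman regularity with witness $q$. Your Morse-theoretic fibration and cone-gluing then go through exactly as written; no first-variation or auxiliary-triangle argument is required.
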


\begin{theorem}[\cite{Per Alex II}, \cite{Pet Appl}, \cite{GP}] 
\label{radius sphere theorem}
If an $n$-dimensional Alexandrov space $\Sigma^n$ of curvature $\geq 1$
has radius $> \pi / 2$ 
then $\Sigma$ is homeomorphic to an $n$-sphere.
\end{theorem}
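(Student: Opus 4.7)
The plan is to argue by induction on $n = \dim \Sigma$. The base case $n=1$ is immediate, since a closed one-dimensional Alexandrov space of curvature $\geq 1$ is a circle, hence homeomorphic to $S^1$.

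For the inductive step, fix any $p \in \Sigma$ and let $\bar p \in \Sigma$ realize $\max_x d(p,x)$; by the radius hypothesis, $d(p,\bar p) > \pi/2$. The heart of the argument is to show that $d_p$ is Perelman-regular at every point of $\Sigma \setminus \{p,\bar p\}$. Suppose, towards a contradiction, that $x \in \Sigma \setminus \{p,\bar p\}$ is critical for $d_p$. The radius condition applied at $x$ yields some $y \in \Sigma$ with $d(x,y) > \pi/2$; criticality forces $\wangle_1(x; p, y) \leq \pi/2$, and the spherical law of cosines then gives the one-sided bound
\[
\cos d(p,y) \geq \cos d(x,p)\,\cos d(x,y).
\]
Since $\bar p$ is a maximum of $d_p$ it is itself critical, so applying the same argument at $\bar p$ (with the radius condition producing a point $y'$ far from $\bar p$) and combining with the extremality $d(p,y) \leq d(p,\bar p)$ produces an incompatible system of spherical-trigonometric inequalities, contradicting $\mathrm{rad}(\Sigma) > \pi/2$. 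Granting this regularity claim, Theorem \ref{Morse theory} realizes $d_p\colon \Sigma \setminus \{p,\bar p\} \to (0, d(p,\bar p))$ as a locally trivial fiber bundle, and the cone-neighborhood consequence of Theorem \ref{stability theorem} allows us to assemble $\Sigma$ as a union of cones $K(\Sigma_p) \cup K(\Sigma_{\bar p})$ glued along their common middle fiber, identifying $\Sigma$ with a topological suspension $\Sigma(\Lambda)$ where $\Lambda \approx \Sigma_p \approx \Sigma_{\bar p}$.

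To close the induction, it remains to verify that $\mathrm{rad}(\Sigma_p) > \pi/2$, so that the inductive hypothesis gives $\Sigma_p \approx S^{n-1}$ and hence $\Sigma \approx \Sigma(S^{n-1}) = S^n$. Given $\xi \in \Sigma_p$, let $\gamma(t) = \exp_p(t\xi)$; by compactness of $\Sigma$ the radius condition gives a uniform lower bound $\rho > \pi/2$, so we may choose $y_t \in \Sigma$ with $d(\gamma(t),y_t) \geq \rho$. A subsequential limit $y_0 = \lim_t y_t$ yields $\eta := \uparrow_p^{y_0} \in \Sigma_p$ with $d(p,y_0) \geq \rho$. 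The first variation formula
\[
\frac{d}{dt}\bigg|_{t=0^+} d(\gamma(t), y_0) = -\cos\angle(\xi, \eta),
\]
combined with $d(\gamma(t),y_t) \geq \rho$ and $y_t \to y_0$, forces $\cos\angle(\xi,\eta) \leq 0$, and strict inequality is obtained by choosing $y_t$ slightly inside the ball $\{d(p,\cdot) \leq \pi/2\}$ when possible (using the radius condition iteratively) to arrange a strict gain.

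The step I expect to be the main obstacle is the no-intermediate-critical-point claim for $d_p$: a single application of the radius condition at $x$ provides only a one-sided spherical estimate, and closing the contradiction demands a careful bootstrap that couples the radius condition at $x$, at $\bar p$, and at auxiliary points with the extremality of $d(p,\bar p)$ through repeated use of Toponogov comparison. The transfer of the radius hypothesis to $\Sigma_p$ is technically similar in spirit but less delicate, being essentially a first-variation argument.
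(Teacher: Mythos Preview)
The paper does not give its own proof of this statement; it is quoted from \cite{Per Alex II}, \cite{Pet Appl}, \cite{GP} and used as a black box. Your inductive outline (Grove--Petersen critical-point lemma for $d_p$, suspension structure via Theorem~\ref{Morse theory}, then induction on $\Sigma_p$) is the standard strategy in those references, so there is nothing in the paper itself to compare against beyond that.

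That said, your proof has a genuine gap precisely in the step you call ``less delicate'': the transfer $\mathrm{rad}(\Sigma_p) > \pi/2$. You pass from a moving family $y_t$ with $d(\gamma_\xi(t), y_t) \geq \rho$ to a fixed limit $y_0$ and then invoke the first-variation formula for $t \mapsto d(\gamma_\xi(t), y_0)$. But first variation controls the derivative for the \emph{fixed} target $y_0$, and nothing in your hypotheses forces $d(\gamma_\xi(t), y_0) \geq d(p, y_0)$ for small $t$; the inequality $d(\gamma_\xi(t), y_t) \geq \rho$ with a \emph{varying} $y_t$ gives no such control. Concretely, take $\xi = \uparrow_p^{\bar p}$ on a space where this direction is unique: by continuity of the (unique) farthest-point map, the farthest point from $\gamma_\xi(t)$ converges to $\bar p$, so $y_0 = \bar p$, $\eta = \uparrow_p^{\bar p} = \xi$, and $\angle(\xi,\eta) = 0$, not $>\pi/2$. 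Your proposed patch (``choose $y_t$ slightly inside $\{d(p,\cdot)\le\pi/2\}$'') is incompatible with $d(\gamma_\xi(t), y_t) > \pi/2$ once $t$ is small. Establishing $\mathrm{rad}(\Sigma_p) > \pi/2$ is not a routine first-variation consequence; it requires a comparison argument of the same flavor as the critical-point lemma itself (see the involution property of the farthest-point map in Grove--Petersen, or the gradient-exponential route in \cite{Pet Appl}). Your sketch of the critical-point lemma is separately too vague to assess---you obtain the correct inequality $\cos|py| \geq \cos|px|\cos|xy|$ at a critical $x$, but ``an incompatible system of inequalities'' is not yet a proof---though you rightly flag this as the crux.
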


\subsubsection{Preliminaries from Siebenmann's theory in \cite{Sie}}

\begin{definition} \label{topological submersion} \upshape
A continuous map $p : E \to X$ between topological spaces is called a {\it topological submersion} (or called a {\it locally trivial fiber bundle}) if for any $y \in E$ there are an open neighborhood $U$ of $y$ in the fiber $p^{-1} (p(y))$, 
an open neighborhood $N$ of $p(y)$ in $X$, 
and an open embedding $f : U \times N \to E$ such that $p \circ f$ is the projection $U \times N \to N$.
We call the embedding $f : U \times N \to E$ a {\it product chart} about $U$ for $p$, 
and the image $f(U \times N)$ a {\it product neighborhood} around $y$.

A surjective continuous map $p: E \to X$ of topological spaces is called a {\it topological fiber bundle} if there exists an open covering $\{ U_{\alpha} \}$ of $X$, and a family $\{F_{\alpha} \}$ of topological spaces, and a family $\{\varphi_{\alpha} : p^{-1} (U_{\alpha}) \to U_{\alpha} \times F_{\alpha}\}$ of homeomorphisms such that $\mathrm{proj}_{U_{\alpha}} \circ \varphi_{\alpha} = p|_{p^{-1}(U_{\alpha})}$ holds for each $\alpha$.
Here, $\mathrm{proj}_{U_{\alpha}}$ is the projection from $U_{\alpha} \times F_{\alpha}$ to $U_{\alpha}$.
\end{definition}

A finite dimensional topological space $Y$ is said to be a {\it WCS-set} \cite[\S 5]{Sie} if it satisfies the following $(1)$ and $(2)$:
\begin{itemize}
\item[(1)] $Y$ is stratified into topological manifolds, i.e., it has a stratification 
\[
Y \supset \cdots Y^{(n)} \supset Y^{(n-1)} \supset \cdots Y^{(-1)} = \emptyset,
\]
such that $Y^{(n)} - Y^{(n-1)}$ is a topological $n$-manifold without boundary.
\item[(2)] For each $x \in Y^{(n)} - Y^{(n-1)}$ there are a cone $C$ with a vertex $v$ and a homeomorphism $\rho : \mathbb R^n \times C \to Y$ onto an open neighborhood of $x$ in $Y$ such that $\rho^{-1} (Y^{(n)}) = \mathbb R^n \times \{v\}$.
\end{itemize}
From the definition, we can see that an MCS-space is a WCS-set.

\begin{theorem}[Union Lemma \cite{Sie}] \label{union lemma}
Let $p : E \to X$ be a topological submersion 
and $F = p^{-1}(x_0)$ the fiber over $x_0 \in X$.
We assume that $F$ is a WCS-space.
Let $A_1$ and $A_2$ be compact sets in $F$.
Let $\varphi_i : U_i \times N_i \to E$ be a product chart about $U_i$ for an open neighborhood $U_i$ of $A_i$ in $F$, and $i = 1, 2$.
Then there exists a product chart $\varphi : U \times N \to E$ about 
$U \supset A_1 \cup A_2$ in $F$ such that 
\[
\varphi = 
\left\{
\begin{aligned}
\varphi_1 \text{ near }& A_1 \times \{x_0\}, \\
\varphi_2 \text{ near }& (A_2 - U_1) \times \{x_0\}.
\end{aligned}
\right.
\]
\end{theorem}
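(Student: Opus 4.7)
The plan is to produce a fiber-preserving homeomorphism $H$ of $U_2 \times N$ over a small base neighborhood $N$ of $x_0$ that coincides with the transition map $\varphi_2^{-1} \circ \varphi_1$ on a neighborhood of $A_1 \cap U_2$ and with the identity on a neighborhood of $A_2 \setminus U_1$. Then $\varphi_2 \circ H$ will agree with $\varphi_1$ on the former region and with $\varphi_2$ on the latter, so I can glue it to $\varphi_1$ along a slightly larger neighborhood of $A_1$ to obtain the desired global chart $\varphi$.

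For the setup, I first shrink $N_1 \cap N_2$ to a common smaller neighborhood $N$ of $x_0$, so both $\varphi_1$ and $\varphi_2$ are defined over $N$. Since $A_1$ and $A_2 \setminus U_1$ are disjoint compact subsets of $F$, I choose open sets
\[
A_1 \subset V_1 \subset \overline{V_1} \subset V_1' \subset \overline{V_1'} \subset U_1
\]
with $\overline{V_1'} \cap (A_2 \setminus U_1) = \emptyset$. Writing $W := U_1 \cap U_2$ and using that $\varphi_i(y, x_0) = y$ for $y \in W$, a further shrinking of $N$ ensures that
\[
\Phi := \varphi_2^{-1} \circ \varphi_1 : W \times N \longrightarrow U_2 \times N
\]
is a well-defined fiber-preserving open embedding equal to the identity on $W \times \{x_0\}$. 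I would then invoke a parametric stratified isotopy extension to construct $H : U_2 \times N' \to U_2 \times N'$ over some $N' \subset N$ such that $H = \Phi$ on $(\overline{V_1} \cap U_2) \times N'$, $H = \mathrm{id}$ on $(U_2 \setminus \overline{V_1'}) \times N'$, and $H$ restricts to the identity on $U_2 \times \{x_0\}$.

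Granting such an $H$, I set $U := V_1 \cup U_2 \supset A_1 \cup A_2$ and define
\[
\varphi(y, x) := \begin{cases} \varphi_1(y, x), & y \in V_1, \\ \varphi_2(H(y, x)), & y \in U_2, \end{cases}
\]
for $(y, x) \in U \times N'$. On the overlap $V_1 \cap U_2 \subset \overline{V_1} \cap U_2$ the equality $H = \Phi = \varphi_2^{-1} \circ \varphi_1$ gives $\varphi_2 \circ H = \varphi_1$, so the two branches agree and $\varphi$ is a well-defined product chart. By construction $\varphi = \varphi_1$ on $V_1 \times N'$, which is a neighborhood of $A_1 \times \{x_0\}$, and $\varphi = \varphi_2$ on $(U_2 \setminus \overline{V_1'}) \times N'$, which is a neighborhood of $(A_2 \setminus U_1) \times \{x_0\}$.

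The hard part will be constructing $H$ with the two prescribed equalities on disjoint closed sets, with continuous dependence on the parameter $x \in N'$, and respecting the WCS-stratification of $F$. The WCS-hypothesis is exactly what makes this work: since each point of $F$ has a local conic chart $\mathbb{R}^k \times K(\Sigma)$ compatible with its stratification, one can build $H$ inductively along the strata of $F$ meeting the collar $\overline{V_1'} \setminus V_1$, interpolating between $\Phi$ and the identity using the conic chart structure, and the parametric version with respect to $x$ is handled by the same stratum-by-stratum engulfing carried out with continuous dependence on the parameter, as in Siebenmann \cite{Sie}.
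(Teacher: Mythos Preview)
The paper does not supply its own proof of this theorem; it is simply quoted from Siebenmann \cite{Sie} as a black box, so there is nothing in the paper to compare your argument against.

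That said, your outline is the standard route and matches how Siebenmann himself proceeds: reduce to a transition map $\Phi = \varphi_2^{-1}\circ\varphi_1$ on the overlap that is the identity over $x_0$, then invoke the parametric stratified isotopy extension theorem to push $\Phi$ to a fiber-preserving homeomorphism $H$ of $U_2\times N'$ that equals $\Phi$ near $A_1\cap U_2$ and the identity away from a slightly larger set, and finally glue $\varphi_1$ with $\varphi_2\circ H$. The WCS hypothesis is used exactly where you say, to run the stratum-by-stratum engulfing that underlies isotopy extension. Your sketch is correct; the only comment is that the sentence ``the hard part will be constructing $H$'' is doing all the work, and in a self-contained write-up you would either cite Siebenmann's First Isotopy Extension Theorem explicitly at that point or reproduce the inductive cone-chart argument in detail.
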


\begin{theorem}[\cite{Sie}]
Let $p: E \to X$ be topological submersion.
We assume that $p$ is proper and all fibers of $p$ are WCS-spaces. 
Then $p$ is a topological fiber bundle over $p(E)$.
\end{theorem}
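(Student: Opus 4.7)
The plan is to reduce this global statement to the local Union Lemma (Theorem \ref{union lemma}) via a finite covering argument that uses properness in an essential way. Fix $x_0 \in p(E)$ and set $F := p^{-1}(x_0)$; properness of $p$ makes $F$ compact. By the definition of topological submersion, every $y \in F$ admits an open neighborhood $W_y$ in $F$ and a product chart $\varphi_y : W_y \times N_y \to E$ about $W_y$ for $p$.

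First I would extract finite data. Compactness of $F$ gives a finite subcover $W_1, \ldots, W_k$ of the $W_y$'s with corresponding product charts $\varphi_i : W_i \times N_i \to E$. A standard shrinking argument (which is legitimate since WCS-sets are metrizable, hence normal) then produces compact subsets $A_i \subset W_i$ with $F = A_1 \cup \cdots \cup A_k$. This is the setup under which the Union Lemma applies.

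Next I would iterate the Union Lemma. Apply Theorem \ref{union lemma} to $\varphi_1$ and $\varphi_2$ on the pair $(A_1, A_2)$ to obtain a product chart $\varphi^{(2)} : W^{(2)} \times N^{(2)} \to E$ with $W^{(2)}$ an open neighborhood of $A_1 \cup A_2$ in $F$. Apply the Lemma again to $\varphi^{(2)}$ and $\varphi_3$ on $(A_1 \cup A_2, A_3)$, and continue. After $k-1$ steps one arrives at a product chart $\varphi : W \times N \to E$ with $W$ an open subset of $F$ containing $A_1 \cup \cdots \cup A_k = F$, so $W = F$. Thus $\varphi : F \times N \to E$ is an open embedding commuting with the projections onto $N$. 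Shrinking $N$ to a smaller neighborhood $N' \ni x_0$ and using properness of $p$ to rule out fiber points outside the image, one checks that $\varphi(F \times N') = p^{-1}(N')$; this gives the local trivialization of $p$ near $x_0$. Doing this at every $x_0 \in p(E)$ yields the topological fiber bundle structure.

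The main obstacle is the iteration step: one must verify that the Union Lemma can indeed be applied repeatedly, since after each step the compact set $A_1 \cup \cdots \cup A_j$ is produced together with a product chart over an open neighborhood of it, and this combined data must still fit the hypotheses for the next application with $(A_{j+1}, W_{j+1}, \varphi_{j+1})$. The fact that WCS-spaces form a class closed under the operations appearing in the Union Lemma is essential here. A secondary technical point is the final passage from a product neighborhood of $F$ to a genuine saturated trivialization $\varphi : F \times N' \to p^{-1}(N')$; this requires properness (to confine preimages) together with the observation that the product structure of $\varphi$ already covers $F \times \{x_0\}$, so a standard argument about open embeddings of compact sets produces the desired $N'$.
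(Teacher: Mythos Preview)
The paper does not prove this theorem; it is quoted from Siebenmann \cite{Sie} without proof. So there is no ``paper's own proof'' to compare against. Your proposal is a faithful sketch of the standard argument (which is indeed Siebenmann's): cover the compact fiber by finitely many product charts, shrink to compact sets, and iterate the Union Lemma to obtain a single product chart over the whole fiber; then use properness to saturate. The iteration is unproblematic since after each step you again have a product chart about an open neighborhood of a compact set in $F$, which is exactly the input the Union Lemma requires. The only point worth tightening is the last step: once $\varphi:F\times N\to E$ is an open embedding with $p\circ\varphi$ the projection and $\varphi|_{F\times\{x_0\}}=\mathrm{id}_F$, the set $p^{-1}(N)\setminus\varphi(F\times N)$ is closed in $p^{-1}(N)$ and misses $F$, so by properness its image under $p$ is closed in $N$ and misses $x_0$; take $N'$ in the complement.
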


We provide the following lemma that will be used in Section \ref{proof of 2-dim boundary}.
\begin{lemma} \label{union lemma corollary}
Let $f : E \to [0,1]$ be a fiber bundle, and the fiber $F := f^{-1}(0)$ be a WCS-space.
Let $U \subset F$ be an open subset, and $A \subset U$ be a closed subset.
Suppose that $\varphi : U \times [0,1] \to E$ is a product chart about $U$ for $f$.
Then there exists a product chart $\chi : F \times [0,1] \to E$ such that 
\[
\chi = \varphi \text{ on } A \times [0,1].
\]
In particular, $E - \varphi (A \times [0,1])$ is homeomorphic to 
$(F - A) \times [0,1]$.
\end{lemma}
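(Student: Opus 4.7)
The plan is to apply the Union Lemma (Theorem \ref{union lemma}) iteratively to extend the given product chart $\varphi$ to a global product chart on all of $F$. Since $f : E \to [0,1]$ is a fiber bundle over a compact contractible interval, each $x \in F$ admits a product chart $\psi_x : V_x \times [0,1] \to E$ over the entire base, obtained by patching a finite family of local trivializations along $[0,1]$. In the applications (and implicitly here) $F$ is compact, so we fix a finite open cover $\{U, V_{x_1}, \dots, V_{x_k}\}$ of $F$ and choose compact sets $B_0 \supset A$ with $B_0 \subset U$, and $B_j \subset V_{x_j}$ for $j \geq 1$, such that $\bigcup_{j=0}^{k} B_j = F$.

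Starting from $\psi_0 := \varphi$, I would apply the Union Lemma inductively: at step $j$, combine the product chart constructed at step $j-1$ (defined on an open neighborhood of $B_0 \cup \cdots \cup B_{j-1}$ and agreeing with $\varphi$ near $A \times \{0\}$) with $\psi_{x_j}$ (defined near $B_j$). The Union Lemma produces a new product chart defined near $B_0 \cup \cdots \cup B_j$, and the agreement clause with $\psi_0$ near $B_0 \times \{0\}$ is preserved at each step. After $k$ steps we obtain a product chart $\chi_0 : F \times [0,1] \to E$. The map $\chi_0$ restricts to a homeomorphism $F \times \{0\} \to F$, and on each fiber level $t \in [0,1]$ its image $\chi_0(F \times \{t\})$ is open in $f^{-1}(t) \approx F$ and compact (hence closed), so by connectedness of the components of $F$ the map $\chi_0$ is surjective fiber-by-fiber, and therefore a homeomorphism onto $E$.

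The remaining task is to modify $\chi_0$ so that it equals $\varphi$ on all of $A \times [0,1]$ rather than merely near $A \times \{0\}$. Consider the fiber-preserving self-homeomorphism $h := \varphi^{-1} \circ \chi_0$ of a neighborhood $W$ of $A \times [0,1]$ in $F \times [0,1]$; this is the identity on an open neighborhood of $A \times \{0\}$, i.e. $h$ is an isotopy of open sets of $F$ that is trivial at $t = 0$ near $A$. Using a collar of $A \times \{0\}$ in $A \times [0,1]$ together with the isotopy extension principle for fiber-preserving homeomorphisms of $F \times [0,1]$, one builds a fiber-preserving self-homeomorphism $H : F \times [0,1] \to F \times [0,1]$ that equals $h^{-1}$ on an open neighborhood of $A \times [0,1]$ and the identity outside $W$. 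Then $\chi := \chi_0 \circ H$ is the desired chart, and the \emph{in particular} statement follows by restricting the homeomorphism $\chi$ to $(F - A) \times [0,1]$.

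The main obstacle is this last step: the Union Lemma only guarantees agreement near $A \times \{0\}$, whereas the conclusion demands agreement on the whole $A \times [0,1]$. Promoting the local agreement to global agreement requires a topological isotopy-extension argument for fiber-preserving homeomorphisms, and it is here that the WCS-structure of $F$ (which permits such extensions in the topological category) and the compactness of $F$ are used essentially.
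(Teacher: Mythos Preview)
Your strategy differs from the paper's in an essential way, and the difference is exactly where your ``main obstacle'' arises.

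The paper works in the \emph{base} direction rather than the fiber direction. For each $t \in [0,1]$ it applies the Union Lemma once---merging the given chart $\varphi$ (about $A$) with a local trivialization of $f$ near $t$ (about all of $F$)---to obtain a product chart $\psi^{(t)} : F \times N_t \to E$ over a neighborhood $N_t$ of $t$. Because the Union Lemma guarantees $\psi^{(t)} = \varphi$ near $A \times \{t\}$ and $A$ is compact, one shrinks $N_t$ so that $\psi^{(t)} = \varphi$ on all of $A \times N_t$. A Lebesgue-number argument covers $[0,1]$ by finitely many intervals $I_k \subset N_{t_k}$, and the charts are glued along the base by the explicit rule $\chi^k_t = \psi^k_t \circ (\psi^k_{k/n})^{-1} \circ \chi^{k-1}_{k/n}$. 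A one-line induction shows this composition restricts to $\varphi_t$ on $A$ at every stage, so the final chart agrees with $\varphi$ on $A \times [0,1]$ \emph{by construction}---no isotopy extension is ever needed.

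By contrast, you iterate the Union Lemma over a cover of the \emph{fiber} $F$ while anchored at the single base point $t = 0$. This is precisely why you only obtain agreement near $A \times \{0\}$ and must then invoke an isotopy extension theorem for fiber-preserving homeomorphisms of WCS-spaces. That step does not follow from the tools already on the table; it would require a separate result to be stated and justified. There is also a secondary gap: the Union Lemma as stated may shrink the base neighborhood $N$, so your iterated construction of $\chi_0$ need not be defined over all of $[0,1]$ without further argument. The paper's organization sidesteps both issues by placing the gluing in the base direction, where the agreement with $\varphi$ on $A$ is automatic at each step.
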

\begin{proof}
We may assume that $E = F \times [0,1]$ and $f$ is the projection onto $[0,1]$. 
Let $\varphi : U \times [0,1] \to F \times [0,1]$ be a product chart about $U$.
Using Union Lemma \ref{union lemma} and the compactness of $[0,1]$, 
we will construct an extension of $\varphi|_{A \times [0,1]}$ to a product chart defined on $F \times [0,1]$.

By Union Lemma \ref{union lemma}, 
for any $t \in [0,1]$, there exist an open neighborhood $N_t$ of $t$ in $[0,1]$, 
and a product chart 
\[
\psi^{(t)} : F \times N_t \to E
\]
such that 
\[
\psi^{(t)} |_{A \times N_t} = \varphi |_{A \times N_t}.
\]
By the Lebesgue number lemma, there is $n \in \mathbb N$ such that,
setting $I_k := [k /n, (k+1) / n]$, $\{I_k\}_{k = 0, 1, \dots, n-1}$ is a refinement of an open covering $\{N_t\}_{t \in [0,1]}$ of $[0,1]$.
Namely, for $k = 0, 1, \dots, n-1$, there is $t_k \in [0,1]$ such that $I_k \subset N_{t_k}$.
Let us set 
\[
\psi^k := \psi^{(t_k)}|_{F \times I_k}.
\]
For $t \in I_k$, let us define a homeomorphism $\psi^k_t : F \to F$ by the equality:
\[
\psi^k (x, t) = (\psi^k_t (x), t).
\]
Gluing these local product chart $\psi^k$, we construct the required product chart $\chi$ as follows.
We inductively define a homeomorphism $\chi^k_t : F \to F$ by 
\[
\begin{array}{ll}
\chi^0_t = \psi^0_t                             &\text{ for } t \in I_0, \\
\chi^k_t = \psi^k_t \circ (\psi^k_{k/n})^{-1} \circ \chi^{k-1}_{k /n} 
&\text{ for } t \in I_k, k \ge 1.
\end{array}
\]
For $k = 0, 1, \dots, n-1$ and $(x, t) \in F \times I_k$, we define
\[
\chi (x ,t) := (\chi^k_t (x), t)
\]
One can easily check that 
\[
\chi = \varphi \text{ on } A \times [0,1].
\]
Namely, $\chi : F \times [0,1] \to E$ satisfies the conclusion of the lemma.
\end{proof}

\subsection{Differentiable structures of Alexandrov spaces}

Otsu and Shioya \cite{OS} proved that any Alexandrov space has a differential structure and a Riemannian structure in a weak sense.

\begin{definition}[\cite{Per DC}] \upshape \label{DC^1-chart}
Let $U \subset M^n$ be an open subset of an Alexandrov space $M$.
A locally Lipschitz function $f : U \to \mathbb{R}$ is called 
a {\it DC-function} if for any $x \in U$ there exist two (semi-)concave functions 
$g$ and $h$ on some neighborhood $V$ of $x$ in $U$ such that 
$f = g -h$ on $V$. 
A locally Lipschitz map $f = (f_1, \dots, f_m) : U \to \mathbb{R}^m$ is 
called a {\it DC-map} if each $f_i$ is a $DC$-function.
\end{definition}

In \cite[\S 2.6]{KMS}, they formulated a general concept of structure 
on topological spaces.
\begin{definition}[\cite{KMS}] \upshape
For an integer $n \geq 0$, we consider the family 
\[
\mathcal{F} = \{\,
\mathcal{F}(U; A) \,|\,
U \subset \mathbb{R}^n \text{ is an open subset and } A \subset U \text{ a subset}\,
\}
\]
such that 
\begin{itemize}
\item[(i)] each $\mathcal{F}(U; A)$ is a class of maps from $U$ to $\mathbb{R}^n$, 
\item[(ii)] if $A \supset B$, then $\mathcal{F}(U; A) \subset \mathcal{F}(U; B)$,
\item[(iii)] if $f \in \mathcal{F}(U; A)$, $g \in \mathcal{F}(V; B)$, and $f(U) \subset V$, 
then 
\[
g \circ f \in \mathcal{F}(U; A \cap f^{-1}(B)).
\]
\end{itemize}
The following are examples of $\mathcal{F} = \{\mathcal{F}(U; A)\}$.
\vspace{0.5em}\\
(Class $C^1$) 
Let $C^1(U; A)$ be the class of maps from $U$ to $\mathbb{R}^n$ which are $C^1$ on 
$A$, i.e., they are differentiable on $A$ and their derivatives are continuous on $A$.
\vspace{.5em}\mbox{}\\
(Class $DC$)
Let $DC(U; A)$ be the class of maps from $U$ to $\mathbb{R}^n$ which are $DC$ on 
some open subset $O \subset \mathbb{R}^n$ with $A \subset O \subset U$.
\vspace{0.5em}

Let $X$ be a paracompact Hausdorff space, $Y \subset X$ a subset, 
and $\mathcal{F}$ as above.
We call a pair $(U, \varphi)$ a {\it local chart of} $X$ 
if $U$ is an open subset of $X$ and if $\varphi$ is a homeomorphism from $U$ to 
an open subset of $\mathbb{R}^n$.
A family $\mathcal{A} = \{(U,\varphi)\}$ of local charts of $X$ is called an $\mathcal{F}$-{\it atlas on} $Y \subset X$ if the following (i) and (ii) hold:
\begin{itemize}
\item[(i)]
$Y \subset \bigcup_{(U,\varphi) \in \mathcal{A}} U$.
\item[(ii)]
If two local charts $(U,\varphi)$, $(V, \psi) \in \mathcal{A}$ satisfy $U \cap V \neq \emptyset$, then
\[
\psi \circ \varphi^{-1} \in \mathcal{F}(\varphi(U \cap V); \varphi(U \cap V \cap Y)).
\]
\end{itemize}
Two $\mathcal{F}$-atlases $\mathcal{A}$ and $\mathcal{A}'$ on $Y \subset X$ are said to be 
{\it equivalent} if $\mathcal{A} \cup \mathcal{A}'$ is also an $\mathcal{F}$-atlas on $Y \subset X$.
We call each equivalent class of $\mathcal{F}$-atlases on $Y \subset X$ an $\mathcal{F}$-{\it structure} on $Y \subset X$.

Assume that $Y = X$.
Then, an $\mathcal{F}$-structure on $Y \subset X$ is simply called an $\mathcal{F}$-{\it structure on} $X$. 
If there is an $\mathcal{F}$-structure on $X$ then, $X$ is a topological manifold.
We call a space equipped with an $\mathcal{F}$-structure an $\mathcal{F}${\it -manifold}.
Notice that $\mathcal{F}$-manifolds for $\mathcal{F} = C^1$ are 
nothing more than 
$C^1$-differentiable manifolds in the usual sense.
\end{definition}

Let $M^n$ be an $n$-dimensional Alexandrov space. 
Fix a number $\de > 0$ with $\de \ll 1/n$.
By Theorem \ref{around regular point}, for any $x \in M - S_{\de}(M)$, we obtain a local chart $(U, \tilde{\varphi})$, $U = U(x, r)$.
The family $\mathcal{A}_0$ of all the $(U, \tilde{\varphi})$'s on $M$ induces:

\begin{theorem}[\cite{OS}]
There exists a $C^1$-structure on $M - S(M) \subset M$ containing $\mathcal{A}_0$.
\end{theorem}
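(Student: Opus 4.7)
The plan is to verify that $\mathcal{A}_0$ satisfies the $C^1$-compatibility condition on overlaps restricted to $M-S(M)$: for any two charts $(U, \tilde\varphi), (V, \tilde\psi) \in \mathcal{A}_0$ with $U \cap V \neq \emptyset$, the transition $\tilde\psi \circ \tilde\varphi^{-1}$ must be $C^1$ on $\tilde\varphi(U \cap V \cap (M-S(M)))$. By Theorem \ref{around regular point}, each $\tilde\varphi$ is a $(\theta_n(\de) + \theta_n(r/\ell))$-almost isometry onto an open subset of $\mathbb{R}^n$, hence in particular a bi-Lipschitz homeomorphism; consequently each transition is already a bi-Lipschitz map between open subsets of $\mathbb{R}^n$, and the task is to upgrade this to $C^1$.

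The key analytic input is the differentiability of the averaged coordinate functions $\tilde d_{p_\alpha}(x) = \frac{1}{\mathcal{H}^n(B(p_\alpha,\e))} \int_{B(p_\alpha,\e)} d(y,x)\, d\mathcal{H}^n(y)$. Each such function is semiconcave on a neighborhood of $x$ disjoint from $B(p_\alpha,\e)$, being a convex combination of the semiconcave distance functions $d(y,\cdot)$, and hence possesses a Perelman gradient $\nabla_x \tilde d_{p_\alpha} \in T_x M$. At a genuinely regular point $x \in M-S(M)$ the tangent cone $T_xM$ is isometric to $\mathbb{R}^n$, and combining the first variation formula $d_x d_y(\xi) = -\cos \angle(\uparrow_x^y, \xi)$ with linearity of the differential on $T_xM$ yields
\[
\nabla_x \tilde d_{p_\alpha} = \frac{1}{\mathcal{H}^n(B(p_\alpha,\e))} \int_{B(p_\alpha,\e)} (-\uparrow_x^y)\, d\mathcal{H}^n(y),
\]
where the integral is interpreted in the Euclidean tangent cone $T_xM$. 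I would then show that this vector depends continuously on $x$. Although $x \mapsto \uparrow_x^y$ can jump when $y$ admits more than one minimal geodesic from $x$, Perelman's first variation analysis shows that this exceptional set of $y$'s has $\mathcal{H}^n$-measure zero; pointwise almost-everywhere continuity of the integrand, uniform boundedness by $1$, and the dominated convergence theorem (applied in the almost-Euclidean coordinates provided by Theorem \ref{around regular point}) then yield continuity of $x \mapsto \nabla_x \tilde d_{p_\alpha}$.

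Once continuity of the $n$ gradient vectors $\nabla_x \tilde d_{p_1}, \ldots, \nabla_x \tilde d_{p_n}$ is established, the differential $d\tilde\varphi_x : T_x M \to \mathbb{R}^n$ varies continuously in $x$. Since Theorem \ref{around regular point} guarantees that this differential is close to an isometry and therefore invertible, the classical inverse function theorem gives that $\tilde\varphi^{-1}$ is $C^1$ on the regular locus, and the chain rule then yields $C^1$-regularity of $\tilde\psi \circ \tilde\varphi^{-1}$. The hard part will be the continuity claim in the previous paragraph: the tangent cones $T_x M$ at distinct nearby regular points are formally distinct metric spaces, so to compare the vectors $\nabla_x \tilde d_{p_\alpha}$ and $\nabla_{x'} \tilde d_{p_\alpha}$ one must identify $T_x M$ and $T_{x'}M$ with $\mathbb{R}^n$ coherently via the almost-Euclidean charts of Theorem \ref{around regular point}, and carefully track the error terms $\theta_n(\de) + \theta_n(r/\ell)$ during this identification as $x$ varies. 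This bookkeeping is the technical heart of the Otsu--Shioya argument.
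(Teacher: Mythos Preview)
The paper does not prove this statement; it is quoted as a result of Otsu--Shioya \cite{OS} and simply cited without argument. There is therefore nothing in the paper to compare your proposal against.

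That said, your sketch is a faithful outline of the Otsu--Shioya strategy, and you correctly identify the crux: continuity of $x \mapsto \nabla_x \tilde d_{p_\alpha}$ on $M - S(M)$, which requires both the Euclidean structure of $T_xM$ at regular points and a measure-zero argument for cut-locus directions. One small logical wrinkle: invoking ``the classical inverse function theorem'' to conclude that $\tilde\varphi^{-1}$ is $C^1$ is not quite the right framing, since $\tilde\varphi^{-1}$ has codomain $M$, which does not yet carry a $C^1$ structure. The cleaner statement is that one directly shows each component $\tilde d_{q_\beta} \circ \tilde\varphi^{-1} : \tilde\varphi(U\cap V) \to \mathbb{R}$ has continuous partial derivatives; this follows once you know that the directional derivatives of $\tilde d_{q_\beta}$ along the (almost-orthonormal) frame $\nabla \tilde d_{p_1}, \ldots, \nabla \tilde d_{p_n}$ vary continuously, which is exactly the continuity-of-gradients claim you isolate.
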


\begin{theorem}[\cite{Per DC}]
There exists a $DC$-structure on $M - S_{\de}(M) \subset M$ containing $\mathcal{A}_0$.
\end{theorem}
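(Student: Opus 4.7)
The plan is to verify that the atlas $\mathcal{A}_0$ of Burago-Gromov-Perelman charts $(U, \tilde\varphi)$ from Theorem \ref{around regular point} is a $DC$-atlas on $R_{n,\de}(M) = M - S_{n,\de}(M) \subset M$. Axiom (i) of the definition of an $\mathcal{F}$-atlas is immediate, because every point of $R_{n,\de}(M)$ is $(n,\de)$-strained and hence the center of some chart in $\mathcal{A}_0$. The content lies in axiom (ii): for any two charts $(U, \tilde\varphi)$ and $(V, \tilde\psi)$ in $\mathcal{A}_0$ with $U \cap V \neq \emptyset$, the transition $\tilde\psi \circ \tilde\varphi^{-1}$ must be a $DC$-map on the open subset $\tilde\varphi(U \cap V) \subset \mathbb{R}^n$.

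First I would establish that each coordinate function $\tilde\varphi_i$ is semi-concave on $U$, and hence a $DC$-function. For any $y \in B(p_i^+, \e)$, along any segment $\gamma$ in $U$ the composition $d(y, \gamma(t))$ is bounded above by the corresponding function in the model $\kappa$-plane, whose second derivative is uniformly bounded so long as $d(y,\cdot)$ stays bounded below by a positive constant on $U$; this is precisely the content of Toponogov comparison. The estimate is uniform in $y \in B(p_i^+, \e)$, so averaging preserves the semi-concavity constant, showing $\tilde\varphi_i$ is semi-concave on $U$. Moreover, by Theorem \ref{around regular point}, after shrinking $U$ the map $\tilde\varphi$ is a $(\theta_n(\de) + \theta_n(r/\ell))$-almost isometry, in particular a bi-Lipschitz homeomorphism onto its image.

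The crucial analytic step is the following fact, which is the technical heart of \cite{Per DC}: \textbf{Key Lemma.} \emph{If $F : \Omega \to \Omega'$ is a bi-Lipschitz homeomorphism between open subsets of $\mathbb{R}^n$ whose components are $DC$-functions, then $F^{-1}$ is a $DC$-map on $\Omega'$.} Granted this, axiom (ii) follows: work on a common refinement $W \subset U \cap V$ and use the chart $(W, \tilde\varphi)$ to read each $\tilde\psi_j$ as a function on the Euclidean open set $\tilde\varphi(W)$. The $j$-th component of $\tilde\psi \circ \tilde\varphi^{-1}$ is $\tilde\psi_j \circ \tilde\varphi^{-1}$, a composition of a $DC$-function (the semi-concave $\tilde\psi_j$, read back by $\tilde\varphi^{-1}$) with the $DC$-map $\tilde\varphi^{-1}$ given by the Key Lemma. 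Since the class $DC$ is closed under composition (property (iii) of the defining family), the transition $\tilde\psi \circ \tilde\varphi^{-1}$ is $DC$, completing the verification. The equivalence class of $\mathcal{A}_0$ is the required $DC$-structure.

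The main obstacle is the Key Lemma itself. Its proof in \cite{Per DC} combines Alexandrov's almost-everywhere twice differentiability of semi-concave (hence $DC$) functions with the lower bound on $|\det dF|$ coming from the bi-Lipschitz property, and then reconstructs each component of $F^{-1}$ as a difference of two semi-concave functions in a way that is uniform over $\Omega'$. The remaining ingredients --- semi-concavity of averaged distance functions via comparison, the almost-isometry estimate for $\tilde\varphi$, and the composition stability of $DC$-maps --- are already available from Theorem \ref{around regular point} and the standard comparison geometry recalled in this preliminary section, so the proof reduces cleanly to importing the Key Lemma as a black box.
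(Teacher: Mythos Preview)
The paper does not prove this theorem; it is stated with a citation to \cite{Per DC} and no argument is given. So there is no ``paper's own proof'' to compare against, and any assessment must be of your sketch on its own merits as a summary of Perelman's argument.

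Your outline assembles the right ingredients (semi-concavity of the averaged distance coordinates, the almost-isometry/bi-Lipschitz property of $\tilde\varphi$, stability of the $DC$ class under composition), but the way you invoke the ``Key Lemma'' is circular. The Key Lemma as you state it concerns a bi-Lipschitz homeomorphism $F:\Omega\to\Omega'$ between open subsets of $\mathbb{R}^n$ with $DC$ components. You then try to apply it to $\tilde\varphi^{-1}$, but $\tilde\varphi^{-1}$ has target in $M$, not in $\mathbb{R}^n$, so the lemma is inapplicable. Equally, the sentence ``$\tilde\psi_j\circ\tilde\varphi^{-1}$ is a composition of a $DC$-function (the semi-concave $\tilde\psi_j$, read back by $\tilde\varphi^{-1}$) with the $DC$-map $\tilde\varphi^{-1}$'' simply asserts the conclusion: ``$\tilde\psi_j$ read back by $\tilde\varphi^{-1}$'' \emph{is} $\tilde\psi_j\circ\tilde\varphi^{-1}$, the very function whose $DC$ property is in question, and the second factor is not a map between Euclidean domains at all. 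The composition axiom (iii) for the class $DC(U;A)$ only applies to maps between Euclidean open sets, so neither factor is of the required type.

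What Perelman actually proves in \cite{Per DC} is the direct statement you are missing: if $f$ is $\lambda$-concave on a neighbourhood in $M$, then $f\circ\tilde\varphi^{-1}$ is a $DC$-function on the Euclidean image. This does not follow formally from any inverse-function lemma; it requires comparing the behaviour of $f$ along Euclidean segments (which $\tilde\varphi^{-1}$ maps to curves in $M$ that are only \emph{close} to geodesics) with its known concavity along genuine geodesics, and this is where the almost-isometry estimate from Theorem~\ref{around regular point} does the real work. Once that is established for each coordinate $\tilde\psi_j$, the transition $\tilde\psi\circ\tilde\varphi^{-1}$ has $DC$ components and axiom~(ii) follows; your Key Lemma is then not needed at all.
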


Thus, $M - S_{\de}(M)$ is a $DC^1$-manifold with singular set $S(M)$ in the following sense.

\begin{definition}[{\cite[\S 5]{KMS}}] \upshape
A paracompact topological manifold $V$ with a subset $S \subset V$ is said to be a $DC^1$-{\it manifold with singular set} $S$ if $V$ possesses a $DC$-atlas $\mathcal{A}$ on $V$ which is also a $C^1$-atlas on $V - S \subset V$.
We say that each local chart compatible with the atlas $\mathcal{A}$ is a $DC^1$-{\it local chart}.

Let $V'$ be an another $DC^1$-manifold with singular set $S'$.
A map $f : V \to V'$ is called a $DC^1$-{\it map} if for any $DC^1$-local chart $(U', \varphi')$ of $V'$, $(f^{-1}(U'), \varphi' \circ f)$ is a $DC^1$-local chart of $V$.
A homeomorphism $f : V \to V'$ is called a $DC^1$-{\it homeomorphism} if $f$ and $f^{-1}$ are $DC^1$-maps.
\end{definition}

Using Otsu's method \cite{Otsu}, Kuwae, Machigashira and Shioya \cite{KMS} proved that an almost regular Alexandrov space has a smooth approximation by a Riemannian manifold.

\begin{theorem}[\cite{KMS}, cf. \cite{Otsu}] \label{smooth approximation}
For any $n \in \mathbb{N}$, there exists a positive number $\e_n > 0$ depending only on $n$ satisfying the following:
If $C$ is a compact subset in an $n$-dimensional Alexandrov space $M$ with curvature $\geq -1$, and it is $\e$-strained for $\e \leq \e_n$, then there exist an open neighborhood $U(C)$, a $C^{\infty}$-Riemannian $n$-manifold $N(C)$ with $C^{\infty}$-Riemannian metric $g_{N(C)}$, and a $\theta(\e)$-isometric $DC^{1}$-homeomorphism $f : U(C) \to N(C)$ such that $g_{N(C)} ( df(v), df(w) ) = \langle v, w \rangle + \theta(\e)$ for any $v, w \in \Sigma_x U(C)$ and $x \in U(C)$.
Here, $\langle \cdot, \cdot \rangle$ is the inner product of $T_x M$.
\end{theorem}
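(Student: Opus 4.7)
The approach is to construct $N(C)$ by pulling back a Euclidean structure through the averaged distance charts provided by Theorem \ref{around regular point}, gluing them into a $DC$-Riemannian metric on a neighborhood $U(C) \supset C$, and then smoothing while preserving the infinitesimal approximation to the Alexandrov inner product.

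First, for $\e \leq \e_n$ sufficiently small, every $x \in C$ is $\e$-strained, and Theorem \ref{around regular point} produces an averaged-distance map $\tilde\varphi_x$ on a small ball $U_x$ that is a $\theta_n(\e)$-almost isometry onto an open subset of $\mathbb{R}^n$. Compactness of $C$ yields a finite subcover $\{(U_i, \tilde\varphi_i)\}_{i=1}^N$, whose union $U(C) := \bigcup_i U_i$ is the desired neighborhood. By Perelman's $DC$-structure theorem the transition maps $\tilde\varphi_j \circ \tilde\varphi_i^{-1}$ lie in $DC$ and are $C^1$ off $S(M)$, so the $\tilde\varphi_i$ form a $DC^1$-atlas on $U(C)$ with singular set $S(M) \cap U(C)$. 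Pulling back the Euclidean inner product through each chart and gluing by a $DC$-partition of unity $\{\rho_i\}$, one obtains a $DC$-Riemannian tensor $g := \sum_i \rho_i \cdot \tilde\varphi_i^{\ast} g_{\mathrm{Eucl}}$ on $U(C)$.

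The key infinitesimal estimate follows from the almost-isometry property of $\tilde\varphi_i$: by the first-variation formula for distance functions, $d\tilde\varphi_i(\xi) \cdot d\tilde\varphi_i(\eta) = \langle \xi, \eta\rangle + \theta(\e)$ for all $\xi, \eta \in \Sigma_x$, $x \in U_i$; convex-combining with $\rho_i$ preserves this bound, so already $g(\xi, \eta) = \langle \xi, \eta\rangle + \theta(\e)$. One then applies Otsu's mollification: convolve the coefficient functions of $g$ against a smooth kernel of scale $\sigma \ll \e$ in each chart, re-patch via the $\rho_i$, and pass to the $C^\infty$ metric $g_{N(C)}$ so obtained. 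Since $g$ is Lipschitz in the chart coordinates, the mollified metric differs from $g$ pointwise by $\theta(\sigma) = \theta(\e)$, so the infinitesimal approximation to $\langle \cdot, \cdot \rangle$ persists. The map $f : U(C) \to N(C)$ is the identity on the underlying topological manifold, which is a $DC^1$-homeomorphism since mollification leaves the $DC^1$-structure unchanged.

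The main obstacle is controlling the mollification near the singular set $S(M)$ while keeping the $\theta(\e)$-closeness uniform for \emph{all} directions $v, w \in \Sigma_x$, including at singular points where $\Sigma_x$ is not a round sphere. Because the Alexandrov inner product is defined through geodesic angles rather than a smooth bilinear form, one must show that the $\e$-strainer geometry forces the angle deficits between triples of tangent directions to be $\theta(\e)$-small uniformly on $U(C)$, so that convolution in the Euclidean chart coordinates does not destroy the infinitesimal isometry property on $\Sigma_x$. This is precisely where the codimension bound $\dim_H S(M) \leq n-1$ and the explicit structure of the averaged distance charts from Theorem \ref{around regular point} are essential.
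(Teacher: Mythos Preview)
Your outline misses the central construction and would not produce a $C^{\infty}$ Riemannian manifold $N(C)$. You take $N(C)$ to be $U(C)$ itself with a mollified metric, and $f$ to be the identity. But $U(C)$ carries only a $DC^{1}$-structure: the transition maps $\tilde\varphi_j\circ\tilde\varphi_i^{-1}$ between the averaged distance charts are merely $DC$, not smooth, and they satisfy the cocycle condition only approximately. Mollifying the metric coefficients in each chart separately does not give a globally well-defined tensor unless the transitions are smooth, and re-patching with a $DC$ partition of unity destroys whatever smoothness you gained. So you end up with neither a $C^{\infty}$ manifold nor a $C^{\infty}$ metric.

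The paper's construction (reviewed inside the proof of Theorem~\ref{flow theorem}, following \cite{Otsu} and \cite{KMS}) proceeds quite differently. One first records that the chart images in $E_j\cong\mathbb{R}^n$ are related by genuine Euclidean isometries $F_j^k$ up to error $\theta(\e)$ (Lemma~\ref{lemma 5 Otsu}). The decisive step is Lemma~\ref{lemma 6 Otsu}: one inductively perturbs the $F_j^k$ to smooth $\theta(\e)$-almost isometries $\tilde F_j^k$ satisfying the \emph{exact} cocycle relation $\tilde F_j^{\,l}=\tilde F_j^k\circ\tilde F_k^{\,l}$. This is what allows one to form $N(C):=\bigsqcup_j V_j/\!\sim$ as a genuine $C^{\infty}$ manifold with atlas $\{(\tilde V_j,\tilde f_j)\}$, and then to define the smooth metric $g_N$ by \eqref{Riemannian metric}. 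The map $f:U(C)\to N(C)$ is built as an inductive limit of corrected charts $f^{(j)}$ (not the original $f_j$), and the estimates \eqref{error 1}--\eqref{error 2} give the infinitesimal isometry property. Your proposal skips exactly this cocycle-perturbation step, which is where all the work lies; the ``mollification near $S(M)$'' that you flag as the main obstacle is in fact not the issue at all.
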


\begin{remark} \upshape
Otsu \cite{Otsu} proved this theorem for any Riemannian manifold $M$ with a lower sectional curvature bound and having small excess.
\end{remark}

We will review the proof of Theorem \ref{smooth approximation} in the proof of Theorem \ref{flow theorem} in Section \ref{proof of flow theorem}.
It is important and needed in our proof of Theorem \ref{flow theorem}.

\subsection{Generalized Seifert fiber spaces}
To describe results obtained in the present paper 
we define the notion of a generalized Seifert fiber space. 

\begin{definition}\upshape \label{generalized Seifert fiber space}
Let $M^3$ and $X^2$ be a three-dimensional and two-dimensional topological orbifolds possibly with boundaries, respectively. 
A continuous map $f : M \to X$ is called a {\it generalized Seifert fibration} if 
there exists a family $\{c_x\}_{x \in X}$ of subset of $M$ such that the following properties holds:
\begin{itemize}
\item The index set of $\{c_x\}$ is $X$. 
Each $x \in X$, $f^{-1}(x) = c_x$.
\item Each $c_x$ is homeomorphic to a circle or a bounded closed interval.
$c_x$ are disjoint and 
\[
\bigcup_{x \in X} c_x = M.
\]
\item For each $x \in X$, there exists a closed neighborhood $U_x$ of $x$ such that $U_x$ is homeomorphic to a disk, and putting $V_x := f^{-1} (U_x )$, $V_x$ satisfies the following.
\begin{itemize}
\item[(i)] If $c_x$ is topologically a circle, then $f|_{V_x}: V_x \to U_x$ is a Seifert fibered solid torus in the usual sense.

\item[(ii)] If $c_x$ is topologically a bounded closed interval, then there exist homeomorphisms $\tilde{\phi}_x :  V_x \to B(\pt)$ and $\phi_x:  U_x \to K_1(S_\pi^1)$, which preserve the structure of circle fibration with singular fiber. Namely the following diagram commutes.
\[
\begin{CD}
( V_x, c_x ) @> \tilde{\phi}_x >> ( B(\pt), p^{-1}(o) ) \\
@V f|_{V_x} V V                                 @ V V p V \\
( U_x, x ) @> \phi_x >> (K_1(S_\pi^1), o)
\end{CD}
\]
Here, $B(\pt) = S^1 \times D^2 / \mathbb{Z}_2$ is the topological orbifold defined after Theorem \ref{1-dim circle} and $p$ is a canonical projection. 
\end{itemize} 
\item If $\partial X$ has a compact component $C$, then there is a collar neighborhood $N$ of $C$ in $X$ such that $f|_{f^{-1}(N)}$ is a usual circle fiber bundle over $N$. 
\end{itemize}

We say that a three-dimensional topological orbifold $M$ is a {\it generalized Seifert fiber space over} $X$ if there exists a generalized Seifert fibration $f : M \to X$.
Each fiber $f^{-1}(x) = c_x$ of $f$ is often called an {\it orbit} of $M$. 
An orbit $c_x$ is called {\it singular} if $V_x$ is a usual Seifert solid torus of $(\mu, \nu)$-type with $\mu > 1$ or if $c_x$ is homeomorphic to an interval.

\end{definition}

\subsection{Soul Theorem from \cite{SY} with complete classification} 

In this subsection, we recall the soul theorem for open three-dimensional Alexandrov space of nonnegative curvature, obtained in \cite{SY}. 
And, we classify the geometry and topology of open three-dimensional Alexandrov spaces of nonnegative curvature having two-dimensional soul together with some new precise arguments.
The soul theorem is very important to determine the topology of a neighborhood around a singular point in a collapsing three-dimensional Alexandrov space.

\begin{definition} \upshape
Let $M^n$ be an $n$-dimensional noncompact Alexandrov space with nonnegative curvature.
For a ray $\gamma: [0,\infty) \to M$ in $M$, we define the {\it Busemann function} $b_{\gamma} : M \to \mathbb{R}$ {\it with respect to} $\gamma$ as follows:
\[
b_{\gamma} (x) := \lim_{t \to \infty} d(\gamma(t), x) - t
\]
for $x \in M$. 
Fix a point $p \in M$ and define the {\it Busemann function} $b : M \to \mathbb{R}$ {\it with respect to} $p$ by 
\[
b(x) := \inf_\gamma b_{\gamma} (x) 
\]
for $x \in M$. 
Here, $\gamma$ runs over all the rays emanating from $p$.
The Busemann functions $b_{\gamma}$ and $b$ are concave on $M$.

We denote by $C(0)$ the set of all points attaining the maximum value of $b$: 
\[
C(0) := b^{-1} (\max_{M} b).
\]
Since $b$ is concave, $C(0)$ is an Alexandrov space possibly with boundary of dimension less than $n$.
If $C(0)$ has no boundary, we call it a {\it soul} of $M$.
Inductively, if $C(k)$, $k \geq 0$, has the non-empty boundary, we define $C(k+1)$ the set of all points attaining the maximum value of the distance function $\dist_{\partial C(k)}$ from the boundary $\partial C(k)$: 
\[
C(k+1) := \dist_{\partial C(k)}^{-1} ( \max_{C(k)} \dist_{\partial C(k)} ).
\]
Since $\dist_{\partial C(k)}$ is concave on $C(k)$, $C(k+1)$ is also an Alexandrov space of dimension $< \dim C(k)$. 
Since $M$ has finite dimension, this construction stop, i.e., $\partial C(k) = \emptyset$ for some $k \geq 0$.
Then we call such $C(k)$ a {\it soul} of $M$.
\end{definition}

\begin{proposition}[{\cite{Per Alex II}, cf.\cite[\S 2]{Pet Semi}}] \label{Sharafutdinov retraction}
For any open Alexandrov space $M$ of nonnegative curvature and its soul $S$, there is a Sharafutdinov's retraction from $M$ to $S$.
In particular, $S$ is homotopic to $M$.
\end{proposition}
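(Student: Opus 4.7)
The plan is to follow the classical Sharafutdinov construction and port it to the Alexandrov setting using the gradient flow theory of Perelman--Petrunin for semiconcave functions, which is already available in this category (and cited in the paper).

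First, I would check that the ingredients of the soul construction are concave functions on the appropriate spaces. The Busemann function $b : M \to \mathbb{R}$ is concave on $M$ by the standard Toponogov argument, using that $M$ has curvature $\ge 0$. Each $C(k)$ is a totally convex subset of $M$, being the superlevel set of a concave function, and so inherits a structure of an Alexandrov space with curvature $\ge 0$. On $C(k)$, the function $\dist_{\partial C(k)}$ is concave: along any minimizing geodesic inside $C(k)$, the usual comparison for distance to an extremal subset of a nonnegatively curved space yields concavity.

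Second, I would apply the Perelman--Petrunin gradient flow. For any concave function $f$ on an Alexandrov space, there is a $1$-Lipschitz gradient flow $\Phi^{f}_{t}$ along which $f$ is nondecreasing, and each superlevel set $\{f \ge c\}$ is preserved. Using this I would construct the retraction $r : M \to S$ as a finite concatenation: flow by $\Phi^{b}$ to drive $M$ into the maximum set $C(0)$; then, working inside $C(0)$, flow by $\Phi^{\dist_{\partial C(0)}}$ to drive $C(0)$ into $C(1)$; iterate. Since the construction of souls terminates after finitely many steps at $S = C(k_0)$, the composition of these partial flows (with time reparameterized) yields a continuous map $r : M \to S$ that fixes $S$ pointwise at every stage. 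Concatenating the flow-homotopies gives a homotopy $H : M \times [0,1] \to M$ from $\mathrm{id}_M$ to the inclusion-composed retraction, relative to $S$, proving that $S$ is a deformation retract of $M$ and hence $S \simeq M$.

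The main obstacle is the transition step: the flow of $\Phi^{b}$ on $M$ reaches its maximum set $C(0)$ only asymptotically, so one must show that the asymptotic map $r_{0} := \lim_{t\to\infty} \Phi^{b}_{t}$ exists and is continuous as a map $M \to C(0)$, and then that the subsequent flows inside $C(k)$ can be glued with $r_{0}$ into a single continuous retraction defined on all of $M$. The standard way around this is to replace each concave function by a suitable truncation that attains its maximum in finite flow time, or to combine the Busemann function and the nested boundary-distance functions into a single concave functional on $M$ whose superlevel set is exactly $S$; the latter is the viewpoint adopted by Perelman and Petrunin (see \cite{Per Alex II}, \cite{Pet Semi}), which I would invoke. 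Once this packaging is in place, the homotopy equivalence $S \simeq M$ is an immediate corollary of the flow being a deformation retraction.
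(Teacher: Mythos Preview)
The paper does not supply its own proof of this proposition: it is stated with attribution to \cite{Per Alex II} and \cite[\S 2]{Pet Semi} and then used as a black box. Your sketch is a faithful outline of the standard argument in those references---concavity of the Busemann function and of $\dist_{\partial C(k)}$ on the nested convex sets, combined with the Perelman--Petrunin gradient flow for semiconcave functions---and you correctly flag the one genuine technical point (convergence of the flow to the maximum set and gluing across the stages). There is nothing to compare against in the paper itself; your proposal matches the approach of the cited sources.
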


\subsubsection{Soul Theorem} \label{subsec:soul}

We recall that a noncompact Alexandrov space without boundary is called open.
In this section, we state Soul Theorem for open three-dimensional Alexandrov spaces of nonnegative curvature obtained in \cite{SY}.
And we define examples of open three-dimensional Alexandrov spaces of nonnegative curvature which are not topological manifolds, and study those topologies.

First, we shell prove a rigidity result for the case that a soul has codimension one.
This is a generalization of \cite[Theorem 9.8(2)]{SY}.

\begin{theorem} \label{codim=1} 
Let $M$ be an $n$-dimensional open Alexandrov space and $S$ be a soul of $M$.
Suppose that $\dim S = n -1$ and $S$ has a one-normal point.
Let $B = B(S,t)$ be a metric ball around $S$ of radius $t > 0$.
Then, the metric sphere $\hat S := \partial B$ equipped with the induced intrinsic metric is an Alexandrov space of nonnegative curvature.
And, $\hat S$ has an isometric involution $\sigma$ such that $\hat S / \sigma$ is isometric to $S$ and $M$ is isometric to $\hat S \times \mathbb R / (x, t) \sim (\sigma(x), -t)$.
\end{theorem}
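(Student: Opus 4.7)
The plan is to establish the Alexandrov structure on $\hat S$ by convexity of the tube, then construct $\sigma$ and the global isometry by combining the Splitting Theorem with the rigidity furnished by the one-normal point. Since $S$ is a soul, it is totally convex in the nonnegatively curved space $M$, and hence the closed tube $B = B(S,t)$ is itself totally convex. Consequently the boundary $\hat S = \partial B$, equipped with its induced intrinsic metric, is a nonnegatively curved Alexandrov space by the standard result on boundaries of convex subsets (cf.\ \cite{Pet Appl}). This gives the first claim.

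For the involution, the hypothesis $\dim S = n-1$ forces the normal set $N_q S := \{\xi \in \Sigma_q M : \angle(\xi, \Sigma_q S) \ge \pi/2\}$ to contain at most two points at each $q \in S$, and when it contains two, they are antipodal in $\Sigma_q M$. A one-normal point is a point $p_0 \in S$ with $N_{p_0} S = \{\xi_0\}$. I first show that $\gamma_0(s) := \exp_{p_0}(s\xi_0)$ is a ray (since $\dist_S \circ \gamma_0(s) \equiv s$) and that its Busemann function $b_0$ is concave on $M$ with $b_0 \equiv 0$ on $S$ (by first variation combined with total convexity of $S$ and $\xi_0 \perp \Sigma_{p_0} S$). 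At each 2-normal $q \in S$ the two antipodal elements of $N_q S$ produce two rays $\gamma_q^{\pm}$ whose concatenation is a line through $q$, so the Splitting Theorem \ref{splitting theorem} gives a local isometric splitting $Y_q \times \mathbb R$ near $q$. I then define $\sigma : \hat S \to \hat S$ by $\exp_q(t\eta) \mapsto \exp_q(t\eta')$, with $\eta'$ the opposite element of $N_q S$ (so $\sigma$ fixes the point when $q$ is one-normal); the local splittings make $\sigma$ a locally isometric involution, and the one-normal point $p_0$ pins the sheeted structure globally, so that $\sigma$ is a well-defined isometry of $\hat S$ and the nearest-point projection $\pi : \hat S \to S$ realizes $\hat S/\sigma$ isometrically as $S$. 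Finally I define $F : \hat S \times \mathbb R \to M$ by $F(x,s) = \exp_{\pi(x)}(s\eta_x)$, with $\eta_x$ the normal labeling the sheet through $x$; this descends to the relation $(x,s) \sim (\sigma(x), -s)$, is surjective by completeness of the normal gradient flow, and is distance-preserving by the fiberwise splittings.

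The main obstacle is the global coherence of the two-sheeted structure: one must show that the rays $\gamma_q^{\pm}$ exist at \emph{every} $2$-normal point and that the assignment $q \mapsto \{\gamma_q^+, \gamma_q^-\}$ is continuous and free of monodromy around loops in $S$. A priori the ``normal bundle'' of $S$ in $M$ could twist, preventing a globally defined involution; the one-normal hypothesis is what rules this out, since the unique normal direction at $p_0$ provides a canonical basepoint where the two sheets coincide, and (together with the connectedness of $\hat S$) forces the two-sheeted structure to be consistent. This also identifies the fixed set of $\sigma$ with the preimage of the one-normal locus of $S$, and thereby ensures that the twisted quotient $\hat S \times \mathbb R /\sim$ genuinely recovers $M$ rather than a disconnected double cover.
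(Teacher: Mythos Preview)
Your argument has two genuine gaps.

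\textbf{First}, the claim that $\hat S=\partial B$ with its intrinsic metric is an Alexandrov space ``by the standard result on boundaries of convex subsets'' is not justified. The statement that the boundary of an Alexandrov space is itself an Alexandrov space in the induced length metric is Perelman's boundary conjecture, which is open in general and is certainly not contained in \cite{Pet Appl}. The paper does \emph{not} shortcut this: it proves directly (Assertion~\ref{hat S is nonneg}) that for $x,y\in\hat S$ with $|xy|<2t$ every $M$-geodesic from $x$ to $y$ already lies in $\hat S$. The tool is the flat-strip rigidity of \cite[Proposition~2.1]{Y 4-dim}: the union of normal rays along $\bar\gamma=\pi\circ\gamma$ is a flatly immersed surface $\Pi$, so sliding $\gamma$ along $\Pi$ to level $t$ produces a curve $\hat\gamma\subset\hat S$ with $L(\hat\gamma)=L(\bar\gamma)\le L(\gamma)$, with strict inequality unless $\gamma\subset\hat S$. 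This local convexity of $\hat S$ in $M$ is what gives the curvature bound.

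\textbf{Second}, and more seriously, your splitting step is wrong. You assert that at a $2$-normal point $q$ the two antipodal normals give rays whose concatenation is a \emph{line}, and then invoke Theorem~\ref{splitting theorem} to obtain a ``local isometric splitting $Y_q\times\mathbb R$ near $q$''. But the Splitting Theorem is global: a line forces $M\cong Y\times\mathbb R$ globally. If that held, every point of $S$ would be $2$-normal, contradicting the hypothesis that $S$ has a one-normal point. (Concretely, in the target model $\hat S\times\mathbb R/(x,s)\sim(\sigma(x),-s)$ the two normal rays at a $2$-normal $[x,0]$ are $s\mapsto[x,s]$ and $s\mapsto[\sigma(x),s]$; the distance between their time-$s$ points is bounded by $|x,\sigma(x)|_{\hat S}$, so they never form a line.) There is no ``local splitting theorem'' to fall back on.

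The paper's construction of $\sigma$ avoids this entirely. It restricts the nearest-point projection to $\hat\pi:\hat S_{\mathrm{reg}}\to S_{\mathrm{reg}}$, where $S_{\mathrm{reg}}=S\cap M^{\mathrm{reg}}_\delta$ is convex (hence connected) by \cite{Pet parallel} and consists of $2$-normal points; this restriction is an honest double cover, and $\sigma$ is defined as its nontrivial deck transformation. That $\sigma$ is a local isometry again comes from the flat-strip structure of \cite[Proposition~2.1]{Y 4-dim}, after which one extends $\sigma$ continuously (as the identity over one-normal points) to all of $\hat S$. So the essential ingredient you are missing is the flat-strip rigidity along normal rays, which replaces both your appeal to the boundary conjecture and your use of the Splitting Theorem.
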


\begin{proof}
Let us denote by
\[
\pi : M \to S
\]
a canonical projection.
Namely, for $x \in M$, we set $\pi(x) \in S$ to be the nearest point from $x$ in $S$.
We use rigidity facts on the $\pi$ referring \cite[\S 9]{SY} and \cite[\S 2]{Y 4-dim}, for proving the theorem.

\begin{assertion} \label{hat S is nonneg}
$\hat S$ satisfies the following convexity property:
For $x, y \in \hat S$ with $|xy| < 2t$, any geodesic $\gamma$ between $x$ and $y$ in $M$ is contained in $\hat S$.
In particular, $\hat S$ with the induced intrinsic metric is an Alexandrov space of nonnegative curvature.
\end{assertion}
\begin{proof}[Proof of Assertion \ref{hat S is nonneg}]
Since $|xy| < 2t$, $\gamma$ does not intersect $S$.
From the totally convexity of $B$, we have $\gamma \subset B$.
Let us consider a curve $\bar \gamma := \pi \circ \gamma$ on $S$.
Let $\sigma_s$ denote a unique ray emanating from $\bar \gamma(s)$ containing $\gamma(s)$.
By \cite[Proposition 2.1]{Y 4-dim}, 
\[
\Pi := \bigcup_{s \in [0, |xy|]} \sigma_s
\] 
is a flatly immersed surface in $M$.
Moving $\gamma$ along with $\Pi$, we obtain a curve $\hat \gamma$ contained in $\hat S$.
This is a lift of $\bar \gamma$ via $\pi : \hat S \to S$.
Therefore, we obtain $L(\hat \gamma)=L(\bar \gamma) \le L(\gamma)=|x y|$. 
Suppose that $\gamma$ is not contained in $\hat S$.
It follows from the construction of $\hat \gamma$ and \cite[Proposition 2.1]{Y 4-dim}, one can show that $L(\hat \gamma) < L(\gamma)$.
This is a contradiction.
Therefore, $\hat \gamma$ must coincide with $\gamma$.
\end{proof}
Now, we denote by $\hat d$ the induced intrinsic metric on $\hat S$.
Assertion \ref{hat S is nonneg} says that $(\hat S, \hat d)$ is an Alexandrov space of nonnegative curvature.
Let us denote by $\hat \pi : \hat S \to S$ the restriction of $\pi$ on $\hat S$.
Let $S_{\mathrm{two}}$ (resp. $S_{\mathrm{one}}$) denote the set of all two-normal (resp. one-normal) points in $S$.
And we set $\hat S_{\mathrm{two}} := \hat \pi^{-1}(S_{\mathrm{two}})$ and $\hat S_{\mathrm{one}} := \hat \pi^{-1} (S_{\mathrm{one}})$.
Then, $\hat \pi : \hat S_{\mathrm{two}} \to S_{\mathrm{two}}$ is a two-to-one map, and $\hat \pi : \hat S_{\mathrm{one}} \to S_{\mathrm{one}}$ is a one-to-one map.

Let us consider $S_\reg := S \cap M_\delta^\reg$ for a small $\delta > 0$, which is open dense in $S$.
Note that since any one-normal point is an essentially singular point \cite{SY}, $S_\reg$ is contained in $S_{\mathrm{two}}$.
By \cite{Pet parallel}, $S_\reg$ is convex, and hence, it is connected. 
We set $\hat S_\reg := \hat \pi^{-1}(S_\reg)$.
The restriction 
\[
\hat \pi : \hat S_\reg \to S_\reg
\]
is a double covering.
We define an involution $\sigma$ on $\hat S_\reg$ as the non-trivial deck transformation of $\hat \pi : \hat S_\reg \to S_\reg$.
By using \cite[Proposition 2.1]{Y 4-dim}, we conclude that $\sigma$ is a local isometry.
Hence, there is a continuous extension of $\sigma$ on the whole $\hat S$.
We denote it by the same notation $\sigma$.
Then, $\sigma$ on $(\hat S, \hat d)$ is also a local isometric involution.
We note that $\sigma$ on $\hat S_{\mathrm{one}}$ is defined as the identity.
From the construction, $\sigma$ is bijective.
Therefore, $\sigma$ is an isometry on $\hat S$ with respect to $\hat d$.
We now fix the metric $\hat d$ on $\hat S$.
By the construction, $\hat S / \sigma$ and $S$ are isometric to each other.

Let us consider the quotient space $N := \hat S \times \mathbb R / (x,s) \sim (\sigma(x), -s)$, which is an open Alexandrov space of nonnegative curvature.
We define $\varphi : N \to M$ as 
sending $[x,t] \in N$ to $x \in \hat S$.
By the construction, $\varphi$ is an isometry.
\end{proof}


\begin{example}[{\cite[p.39]{SY}}] \label{L(S;k)} \upshape
For a nonnegatively curved closed Alexandrov surface $S$ and $p_1, p_2 \dots, p_k \in S$ ($k \in \mathbb Z_{\ge 0}$), we denote by $L(S; k) = L(S; p_1, p_2 \dots, p_k)$ an open three-dimensional Alexandrov space of nonnegative curvature (if it exists) satisfying the following. 
\begin{enumerate}
\item $p_1$, $p_2, \dots, p_k$ are essential singular points in $S$, and $S$ is isometric to a soul of $L(S;k)$. 
Hereafter, $S$ is identified a soul of $L(S;k)$.
\item $\{p_1, \dots, p_k\}$ is the set of all topological singular points in $L(S;k)$.
\item There is a continuous surjection $\pi : L(S; k) \to S$ such that 
for $x \in S - \{p_1, \dots p_k\}$, $\pi^{-1}(x)$ is the union of two rays emanating from $x$ perpendicular to $S$; and 
for $x \in \{p_1, \dots p_k \}$, $\pi^{-1}(x)$ is the unique ray emanating from $x$ perpendicular to $S$.
\item The restriction $\pi : \pi^{-1}(S - \{p_1, \dots p_k\}) \to S - \{p_1, \dots p_k\}$ is a line bundle.
\end{enumerate}
\end{example}

\begin{proposition}[{\cite[Proposition 9.5]{SY}}, cf.{\cite[\S 17]{Y 4-dim}}]
\label{2-dim soul}
If $k \ge 1$, then any space $L(S;k)$ is one of $L(S^2; 2)$, $L(P^2; 2)$ and $L(S^2; 4)$.

\end{proposition}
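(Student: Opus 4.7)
My plan is to apply Theorem~\ref{codim=1} to get a branched double cover structure near the soul, then enumerate the possible $(S,\hat S,k)$ by Euler characteristics, and finally rule out two spurious cases via a monodromy obstruction.

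Since $k\ge 1$, the essential singular point $p_1$ is a one-normal point of $S$, so Theorem~\ref{codim=1} applies to $M := L(S;k)$: for small $t>0$ the metric sphere $\hat S := \partial B(S,t)$ with its induced intrinsic metric is a closed nonnegatively curved Alexandrov surface, there is an isometric involution $\sigma$ on $\hat S$ with $\hat S/\sigma\equiv S$, and $M\equiv \hat S\times\mathbb R /(x,s)\sim(\sigma(x),-s)$. From the proof of Theorem~\ref{codim=1}, the fixed-point set of $\sigma$ equals $\hat \pi^{-1}(\{p_1,\dots,p_k\}) = \{\hat p_1,\dots,\hat p_k\}$, so $\sigma$ has exactly $k$ isolated fixed points and $\hat S\to S$ is a branched double cover with branch locus $\{p_1,\dots,p_k\}$.

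Both $S$ and $\hat S$ are closed nonnegatively curved Alexandrov surfaces, hence homeomorphic to one of $S^2, P^2, T^2, K^2$, with Euler characteristics $2,1,0,0$ respectively. A standard CW computation for branched double covers gives $\chi(\hat S) = 2\chi(S) - k$. Combining this with $k\ge 1$, $\chi(S)\ge 0$, and $\chi(\hat S)\ge 0$ leaves only the triples $(S,\hat S,k)$ among
\[
(S^2,S^2,2),\ (S^2,P^2,3),\ (P^2,P^2,1),\ (S^2,T^2\text{ or }K^2,4),\ (P^2,T^2\text{ or }K^2,2).
\]
The first, fourth, and fifth produce the claimed spaces $L(S^2;2)$, $L(S^2;4)$, $L(P^2;2)$, so it remains to rule out the nuisance cases $L(S^2;3)$ and $L(P^2;1)$.

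For this I would use the monodromy of the unbranched double cover $\hat S\setminus\mathrm{Fix}(\sigma)\to S\setminus\{p_1,\dots,p_k\}$, classified by a homomorphism $\rho:\pi_1(S\setminus\{p_i\})\to\mathbb Z/2$. Since the cover genuinely branches at each $p_i$, $\rho$ must send every small loop $\gamma_i$ around $p_i$ to the non-trivial element, otherwise the cover would extend unramified across $p_i$. For $L(S^2;3)$, $\pi_1(S^2\setminus\{p_1,p_2,p_3\})$ is free on two generators $\gamma_1,\gamma_2$ with $\gamma_3=(\gamma_1\gamma_2)^{-1}$, so $\rho(\gamma_3)=\rho(\gamma_1)+\rho(\gamma_2)=0$, contradicting $\rho(\gamma_i)=1$ for all $i$. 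For $L(P^2;1)$, $\pi_1(P^2\setminus\{p_1\})$ is infinite cyclic, generated by the core circle $\alpha$ of the M\"obius band, and the puncture loop equals $\alpha^2$; hence $\rho(\alpha^2)=2\rho(\alpha)=0$, again a contradiction. The step I expect to require the most care is verifying that Theorem~\ref{codim=1} really identifies $\mathrm{Fix}(\sigma)$ with $\{\hat p_1,\dots,\hat p_k\}$ exactly, because the entire Euler-characteristic enumeration and the monodromy classification depend on this; once that is in place the rest is a short finite case analysis.
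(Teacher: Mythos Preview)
Your argument is correct and proceeds by a genuinely different route from the paper's. The paper first invokes Theorem~\ref{Alex surface} to see that a closed nonnegatively curved surface with an essential singular point must be $S^2$ or $P^2$, and to bound $k$ by the number of essential singular points ($k\le 4$ or $k\le 2$ respectively); it then excludes the odd values of $k$ by a direct line-bundle gluing obstruction. For $S\approx P^2$, $k=1$: over a disk $B$ around $p_1$ the projection $\pi$ is the standard $D^2\times\mathbb R/\mathbb Z_2\to D^2/\mathbb Z_2$, so $\partial\pi^{-1}(B)$ is a M\"obius strip, whereas over the complementary $B'\approx\Mo$ the line bundle restricts trivially along $\partial B'$, giving $\partial\pi^{-1}(B')\approx S^1\times\mathbb R$; the two boundaries cannot match. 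A similar argument handles $S\approx S^2$. You instead apply Theorem~\ref{codim=1} to realise $L(S;k)$ through a branched double cover $\hat S\to S$ with exactly $k$ branch points, and then combine Riemann--Hurwitz with the $\mathbb Z/2$-monodromy of the unbranched cover over $S\setminus\{p_i\}$ to force $k$ even. Both arguments are at bottom the same parity obstruction; yours is more systematic and makes the role of the cover $\hat S$ explicit (which the paper only exploits afterwards in Corollary~\ref{good orbifold}), while the paper's is self-contained in that it works directly with the projection $\pi$ from the definition of $L(S;k)$ and does not need to first establish the rigidity statement of Theorem~\ref{codim=1}. Your caution about identifying $\mathrm{Fix}(\sigma)$ with $\{\hat p_1,\dots,\hat p_k\}$ is well placed but easily settled: in the proof of Theorem~\ref{codim=1} the involution $\sigma$ is defined as the nontrivial deck transformation on $\hat S_{\mathrm{two}}$ and the identity on $\hat S_{\mathrm{one}}$, and condition~(3) in the definition of $L(S;k)$ gives $S_{\mathrm{one}}=\{p_1,\dots,p_k\}$ exactly.
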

\begin{remark} \upshape
There is an error in Proposition 9.5 (and Theorem 9.6) in \cite{SY}.
Actually, a space $L(S; 1)$ can not exist, and a space $L(S; 2)$ can have a soul homeomorphic to $P^2$. See \cite[\S 17]{Y 4-dim}.

\end{remark}

\begin{proof}[Proof of Proposition \ref{2-dim soul}]
Since $k \ge 1$, by Theorem \ref{Alex surface}, $S$ is homeomorphic to $S^2$ or $P^2$. 
Moreover, if $S \approx S^2$, then we have $k \le 4$;
and if $S \approx P^2$, then $k \le 2$.

We consider the case that $S \approx P^2$. 
Suppose that $k = 1$. Let $p \in S$ be a unique topological singular point in $L(S;1)$.
Let $\pi : L(S; 1) \to S$ be a surjection obtained in Example \ref{L(S;k)}.
For a neighborhood $B$ of $p$ in $S$ homeomorphic to $D^2$, 
the restriction 
\[
\pi : \pi^{-1}(B) \to B
\]
is fiber-wise isomorphic to $\pi_0 : D^2 \times \mathbb R / \mathbb Z_2 \to D^2 / \mathbb Z_2$ such that $p \in B$ corresponds to the origin of $D^2/ \mathbb Z_2$.
Here, $D^2 \times \mathbb R / \mathbb Z_2$ denotes the quotient space of $D^2 \times \mathbb R$ by an involution $(x, t) \mapsto (-x, -t)$, $D^2 / \mathbb Z_2$ denotes the quotient space of $D^2$ by an involution $x \mapsto -x$ which is homeomorphic to a disk, and $\pi_0$ is a canonical projection $\pi_0 : [x, t] \mapsto [x]$.
In particular, $\partial \pi^{-1}(B)$ is homeomorphic to a Mobius strip $S^1 \tilde \times \mathbb R$. 
On the other hand, $B' := S - \mathrm{int}\, B$ is homeomorphic to $\Mo$.
Then, the restriction $\pi : \pi^{-1} (B') \to B'$ is a line bundle over $\Mo $. 
In particular, it is trivial over $\partial B'$. 
Namely, we have $\partial \pi^{-1} (B') \approx S^1 \times \mathbb R$. 
This contradicts to $\partial \pi^{-1}(B) \approx S^1 \tilde \times \mathbb R$.
Therefore, we obtain that if $S \approx P^2$ then $k =2$.

By a gluing argument as above, if $S \approx S^2$, then $k = 2$ or $4$.
\end{proof}


Explicitly, we determine the topology of $L(S;k)$.
\begin{corollary} \label{good orbifold}
$L(S^2;2)$ is isometric to $\hat S^2 \times \mathbb R / (x,s) \sim (\sigma(x),-s)$, where $\hat S^2$ is a sphere of nonnegative curvature in the sense of Alexandrov with an isometric involution $\sigma$ such that $\hat S^2 / \sigma$ is isometric to the soul $S^2$ of $L(S^2;2)$.

$L(P^2;2)$ is isometric to $K^2 \times \mathbb R / (x,s) \sim (\sigma(x),-s)$, where $K^2$ is a flat Klein bottle with an isometric involution $\sigma$ such that $K^2 / \sigma$ is isometric to the soul $P^2$ of $L(P^2;2)$.

$L(S^2;4)$ is isometric to $T^2 \times \mathbb R / (x,s) \sim (\sigma(x),-s)$, where $T^2$ is a flat torus with an isometric involution $\sigma$ such that $T^2 / \sigma$ is isometric to the soul $S^2$ of $L(S^2;4)$.
\end{corollary}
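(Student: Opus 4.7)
The plan is to apply Theorem \ref{codim=1} to each of $L(S^2;2)$, $L(P^2;2)$, and $L(S^2;4)$, and then identify the resulting double cover $\hat S$ topologically. First, I verify the hypotheses: by Example \ref{L(S;k)}, the soul $S$ has codimension one, and each topologically singular point $p_\alpha \in S$ is a one-normal point since the fiber $\pi^{-1}(p_\alpha)$ consists of a single ray perpendicular to $S$. Theorem \ref{codim=1} then yields
\begin{equation*}
L(S;k) \equiv \hat S \times \mathbb{R} / (x, s) \sim (\sigma(x), -s),
\end{equation*}
where $\hat S = \partial B(S, t)$, equipped with the induced intrinsic metric, is a closed nonnegatively curved Alexandrov surface, $\sigma$ is an isometric involution with $\hat S / \sigma \equiv S$, and the fixed-point set of $\sigma$ coincides with $\hat \pi^{-1}(\{p_1,\dots,p_k\})$, a set of exactly $k$ isolated points.

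Next I compute the homeomorphism type of $\hat S$ via a branched-cover Euler characteristic count. Since $\hat \pi : \hat S \to S$ is a double cover branched precisely at the $k$ points $p_1, \dots, p_k$, we obtain $\chi(\hat S) = 2\chi(S) - k$. For $L(S^2;2)$ this yields $\chi(\hat S) = 2$, so $\hat S \approx S^2$. For $L(P^2;2)$ and $L(S^2;4)$ this yields $\chi(\hat S) = 0$, so $\hat S$ is homeomorphic to either $T^2$ or $K^2$; in these two cases, the Gauss--Bonnet formula for nonnegatively curved Alexandrov surfaces (the total curvature equals $2\pi\chi = 0$) forces the metric on $\hat S$ to be flat.

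To distinguish $T^2$ from $K^2$, I use fixed-point counts for isometric involutions on flat surfaces. Any isometric involution on a flat $T^2$ lifts to an affine map $x \mapsto Ax + b$ with $A \in O(2)$ and $A^2 = I$; a direct case analysis over $A \in \{I, -I, \textrm{reflection}\}$ shows that such an involution is either free, has a one-dimensional fixed set, or has exactly four isolated fixed points. An analogous affine analysis on a flat $K^2$, after constraining the lift to normalize the Klein bottle wallpaper group (which contains a glide reflection), yields that every isometric involution has either no isolated fixed points, a one-dimensional fixed set, or exactly two isolated fixed points; alternatively, the Lefschetz fixed-point theorem applied to any involution of $K^2$ gives the same bound, since $H_1(K^2;\mathbb{Q}) \cong \mathbb{Q}$ and the Lefschetz number is $1 \mp 1 \in \{0, 2\}$. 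Hence $\hat S \approx K^2$ for $L(P^2;2)$ (where $k=2$), and $\hat S \approx T^2$ for $L(S^2;4)$ (where $k=4$).

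The main obstacle will be the affine case analysis on the flat Klein bottle, where one must enumerate all admissible lifts of an isometric involution that normalize the wallpaper group before concluding the fixed-point count; the Lefschetz approach circumvents this difficulty by purely topological means. Once the homeomorphism type of $\hat S$ is pinned down, the isometric identifications stated in the corollary are immediate from the quotient description provided by Theorem \ref{codim=1}.
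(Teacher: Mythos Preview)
Your proof is correct and takes a cleaner route than the paper. Both arguments invoke Theorem \ref{codim=1} to obtain the quotient description $\hat S \times \mathbb{R}/\!\!\sim$, and both ultimately distinguish $T^2$ from $K^2$ via the fixed-point count for involutions (the paper cites \cite{Natsheh} for this, while you supply the affine/Lefschetz argument directly). The difference lies in how $\hat S$ is identified: the paper decomposes the metric ball $B(S;k)$ explicitly by pulling back a cell decomposition of $S$ along the projection $\pi$, recognizes each piece as a copy of $K_1(P^2)$ or an $I$-bundle, and then reads off $\partial B(S;k)$ by gluing the boundary pieces. Your branched-double-cover Riemann--Hurwitz count $\chi(\hat S) = 2\chi(S) - k$ bypasses this decomposition entirely and is more direct for the purpose of this corollary. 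The paper's hands-on decomposition does have a payoff elsewhere, however: it yields the explicit descriptions $B(S^2;2) \approx K_1(P^2) \cup_{\Mo} K_1(P^2)$ and $B(P^2;2) \supset K_1(P^2) \cup_{D^2} K_1(P^2)$ that feed into Corollary \ref{ball around 2-dim soul} and the later structure theorems.
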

\begin{proof}
To prove this, it suffices to determine the topology of a metric sphere around the soul of any $L(S;k)$.
For any $L(S;k)$, we denote by $B(S;k)$ a metric ball around $S$.
Let us denote by $\pi$ a canonical projection 
\[
\pi : B(S;k) \to S.
\]
Namely, for $x \in S$, $\pi(x)$ is the nearest point from $x$ in $S$.

We consider the case that $S \approx S^2$ and $k = 2$.
Let $p_1, p_2 \in S$ be the topological singular points of $L(S^2;2)$ in $S$.
We divide $S$ into $D_1$ and $D_2$ such that each $D_i$ is a disk neighborhood of $p_i$ and $D_1 \cap D_2$ is homeomorphic to a circle.
Then, for $i = 1,2$, there is a homeomorphism $\varphi_i : \pi^{-1}(D_i) \to D^2 \times [-1,1]/(x,s) \sim (-x,-s)$. 
The gluing part $\pi^{-1}(D_1 \cap D_2)$ of $\pi^{-1}(D_1)$ and $\pi^{-1}(D_2)$ is homeomorphic to a Mobuis band $\Mo$.
Since the space $D^2 \times [-1,1] / \!\! \sim$ is homeomorphic to $K_1(P^2)$, we obtain that $B(S^2;2) = \pi^{-1}(D_1) \cup \pi^{-1}(D_2)$ is homeomorphic to $K_1(P^2) \cup_{\Mo} K_1(P^2)$ (see Remark \ref{K_1 cup K_1} before).
Then, $\partial B(S^2;2)$ is homeomorphic to a gluing of two copies of $P^2 - \mathrm{int} (\Mo) \approx D^2$.
Therefore, $\partial B(S^2;2) \approx S^2$.

We consider the case that $S \approx P^2$ and $k=2$.
Let $p_1, p_2 \in S$ be the topological singular points of $L(P^2;2)$ in $S$.
We take a disk neighborhood $D$ of $\{ p_1, p_2\}$ in $S$.
Let us divide $D$ into $D_1$ and $D_2$ such that each $D_i$ is a disk neighborhood of $p_i$ and $D_1 \cap D_2$ is homeomorphic to an interval.
Then, $\pi^{-1}(D_1 \cap D_2)$ is homeomorphic to $D^2$.
Hence, $\pi^{-1}(D) = \pi^{-1}(D_1) \cup \pi^{-1}(D_2)$ is homeomorphic to $K_1(P^2) \cup_{D^2} K_1(P^2)$ (see Lemma \ref{K_1 cup_D^2 K_1}).
By Lemma \ref{K_1 cup_D^2 K_1}, $\partial \pi^{-1}(D)$ is homeomorphic to a Klein bottle. 
Since $\pi$ is a non-trivial $I$-bundle over $\partial D_i$ for $i=1,2$, it is a trivial $I$-bundle over $\partial D$.
Then, $\pi^{-1}(\partial D) \approx S^1 \times I$.
Let us set $A := \partial B(P^2;2) \cap \pi^{-1}(D)$.
Since $D$ has singular points $p_1$ and $p_2$ of the projection $\pi$, $A$ is connected, and hence $A$ is homeomorphic to $S^1 \times I$.

Let us set $D' := S - \mathrm{int}\, D$ which is homeomorphic to $\Mo$.
Then, $\pi^{-1}(D')$ is homeomorphic to a total space of an $I$-bundle over $\Mo$, 
which is $\Mo \times I$ or $\Mo \tilde \times I$.
Let us set $A'$ to be $\partial B(P^2;2) \cap \pi^{-1}(D')$.
Therefore, if $\pi^{-1}(D') \approx \Mo \times I$, then $A'$ is a disjoint union of two Mobius bands, and if $\pi^{-1}(D') \approx \Mo \tilde \times I$, then $A'$ is homeomorphic to $S^1 \times I$.
Then, $\partial B(P^2;2) = A \cup A'$ is homeomorphic to a Klein bottle if $\pi^{-1}(D') \approx \Mo \times I$, and is homeomorphic to $S^1 \times I \cup_\partial S^1 \times I$ which is a torus or a Klein bottle if $\pi^{-1}(D') \approx \Mo \tilde \times I$.
Suppose that $\partial B(P^2;2)$ is homeomorphic to $T^2$.
By Theorem \ref{codim=1}, there is an involution on $T^2$ having only two fixed points.
This is a contradiction (see \cite[Lemma 3]{Natsheh}).
Therefore, $\partial B(P^2;2) \approx K^2$.

We consider the case that $S \approx S^2$ and $k=4$.
Let $p_1, p_2, p_3, p_4 \in S$ be all topological singular points of $L(S^2;4)$.
Let $D$ and $D'$ be domains in $S$ homeomorphic to a disk such that $\mathrm{int}\, D$ (resp. $\mathrm{int}\, D'$) contains $p_1$ and $p_2$ (resp. $p_3$ and $p_4$), $D \cap D'$ is homeomorphic to a circle and $S = D \cup D'$.
Let us denote $\partial B(S^2;4) \cap \pi^{-1}(D)$ (resp. $\partial B(S^2;4) \cap \pi^{-1}(D')$) by $A$ (resp. $A'$).
By repeating an argument similar to the case that $L(S;k)=L(P^2;2)$, we obtain that $A$ and $A'$ are homeomorphic to $S^1 \times I$.
Then, $\partial B(S^2;4) = A \cup A'$ is homeomorphic to a torus or a Klein bottle.
Suppose that $\partial B(S^2;4)$ is homeomorphic to $K^2$.
By Theorem \ref{codim=1}, there is an involution on $K^2$ having only four fixed points.
This is a contradiction (see \cite[Lemma 2]{Natsheh}).
Therefore, $\partial B(S^2;4) \approx T^2$.
\end{proof}

\begin{remark} \upshape
Since involutions on closed surfaces are completely classified \cite{Natsheh}, the topology of each $L(S;k)$ is unique.
\end{remark}

For any space $L(S;k)$, we denote a metric ball around $S$ in $L(S;k)$ by $B(S;k)$.
The topology of any $B(S;k)$ as follows. 
\begin{corollary} \label{ball around 2-dim soul}
$B(S^2;2)$ is homeomorphic to $S^2 \times [-1,1] / \mathbb Z_2$; 
$B(P^2;2)$ is homeomorphic to $K^2 \times [-1,1] / \mathbb Z_2$; and 
$B(S^2;4)$ is homeomorphic to $T^2 \times [-1,1] / \mathbb Z_2$.
Here, all $\mathbb Z_2$-actions are corresponding to ones of Corollary \ref{good orbifold}.
\end{corollary}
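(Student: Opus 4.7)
The plan is to deduce this corollary directly from Corollary \ref{good orbifold} by identifying the metric ball $B(S;k)$ under the isometry provided there. The content beyond Corollary \ref{good orbifold} is essentially bookkeeping: passing from the global isometric description to a topological description of the sublevel set of the distance function from the soul.

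First I would fix one of the three cases, say $L(S^2;2)$, and invoke Corollary \ref{good orbifold} to obtain an isometry
\[
\Phi : \hat S^2 \times \mathbb R / (x,s) \sim (\sigma(x),-s) \;\longrightarrow\; L(S^2;2)
\]
carrying $[\hat S^2 \times \{0\}]$ onto the soul $S^2$. The next step is to observe that under $\Phi$, the distance from the soul is computed by $|s|$. Indeed, on the quotient, any curve from $[x,s]$ to a point of $[\hat S^2 \times \{0\}]$ has length at least $|s|$, while the vertical ray $r \mapsto [x, s(1-r)]$ realizes this length. Consequently, for any $t > 0$,
\[
\Phi^{-1}(B(S^2;2)) \;=\; \bigl(\hat S^2 \times [-t,t]\bigr) / (x,s) \sim (\sigma(x),-s),
\]
where $t$ is the radius used to define $B(S^2;2)$.

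Then I would rescale the interval factor by the homeomorphism $s \mapsto s/t$, which carries the quotient to $\hat S^2 \times [-1,1] / (x,s)\sim(\sigma(x),-s)$, i.e.\ to $\hat S^2 \times [-1,1]/\mathbb Z_2$ with the $\mathbb Z_2$-action of Corollary \ref{good orbifold}. Since by Corollary \ref{good orbifold} the space $\hat S^2$ is a nonnegatively curved Alexandrov sphere, it is in particular \emph{topologically} $S^2$, and so $B(S^2;2) \approx S^2 \times [-1,1]/\mathbb Z_2$. Repeating the same argument verbatim for $L(P^2;2)$ (using that $\hat S$ is then a flat Klein bottle, hence topologically $K^2$) and for $L(S^2;4)$ (using that $\hat S$ is a flat torus) yields the remaining two statements.

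There is no real obstacle beyond the one point that must be checked carefully, namely that the distance-to-soul function on the quotient is indeed the descent of $|s|$; this is immediate from the fact that the quotient map $\hat S \times \mathbb R \to L(S;k)$ is a local isometry and that $\hat S \times \{0\}$ projects bijectively (up to the two-to-one identification via $\sigma$) onto $S$. Once this identification is in place, the topological conclusion is a direct rescaling.
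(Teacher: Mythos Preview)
Your proposal is correct and matches the paper's approach: the paper states this corollary with no proof, treating it as an immediate consequence of Corollary \ref{good orbifold}, and your argument—identifying the distance-to-soul function with $|s|$ under the isometry of Corollary \ref{good orbifold} and then rescaling the interval factor—is exactly the implicit reasoning. The one point you flag as needing care (that $d_S$ descends to $|s|$) is indeed the only thing to check, and your justification via the product/quotient description is adequate.
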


\begin{theorem}[Soul theorem (Theorem 9.6 in \cite{SY})] 
\label{soul theorem}
Let $Y$ be a three-dimensional open Alexandrov space, and $S$ be an its soul.
Then we have the following. 
\begin{itemize}
\item[(1)] If $\dim S = 0$, then $Y$ is homeomorphic to $\mathbb{R}^3$, or the cone $K(P^2)$ over the projective plane $P^2$, or $M_{\pt}$ which is defined in Example \ref{M_pt}.
\item[(2)] If $\dim S = 1$, then $Y$ is isometric to a quotient $(\mathbb{R} \times N) / \Lambda$, where $N$ is an Alexandrov space with nonnegative curvature homeomorphic to $\mathbb{R}^2$ and $\Lambda$ is an infinite cyclic group. Here, the $\Lambda$-action is diagonal.
\item[(3)] If $\dim S = 2$, then $Y$ is isometric to one of
the normal bundle $N(S) = L(S; 0)$ over $S$, $L(S; 2)$ and $L(S; 4)$.

\end{itemize}
\end{theorem}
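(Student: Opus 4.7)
The plan is to argue by case analysis on $k := \dim S$, leveraging the tools assembled in the preceding subsections (splitting, Sharafutdinov retraction, Theorem \ref{codim=1}, Proposition \ref{2-dim soul}, Stability Theorem \ref{stability theorem}, and the classification of two-dimensional Alex spaces).

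\textbf{Case $k = 2$.} This is essentially already proven in the excerpt. If $S$ has no one-normal point, the Sharafutdinov retraction refines to a line-bundle projection $\pi : Y \to S$, and so $Y$ is isometric to $L(S;0)$. Otherwise Theorem \ref{codim=1} gives an isometric identification $Y \cong \hat S \times \mathbb{R} / (x,s) \sim (\sigma(x), -s)$ for some isometric involution $\sigma$ with $\hat S / \sigma \cong S$. The one-normal points of $S$ are exactly the fixed points of $\sigma$ and are topologically singular in $Y$; Proposition \ref{2-dim soul} forces their number to be $0$, $2$, or $4$, and Corollary \ref{good orbifold} upgrades this to the stated isometric identification with $L(S;0)$, $L(S;2)$, or $L(S;4)$.

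\textbf{Case $k = 1$.} A compact one-dimensional Alex space without boundary is a circle, so $S \cong S^1$. Passing to the universal cover $\tilde Y$, the preimage of $S$ contains a line $\mathbb{R}$; by the Splitting Theorem \ref{splitting theorem}, $\tilde Y$ is isometric to $\mathbb{R} \times N$ for a two-dimensional nonnegatively curved Alex space $N$. Applying Sharafutdinov to $\tilde Y$ and projecting onto the $N$-factor forces $N$ to retract to a point; since $N$ is a two-dimensional open nonnegatively curved Alex space with $0$-dimensional soul, the classification of Alex surfaces yields $N \approx \mathbb{R}^2$. The deck group $\Lambda \cong \pi_1(Y) \cong \mathbb{Z}$ acts isometrically on $\mathbb{R} \times N$; because the $\mathbb{R}$-factor is the unique line direction in $\tilde Y$, the action preserves the splitting and is therefore diagonal, giving $Y = (\mathbb{R} \times N)/\Lambda$.

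\textbf{Case $k = 0$.} Let $S = \{p\}$. I would argue via the finite essential singular set $\ess(Y) \supset S_{\mathrm{top}}(Y)$, which in three dimensions and nonnegative curvature is discrete, and partition by the cardinality $m := |S_{\mathrm{top}}(Y)|$. When $m = 0$, $Y$ is a topological $3$-manifold of nonnegative curvature with a unique soul point, so $Y \approx \mathbb{R}^3$ by the Cheeger--Gromoll soul theorem for manifolds together with standard $3$-manifold topology. When $m = 1$, rescaling at the unique topologically singular point $q$ and applying Stability Theorem \ref{stability theorem} yield $Y \approx K(\Sigma_q)$; the classification of two-dimensional closed Alex spaces of curvature $\ge 1$ with $\rad \le \pi/2$, together with the requirement that $K(\Sigma_q)$ have exactly one topological singularity, forces $\Sigma_q \approx P^2$ and hence $Y \approx K(P^2)$. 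When $m = 2$, the two singularities $q_\pm$ satisfy $\Sigma_{q_\pm} \approx P^2$; a minimal geodesic between them passes through $p$, and I would build an isometric $S^1$-action on $Y$ whose axis is this geodesic, obtained by extending the rotational symmetries of the local $K(P^2)$ cone models at $q_\pm$ via splitting-type rigidity on slices transverse to the axis. Dividing out by this circle action should give the cone $K(S^1_\pi)$, and reconstructing $Y$ as a singular circle fibration with two singular interval orbits identifies it with $M_{\pt}$. Finally the cases $m \ge 3$ are ruled out by a topological obstruction coming from the structure of the Busemann function iteration producing $S$.

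The principal obstacle is the subcase $m = 2$ of the last case: promoting the purely local cone structure at each $q_\pm$ to a \emph{global} isometric $S^1$-action on $Y$, and verifying that the quotient is exactly $K(S^1_\pi)$ with the expected singular fibers, so as to pin down $Y \cong M_{\pt}$ rather than some a priori different gluing of two copies of $K(P^2)$ along an intermediate cylinder.
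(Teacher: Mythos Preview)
The paper does not prove Theorem \ref{soul theorem}: it is stated as a citation of \cite[Theorem~9.6]{SY} and used as a black box throughout. What the paper \emph{does} prove in the surrounding subsection is the codimension-one rigidity Theorem \ref{codim=1}, Proposition \ref{2-dim soul}, and Corollaries \ref{good orbifold}--\ref{ball around 2-dim soul}, which sharpen part (3). So there is no ``paper's own proof'' to compare your proposal against; your Case $k=2$ is in fact a correct summary of that surrounding material, and your Case $k=1$ is the standard splitting argument.

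Your Case $k=0$, however, has genuine gaps beyond the one you flag. For $m=0$ you invoke ``standard $3$-manifold topology'' to get $Y\approx\mathbb R^3$ from contractibility, but contractible open $3$-manifolds need not be $\mathbb R^3$ (Whitehead); one needs Perelman's conical-neighborhood/stability results to conclude $Y\approx K(\Sigma_p)\approx\mathbb R^3$ from $\Sigma_p\approx S^2$. For $m=1$ you write ``rescaling at $q$ and applying Stability'' to get $Y\approx K(\Sigma_q)$, but Stability gives homeomorphisms of \emph{balls}, not of the whole noncompact $Y$; you need an additional argument (regularity of $d_q$ outside $q$, flow arguments) to propagate this globally. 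For $m\ge 3$ you assert a ``topological obstruction from the Busemann iteration'' without saying what it is. And the $m=2$ subcase you identify as the principal obstacle is indeed where the real work lies in \cite{SY}; your proposed strategy of manufacturing a global isometric $S^1$-action from local cone models is not how \cite{SY} proceeds and would require substantial new rigidity input to carry out.
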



We will define examples of $L(S^2; 2)$, $L(P^2; 2)$ and $L(S^2;4)$ in Example \ref{L_i}.


\begin{example}[{\cite[Example 9.3]{SY}}] 
\label{Ex 9.3 in SY} \upshape
Let $\Gamma$ be a group of isometries generated by $\gamma$ and $\sigma$ on $\mathbb{R}^3$.
Here, $\gamma$ and $\sigma$ are defined by $\gamma(x,y,z) = -(x,y,z)$ and $\sigma(x,y,z) = (x+1,y,z)$.
Then we obtain an open nonnegatively curved Alexandrov space $\mathbb{R}^3/\Gamma$.
This space is isometric to $M_{\pt}$ in Example \ref{M_pt}. 
\end{example}

We denote by $B(\pt)$ a metric ball $B(p_0, R)$ around a soul $p_0$ of $M_\pt = \mathbb{R}^3/\Gamma$ for large $R > 0$.
Remark that $B(\mathrm{pt})$ is homeomorphic to $S^1 \times D^2 / (x,v) \sim (\bar x, -v)$.
We can check that $B(\pt)$ is one of $K_1(P^2) \cup_{D^2} K_1(P^2)$.
Here, $K_1(P^2) \cup_{D^2} K_1(P^2)$ denotes the gluing $K_1(P^2) \cup_\varphi K_1(P^2)$ of two copies $K_1(P^2)$ along domains $A_1$ and $A_2$ homeomorphic to $D^2$ contained in $\partial K_1(P^2) \approx P^2$ via a homeomorphism $\varphi : A_1 \to A_2$.
We show that the topology of $K_1(P^2) \cup_{D^2} K_1(P^2)$ does not depend on the choice of the gluing map. 
\begin{lemma} \label{K_1 cup_D^2 K_1}
For any domains $A_1$ and $A_2$ which are homeomorphic to $D^2$ contained in $\partial K_1(P^2)$ and any homeomorphism $\varphi : A_1 \to A_2$, there is a homeomorphism 
\[
\tilde \varphi : K_1(P^2) \cup_\varphi K_1(P^2) \to K_1(P^2) \cup_{id} K_1(P^2).
\]
Here, $id : A_0 \to A_0$ is the identity of a domain $A_0$ which is homemorphic to $D^2$ contained in $\partial K_1(P^2)$.
In particular, any such gluing is homeomorphic to $B(\pt)$.
\end{lemma}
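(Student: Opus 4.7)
My plan is to prove that the topology of $K_1(P^2)\cup_\varphi K_1(P^2)$ is independent of the gluing data $(A_1, A_2, \varphi)$ by two successive reductions, and then to verify that $B(\pt)$ realizes the standard model $K_1(P^2)\cup_{id} K_1(P^2)$. The underlying principle is gluing functoriality: if $\Phi_1, \Phi_2 : K_1(P^2) \to K_1(P^2)$ are self-homeomorphisms satisfying $\Phi_2 \circ \varphi = \psi \circ \Phi_1|_{A_1}$, then the disjoint union $\Phi_1 \sqcup \Phi_2$ descends to a homeomorphism from $K_1(P^2)\cup_\varphi K_1(P^2)$ onto $K_1(P^2) \cup_\psi K_1(P^2)$. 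The task is thus to produce such self-homeomorphisms.

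First I would move the disks to a standard position. Any two disk domains in $\partial K_1(P^2) = P^2$ are related by a self-homeomorphism of $P^2$, by the classical transitivity of the homeomorphism group of a surface on its locally flat embedded disks (a consequence of the annulus theorem for surfaces and isotopy extension). Each such $H : P^2 \to P^2$ extends radially to $K_1(P^2) = P^2 \times [0,1]/(P^2 \times \{0\})$ via $(x,t) \mapsto (H(x),t)$. Applying these extensions to the two copies separately reduces the problem to the case $A_1 = A_2 = A_0$, so that the gluing becomes $K_1(P^2) \cup_\eta K_1(P^2)$ for some self-homeomorphism $\eta : A_0 \to A_0$.

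Next I would absorb $\eta$ by building an extension $\widetilde \eta : K_1(P^2) \to K_1(P^2)$ with $\widetilde\eta|_{A_0} = \eta$, since then applying $\widetilde\eta$ to one copy converts the $\eta$-gluing into the identity gluing. Decompose $P^2 = A_0 \cup \Mo$, where $\Mo := P^2 - \mathrm{int}\, A_0$ is a Mobius band. The restriction $\eta|_{\partial A_0} : S^1 \to S^1$ is isotopic to either the identity or a reflection, and in both cases extends to a self-homeomorphism of $\Mo$ (by a rotation of the Mobius band along its core circle in the identity case, and by reflection across the core in the orientation-reversing case). Gluing then yields a self-homeomorphism of $P^2$ restricting to $\eta$ on $A_0$, which cones to the desired $\widetilde \eta$ on $K_1(P^2)$ exactly as in Step~1.

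Finally I would verify directly that $B(\pt) = S^1 \times D^2 / \langle \alpha \rangle$ realizes $K_1(P^2) \cup_{D^2} K_1(P^2)$. Cut $S^1$ at $\{i, -i\}$ into two closed semicircles $S^1_\pm$; the subset $C_\pm := S^1_\pm \times D^2$ is a $3$-ball containing exactly one of the fixed points $(\pm 1, 0)$ of $\alpha$. Each $C_\pm$ is $\alpha$-invariant, and $\alpha|_{C_\pm}$ acts as the antipodal involution about its fixed point, so $C_\pm/\alpha \approx D^3/\mathbb Z_2 \approx K_1(P^2)$. The meridian $C_+ \cap C_- = (\{i\}\cup \{-i\})\times D^2$ is folded by $\alpha$ (which sends $(i,x)$ to $(-i,-x)$) into a single embedded disk in $B(\pt)$, along which $C_+/\alpha$ and $C_-/\alpha$ are glued. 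The main obstacle is the extension across the Mobius band in Step~2; the orientation-reversing case is the one requiring an explicit construction, because $\Mo$ is non-orientable, whereas Steps~1 and~3 are routine topology, and the gluing functoriality then reduces the lemma to those extension facts.
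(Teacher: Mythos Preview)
Your proof is correct and uses essentially the same idea as the paper: extend the disk homeomorphism to a self-homeomorphism of $P^2 = \partial K_1(P^2)$ and then cone radially, so that gluing functoriality converts any $\varphi$-gluing into the $id$-gluing. The paper does this in a single step---fixing any $\varphi_1:A_1\to A_0$, setting $\varphi_2:=\varphi_1\circ\varphi^{-1}:A_2\to A_0$, extending each to $P^2$, and coning---whereas you split it into ``move the disks'' followed by ``absorb $\eta$''; these are equivalent, though the paper's version is slightly cleaner. Your explicit verification that $B(\pt)\approx K_1(P^2)\cup_{D^2}K_1(P^2)$ is a useful addition, since the paper asserts this fact just before the lemma without writing out the decomposition.
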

\begin{proof}
Let $X_1$, $X_2$ and $Y_1 = Y_2$ be spaces homeomorphic to $K_1(P^2)$.
Let us take domains $A_1 \subset \partial X_1$, $A_2 \subset \partial X_2$ and $A_0 \subset \partial Y_1 = \partial Y_2$ which are homeomorphic to $D^2$.
Let us take any homeomorphism $\varphi : A_1 \to A_2$.

And let us fix a homeomorphism $\varphi_1 : A_1 \to A_0$.
Then there is a homeomorphism $\hat \varphi_1 : \partial X_1 \to \partial Y_1$ which is an extension of $\varphi_1$.
By using the cone structures of $X_1$ and $Y_1$, we obtain a homeomorphism $\tilde \varphi_1 : X_1 \to Y_1$ which is an extension of $\hat \varphi_1$.
Let us set $\varphi_2 := \varphi_1 \circ \varphi^{-1} : A_2 \to A_0$.
By an argument similar to the above, we obtain a homeomorphism $\tilde \varphi_2 : X_2 \to Y_2$ which is an extension of $\varphi_2$.
And, we define a map $\tilde \varphi : X_1 \cup_\varphi X_2 \to Y_1 \cup_{id_{A_0}} Y_2$ by
\[
\tilde \varphi (x) = 
\left\{
\begin{aligned}
\tilde \varphi_1(x) &\text{ if } x \in X_1 \\
\tilde \varphi_2(x) &\text{ if } x \in X_2
\end{aligned}
\right.
\]
This map is well-defined and a homeomorphism.
\end{proof}

\begin{remark}\label{K_1 cup K_1} \upshape
We define a space $K_1(P^2) \cup_{\Mo} K_1(P^2)$ in a way similar to $K_1(P^2) \cup_{D^2} K_1(P^2)$.
Let us consider domains $A_1$, $A_2 \subset \partial K_1(P^2) \approx P^2$ which are homeomorphic to a Mobius band $\Mo$, and take a homeomorphism $\varphi : A_1 \to A_2$. 
Then, we denote $K_1(P^2) \cup_{\varphi} K_1(P^2)$ by the gluing $K_1(P^2) \cup_{\Mo} K_1(P^2)$ for some gluing map $\varphi$. 
And by an argument similar to the proof of Lemma \ref{K_1 cup_D^2 K_1}, the topology of $K_1(P^2) \cup_{\Mo} K_1(P^2)$ does not depend on the choice of the gluing map.
We can show that any such gluing is homeomorphic to $S^2 \times [-1,1] / (v, t) \sim (\sigma(v), -t)$.
Here, $S^2$ is regarded as $\{v = (x,y,z) \in \mathbb R^3 \mid |v| = 1\}$ and $\sigma$ is an involution defined as $\sigma : (x,y,z) \mapsto (-x,-y,z)$.
Further, it is homeomorphic to $B(P^2;2)$ (see Corollary \ref{ball around 2-dim soul}).


$K_1(P^2) \cup_\partial K_1(P^2)$ denotes the gluing of two copies of $K_1(P^2)$ via a homeomorphism on $\partial K_1(P^2)$. 
This space has the same topology as $K_1(P^2) \cup_{id} K_1(P^2)$, where $id$ is the identity on $\partial K_1(P^2)$, which is homeomorphic to the suspension $\Sigma (P^2)$ over $P^2$.
The proof is done by using the cone structure as in the proof of Lemma \ref{K_1 cup_D^2 K_1}. 
\end{remark}

\begin{example}
\label{L_i} \upshape

We will define open Alexandrov spaces $L_2$ and $L_4$ as follows.
Later, we show that $L_k$ is isometric to an $L(S;k)$ for $k=2,4$.

Recall that $M_\pt$ is defined as
\[
M_\pt := S^1 \times \mathbb R^2 / 
(x, y) \mathop{\sim}\limits^{\alpha} (\bar x, -y)
\]
in Example \ref{M_pt}.
We consider a closed domain $M_\pt'$ of $M_\pt$ as 
\[
M_\pt' := S^1 \times [-\ell , \ell] \times \mathbb R / \alpha 
\]
for some $\ell > 0$. 
Then, $M_\pt'$ is a convex subset of $M_\pt$, and hence it is an Alexandrov space of nonnegative curvature with boundary $\partial M_\pt' \equiv S^1 \times \mathbb R$.

We denote by $L_4$ one of open Alexandrov spaces of nonnegative curvature defined as
\begin{align*}
L_4 (\varphi) = M_\pt' \cup_\varphi M_\pt'.
\end{align*}
for an isometry $\varphi$ on $\partial M_\pt'$.
Here, we use the following notation: 
For Alexandrov spaces $A$ and $A'$ whose boundaries are isometric to each other in the induced inner metric with an isometry $\varphi : \partial A \to \partial A'$, $A \cup_\varphi A'$ denotes the gluing of $A$ and $A'$ via $\varphi$.

We will show that $L_4$ is $L(S^2;4)$ (Lemma \ref{L_4}).

Let $U_{2,1}$ be the Alexandrov space defined by 
\[
U_{2,1} := S^1 \times \mathbb R^2 / (x, y) \mathop{\sim}\limits^\beta (-x, -y).
\]
Let us set 
\[
U_{2,1}' := S^1 \times [-\ell, \ell] \times \mathbb R /\beta \subset U_{2,1}
\] 
which is a convex subset of $U_{2,1}$ and hence it is an Alexandrov space of nonnegative curvature with boundary $\partial U_{2,1}' \equiv S^1 \times \mathbb R$.
Let us set $S(U_{2,1}') := S^1 \times [-\ell, \ell] \times \{0\} / \beta$. 
Note that $S(U_{2,1}')$ is isometric to a Mobius band $\Mo$ 
and $U_{2,1}'$ is isomorphic to an $\mathbb R$-bundle over $S(U_{2,1}')$.

We define open Alexandrov spaces $L_{2,1}$, $L_{2,2}$ and $L_{2,3}$ of nonnegative curvature as
\begin{align*}
L_{2,1} &:= L_{2,1}(\varphi) = M_\pt' \cup_\varphi U_{2,1}' , \\
L_{2,2} &:= L_{2,2}(\varphi) = M_\pt' \cup_\varphi D^2 \times \mathbb R, \text{ and} \\
L_{2,3} &:= L_{2,3}(\varphi) = M_\pt' \cup_\varphi \Mo \times \mathbb R. 
\end{align*}
Here, $\varphi$ denotes a gluing isometry between the corresponding boundaries.
And $D^2$ denotes a two-disk of nonnegative curvature, $\Mo$ is a flat Mobius band.

Let us define an Alexandrov space $A$ of nonnegative curvature 
\[
A := [-a, a] \times [-b ,b] \times \mathbb R / (v,s) \sim (-v,-s). 
\]
Here, $v \in [-a,a] \times [-b,b]$ and $s \in \mathbb R$.
The boundary $\partial A$ is isometric to $S^1 \times \mathbb R$.
We define an open Alexandrov space $L_{2,4}$ of nonnegative curvature as
\[
L_{2,4} = L_{2,4}(\varphi) = A \cup_\varphi A
\]
for some isometry $\varphi$ on $\partial A$.

We will prove that $L_{2,1}$ and $L_{2,3}$ are $L(P^2; 2)$ and $L_{2,2}$ and $L_{2,4}$ are $L(S^2;2)$ (Lemma \ref{L_2}).

From now on throughout this paper, we denote by $L_2$ one of $L_{2,1}$, $L_{2,2}$, $L_{2,3}$ and $L_{2,4}$.
\end{example}

\begin{lemma} \label{L_4}
$L_4$ is $L(S^2;4)$.
\end{lemma}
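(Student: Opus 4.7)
The approach is to realize $L_4=M_\pt'\cup_\varphi M_\pt'$ explicitly as a $\mathbb{Z}_2$-quotient of a flat product, from which the $L(S^2;4)$ structure becomes transparent. Doubling the underlying flat 3-manifold $X:=S^1\times[-\ell,\ell]\times\mathbb R$ along its boundary $S^1\times\{-\ell,\ell\}\times\mathbb R$ produces the flat product $T^2\times\mathbb R$, where $T^2=S^1\times S^1_{4\ell}$, and the defining involution $\alpha(e^{i\theta},t,s)=(e^{-i\theta},-t,-s)$ extends to an isometric involution $\tilde\alpha$ on this double of the form $(v,s)\mapsto(\sigma v,-s)$, with $\sigma$ the isometric involution $(\theta,t)\mapsto(-\theta,-t)$ on $T^2$. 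This $\sigma$ has exactly four fixed points, and the quotient $T^2/\sigma$ is topologically $S^2$ (the classical hyperelliptic quotient). Hence when $\varphi$ is the identity gluing, $L_4$ is isometric to $(T^2\times\mathbb R)/\tilde\alpha$; for any other isometry $\varphi$ of $\partial M_\pt'\equiv S^1\times\mathbb R$ the same identification holds after conjugating by an ambient isometric extension of $\varphi$ across the double, which only changes $\sigma$ inside $\mathrm{Isom}(T^2)$ without altering the hyperelliptic fixed-point count.

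With this identification in place, the candidate soul is $S:=T^2\times\{0\}/\sigma$. It is totally convex as the fixed locus of the isometric involution $(v,s)\mapsto(v,-s)$ on $L_4$, it is a closed nonnegatively curved Alexandrov surface homeomorphic to $S^2$ with four order-two cone points (the images of the four fixed points of $\sigma$), and those cone points are essential singular since each has space of directions a circle of length $\pi$. A Sharafutdinov retraction from $L_4$ onto $S$, furnished by Proposition \ref{Sharafutdinov retraction}, confirms that $S$ is indeed a soul. Away from the four fixed points the $\tilde\alpha$-action is free and produces Euclidean manifold charts, while at each fixed point the tangent cone is $K(P^2)$, which is not a manifold; hence $L_4$ has precisely four topologically singular points, all lying on $S$.

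The perpendicular-projection structure required by Example \ref{L(S;k)} comes from the linear projection $T^2\times\mathbb R\to T^2\times\{0\}$, which descends to a continuous surjection $\pi:L_4\to S$. Over a non-cone point $x\in S$ the preimage $\pi^{-1}(x)$ is the image of $\{x\}\times\mathbb R$, i.e., a pair of perpendicular rays meeting at $x$, and the restriction of $\pi$ to $\pi^{-1}(S\setminus\{p_1,\ldots,p_4\})$ is the trivial line bundle inherited from the product structure upstairs. Over a cone point $p_i$, the preimage is the $\mathbb R$-factor at the corresponding fixed point of $\sigma$ quotiented by $s\sim-s$, i.e., a single perpendicular ray. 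This verifies properties (1)--(4) of Example \ref{L(S;k)} and identifies $L_4$ with an $L(S^2;4)$.

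The main obstacle is the uniform treatment of the general gluing isometry $\varphi$. The group $\mathrm{Isom}(S^1\times\mathbb R)$ is generated by circle rotations, $\mathbb R$-translations, one circle reflection, and one $\mathbb R$-reflection; the step that needs real verification is that each of these generators extends across the double to an isometry of $T^2\times\mathbb R$ that conjugates $\tilde\alpha$ to another involution of the same form $(\sigma',-s)$ on the flat torus, and that $\sigma'$ always has exactly four fixed points. This is a short case analysis but is the one place where care is required; everything else reduces to the concrete model $(T^2\times\mathbb R)/\tilde\alpha$ analyzed above.
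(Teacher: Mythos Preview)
Your approach—realizing $L_4$ explicitly as a $\mathbb Z_2$-quotient of a flat $T^2\times\mathbb R$ and then verifying conditions (1)--(4) of Example~\ref{L(S;k)} directly—is genuinely different from the paper's. The paper instead shows that each $M_\pt'$ deformation retracts onto a disk $S_\pt'(a)$ via the projection $\pi:M_\pt'\to E=[-\ell,\ell]\times\mathbb R/(-1)$, chooses the parameter $a$ compatibly with the translation part $g(0)$ of $\varphi$, concludes that $L_4\simeq S^2$, and then invokes the Soul Theorem~\ref{soul theorem}(3) to place $L_4$ in the list $\{L(S;0),L(S;2),L(S;4)\}$. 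Your route avoids the Soul Theorem and gives the explicit isometric model promised by Corollary~\ref{good orbifold}; the paper's route is shorter and handles all gluings uniformly in one line.

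There is, however, a real gap in your treatment of general $\varphi$. Your plan is to extend $\varphi$ to an isometry $\hat\varphi$ of the \emph{fixed} double $T^2\times\mathbb R$ and conjugate $\tilde\alpha$ by $\hat\varphi$. But conjugation yields a quotient isometric to $(T^2\times\mathbb R)/\tilde\alpha=L_4(\mathrm{id})$, whereas $L_4(\varphi)$ is in general \emph{not} isometric to $L_4(\mathrm{id})$: if $\varphi$ is the $\mathbb R$-translation by $s_0$, the lifted gluing $\tilde\varphi$ shifts the two boundary components of $X$ by $\pm s_0$, so the double $Y_\varphi=X_1\cup_{\tilde\varphi}X_2$ is $\mathbb R^3$ modulo the lattice $\langle(2\pi,0,0),(0,4\ell,2s_0)\rangle$, a flat $T^2_\varphi\times\mathbb R$ with a genuinely different torus. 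Your case analysis on generators of $\mathrm{Isom}(S^1\times\mathbb R)$ cannot repair this, because not every such isometry extends to an isometry of $M_\pt'$ (rotations by $\theta_0\notin\{0,\pi\}$ and nonzero $\mathbb R$-translations do not commute with $\alpha$).

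The fix is to build the double \emph{depending on} $\varphi$: lift $\varphi$ to $\tilde\varphi:\partial X\to\partial X$, form $Y_\varphi=X_1\cup_{\tilde\varphi}X_2$, and observe that $Y_\varphi$ is a complete flat $3$-manifold with $\pi_1\cong\mathbb Z^2$, hence $\mathbb R^3/\Lambda$ for a rank-$2$ translation lattice $\Lambda$; the orthogonal splitting $\mathbb R^3=\mathrm{span}(\Lambda)\oplus\mathrm{span}(\Lambda)^\perp$ gives $Y_\varphi\cong T^2_\varphi\times\mathbb R$ isometrically. The involution $\tilde\alpha$ (equal to $\alpha_i$ on each piece) is then forced to respect this splitting, hence has the form $(v,s)\mapsto(\sigma' v,-s)$ after a shift in $s$; since its fixed points are isolated (two in each $X_i$), the linear part of $\sigma'$ must be $-\mathrm{id}$, giving exactly four fixed points on $T^2_\varphi$. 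From here your verification of the $L(S^2;4)$ structure goes through unchanged.

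Two smaller points. First, ``fixed locus of an isometric involution is totally convex'' is not a general fact in Alexandrov geometry (an isometry can permute distinct geodesics between two fixed points); here it is true because any geodesic in $L_4$ between points of $S$ lifts to a geodesic in $T^2\times\{0\}$, but you should say this. Second, Proposition~\ref{Sharafutdinov retraction} does not \emph{identify} $S$ as the soul; you should instead note that the function $[v,s]\mapsto -|s|$ equals the Busemann function and its maximum set is $S$, which has empty boundary.
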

\begin{proof}
Recall that $L_4 = L_4(\varphi) = M_\pt' \cup_\varphi M_\pt'$.
We identify $\partial M_\pt'$ as $S^1 \times \mathbb R$ via an isometry 
$[\xi, \ell, s] \mapsto [\xi,s]$. 
The isometry $\varphi : \partial M_\pt' \to \partial M_\pt'$ is written as 
\[
\varphi [\xi, \ell, s] = [f(\xi), \ell, g(s)]
\]
for some isometries $f$ on $S^1$ and $g$ on $\mathbb R$.
Then, $g(s) = (\pm 1) \cdot s + g(0)$.

Let us define $E := [-\ell, \ell] \times \mathbb R / (s,t) \sim (-s,-t)$.
Obviously, there is a canonical projection $\pi : M_\pt' \to E$ defined by $[\xi, s, t] \mapsto [s,t]$.
Here, $\xi \in S^1$, $s \in [-\ell, \ell]$ and $t \in \mathbb R$.
The map $\pi$ is a line bundle over $E - \{[0,0]\}$.

For $a \in \mathbb R$, let us define $S_\pt'(a) \subset M_\pt'$ as 
\[
S_\pt' (a) := \left. S^1 \times \left\{\left(t, a t /\ell \right) \,|\, t \in [-\ell, \ell] \right\}  \right/\!\! \alpha.
\]
$S_\pt'(a)$ is homeomorphic to a disk.
Then, by using the fibration $\pi : M_\pt' \to E$, we obtain that $M_\pt'$ is homotopic to $S_\pt'(a)$ for any $a \in \mathbb R$.

By choosing $a$ with respect to $g(0)$, we obtain that $L_4$ is homotopic to the gluing $S_\pt' (a) \cup_\partial S_\pt'(-a)$ which is homeomorphic to $S^2$. 
Thus, a soul of $L_4$ is homeomorphic to a sphere.
Since $M_\pt'$ has only two topological singular points in its interior, $L_4$ has only four topological singular point.
Therefore, $L_4$ is $L(S^2; 4)$.
\end{proof}

\begin{lemma} \label{L_2}
$L_{2,1}$ and $L_{2,3}$ are $L(P^2;2)$, and $L_{2,2}$ and $L_{2,4}$ are $L(S^2;2)$.
\end{lemma}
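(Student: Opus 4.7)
The plan is to adapt the deformation-retract-plus-singular-point-count argument of Lemma \ref{L_4} to each of the four spaces. For each $L_{2,i}$ I will exhibit a two-dimensional deformation retract (which will be a soul), identify its homeomorphism type, and count the topological singular points; then Proposition \ref{2-dim soul} together with the definition in Example \ref{L(S;k)} will force $L_{2,i}$ to be $L(P^2;2)$ or $L(S^2;2)$ as claimed.

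The starting observation is that $M_\pt'$ has exactly two topological singular points, namely the images of the fixed points $(\pm 1, 0, 0)$ of $\alpha$, and that (as already used in the proof of Lemma \ref{L_4}) the line-bundle structure $\pi : M_\pt' \to E$ away from the singular fiber deformation retracts $M_\pt'$ onto any of the disks $S_\pt'(a)$. Thus every gluing with $M_\pt'$ contributes a disk summand and $2$ singular points. I then treat the four cases in parallel. For $L_{2,1}$ the key point is that $\beta$ acts freely on $S^1 \times \mathbb R^2$ (no $x \in S^1 \subset \mathbb C$ satisfies $x = -x$), hence $U_{2,1}'$ is a manifold and the projection onto $S(U_{2,1}') \approx \Mo$ is an $\mathbb R$-line bundle, giving a retract $\Mo$; gluing yields a soul $D^2 \cup_{S^1} \Mo \approx P^2$, so Proposition \ref{2-dim soul} forces $L_{2,1} = L(P^2;2)$. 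The cases $L_{2,2}$ and $L_{2,3}$ are analogous: $D^2 \times \mathbb R$ and $\Mo \times \mathbb R$ are manifolds retracting onto $D^2$ and $\Mo$, producing souls $S^2$ and $P^2$ with $2$ singular points. For $L_{2,4}$ the involution $(v,s)\mapsto(-v,-s)$ on $[-a,a]\times[-b,b]\times\mathbb R$ has a unique fixed point at the origin, so each copy of $A$ contributes exactly one topological singular point and retracts onto the disk-like quotient $([-a,a]\times[-b,b])/(v\sim -v) \approx D^2$; the double gives $2$ singular points and soul $D^2 \cup_{S^1} D^2 \approx S^2$, whence $L_{2,4} = L(S^2;2)$.

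The step I expect to be the main obstacle is the compatibility of the two chosen retractions under the gluing isometry $\varphi$: I must choose the parameters (the level $a$ in $S_\pt'(a)$, the zero section of $U_{2,1}'$, $D^2\times\mathbb R$, $\Mo\times\mathbb R$, and the analogous section of $A$) so that the boundary circles of the two retracts match under $\varphi$, producing a genuine global deformation retract of the glued space. This will use the explicit form of $\varphi$ as $[\xi, \ell, s] \mapsto [f(\xi), \ell, \pm s + c]$ and the homogeneity of the isometry group of $S^1 \times \mathbb R$, exactly as in the proof of Lemma \ref{L_4}. A smaller but necessary verification is that the topological singular points lie on these core surfaces and that the projection structure required in Example \ref{L(S;k)} (namely, fiber $=$ two rays over a smooth point and one ray over a singular point, perpendicular to the soul) is satisfied; both properties are immediate from the explicit $\mathbb Z_2$-quotient descriptions of $M_\pt'$ and $A$.
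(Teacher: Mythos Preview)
Your proposal is correct and follows essentially the same approach as the paper: for each $L_{2,i}$ you identify a two-dimensional core surface by using the $\mathbb R$-fibration structure on each piece (the fibration $\pi:M_\pt'\to E$ and the obvious line-bundle structures on $U_{2,1}'$, $D^2\times\mathbb R$, $\Mo\times\mathbb R$, $A$), glue the resulting disk/M\"obius band along their boundary circles to read off the homotopy type $S^2$ or $P^2$, and then count the two topological singular points to conclude via the classification of $L(S;k)$. Your discussion of matching the boundary circles of the two retracts under the gluing isometry $\varphi$ and of verifying the perpendicular-ray conditions (3)--(4) of Example~\ref{L(S;k)} is more explicit than what the paper writes down, but the underlying argument is the same.
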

\begin{proof}
We will use the notation same as in the proof of Lemma \ref{L_4}.

Let us consider $L_{2,1} = L_{2,1}(\varphi) = M_\pt' \cup_\varphi U_{2,1}$.
Since $U_{2,1}'$ is isomorphic to a line bundle over $S(U_{2,1}')$. 
Here, $S(U_{2,1}')$ is a subset of $U_{2,1}'$ homeomorphic to $\Mo$. 
By using the bundle structure of $U_{2,1}'$ and the fibration $\pi$, we obtain that $L_{2,1}$ is homotopic to the gluing $S_\pt' \cup_\partial S(U_{2,1}')$, which is homeomorphic to $P^2$.
It follows from $L_{2,1}$ has only two topological singular points that $L_{2,1}$ is $L(P^2;1)$.

Let us take $L_{2,2} = L_{2,2}(\varphi) = M_\pt' \cup_\varphi D^2 \times \mathbb R$.
By using the fibration $\pi$, we obtain that $L_{2,2}$ is homotopic to the gluing $S_\pt' (a) \cup_\partial D^2$ for some $a$, which is homeomorphic to $S^2$. 
And $L_{2,2}$ has only two topological singular points. 
This implies that $L_{2,2}$ is $L(S^2;2)$.

Let us take $L_{2,3} = L_{2,3}(\varphi) = M_\pt' \cup_\varphi \Mo \times \mathbb R$. 
By using $\pi$, we obtain that $L_{2,3}$ is homotopic to the gluing $S_\pt'(a) \cup_\partial \Mo$ for some $a$, which is homeomorphic to $P^2$. 
It follows from $L_{2,3}$ has only two topological singular points that $L_{2,3}$ is $L(P^2;2)$.

Let us take $L_{2,4} = L_{2,4}(\varphi) = A \cup_\varphi A$.
Recall that $A = [-a,a] \times [-b,b] \times \mathbb R / (x, y, s) \sim (-x, -y, -s)$. 
Let us consider a subset $S' := [-a,a] \times [-b,b] \times \{0\} /\! \sim$ of $A$, which is homeomorphic to a disk.
Let us set $E := [-b,b] \times \mathbb R / (y, s) \sim (-y, -s)$.
There is a canonical projection $\pi' : A \to E$ defined by $\pi' ([x, y, s]) = [y, s]$.
By using it, 
we obtain that $L_{2,4}$ is homotopic to $S' \cup_\partial S'$, which is homeomorphic to $S^2$.
It follows from $L_{2,4}$ has only two topological singular points that $L_{2,4}$ is $L(S^2;2)$.
\end{proof}



\subsection{Classification of Alexandrov surfaces from \cite{SY}} \label{sec:surface}
We recall a result for a classification of Alexandrov surfaces, by quoting \cite{SY}.

\begin{proposition}[The Gauss-Bonnet Theorem, {\cite[Proposition 14.1]{SY}}]
If $X$ is compact Alexandrov surface, then we have
\[
\omega (X) +\kappa (\partial X) = 2\pi \chi(X).
\]
\end{proposition}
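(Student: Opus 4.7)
The plan is to establish Gauss--Bonnet by triangulating $X$ into small geodesic triangles, applying a local Gauss--Bonnet formula on each triangle, and summing, using the Euler relation $V-E+F=\chi(X)$ to simplify.

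First I would construct a triangulation of $X$ into small geodesic triangles $T_1,\dots,T_F$ with vertex set $V$ and edge set $E$, arranged so that:
(i) each $T_i$ lies in a convex neighborhood in which geodesics between vertices are unique and the curvature and boundary-curvature measures are concentrated;
(ii) every edge of the triangulation that meets $\partial X$ lies entirely in $\partial X$;
(iii) the finitely many essential singular points and corner points of $X$ appear among the vertices. Such a triangulation exists because Alexandrov surfaces are locally conical (by the stability theorem applied to points and boundary points), and away from finitely many points the surface is locally a topological $2$-manifold with a well-behaved geodesic structure.

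Next I would invoke the local Gauss--Bonnet identity on a geodesic triangle $T$ whose edges are either interior geodesics or arcs of $\partial X$: denoting by $\alpha,\beta,\gamma$ its interior angles at the three vertices, one has
\[
\omega(T) \;=\; \alpha+\beta+\gamma-\pi-\kappa(\partial T\cap\partial X),
\]
where the non-boundary edges contribute $0$ because they are geodesic, and the $\kappa$-term only sees those edges that coincide with $\partial X$. This is the standard Alexandrov formula for geodesic triangles on a surface of curvature bounded below, and may be proved either by the smooth Riemannian approximation of Theorem \ref{smooth approximation} applied to small triangles avoiding singular points, together with a limit argument absorbing the atomic contributions at essential singular points, or by the direct construction of the curvature measure via the comparison-triangle excess. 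Summing over $i$ gives
\[
\omega(X) \;=\; \sum_i(\alpha_i+\beta_i+\gamma_i)-\pi F-\kappa(\partial X).
\]

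Finally I regroup the angles by vertex. At an interior vertex $v$, the sum of angles of all triangles meeting $v$ equals the total angle at $v$, which by the definition of the curvature measure in dimension two equals $2\pi$ minus the atomic part of $\omega$ at $v$ (this atomic part is $2\pi-|\Sigma_v|$, which is positive at essential singular interior points and zero where $\Sigma_v$ is a full circle). At a boundary vertex $v$, the sum of adjacent triangle angles equals $\pi$ minus the atomic part of $\kappa$ at $v$ (which is $\pi-\theta_v$ for a corner point of interior angle $\theta_v$, and zero elsewhere). Absorbing both atomic contributions into $\omega$ and $\kappa$, the total angle sum becomes $2\pi V_{\mathrm{int}}+\pi V_\partial$ minus the atomic curvatures already present in $\omega(X)$ and $\kappa(\partial X)$. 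Combining this with the combinatorial identities $3F=2E_{\mathrm{int}}+E_\partial$, $E_\partial=V_\partial$, and $V-E+F=\chi(X)$ collapses the expression to $\omega(X)+\kappa(\partial X)=2\pi\chi(X)$.

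The main obstacle is the rigorous form of the local Gauss--Bonnet identity on a single geodesic triangle, because one must correctly account for (a) essential singular interior points where $|\Sigma_v|<2\pi$ contributes an atom to $\omega$ and (b) corner points of $\partial X$ where $\Sigma_v$ has a boundary and the geodesic curvature measure picks up an atom of size $\pi-\theta_v$. Once these atomic contributions are identified with the appropriate defect in the angle sum around each vertex, the rest of the argument is purely combinatorial.
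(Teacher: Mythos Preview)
The paper does not supply its own proof of this proposition: it is simply recalled from \cite[Proposition~14.1]{SY} in the preliminary section \S\ref{sec:surface}, with no argument given. So there is nothing in the paper to compare your proposal against.

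That said, your triangulation-and-sum argument is the classical route to Gauss--Bonnet on Alexandrov surfaces, essentially going back to Alexandrov himself. One caution: in the Alexandrov-surface setting the curvature measure $\omega$ is typically \emph{defined} so that $\omega(T)=\alpha+\beta+\gamma-\pi$ on a small geodesic triangle $T$ interior to $X$, which makes your local identity close to a tautology rather than something requiring smooth approximation; the real work is in checking that this extends to a countably additive Borel measure and that boundary edges contribute correctly via $\kappa$. Your handling of the atoms at interior singular points ($2\pi-|\Sigma_v|$) and boundary corner points ($\pi-L(\Sigma_v)$) is the right bookkeeping, and the combinatorial reduction via $3F=2E_{\mathrm{int}}+E_\partial$, $E_\partial=V_\partial$, and $V-E+F=\chi(X)$ is standard. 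The outline is sound; the only genuine technical step you have flagged correctly is the local identity together with the consistency of the measure $\omega$.
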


\begin{proposition}[The Cohn-Vossen Theorem, {\cite[Proposition 14.2]{SY}}]
If $X$ is noncompact Alexandrov surface, then we have
\[
2 \pi \chi (X) - \pi \chi (\partial X) - \omega (X) - \kappa (\partial X) \geq 0.
\]
\end{proposition}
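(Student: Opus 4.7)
My plan is to prove the inequality by approximating $X$ from inside by a sequence of compact Alexandrov subsurfaces $X_r$ with controlled boundary, applying the (already quoted) Gauss--Bonnet theorem to each $X_r$, and then passing to the limit $r\to\infty$.

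First I would fix a basepoint $p\in\mathrm{int}\,X$ and, for each sufficiently large $r$, set $X_r:=\{x\in X\mid d(p,x)\leq r\}$. Since the distance function $\dist_p$ is semiconcave on $X\setminus\{p\}$, its level sets are rectifiable, and for almost every $r$ the boundary $\partial X_r$ is a finite union of piecewise-geodesic arcs consisting of pieces of $\partial X$ together with pieces of the metric circle $S_r:=\partial B(p,r)\cap\mathrm{int}\,X$. Because $X_r$ is a compact Alexandrov surface with (at worst polygonal) boundary, the Gauss--Bonnet proposition gives
\[
\omega(X_r)+\kappa(\partial X_r)=2\pi\chi(X_r).
\]
I would decompose the boundary integral as
\[
\kappa(\partial X_r)=\kappa(\partial X\cap X_r)+\kappa(S_r)+\Sigma_r,
\]
where $\Sigma_r$ collects the exterior-angle contributions at the finitely many corner points where $S_r$ meets $\partial X$. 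Each such corner exterior angle is at most $\pi$, and there are exactly $2\cdot\#\{\text{components of }\partial X\text{ met by }S_r\}$ of them for generic $r$.

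The second step is the core geometric estimate: I would show that $\kappa(S_r)\leq 0$ asymptotically in a controlled way. Concretely, $S_r$ is a level set of $\dist_p$, and by semiconcavity of $\dist_p$ together with the first variation formula on an Alexandrov surface, the geodesic curvature of $S_r$ (measured as the turning integral, signed so that convex-toward-$p$ means positive) is bounded above by an expression that, after telescoping in $r$, contributes at most $\omega(X\setminus X_r)$. Combined with the corner estimate $\Sigma_r\leq \pi\,\#(\text{endpoint corners})$, and noticing that at large $r$ the number of components of $\partial X\cap X_r$ stabilizes to the number of non-compact boundary components of $X$ plus already-enclosed compact components, the endpoint contribution in the limit is exactly $\pi\chi(\partial X)$ (each compact component contributes $0$ and each non-compact component contributes $\pi\cdot 2/2=\pi$ from its two endpoints on $S_r$).

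Finally I would pass to the limit $r\to\infty$: $\omega(X_r)\nearrow\omega(X)$ by monotone convergence of the (signed) curvature measure together with the lower curvature bound, $\kappa(\partial X\cap X_r)\to\kappa(\partial X)$, and $\chi(X_r)\to\chi(X)$ since the exhaustion eventually captures every handle, cross-cap, and boundary component. Rearranging Gauss--Bonnet for $X_r$ then yields, in the limit,
\[
2\pi\chi(X)-\pi\chi(\partial X)-\omega(X)-\kappa(\partial X)\;\geq\;-\lim_{r\to\infty}\kappa(S_r)\;\geq\;0,
\]
which is the desired Cohn--Vossen inequality. The principal obstacle is the estimate on $\kappa(S_r)$: although in the smooth case it follows from the Hessian comparison for $\dist_p$, in the Alexandrov setting one must instead use semiconcavity and a careful accounting of the turning of the concave function $\dist_p$ on the non-smooth set, which is precisely where the lower curvature bound feeds in and where the argument must be executed with care.
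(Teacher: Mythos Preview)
The paper does not give its own proof of this proposition; it is quoted from \cite[Proposition~14.2]{SY} and used only as input to the surface classification in Theorem~\ref{Alex surface}. So there is no in-paper argument to compare against, and I evaluate your proposal on its own merits.

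Your exhaustion-by-balls strategy is the classical route, but the key step has the sign reversed. You assert that $\kappa(S_r)\le 0$ asymptotically and conclude with the chain
\[
2\pi\chi(X)-\pi\chi(\partial X)-\omega(X)-\kappa(\partial X)\;\ge\;-\lim_{r\to\infty}\kappa(S_r)\;\ge\;0.
\]
Already for $X=\mathbb R^2$ this fails: $\kappa(S_r)=2\pi$ for every $r$, so your last inequality is $-2\pi\ge 0$. In fact, Gauss--Bonnet on $X_r$ (taking $\partial X=\emptyset$ to isolate the issue) yields
\[
2\pi\chi(X)-\omega(X)\;=\;\lim_{r\to\infty}\kappa(S_r),
\]
so Cohn--Vossen is equivalent to $\lim\kappa(S_r)\ge 0$, the \emph{opposite} of what you try to prove. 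Semiconcavity of $\dist_p$ only bounds the geodesic curvature of $S_r$ from above, which is useless here; your ``telescoping'' remark merely shows that $\kappa(S_r)$ is Cauchy in $r$, not that its limit has a sign. The missing ingredient is the first-variation identity $L'(r)=\kappa(S_r)$ for the length $L(r)$ of $S_r$: once $\kappa(S_r)$ is shown to converge to the defect $c$, a negative $c$ would force $L(r)\to -\infty$, contradicting $L\ge 0$. Hence $c\ge 0$.

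When $\partial X\neq\emptyset$, your treatment of the corner term $\Sigma_r$ is also loose: the claim that each corner tends to a right angle (so $\Sigma_r\to\pi\chi(\partial X)$ exactly) holds in flat model cases but is not justified in general, and again you would need the inequality in the opposite direction. A cleaner route is to double $X$ along $\partial X$: since $\chi(\mathrm{dbl}\,X)=2\chi(X)-\chi(\partial X)$ and $\omega(\mathrm{dbl}\,X)=2\omega(X)+2\kappa(\partial X)$, the boundaryless Cohn--Vossen inequality for $\mathrm{dbl}\,X$ (which is again an Alexandrov surface) gives the stated inequality for $X$ directly.
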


\begin{theorem}[{\cite[Corollary 14.4]{SY}}] \label{Alex surface}
Let $X$ be a nonnegatively curved Alexandrov surface. 
Then, the following holds.
\begin{itemize}
\item[(1)]
$X$ is homeomorphic to either $\mathbb{R}^2$, $\mathbb{R}_{\geq 0} \times \mathbb{R}$, 
$S^2$, $P^2$, $D^2$ or isometric to $[0,\ell] \times \mathbb{R}$, 
$[0, \ell] \times S^1(r)$, $\mathbb{R}_{\geq 0} \times S^1(r)$, 
$\mathbb{R} \times S^1(r)$, $\mathbb{R} \times S^1(r) / \mathbb{Z}_2$, 
a flat torus, or a flat Klein bottle for some $\ell, r > 0$. 
\item[(2)]
$\mathrm{int}\, X$ contains at most four essential singular points, 
and denoting by $n$ the number 
of essential singular points in $\mathrm{int}\, X$, 
we have the following for some $\ell, r > 0$.
\begin{itemize}
\item[(a)] If $n \geq 1$, $X$ is either homeomorphic to $\mathbb{R}^2$, 
$S^2$, $P^2$, $D^2$ or isometric to 
$\mathrm{dbl}\, ( \mathbb{R}_{\geq 0} \times \mathbb{R}_{\geq 0}) 
\cap \{(x,y) \,|\, y \leq h\}$.
\item[(b)] If $n \geq 2$, $X$ is ether homeomorphic to $S^2$, or isometric to 
$\mathrm{dbl}\, (\mathbb{R}_{\geq 0} \times [0,h])$, 
$\mathrm{dbl}\, (\mathbb{R}_{\geq 0} \times [0,h]) \cap \{(x,y) \,|\, x \leq \ell\}$, or 
$\mathrm{dbl}\, ([0, \ell] \times [0,h]) / \mathbb{Z}_2$.
\item[(c)] If $n \geq 3$, $X$ is homeomorphic to $S^2$.
\item[(d)] If $n=4$, $X$ is isometric to 
$A \cup_{\phi} B$, where $A$ and $B$ are isometric to 
$\mathrm{dbl}\,([0, \ell] \times [0,\infty)) \cap \{(x, y) \,|\, y \leq a\}$ and
$\mathrm{dbl}\,([0, \ell] \times [0,\infty)) \cap \{(x, y) \,|\, y \leq b\}$
for some $a, b > 0$, respectively; and $\phi : \partial A \to \partial B$ is some isometry.
\end{itemize}
\end{itemize}
\end{theorem}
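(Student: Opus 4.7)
The plan is to derive the classification directly from the Gauss--Bonnet and Cohn--Vossen formulas stated just above, by quantifying how much curvature each essential singular point must contribute. The first step is to establish a lower bound on the concentrated curvature at an essential singular point. For an interior point $p \in \mathrm{int}\,X$, the space of directions $\Sigma_p$ is a closed $1$-dimensional Alexandrov space with curvature $\ge 1$, hence a metric circle of length $L(\Sigma_p)\le 2\pi$, and for a circle the radius equals $L(\Sigma_p)/2$; thus $\rad \Sigma_p \le \pi/2$ is equivalent to $L(\Sigma_p)\le \pi$, so the atomic curvature at $p$ satisfies $\omega(\{p\})=2\pi-L(\Sigma_p)\ge \pi$. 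For a boundary point $p\in\partial X$, $\Sigma_p$ is an interval of length $\le \pi$, and the condition $\rad \Sigma_p\le \pi/2$ again forces $L(\Sigma_p)\le \pi$, giving a nonnegative contribution $\pi-L(\Sigma_p)$ to $\kappa(\partial X)$ at $p$. Since $X$ has curvature $\ge 0$, both $\omega$ and $\kappa(\partial X)$ are the sum of a nonnegative absolutely continuous part and the atomic parts above.

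For part (1), I would feed the topological classification of surfaces into Cohn--Vossen. In the closed case, Gauss--Bonnet forces $\chi(X)\ge 0$, so $X\approx S^2$, $P^2$, $T^2$, or $K^2$; the flat cases $T^2$, $K^2$ follow from $\omega(X)=0$ plus the standard rigidity for nonnegatively curved flat closed surfaces. In the compact-with-boundary and noncompact cases, Cohn--Vossen combined with $\omega\ge 0$, $\kappa(\partial X)\ge 0$ restricts $(\chi(X),\chi(\partial X))$ to a small list, and the explicit isometric forms in the flat cases arise by splitting off a line (Theorem \ref{splitting theorem}) whenever $X$ contains a line; the cases with one or two ends without lines yield planes and half-planes and their quotients.

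For part (2), combine the atomic bound with Gauss--Bonnet/Cohn--Vossen. In the compact case with $n$ interior essential singular points, one has
\[
n\pi \;\le\; \omega(X) \;=\; 2\pi\chi(X) - \kappa(\partial X) \;\le\; 2\pi\chi(X),
\]
so $n\le 2\chi(X)$; this immediately forces $\chi(X)\ge 1$, giving $X\approx S^2, P^2, D^2$, and the explicit bounds $n\le 4$ on $S^2$, $n\le 2$ on $P^2$ and on $D^2$. For noncompact $X$, Cohn--Vossen gives $n\pi \le 2\pi\chi(X)-\pi\chi(\partial X)$ and eliminates all cases except $\mathbb{R}^2$ and the half-plane-like $\mathrm{dbl}(\mathbb{R}_{\ge 0}\times\mathbb{R}_{\ge 0})\cap\{y\le h\}$. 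For each extremal value of $n$ (in particular $n=4$ on $S^2$, or $n=2$ on $P^2, D^2$), equality in the above forces $\omega(X)$ to be purely atomic of total mass $n\pi$ with every essential singular point contributing exactly $\pi$, and $X\setminus\{\text{essential singular points}\}$ is flat. This rigidity, together with analyzing the developing map on the complement, produces the explicit isometric descriptions as doubles $\mathrm{dbl}(\mathbb{R}_{\ge 0}\times[0,h])$, $\mathrm{dbl}([0,\ell]\times[0,h])/\mathbb{Z}_2$, and the gluings $A\cup_\phi B$ that appear in the statement.

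The main obstacle is the rigidity step in part (2)(d): once the atomic contributions at the four essential singular points exhaust $\omega(X)=4\pi$, one must reconstruct $X$ as an isometric gluing of two flat strips of the stated form. I would handle this by taking a geodesic arc through two of the singular points, showing (via Alexandrov comparison and flatness of the complement) that cutting along it yields a nonnegatively curved surface with totally geodesic boundary and only two remaining atomic points of mass $\pi$; iterating the argument realizes each half as $\mathrm{dbl}([0,\ell]\times[0,\infty))\cap\{y\le a\}$, and the reassembly recovers the asserted isometric form.
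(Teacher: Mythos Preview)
The paper does not give its own proof of this theorem: it is quoted verbatim as \cite[Corollary 14.4]{SY}, immediately after the Gauss--Bonnet and Cohn--Vossen propositions, with no argument supplied. So there is no ``paper's proof'' to compare against; the result is used as a black box in the subsequent soul-theorem discussion and in Sections \ref{proof of 2-dim interior}--\ref{proof of 1-dim interval}.

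That said, your outline is the standard route and is essentially how the result is obtained in \cite{SY}: one feeds the lower bound $\omega(\{p\})\ge\pi$ at an interior essential singular point into the Gauss--Bonnet and Cohn--Vossen inequalities to force the topological and numerical restrictions, and then invokes flat rigidity in the extremal cases. Two places in your sketch would need tightening before it stands as a self-contained proof. First, in part (1) the passage from ``$\chi(X)$ is constrained'' to the explicit list of isometric models in the noncompact and boundary cases requires more than just splitting off a line: you must also argue that when no line is present the soul construction on the double, together with $\omega\ge 0$ and $\kappa(\partial X)\ge 0$, forces the specific flat forms (e.g.\ the half-cylinders and strips). Second, in the rigidity step for (2)(d) your proposed cut-and-develop argument is plausible but delicate: you need to know that a geodesic through two of the cone points of angle $\pi$ actually exists and is minimizing, and that the complement is genuinely flat (not just $\omega=0$ in the measure sense but locally isometric to $\mathbb{R}^2$), which for Alexandrov surfaces follows from \cite{BGP} but should be cited. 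With those two gaps filled, your argument would be complete and would match the approach of \cite{SY}.
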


\subsection{A fundamental observation}
In this subsection, we prove fundamental propositions on the sets of topologically singular points of Alexandrov spaces.

First, we remark the following proposition on the number of topologically singular points of a three-dimensional closed Alexandrov space.
Let us consider a $(2n + 1)$-dimensional manifold $X$ such that its boundary $\partial X$ is homeomorphic to the disjoint union $\bigsqcup_{i = 1}^{m} P^{2n}$ of the projective spaces. 
Then we see that $m$ is even.
Indeed, we consider the double $\mathrm{dbl} (X)$ and its Euler number:
\[
0 = \chi (\mathrm{dbl}(X)) = 2 \chi(X) - \chi(\partial X) = 2 \chi(X) - m.
\]

\begin{proposition}
Let $M$ be a three-dimensional closed Alexandrov space.
Then the number of topologically singular points of $M$ is even.
\end{proposition}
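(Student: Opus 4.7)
The plan is to show that every topologically singular point of $M$ is locally a cone over $P^2$, cut out small cone neighborhoods to obtain a compact $3$-manifold with boundary, and then apply the Euler characteristic computation sketched in the paragraph just above the statement.

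First I would analyze the local structure at a topologically singular point. For any $p \in S_{\mathrm{top}}(M)$, since $\partial M = \emptyset$, the space of directions $\Sigma_p$ is a two-dimensional \emph{closed} Alexandrov space with curvature $\geq 1$. The Gauss–Bonnet theorem forces $\omega(\Sigma_p) = 2\pi\chi(\Sigma_p) > 0$, so by the classification of Theorem \ref{Alex surface}(1), $\Sigma_p$ is homeomorphic to either $S^2$ or $P^2$. If $\Sigma_p \approx S^2$, then by the Stability Theorem \ref{stability theorem} the point $p$ has a neighborhood homeomorphic to $K(S^2) \approx \mathbb{R}^3$, contradicting $p \in S_{\mathrm{top}}(M)$. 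Hence $\Sigma_p \approx P^2$, and Stability again yields a neighborhood of $p$ homeomorphic to the open cone $K(P^2)$.

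Next, since the punctured cone $K(P^2) \setminus \{o\}$ is homeomorphic to $P^2 \times (0,\infty)$, which is a $3$-manifold, every point of such a cone neighborhood of $p$ other than $p$ itself is a manifold point of $M$. Therefore $S_{\mathrm{top}}(M)$ is discrete, and by compactness of $M$ it is finite: write $S_{\mathrm{top}}(M) = \{p_1,\dots,p_m\}$. Choose pairwise disjoint closed cone neighborhoods $\overline{U_i} \approx K_1(P^2)$ of $p_i$, and set
\[
N := M \setminus \bigcup_{i=1}^m \mathrm{int}\, U_i.
\]
Every interior point of $N$ is a manifold point of $M$, while each boundary component inherits a bicollar from the cone structure of $K_1(P^2)$. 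Thus $N$ is a compact topological $3$-manifold with $\partial N \approx \bigsqcup_{i=1}^m P^2$.

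Finally, the double $D(N) := N \cup_{\partial N} N$ is a closed topological $3$-manifold, so $\chi(D(N)) = 0$. Combined with
\[
\chi(D(N)) \;=\; 2\chi(N) - \chi(\partial N) \;=\; 2\chi(N) - m\chi(P^2) \;=\; 2\chi(N) - m,
\]
this gives $m = 2\chi(N)$, which is even. The only substantive step is the first—identifying $\Sigma_p$ as $P^2$ at each topologically singular point so that the cut-out neighborhoods really are $K_1(P^2)$; once that is in hand, the remainder is exactly the Euler characteristic bookkeeping used in the $X$-with-$P^{2n}$-boundary remark preceding the statement.
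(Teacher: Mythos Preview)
Your proof is correct and follows essentially the same approach as the paper: excise cone neighborhoods $K_1(P^2)$ around the topologically singular points and apply the Euler characteristic argument for the double of the resulting compact $3$-manifold with $P^2$ boundary components. You have simply made explicit the step the paper leaves implicit, namely that $\Sigma_p$ must be $P^2$ (not $S^2$) at a topologically singular point, via Gauss--Bonnet and the surface classification.
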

\begin{proof}
Since $M$ is compact, $S_{\mathrm{top}}(M)$ is a finite set.
By Theorem \ref{stability theorem}, there exists $r > 0$ such that 
for any $p \in S_{\mathrm{top}}(M)$ 
we have $(B(p, r), p) \approx (K_1(P^2), o)$.
Therefore, 
\[
M_0 := M - \bigcup_{p \in S_{\mathrm{top}}(M)} U(p, r)
\]
is a manifold with boundary 
$\partial M_0 \approx \bigsqcup_{p \in S_{\mathrm{top}}(M)} P^2$.
By the above argument, $\sharp S_{\mathrm{top}}(M)$ is even.
\end{proof}

We also prepare the following proposition. 
\begin{proposition} \label{regular collapsing}
Let $(M_i, p_i)$ be a sequence of $n$-dimensional pointed Alexandrov spaces of curvature $\geq -1$ converging to $(X, p)$.
If $\diam \Sigma_p > \pi / 2$, then $\Sigma_{p_i}$ is homeomorphic to a suspension over an Alexandrov space of curvature $\geq 1$, for large $i$.
\end{proposition}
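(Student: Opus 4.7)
The plan is to reduce the proposition to Theorem \ref{diameter suspension theorem} by showing that $\diam \Sigma_{p_i} > \pi/2$ for all large $i$. Once we have this, Theorem \ref{diameter suspension theorem} applied to $\Sigma_{p_i}$ (which has curvature $\geq 1$ since $M_i$ has curvature $\geq -1$) immediately yields the homeomorphism with a suspension over an $(n-2)$-dimensional Alexandrov space of curvature $\geq 1$.

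To produce the diameter bound at $p_i$, I would first pick $\xi,\eta\in\Sigma_p'$ realized by honest geodesics emanating from $p$ such that $\angle(\xi,\eta) > \pi/2$; such $\xi,\eta$ exist because $\Sigma_p'$ is dense in $\Sigma_p$ and $\diam \Sigma_p > \pi/2$ by hypothesis. Choose points $q,r$ slightly beyond $p$ along these geodesics, small enough that, by the definition of angle via limits of comparison angles, $\wangle_{-1} q p r > \pi/2$. Because $(M_i,p_i) \to (X,p)$ in the (pointed) Gromov--Hausdorff sense, I can select $q_i, r_i \in M_i$ with $q_i \to q$ and $r_i \to r$, and the three pairwise distances $|p_iq_i|, |p_ir_i|, |q_ir_i|$ converge to $|pq|, |pr|, |qr|$. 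Continuity of the hyperbolic comparison angle in its three side arguments then gives $\wangle_{-1} q_i p_i r_i \to \wangle_{-1} qpr > \pi/2$, so for all large $i$ we have $\wangle_{-1} q_i p_i r_i > \pi/2$.

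The curvature bound $\mathrm{curv}\, M_i \geq -1$ gives the Toponogov-type inequality $\angle q_i p_i r_i \geq \wangle_{-1} q_i p_i r_i$ for any choice of minimizing geodesics $p_iq_i$ and $p_ir_i$, which are guaranteed to exist by properness. Consequently $\diam \Sigma_{p_i} \geq \angle q_i p_i r_i > \pi/2$ for all large $i$, and Theorem \ref{diameter suspension theorem} concludes the argument.

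The only subtle point, and the thing I would verify carefully, is the passage from the abstract directions $\xi,\eta\in\Sigma_p$ realizing the diameter to a genuine triangle $(p;q,r)$ with large comparison angle in $X$; this is routine given that the angle at $p$ is the limit of comparison angles along the defining geodesics, but one must choose $q,r$ close enough to $p$ that the tiny perturbation $\wangle_{-1} qpr - \angle(\xi,\eta)$ is absorbed by the strict inequality $\angle(\xi,\eta) > \pi/2$. No other step presents difficulty, since everything else is soft use of Gromov--Hausdorff continuity of pairwise distances and the standard angle comparison built into the curvature bound.
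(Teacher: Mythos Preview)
Your proof is correct and follows essentially the same approach as the paper: both reduce to Theorem~\ref{diameter suspension theorem} by establishing $\diam \Sigma_{p_i} > \pi/2$ for large $i$. The only difference is that the paper argues by contradiction and invokes the lower semicontinuity of spaces of directions under Gromov--Hausdorff convergence as a known fact (so $\diam \Sigma_{p_i} \le \pi/2$ for all $i$ would force $\diam \Sigma_p \le \pi/2$), whereas you spell out this semicontinuity directly via comparison angles and Toponogov---your argument is in effect a proof of the relevant instance of that semicontinuity.
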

\begin{proof}
Suppose that the conclusion fails.
Then we have some sequence $\{M_i^n\}$ such that
$(M_i, p_i)$ converges to $(X, p)$ and
each $\Sigma_{p_{i}}$ does not have topological suspension structure over 
any Alexandrov space of curvature $\geq 1$.
It follows from Theorem \ref{diameter suspension theorem}
that $\diam(\Sigma_{p_i}) \leq \pi / 2$.
The convergence of spaces of directions is 
lower semi-continuous:
\[
\liminf_{i \to \infty} \Sigma_{p_i} \geq \Sigma_p .
\]
Then we have $\diam(\Sigma_p) \leq \pi/2$.
This is a contradiction.
\end{proof}

\section{Smooth Approximations and Flow Arguments} \label{proof of flow theorem}
\subsection{Flow Theorem}
\mbox{}

A bijective map $f : X \to Y$ between metric spaces 
is called a bi-Lipschitz if 
both $f$ and $f^{-1}$ are Lipschitz.

\begin{definition} \upshape \label{definition of flow}
Let $M$ be a topological space.
A continuous map $\Phi : M \times \mathbb{R} \to M$ is called 
a {\it flow} if it satisfies 
\begin{align*}
\Phi(x, 0) &= x, \\
\Phi(x, s + t) &= \Phi(\Phi(x, s), t)
\end{align*}
for any $x \in M$ and $s, t \in \mathbb{R}$.
Remark that, for each $t \in \mathbb{R}$, the map 
\[
\Phi_t = \Phi(\cdot, t) : M \to M
\]
has the inverse map $\Phi_{-t}$.

Let $M$ be a metric space. 
If a flow $\Phi$ is a Lipschitz map from 
$M \times \mathbb{R}$ to $M$, then 
we call it a {\it Lipschitz flow}.
Remark that for any Lipschitz flow $\Phi$, 
$\Phi(\cdot, t)$ is bi-Lipschitz for each $t \in \mathbb{R}$.
\end{definition}

By using the proof of Theorem \ref{smooth approximation}, 
we obtain the following theorem.
This is a main tool for the proof of our results 
throughout the present paper.


\begin{theorem}[Flow Theorem] \label{flow theorem} 
For any $n \in \mathbb{N}$, there exists a positive number $\e_n$ depending only on $n$ satisfying the following:
Let $C$ be a compact subset and $S$ be a closed subset in an $n$-dimensional Alexandrov space $M$ with curvature $\geq -1$. 
Suppose that $C \cap S = \emptyset$ and $C$ is $\e$-strained
and $\dist_{S}$ is $(1-\de)$-regular on $C$ for $\de > 0$,
where $0 < \e \leq \e_n$ and $\de$ is smaller than some constant.
Then there exist a neighborhood $U(C)$ of $C$ and a Lipschitz flow $\Phi : M \times \mathbb{R} \to M$ satisfying the following.
\begin{itemize} 
\item[$(i)$]
For any $x \in U(C)$, putting $I_x := \{t \in \mathbb{R} \,|\, \Phi(x,t) \in U(C)\}$, $\Phi(x, t)$ is $5 \sqrt \de +\theta(\e)$-isometric embedding in $t \in I_x$.
\item[$(ii)$]$\Phi$ is leaving from $S$ which speed is almost one. Namely, 
\begin{align} \label{almost gradient}
\left. \frac{d}{d t} \right|_{t = 0+} \!\! \dist_S \circ \Phi(x, t) 
&> 1 - 5 \sqrt \de - \theta(\e) 
\end{align}
at any $x \in U(C)$.
\end{itemize}
\end{theorem}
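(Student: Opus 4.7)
The plan is to reduce to the Riemannian case via the Kuwae--Machigashira--Shioya smoothing (Theorem \ref{smooth approximation}), build a smooth gradient-like vector field for the push-forward of $\dist_S$ there, and conjugate the resulting smooth flow back to $M$ by the almost-isometric $DC^1$-homeomorphism.

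Since $C$ is compact and $\e$-strained with $\e \le \e_n$, Theorem \ref{smooth approximation} furnishes an open neighborhood $U(C) \supset C$ (shrunk to be disjoint from $S$), a smooth Riemannian $n$-manifold $(N(C), g)$, and a $\theta(\e)$-isometric $DC^1$-homeomorphism $f : U(C) \to N(C)$ with $g(df(v), df(w)) = \langle v, w \rangle + \theta(\e)$. I transport the distance function by setting $h := \dist_S \circ f^{-1}$ on $N(C)$, so $h$ is $(1+\theta(\e))$-Lipschitz; at each $p \in C$, the $(1-\de)$-regularity of $\dist_S$ combined with Lemma \ref{lemma 1.8}, which realizes the almost-gradient direction at $p$ by a direction toward a genuine comparison point in $M$, produces at $f(p)$ a $g$-unit vector $\xi_p$ with $dh(\xi_p) \ge 1 - \de - \theta(\e)$.

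Using finitely many such $\xi_p$ and a smooth partition of unity on a neighborhood of $f(C)$ in $N(C)$, I produce a smooth vector field $\tilde V$ with $|\tilde V|_g \le 1$ and $dh(\tilde V) \ge 1 - \de - \theta(\e)$. The averaging does not collapse because all the $\xi_p$ lie in the convex cone $\{v : dh(v)/|v|_g \ge 1 - \de - \theta(\e)\}$ of the unit sphere. Multiplying $\tilde V$ by a smooth bump supported in $f(U(C))$, the vector field is complete and its flow $\Psi : N(C) \times \mathbb{R} \to N(C)$ is smooth. Define $\Phi_t := f^{-1} \circ \Psi_t \circ f$ on $U(C)$ and the identity elsewhere; the cutoff arranges the match at the boundary and makes $\Phi$ a global Lipschitz flow on $M$.

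Properties (i) and (ii) are then verified on the Riemannian side. Claim (ii) is immediate from the chain rule: $\left.\frac{d}{dt}\right|_{t = 0+} \dist_S \circ \Phi(x, t) = dh(\tilde V(f(x))) \ge 1 - \de - \theta(\e)$, well within the tolerance $5\sqrt{\de} + \theta(\e)$ after accounting for the almost-isometry of $f$. Claim (i) follows since, along a trajectory $t \mapsto \Phi(x, t)$, the Riemannian speed is at most $|\tilde V|_g + \theta(\e) \le 1 + \theta(\e)$ and $h$ grows at rate at least $1 - \de - \theta(\e)$ by (ii), so the $(1+\theta(\e))$-Lipschitz bound on $h$ forces $|\Phi(x, t_1) \Phi(x, t_2)|$ to lie between $(1 - O(\de))|t_1 - t_2|$ and $(1 + \theta(\e))|t_1 - t_2|$, again comfortably within $5\sqrt{\de} + \theta(\e)$. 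The main obstacle will be the construction of $\tilde V$: the regularity of $\dist_S$ is pointwise and phrased in terms of Alexandrov angles, whereas I need a smooth direction field compatible with Riemannian integration. Lemma \ref{lemma 1.8} is the bridge, producing genuine comparison points that realize prescribed almost-directions, so that after transport through $f$ the partition-of-unity averaging and cutoff manipulations are all available in the smooth category; the slack $5\sqrt{\de}$ in the exponent accommodates both the angle loss between $\tilde V$ and $\nabla_g h$ and the cumulative errors of these pointwise-to-smooth transitions.
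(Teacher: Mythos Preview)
Your approach is essentially the paper's: pass to the KMS/Otsu smooth model $N(C)$, build a gradient-like field there from comparison points realizing the almost-gradient direction of $\dist_S$, integrate, and conjugate the flow back via the $\theta(\e)$-almost isometric $DC^1$-homeomorphism $f$; the verification of (i) and (ii) is also the same (upper bound on speed from $|\tilde V|\le 1+\theta(\e)$, lower bound from the growth of $\dist_S$ along trajectories).

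One point deserves tightening. You produce a single vector $\xi_p\in T_{f(p)}N$ at each net point and then speak of averaging ``finitely many such $\xi_p$'' by a partition of unity; but a partition of unity averages \emph{vector fields} on overlapping neighborhoods, not vectors at isolated points. The paper makes this step explicit: at each net point $x_j$ one takes $q_j^{+1}\in S$ nearest to $x_j$ and a point $y_j$ with $\wangle S\,x\,y_j>\pi-\de'-\theta(\e)$ for all $x\in B(x_j,s)$, then defines the \emph{smooth local field} $Z_j(x)=\bigl(f_j(y_j)-f_j(x)\bigr)/|f_j(y_j)-f_j(x)|$ in the Euclidean chart $E_j$ and transports it to $N$. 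The key quantitative fact (the paper's Lemma~3.5) is that on overlaps these local fields differ by at most $4\sqrt{2}\,\de'+\theta(\e)$, because any two such directions make angle $<2\de'+\theta(\e)$ with each other via the common almost-antipode $S'$. This replaces your convex-cone heuristic, which is delicate because $dh$ is only the directional derivative of a semiconcave function (hence concave, not linear, on each tangent space). With the closeness estimate in hand, $(d_S)'(W)\ge (d_S)'(W_j)-|W-W_j|$ gives (ii) directly, and the $5\sqrt{\de}$ slack absorbs $\de'\sim\sqrt{\de}$. Once you phrase the construction this way your sketch and the paper's proof coincide.
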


\begin{proof}[Proof of Theorem \ref{flow theorem}] 
To prove this, we must remember 
the proof of Theorem \ref{smooth approximation}
in reference to \cite{KMS} and \cite{Otsu}.

For a while, $x$ denotes an arbitrary point in $C$.
We set 
\[
v(x) := \frac{\nabla \dist_S}{|\nabla \dist_S|} \in \Sigma_x M.
\]
Since $d_S$ is $(1 -\de)$-regular, we have
\begin{equation}
(\dist_S)'_x (v(x)) = - \cos \angle (S'_x, v(x)) > 1-\de. \label{1-de regular}
\end{equation}
We fix a point $q(x) \in S$ such that
\[
|x q(x)| = |x S|.
\]
Then, by \eqref{1-de regular}, we have
\[
\angle( q(x)' , v(x) ) \ge \angle (S', v(x)) > \pi - \de'.
\]
Here, $\de' := \pi - \cos^{-1} (-1 + \de)$.
Note that $\lim_{\de \to 0} \frac{\de'}{\sqrt \de} = \frac{1}{2 \sqrt 2}$.

We put $\ell := \min \{ \e\strrad (C), d(S, C)\}$.
We fix positive numbers $s$ and $t$ with $s \ll t \ll \ell$.
Take a maximal $0.2s$-net $\{ x_j \, | \, j = 1, \dots, N \}$ of $C$.
Fix any $j \in \{1, \dots, N\}$. 
We take $\e$-strainer $\{q_j^\alpha \,|\, \alpha = \pm 1, \dots, \pm n\}$ at $x_j$ of length $\geq \ell$. 
We may assume that $\{q_j^\alpha\}$ satisfies the following.
\begin{align}
q_j^{+1} &= q (x_j). 
\end{align}
Since $t \ll \ell$, $\{q_j^\alpha\}$ is also $\theta(\e)$-strainer at any $x \in B(x_j,10 t)$.
It follows from $s \ll t$ and \cite[Lemma 1.9]{Y convergence} that 
\begin{equation}
|\wangle q_j^\alpha x y - \angle q_j^\alpha x y| < \theta(\e) 
\end{equation}
for any $x \in B(x_j, s)$ and $y \in B(x, s)$.

We denote by $E_j$ the standard $n$-dimensional Euclidean space.
Define a map 
\[ 
f_j = (f_j^{\alpha})_{\alpha =1}^n : B(x_j, 10t) \to E_j
\] 
by 
\begin{align}
f_j^{\alpha} (y) &= 
\frac{1}{\mathcal{H}^n (B(q_j^{\alpha}, \e'))}
\int_{z \in B(q_j^{\alpha}, \e')}\!\!\!\!\!\!\!
d(y, z) - d(x_j, z)\, d \mathcal{H}^n(z).
\end{align}
where $\e' \ll \e$. 
This map is a 
$\theta(\e)$-almost isometric $DC^1$-homeomorphism,
which is actually a $DC^1$-coordinate system.

\begin{lemma}[{\cite[Lemma 5]{Otsu}}] \label{lemma 5 Otsu}
There is an isometry $F_j^k : E_k \to E_j$ satisfying the following: 
\begin{align}
|F_j^k \circ f_k (y) - f_j (y)| 
&< \theta(\e) s, \label{chain rule 1} \\ 
|d F_j^k \circ d f_k (\xi) - d f_j (\xi) | 
&< \theta(\e) \label{chain rule 2} 
\end{align}
for any $j$ and $k$, 
and $y \in B(x_j, s) \cap B(x_k, s)$ and $\xi \in \Sigma_y$.
\end{lemma}
Remark that each $f_j$ has the directional derivative $d f_j$. 
\begin{proof}[Proof of Lemma \ref{lemma 5 Otsu}]
We first recall how to define $F_j^k$'s.
The property \eqref{chain rule 1} is proved in the same way to the original proof of \cite[Lemma 5]{Otsu} in our situation. 
We only prove \eqref{chain rule 2}.

Fix any $j$ and $k$. 
For $\alpha = 1, \dots, n$, 
take $y_k^{\alpha} \in x_k q_k^{-\alpha}$ and 
$y_k^{-\alpha} \in x_k q_k^{\alpha}$ such that 
\[
|x_k y_k^{\alpha}|= |x_k y_k^{-\alpha}| = s.
\]
Then we have 
\begin{align*}
\langle f_k(y_k^\alpha), f_k(y_k^\beta) \rangle
&= 
s^2 \delta_{\alpha\beta} + \theta(\e, s/\ell).
\end{align*}
for all $\alpha, \beta = 1, \dots, n$.
Here, $\langle \cdot, \cdot \rangle$ is the standard inner product on $E_k$.
Since $s \ll \ell$, $\theta(\e, s/\ell) = \theta(\e)$.
Then, we have
\[
|f_k (y_k^\alpha) - s e_k^\alpha | < \theta(\e). 
\]
Here, $\{e_k^\alpha\}_{\alpha = 1}^n$ is an o.n.b on $E_k$.
In a similar way, we have 
\[
|f_k ( y_k^{-\alpha}) + s e_k^\alpha| < \theta(\e).
\]

We define vectors $v^\alpha$, $w^\alpha \in E_j$ ($\alpha = 1, \dots, n$) by 
\begin{align*}
v^\alpha := \frac{1}{2 s} \{ f_j (y_k^\alpha) - f_j (y_k^{-\alpha}) \}. 
\end{align*}
Then, we have 
\begin{align*}
\langle v^\alpha, v^\beta \rangle 
=& \delta_{\alpha\beta} + \theta(\e).
\end{align*}
Then, $\{v_\alpha\}$ is an almost orthonormal basis. 
By Schmidt's orthogonalization we obtain an orthonormal basis
$\{\tilde{e}_\alpha \}$ of $E_j$ such that 
\[
|\tilde{e}_\alpha - v_\alpha| < \theta(\e).
\]

We now define an isometry $F_j^k : E_k \to E_j$ by changing 
the orthonormal basis and the translation: 
\begin{equation*}
F_j^k(v) = 
f_j(x_k) + \sum_{\alpha = 1}^n 
\langle v, e_k^\alpha \rangle \tilde{e}_\alpha.
\end{equation*}
Then, we have 
\begin{align*}
F_j^k(f_k(x)) 
&= f_j(x) + s \vec{v}({\theta}(\e) )
\end{align*}
for all $x \in B(x_j, s)$.
Here, $\vec{v}(c)$ is a vector whose norm less than or equal to $|c|$. 


We prove (\ref{chain rule 2}). 
For any $y \in B(x_j, s) \cap B(x_k, s)$ and $\xi \in \Sigma_y$, by Lemma \ref{lemma 1.8}, 
there exists $z \in M$ such that 
\begin{equation}
|y z| = t \text{ and } \angle(\xi, \uparrow_y^z) = \theta(\e).
\end{equation}
Then, we have
\[
\wangle q_j^\alpha y z = \angle ((q_j^\alpha)', z') + \theta(s/t).
\]
Since $s \ll t$, we have $\theta(s/t) = \theta(\e)$.
Therefore, 
\begin{align}
d_y f_j^{\alpha} (\xi) 
&= 
\frac{1}{\mathcal{H}^n(B(q_j^{\alpha}, \e'))} 
\int_{w \in B(q_j^{\alpha}, \e')}\!\!\!\!\!\!\!\!\!
- \cos \angle (w'_y , \xi) \,
d \mathcal{H}^n (w) \\
&= 
- \cos \wangle q_j^{\alpha} y z + \theta(\e). \label{derivation}
\end{align}
On the other hands, 
\begin{align}
d F_j^k \circ d f_k (\xi) &= d F_j^k \left( \big( - \cos \wangle q_j^{\alpha} y z \big)_{\alpha=1, \dots, n} \right) + \vec{v}({\theta}(\e)) \\
&= \sum_\alpha - \cos \wangle q_j^{\alpha} y z \cdot \tilde{e}_\alpha + \vec{v}({\theta}(\e)).
\end{align}
Therefore, we have \eqref{chain rule 2}.
\end{proof}

Set $V_j := B(0, 0.4 s) \subset E_j$ for all $j$.

Next, we perturb $\{F_j^k\}$ to a family $\{\tilde{F}_j^k \}$ satisfying the following.
\begin{lemma}[{\cite[Lemma 6]{Otsu}}] \label{lemma 6 Otsu}
For any $j$ and $k$ with $d(x_j, x_k) < 0.9s$, 
there exists a $\theta(\e)$-almost isometric $C^\infty$ map $\tilde{F}_j^k: V_k \to E_j$ satisfying the following: 
\begin{align}\label{chain rule}
\tilde{F}_j^j &= \mathrm{id} \text{ on $E_j$ and } \\
\tilde{F}_j ^l (v) &= \tilde{F}_j^k \circ \tilde{F}_k^l (v) 
\end{align}
for any $j$ and $k$ with $d(x_j, x_k) < 0.9s$ and $v \in V_l \cap \tilde{F}_l^k(V_k) \cap \tilde{F}_l^j(V_j)$.

Moreover, we can obtain this perturbed 
$\{ \tilde{F}_j^k \}$ also satisfying $( \ref{chain rule 1} )$ and $(\ref{chain rule 2})$. That is, we have
\begin{align}
|\tilde{F}_j^k \circ f_k (y) - f_j (y)| 
&< \theta(\e) s, \label{chain rule 3} \\ 
|d \tilde{F}_j^k \circ d f_k (\xi) - d f_j (\xi) | 
&< \theta(\e) \label{chain rule 4} 
\end{align}
for any $j$ and $k$, 
and $y \in B(x_j, s) \cap B(x_k, s)$ and $\xi \in \Sigma_y$.
\end{lemma}
\begin{proof}
We only review the first step of construction of $\tilde{F}_j^k$'s by induction referring to the proof of \cite{Otsu}.

Let us first review how to construct $\tilde{F}_j^k$'s.
Let $\phi : [0, \infty) \to [0, \infty)$ be a $C^{\infty}$-function
such that 
\begin{align*}
\phi &= 1 \text{ on } [0, 1/2], \\
\phi &= 0 \text{ on } [1, \infty), \text{ and }\\ 
-4 &\leq \phi' \leq 0.
\end{align*}
Set \[
\psi_j (v) : = \phi (|v|/0.8s) 
\] 
for $v \in V_j$.

We set $\tilde{F}_j^1 = F_j^1$ and $\tilde{F}_1^j = (\tilde{F}_j^1)^{-1}$, and 
define $\tilde{F}_j^2 : U_2 \to \mathbb{R}^n$ for $j \geq 2$ by 
\begin{equation*}
\tilde{F}_j^2 (v) := 
\psi_1 \circ \tilde{F}_1^2 (v) \cdot
 \tilde{F}_j^1 \circ \tilde{F}_1^2 (v) +
 (1 -\psi_1 \circ \tilde{F}_1^2 (v) ) \cdot
{F}_j^2 (v)
\end{equation*}
for $v \in V_2$. 
By construction, $\tilde{F}_j^2$ is smooth and satisfies \eqref{chain rule 3} and \eqref{chain rule 4}.

For $v \in V_2$, we have
\begin{align*}
\phi_1 \circ \tilde{F}_1^2 (v) &= 1 + \theta(\e)|v|, \\
\|d (\phi_1 \circ \tilde{F}_1^2) \|_{C^1} &= \theta(\e).
\end{align*}
Therefore, we have, for any $v \in V_2$ and $w \in T_v E_2$ with $|w| = 1$, 
\begin{align*}
d \tilde{F}_j^2 (w) &= d (\tilde{F}_j^1 \circ \tilde{F}_1^2) (w) + \theta(\e) \\
&= d( F_j^1 \circ F_1^2) (w) + \theta(\e) \\
&= d F_j^2 (w) + \theta(\e).
\end{align*}
Thus, we have
\begin{equation*}
\| d \tilde{F}_j^2 - d F_j^2 \| < \theta(\e)
\end{equation*}
at any $v \in V_2$. 

Therefore, for a segment $c : [0, t_0] \to V_2$ between $v$ and $y$, we have 
\[
|\tilde{F}_j^2 (v) - \tilde{F}_j^2 (w)| = \left| \int_0^{t_0} d \tilde{F}_j^2 (c'(t) ) dt \right| \geq 0.9 |v -w|.
\]
Thus, $\tilde{F}_j^2$ is injective.
\end{proof}

By the chain rule (\ref{chain rule}), 
an equivalence relation $\sim$
on the disjoint union $\bigsqcup_{j} V_j$
is defined in the following natural way:
$V_j \ni y \sim y' \in V_k \iff 
\tilde{F}_j^k (y') = y$. 
Set $N := \bigsqcup V_j /\!\! \sim$.
We denote by $\pi$ the projection 
\[
\pi: \bigsqcup_{j =1}^N V_j \to N.
\]
We denote by $\tilde{V}_j := \pi(V_j)$ the subset of $N$ corresponding to $V_j$, 
and by $\pi_j$ the restriction of $\pi$
\[
\pi_j : V_j \to \tilde V_j.
\]
We define $\tilde{f}_j := \pi_j^{-1}$. 
Then $N$ is a $C^{\infty}$-manifold with atlas $\{(\tilde{V}_j, \tilde{f}_j) \}_j$, and $\tilde{F}_j^k : \tilde{f}_k(\tilde{V}_{k} \cap \tilde{V}_{j}) \to \tilde{f}_j(\tilde{V}_{k} \cap \tilde{V}_{j})$ is the associate transformation.


Define maps $f^{(j)} : B(x_j, s) \to E_j$ ($j = 1, \dots, N$) by 
\begin{align*}
f^{(1)} (x) &:= f_1 (x) , \\
f^{(2)} (x) &:= \psi_1 \circ f^{(1)} (x) \cdot 
\tilde{F}_2^1 \circ f^{(1)} (x) +
(1 - \psi_1 \circ f^{(1)} (x) ) f_2 (x), \\
&\cdots 
\end{align*}
Set $\hat{V}_j := f^{(j)-1}(V_j)$.
Then we have 
\[
f^{(j)} = \tilde{F}_j^k \circ f^{(k)}
\]
on $\hat{V}_j \cap \hat{V}_k$.
Indeed, for instance, $f^{(2)} = \tilde{F}_2^1 \circ f^{(1)}$ on $B_1 \cap B_2$.
For general case, we refer to \cite[pp 1272-1273]{Otsu}.
Set $U := \bigcup_j \hat{V}_j$. 
A homeomorphism $f : U \to N$ is defined 
to be the inductive limit of $\pi \circ f^{(j)}$.

By \cite[Lemma 8]{Otsu}, we obtain the following properties of $f^{(j)}$.
\begin{align}
|f_j(x) - f^{(j)}(x)| &< \theta(\e) s, \label{error 1} \\
|d f_j(\xi) - d f^{(j)}(\xi)| &< \theta(\e) \label{error 2}
\end{align}
for all $x \in B(x_j, 0.4s)$ and $\xi \in \Sigma_x$.

Let $\{ \chi_j \}_j$ be a smooth partition of unity such that
$\mathrm{supp}\, (\chi_j) \subset \tilde{V}_j$. 
The desired Riemannian metric $g_N$ on $N$ is defined by 
\begin{equation}\label{Riemannian metric}
(g_N)_{x} (v, w) := 
\sum_{j} \chi_j (x) 
\langle d \tilde{f}_j (v), d \tilde{f}_j (w) \rangle
\end{equation}
for any $x \in N$ and $v ,w \in T_x N$.

\vspace{1em} 
Up to here, we reviewed the construction 
of a smooth approximation $f : U \to N$ by \cite{KMS} (and \cite{Otsu}).
Next, we construct the desired flow.

We first remark that 
\begin{lemma} \label{flow theorem lemma 0}
For each $j$, $f^{(j) -1} : V_j \to \hat{V}_j$ is differentiable.
And hence, $f$ and $f^{-1}$ are also differentiable.
\end{lemma}
\begin{proof}
Since $f^{(j)}$ is differentiable, for any scale $(o)$, the following diagram commutes.
\[
\begin{CD}
\left(\hat{V}_j\right)_x^{(o)} @> \left(f^{(j)}\right)_x^{(o)} >> \left( V_j \right)_{y}^{(o)} \\
@V \hat{\rho}^{(o)} V V                                                                                                 @V V \rho^{(o)} V \\
T_x M                                                           @>> d_x f^{(j)} >                                       T_y E_j
\end{CD}
\]
where $y := f^{(j)}(x)$ and $\hat{\rho}^{(o)}$ and $\rho^{(o)}$ are canonical isometries. We will omit the symbol $(o)$ to write 
$\hat{\rho} := \hat{\rho}^{(o)}$ and $\rho := \rho^{(o)}$

Since $f^{(j)}$ is $\theta(\e)$-isometric, $\left(f^{(j)}\right)_x^{(o)}$and $\left(f^{(j) -1}\right)_y^{(o)}$ are so. We define a map $A : T_y E_j \to T_x M$ by
\[
A := \hat{\rho} \circ \left(f^{(j) -1}\right)_y^{(o)} \circ \rho^{-1}.
\]
Then we have 
\begin{align*}
A \circ d_x f^{(j)} &= \mathrm{id}_{T_x M}, \\
d_x f^{(j)} \circ A &= \mathrm{id}_{T_y E_j}.
\end{align*}
Namely, $A = \left( d_x f^{(j)} \right)^{-1}$ is determined independently choice of $(o)$.
By its construction, $A = d_y (f^{(j)-1})$ is well-defined.
Thus $f^{(j)-1}$ is differentiable.

$f$ is the composition of differentiable map $f^{(j)}$ and smooth map $\pi_j$, and hence $f$ and $f^{-1}$ are also differentiable.
\end{proof}

Set $y_j := y_j^{+1}$. 
Remark that $y_j$ can taken satisfying the following.
\begin{equation}
\left\{
\begin{aligned}
&|x_j y_j| = t, \\
&\angle S x y_j \ge \wangle S x y_j > \pi - \de' - \theta(\e, s /t ) 
\end{aligned}
\right.
\end{equation}
for all $x \in B(x_j, s)$.


Now, let us forget the construction of $f_j$ above, 
we will use the following notation: 
\begin{equation}
f_j := f^{(j)}. \label{reconsideration}
\end{equation}

We set 
\begin{align*}
Y_j(x) &:= \big\uparrow_x^{y_j} \in \Sigma_x M, \\
Z_j(x) &:= \frac{f_j (y_j^{+1}) - f_j (x)}{|f_j (y_j^{+1}) - f_j (x)|} \in E_j, 
\end{align*}
for all $x \in B(x_j, s)$.

We recall that $f_j$ is $\theta(\e)$-isometry on $B(x_j, t)$.
It follows from \eqref{derivation} that we have 
\begin{equation}\label{error}
d f_j ( Y_j(x) ) = Z_j(x) + \vec{v}({\theta}(\e))
\end{equation}
for any $x \in B(x_j, 0.4 s)$.


Since
\[
\angle_x (S', Y_j) + \angle_x (S', Y_k) + \angle_x (Y_j, Y_k) \le 2 \pi, 
\]
we have 
\[
\angle (Y_j(x), Y_k(x)) < 2 \de' + \theta(\e)
\]
for all $x \in B(x_j, s) \cap B(x_k, s)$.
Then, we have 
\begin{align*}
d(Y_j, Y_k)^2 &< 2 (1 - \cos (2 \de' + \theta(\e))) \\
&\le 2 \de'^2 + \theta(\e).
\end{align*}
Therefore, we obtain 
\begin{align}
d f_j (Y_k(x)) &= d f_j (Y_j(x)) + \vec{v}(\sqrt{2} \de' + \theta(\e)) \\
&= Z_j(x) + \vec{v}(\sqrt{2} \de' + \theta(\e)). \label{error 5}
\end{align}

Note that $Z_j$ is smooth on $V_j \subset E_j$.
We define a smooth vector field $\tilde{W}_j$ on $\tilde{V}_j \subset N$ by 
\[
\tilde{W}_j (x) := d \tilde{f}_j^{-1} ( Z_j(x) ).
\]

We next prove the following.
\begin{lemma} \label{error lemma}
For any $x \in \tilde{V}_j \cap \tilde{V}_k$, we have
\[
|\tilde{W}_j(x) - \tilde{W}_k(x)|_N < 4 \sqrt{2} \de' + \theta(\e). 
\]
\end{lemma}
\begin{proof}
At first, we see
\begin{align*}
|\tilde{W}_j - \tilde{W}_k|_N^2 
&= \sum_{\ell = 1}^N \chi_\ell \left| d \tilde{f}_\ell \left( \tilde{W}_j - \tilde{W}_k \right) \right|_{E_\ell}^2 \\
&= \sum_{\ell = 1}^N \chi_\ell \left| d \tilde{F}_\ell^j (Z_j) - d \tilde{F}_\ell^k (Z_k)\right|_{E_\ell}^2 .
\end{align*}
By \eqref{error 5}, we have
\begin{align*}
d \tilde{F}_\ell^k (Z_k) &= d \tilde{F}_\ell^k (d f_k (Y_k)) + \vec{v}(\theta(\e)) \\
&= d f_\ell (Y_k) + \vec{v}(\theta(\e)) \\ 
&= Z_\ell + \vec{v}(2 \sqrt{2} \de' + \theta(\e)).
\end{align*}
Therefore, Lemma \ref{error lemma} is proved.
\end{proof}

We next define a smooth vector field $\tilde{W}$ on $N$ by 
\begin{align}
\tilde{W} (x) := \sum_{j=1}^N \chi_j (x) \, \tilde{W}_j (x).
\end{align}
By Lemma \ref{error lemma}, we have 
\begin{align*}
|\tilde{W} - \tilde{W}_j| 
&= \left| \sum_\ell \chi_\ell \cdot \tilde W_\ell - \tilde W_j \right| \\
&\le \sum_\ell \chi_\ell | \tilde W_\ell - \tilde W_j | < 4\sqrt{2}\de' + \theta(\e)
\end{align*}
on $\tilde{V}_j \subset N$.

We consider an integral flow $\tilde{\Phi}$ of $\tilde{W}$. Namely, 
\[
\frac{d \tilde{\Phi} }{dt} (x, t) = \tilde{W} (\tilde{\Phi} (x,t) ).
\]
We now define a flow $\Phi$ on $U$ by 
\[
\Phi(x, t) := f^{-1} \big( \tilde{\Phi}(f(x),t) \big).
\]

\begin{lemma} \label{flow theorem lemma 2}
The conclusion $(ii)$ of Theorem \ref{flow theorem} holds:
\begin{align*}
\left. \frac{d}{d t} \right|_{t = 0+} \!\! \dist_S \circ \Phi(x, t) 
&> 1 - 5 \sqrt \de - \theta(\e), \\
(\tilde d_S)' (\tilde W(\tilde x)) 
&> 1 - 5 \sqrt \de - \theta (\e)
\end{align*}
for all $x \in U$.
\end{lemma}
\begin{proof}
By Lemma \ref{flow theorem lemma 0}, $f$ is differentiable.
Therefore, the flow curve 
\[
\Phi(x, \cdot) : I_x \to U
\]
is differentiable for any $x \in U$.
Then $x \in V_j$ for some $j$.
Then, we have
\begin{align*}
\left. \frac{d}{d t} \right|_{t = 0} \!\!\!\!
d_S \circ \Phi(x ,t) &= (d_S \circ f^{-1})' \left( \frac{d}{d t} f \circ \Phi(x, t) \right) \\
& = (d_S \circ f^{-1})' \left( \tilde W(f(x)) \right) 
= (d_S \circ f_j^{-1})' \circ d \tilde f_j ( \tilde W ) \\
& = (d_S \circ f_j^{-1})' \circ d \tilde f_j ( \tilde W_j + 4 \sqrt{2} \vec{v}(\de' + \theta(\e)) ) \\
& = (d_S \circ f_j^{-1})' (Z_j + 4 \sqrt 2 \vec{v}(\de' + \theta(\e))) \\
& = (d_S \circ f_j^{-1})' (d f_j (Y_j) + 4 \sqrt 2 \vec{v}(\de' + \theta(\e))) \\
& > d_S' (Y_j) - 4 \sqrt 2 \de' - \theta(\e) \\
& > 1 - \de - 4 \sqrt 2 \de' - \theta(\e) \\
& > 1 - 5 \sqrt \de - \theta(\e). 
\end{align*}
\end{proof} 

\begin{lemma} \label{flow theorem lemma 1}
The conclusion $(i)$ of Theorem \ref{flow theorem} holds:
For any $x \in U$, $\Phi(x, t)$ is $5 \sqrt \de + \theta(\e)$-isometric embedding in $t \in I_x$.
Here, $I_x := \{t \in \mathbb{R} \,|\, \Phi(x,t) \in U\}$.
\end{lemma}
\begin{proof}
By the construction of $\tilde W$, we have $|\tilde W| \le 1 + \theta(\e)$.
Indeed, for any $t, t' \in I_x$ with $t < t'$, we obtain
\begin{align*}
d \left( \tilde{\Phi}(f(x),t'), \tilde{\Phi}(f(x),t) \right) 
&\leq \int_t^{t'} \left| \tilde{W} \left( \tilde{\Phi}(f(x), s) \right) \right| d s \\
&\leq |1 + \theta(\e)| (t' -t).
\end{align*}
Then we have
\begin{align*}
\frac{d\left(\Phi(x,t'), \Phi(x,t) \right)}{t' -t} 
& = 
\frac{ \left| \Phi(x,t'), \Phi(x,t) \right| }
{ \left| \tilde{\Phi}(f(x),t'), \tilde{\Phi}(f(x),t) \right| } 
\cdot
\frac{ \left| \tilde{\Phi}(f(x),t'), \tilde{\Phi}(f(x),t) \right| }
{t'-t} \\
& \leq 1 + \theta(\e)
\end{align*}

By Lemma \ref{flow theorem lemma 2}, for $t < t'$ in $I_x$, we obtain
\begin{align*}
d \left( \Phi(x,t'), \Phi(x,t) \right) &\ge d(S, \Phi(x, t')) - d(S, \Phi(x, t)) \\
&= \int_t^{t'} (d_S)' (W) d s \ge (1 - 5 \sqrt \de - \theta(\e)) (t' - t).
\end{align*}
This completes the proof of Lemma \ref{flow theorem lemma 1}.
\end{proof} 

Combining Lemmas \ref{flow theorem lemma 1} and \ref{flow theorem lemma 2}, we obtain the conclusions of Theorem \ref{flow theorem}.
\end{proof}

\begin{definition} \upshape \label{gradient-like flow}
Let $M$ be an Alexandrov space, $f : M \to \mathbb{R}$ be a Lipschitz function and $\Phi : M \times \mathbb R \to M$ be a Lipschitz flow. Let $M'$ be a subset of $M$.
We say that $\Phi$ is {\it gradient-like for $f$ on $M'$} if there exists a constant $c > 0$ such that 
for any $x \in M'$, we have
\[
\liminf_{t \to 0} \frac{f(\Phi(x, t)) - f(x)}{t} > c.
\]
We denote by 
\[
\Phi \pitchfork f \text{ on } M'
\] 
this situation.
\end{definition}

In this notation, we obtained in Theorem \ref{flow theorem}, a gradient-like flow $\Phi$ for $\dist_S$ on $U(C)$ with a constant $c = 1 - 5 \sqrt \de - \theta(\e)$.

\subsection{Flow and Fibration}
We will find out a nice relation between Fibration Theorems \ref{Lipschitz submersion theorem} and \ref{fibration theorem} and Flow Theorem \ref{flow theorem}.
We first recall an important property of Yamaguchi's fibration.

\begin{proposition}[cf. Lemma 4.6 in \cite{Y convergence} ] 
\label{almost linear}
Let $M$ and $X$ be Alexandrov spaces and 
$\pi : M \to X$ be a $\theta(\de, \e)$-Lipschitz submersion 
as in Theorem \ref{Lipschitz submersion theorem}.
Let $(o) = (\e_i)$ be an arbitrary scale. 
We denote by $H_x$ a set of horizontal directions to the fiber $\pi^{-1}(\pi(x))$ at $x$. 
Then for any $x \in M$, the restriction of the blow-up 
\[
\pi_x^{(o)} \circ \exp_x^{(o)}: H_x \to X_{\pi(x)}^{(o)}
\]
satisfies the following:
For any $Y$, $Z \in H_x$, we have 
\[
\left| \left| \pi_x^{(o)} \circ \exp_x^{(o)}(Y), \pi_x^{(o)} \circ \exp_x^{(o)}(Z) \right| - |Y, Z| \right| < \theta(\de, \e).
\]
\end{proposition}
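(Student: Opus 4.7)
The plan is to show that the almost-Lipschitz-submersion inequality $\bigl||\pi(a)\pi(b)|/|ab| - \sin\theta_a\bigr| < \theta(\de,\e)$ is scale invariant (both sides are homogeneous of degree $0$ in the metric), so it passes verbatim to the blow-up $\pi_x^{(o)} : M_x^{(o)} \to X_{\pi(x)}^{(o)}$. By Lemma \ref{exponential map}, $\exp_x^{(o)}$ identifies $M_x^{(o)}$ with $T_x M = K(\Sigma_x)$, and similarly at $\pi(x)$; in this identification, $H_x$ is the cone over the directions perpendicular to $\Sigma_x\Pi_x$, and the blown-up fiber contains $K(\Sigma_x\Pi_x)$.

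Next, I would fix $Y = s\gamma^+$ and $Z = t\eta^+$ in $H_x$ and set $y_i := \gamma(s\e_i)$, $z_i := \eta(t\e_i)$, which represent $\exp_x^{(o)}(Y)$ and $\exp_x^{(o)}(Z)$. Horizontality gives $\angle_x(\gamma,\Sigma_x\Pi_x) = \pi/2$ and similarly for $\eta$, so applying the submersion inequality at $x$ yields
\[
\frac{|\pi(x)\pi(y_i)|}{s\e_i} = 1 + \theta(\de,\e), \qquad \frac{|\pi(x)\pi(z_i)|}{t\e_i} = 1 + \theta(\de,\e).
\]
For the cross-distance I would apply the inequality at $y_i$ toward $z_i$: $|\pi(y_i)\pi(z_i)|/|y_iz_i| \approx \sin\theta_i$ where $\theta_i$ is the angle of $(z_i)_{y_i}'$ with the fiber $\Sigma_{y_i}\Pi_{y_i}$. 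Meanwhile, the definition of the angle between $\gamma$ and $\eta$ at $x$ gives $|y_iz_i|/\e_i \to |Y,Z|_{K(\Sigma_x)}$.

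The main obstacle is showing $\theta_i \to \pi/2$, i.e.\ that horizontality at $x$ transfers to horizontality at the displaced basepoints $y_i$. The cleanest way to bypass parallel transport (which is not available on a general Alexandrov space) is to work in $X$ directly: since $\pi(x)$ is an $(k,\de)$-strained point, the blow-up $X_{\pi(x)}^{(o)}$ is $\theta(\de,\e)$-close to Euclidean $\mathbb R^k$, so comparison angles and angles agree up to $\theta(\de,\e)$ at $\pi(x)$. Combining the two radial estimates with an upper bound $|\pi(y_i)\pi(z_i)| \le |y_iz_i|(1+\theta(\de,\e))$ from Lipschitzness, one reads $\wangle(\pi(x);\pi(y_i),\pi(z_i))$ off via the law of cosines and identifies its limit with $\angle(\gamma,\eta) = \angle(Y,Z)$.

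Finally, invoking the cosine law in the almost-Euclidean blow-up of $X$ at $\pi(x)$ gives
\[
\frac{|\pi(y_i)\pi(z_i)|^2}{\e_i^2} = s^2 + t^2 - 2st\cos\angle(Y,Z) + \theta(\de,\e) = |Y,Z|_{K(\Sigma_x)}^2 + \theta(\de,\e),
\]
and passing to the ultralimit yields the claim. I expect the delicate step to be the justification that the error $\theta(\de,\e)$ in the angle estimate at $\pi(x)$ is uniform as $i \to \infty$; this is controlled by the fact that both $y_i$ and $z_i$ sit at scale $\e_i$ from $x$, so the admissible $(k,\de)$-strainer at $\pi(x)$ of length $\gg \e_i$ remains a $\theta(\de)$-strainer in the rescaled space, and Lemma \ref{lemma 1.9} converts comparison angles to angles up to $\theta(\de,\e)$ independently of $i$.
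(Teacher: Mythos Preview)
Your argument has a genuine gap in obtaining the \emph{lower} bound for $|\pi(y_i)\pi(z_i)|$. You correctly extract the radial estimates $|\pi(x)\pi(y_i)|/\e_i \to s$ and $|\pi(x)\pi(z_i)|/\e_i \to t$ from the submersion inequality applied at $x$, and the upper bound $|\pi(y_i)\pi(z_i)| \le (1+\theta)|y_iz_i|$ is also fine. But from two sides and only an \emph{upper} bound on the third, the law of cosines yields only $\limsup_i \wangle(\pi(x);\pi(y_i),\pi(z_i)) \le \angle(Y,Z)+\theta$; it does not ``identify'' the comparison angle with $\angle(Y,Z)$. Your final display then assumes exactly this identification, so the reasoning is circular: you are using the angle in $X$ to compute the third side, while the only access you had to that angle was through the third side itself. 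The almost-Euclidean nature of $X_{\pi(x)}^{(o)}$ does not help here --- it converts angles to comparison angles, but you never controlled the \emph{directions} $\pi(y_i)'_{\pi(x)}$, $\pi(z_i)'_{\pi(x)}$ in $\Sigma_{\pi(x)}X$.

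This is precisely the content the paper imports from \cite{Y convergence}. Lemma~4.6 there shows that for $Y\in H_x$ (defined, note, as directions toward points at distance $\ge\sigma$, not literally as the orthogonal complement of $\Sigma_x\Pi_x$) there is a specific direction $\bar Y\in\Sigma_{\pi(x)}X$ with $|\pi(\gamma_Y(t)),\gamma_{\bar Y}(t)|/t<\theta$; this pins down where $\pi(y_i)$ sits in the blow-up. Lemma~4.7 then gives $|\angle(Y,Z)-\angle(\bar Y,\bar Z)|<\theta$. Together these supply the missing two-sided control of the angle at $\pi(x)$, after which the distance estimate is immediate. Your attempt to ``bypass parallel transport'' cannot succeed without some substitute for these two facts: you need either to prove $\theta_i\to\pi/2$ (which you explicitly avoid) or to identify the limiting directions in $X$, and the latter is exactly Lemma~4.6.
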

Here, the set of horizontal directions is defined in \cite[\S 4]{Y convergence} as 
\[
H_x := \{ \xi \in y_x' \,|\, |x y| \geq \sigma \}
\]
for some small number $\sigma > 0$ with $\e \ll \sigma$.
\begin{proof}[Proof of Proposition \ref{almost linear}]
We will use the following notation: 
$\theta$ denotes a variable constant $\theta(\de, \e)$.
We set $\bar x = \pi(x)$ for any $x \in M$.

Let us take $Y \in H_x$.
By the definition of $H_x$, there is a point $y \in M$ such that 
\[
|x y| \geq \sigma, \,\, \angle (y', Y) < \theta.
\]
Then, by Lemma 4.6 in \cite{Y convergence}, 
for any $\bar Y \in \bar y' \subset \Sigma_{\bar x} X$, we have 
\begin{equation} \label{almost tangent 1}
\frac{|\pi(\gamma_Y (t)), \gamma_{\bar Y}(t)|}{t} < \theta
\end{equation}
for any small $t > 0$. 
Here, $\gamma_\xi$ denotes the geodesic from $\gamma_\xi(0)$ tangent to $\xi \in \Sigma_{\gamma(0)}$.
Let $(o) = (\e_i)$ be an arbitrary scale.
From \eqref{almost tangent 1}, we have
\begin{equation} \label{almost tangent 2}
|\pi_x^{(o)} \circ \exp_x^{(o)} (Y), \exp_{\bar x}^{(o)} (\bar Y)| 
= \lim_\omega \frac{|\pi(\gamma_Y (\e_i)), \gamma_{\bar Y}(\e_i)|}{\e_i} 
< \theta.
\end{equation}

We next take any $Z \in H_x$. 
Then there exists $z \in M$ such that 
\[
|x z| \geq \sigma, \,\, \angle (z', Z) < \theta.
\]
Then, for any $\bar Z \in \bar z' \subset \Sigma_{\bar x} X$, we have
\begin{equation} \label{almost tangent 3}
|\pi_x^{(o)} \circ \exp_x^{(o)} (Z), 
\exp_{\bar x}^{(o)} (\bar Z)| < \theta.
\end{equation}

On the other hand, by Lemma 4.7 in \cite{Y convergence}, we have 
\[
|\angle (Y, Z) - \angle (\bar Y, \bar Z) | < \theta.
\]
It follows together \eqref{almost tangent 1}, \eqref{almost tangent 2} and \eqref{almost tangent 3} that we obtain 
\[
||\pi_x^{(o)} \circ \exp_x^{(o)} (Y), \pi_x^{(o)} \circ \exp_x^{(o)} (Z)| 
- |Y, Z|| < \theta.
\] 
This completes the proof.
\end{proof}

\begin{theorem} \label{flow application}
For any $n \in \mathbb{N}$, there is a positive number $\epsilon = \epsilon_n$ satisfying the following:
Let $M^n$ be an $n$-dimensional Alexandrov space without boundary with curvature $\geq -1$ and $p$ be a point of $M^n$. 
Let $X^{n-1}$ be an $(n-1)$-dimensional nonnegatively curved Alexandrov space. 
Assume that $X$ is given by the Euclidean cone $K(\Sigma)$ over a closed Riemannian manifold $\Sigma$ of curvature $\geq 1$.
If $d_{G H}((M,p), (X, p_0)) < \epsilon$, where $p_0$ is the origin of the cone $X$, then there exists a small $r = r_p >0$ such that a metric sphere $\partial B(p, r)$ is homeomorphic to an $S^1$-fiber bundle over $\Sigma$.
\end{theorem}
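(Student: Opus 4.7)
The plan is to combine Fibration Theorem~\ref{fibration theorem} with Flow Theorem~\ref{flow theorem}: the fibration will give an $S^1$-bundle structure over an annular region of the cone $X$, and the flow will identify the metric sphere $\partial B(p,r)$ in $M$ with a level set inside this annular region that manifestly carries the $S^1$-bundle structure over $\Sigma$.

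First I would fix a small $r_0 > 0$ such that the closed annulus $\overline{A(p_0; r_0/4, 4r_0)} \subset X$ is $(n-1,\delta)$-almost regular; this is automatic because $\Sigma$ is a closed Riemannian manifold, so $X \setminus \{p_0\}$ is a smooth Riemannian manifold and the admissible strain radius is uniformly bounded below on this annulus. Choosing $\epsilon_n$ small enough, Lipschitz Submersion Theorem~\ref{Lipschitz submersion theorem} yields a $\theta(\delta,\epsilon)$-almost Lipschitz submersion $f : M \to X$. On the preimage region $N := f^{-1}(\overline{A(p_0; r_0/2, 2r_0)})$, every point inherits an $(n-1,\delta)$-strainer from the base via $f$ (using Proposition~\ref{almost linear}); combining this with the collapsing fiber direction of dimension $n-(n-1) = 1$, every point of $N$ becomes $(n,\delta)$-strained. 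Since $M$ has no boundary, the connected fiber of $f|_N$ is a compact $1$-dimensional Alexandrov space without boundary, hence homeomorphic to $S^1$.

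Next I would apply Fibration Theorem~\ref{fibration theorem} (localized to the closed almost-regular domain $N$, in the spirit of its subsequent usage in the paper, or equivalently Perelman's Morse theory Theorem~\ref{Morse theory} applied to pullbacks via $f$ of the distance-strainer functions on $X$) to upgrade $f|_N$ to a genuine topological $S^1$-fiber bundle $\pi : N \to \overline{A(p_0; r_0/2, 2r_0)}$. Restricting $\pi$ to any metric sphere $\partial B(p_0,r) \equiv r\Sigma$ with $r \in (r_0/2, 2r_0)$ produces an honest $S^1$-bundle $\pi^{-1}(\partial B(p_0,r)) \to \Sigma$.

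Finally, to identify $\partial B(p,r)$ with $\pi^{-1}(\partial B(p_0,r))$, I would invoke Flow Theorem~\ref{flow theorem} with $S = \{p\}$ and $C = \overline{A(p; r_0/2, 2r_0)}$: $C$ is $\epsilon$-strained by the previous paragraph, and since the absolute gradient $|\nabla \dist_{p_0}| \equiv 1$ on the punctured cone $X \setminus \{p_0\}$, the GH-closeness forces $\dist_p$ to be $(1-\theta(\epsilon))$-regular on $C$. The resulting Lipschitz flow $\Phi$ increases $\dist_p$ at speed $\geq 1-\theta(\delta,\epsilon)$ along orbits. For each $x \in \pi^{-1}(\partial B(p_0,r))$ the value $\dist_p(x)$ lies within $\theta(\epsilon)$ of $r$, so there is a unique small continuous time $t(x)$ with $\dist_p(\Phi(x, t(x))) = r$; setting $h(x) := \Phi(x, t(x))$ gives the desired map $h : \pi^{-1}(\partial B(p_0,r)) \to \partial B(p,r)$. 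Injectivity follows from transversality of the level set $\{\dist_{p_0} \circ \pi = r\}$ to the flow (since $\dist_{p_0} \circ \pi$ is $\theta(\epsilon)$-close to the strictly monotone $\dist_p$), and surjectivity by flowing any $y \in \partial B(p,r)$ backward until it crosses the level set. The hardest part is the upgrade of the almost Lipschitz submersion to a genuine topological $S^1$-fiber bundle: Fibration Theorem~\ref{fibration theorem} as stated requires global $(n,\delta)$-regularity of $M$, whereas ours has a singular point near $p$; the local version used throughout the paper, combined with the explicit $(n,\delta)$-strainers constructed on $N$ via Proposition~\ref{almost linear}, is what makes the argument go through.
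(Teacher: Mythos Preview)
Your overall architecture matches the paper: obtain an $S^1$-bundle $\pi$ over an annulus in $X$ via the Fibration Theorem, obtain a gradient-like flow $\Phi$ for $\dist_p$ via the Flow Theorem, and use $\Phi$ to carry the level set $\pi^{-1}(\partial B(p_0,r))$ onto $\partial B(p,r)$.

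The genuine gap is your injectivity step. You justify transversality of $\Phi$ to the hypersurface $\{\dist_{p_0}\circ\pi = r\}$ by saying that $\dist_{p_0}\circ\pi$ is $\theta(\epsilon)$-close to $\dist_p$ in $C^0$. But $C^0$ closeness to a strictly monotone function does not prevent oscillation: a priori a single flow line could cross $\{\dist_{p_0}\circ\pi = r\}$ many times within a short parameter interval, and then your map $h$ is not injective (and the inverse ``flow backward until you hit the level set'' is not well-defined either). What is actually needed is that $\Phi$ is gradient-like for $\dist_{p_0}\circ\pi$, i.e.\ that $\dist_{p_0}\circ\pi$ is itself strictly increasing along flow lines. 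This is precisely the content of Lemma~\ref{transversality} in the paper, and its proof is not a triviality: one passes to blow-ups, shows via Proposition~\ref{almost linear} that the flow direction (which is horizontal and near $\nabla\dist_p$) is sent by $d\pi$ to a direction near $\nabla\dist_{p_0}$ in $T_{\pi(x)}X$, and concludes $(\dist_{p_0}\circ\pi)'(W) > 1 - \theta(\epsilon)$. Ironically you do invoke Proposition~\ref{almost linear}, but in the wrong place (to manufacture strainers) rather than here where it does the real work.

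Two smaller points. First, to get that the annulus $A(p;r/2,2)$ is $(n,\theta(\epsilon))$-strained, the paper simply observes it is $(n-1,\epsilon)$-strained by GH-closeness to a Riemannian annulus and then applies Theorem~\ref{regular interior} (interior $(n-1,\delta)$-regular $\Rightarrow$ $(n,\delta')$-regular, using $\partial M=\emptyset$); this is cleaner than building the $n$-th strainer pair from the fiber direction. Second, you identify the upgrade from almost-Lipschitz submersion to topological bundle as the hardest part; in fact that is absorbed into Theorem~\ref{fibration theorem} applied to the $(n,\theta(\epsilon))$-strained domain, while the transversality Lemma~\ref{transversality} is where the paper spends its effort and where your argument currently breaks.
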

\begin{proof}
$d_{GH}((M, p), (X, p_0)) < \e$ implies $d_{GH}(B_M(p, 1/\e), B_X(p_0, 1/\e)) < \e$.
Take a small number $r > 0$ such that $r \ll 1 \ll 1/\e$.
Since $\Sigma$ is a closed Riemannian manifold, 
$A(p_0; r/2, 2)$ is a Riemannian manifold 
$\approx \Sigma \times [r/2, 2]$ 
with boundary $\approx \Sigma \times \{r/2, 2\}$.

Let $C$ be an annulus $C := A(p; r/2, 2)$. 
Since $d_{GH}(C, A(p_0; r/2, 2)) < \e$, $C$ is $(n - 1, \e)$-strained.
Since $M$ has no boundary points, 
Theorem \ref{regular interior} 
implies that $C$ is $(n, \theta(\e))$-strained.
Therefore by Theorem \ref{fibration theorem}, 
there exists a $\theta(\e)$-Lipschitz submersion 
$\pi : M_1 \to A(p_0; r/2, 2)$ which is actually an $S^1$-fiber bundle.
Here, $M_1$ is some closed domain in $M$
near $C$ containing $A(p; r, 1)$. 

Set $S : = \pi^{-1} (\partial B(p_0, r))$. 
Let $\Phi = \Phi(x, t)$ be a gradient-like flow for $\dist_p$ obtained by Theorem \ref{flow theorem} on an annulus around $p$.

We are going to prove
\begin{lemma} \label{transversality}
The flow $\Phi$ is gradient-like for $\dist_{p_0} \circ \pi$. 
Namely, we obtain the following.
\begin{align}
\liminf_{t \to 0+} 
\frac{\dist_{p_0} \circ \pi \circ \Phi(x, t) - \dist_{p_0} \circ \pi (x)}{t} 
> 1 - \theta(\e) \label{transversality eq0}
\end{align}
for any $x \in M_1$.
\end{lemma} 
If it is proved then $S$ is homeomorphic to $\partial B(p, r)$ by a standard flow argument. 
\begin{proof}[Proof of Lemma \ref{transversality}] 
Let us set $\bar x := \pi(x)$ for any $x \in M_1$.
We set 
\[
V := \left. \frac{d}{d t} \right|_{t = 0+} \!\! \Phi(x, t) \in T_x M.
\]
By Theorem \ref{flow theorem} (ii), 
we have 
\begin{equation*}
V \doteqdot \nabla \dist_p
\end{equation*}
and $|V| \doteqdot 1$.
Here, $A \doteqdot A'$ means that $d(A, A') < \theta(\e)$.

We set $\xi := V/|V|$
and recall that $\xi \in H_x$.
It follows together \eqref{almost tangent 2} that there exists $q \in M$ with $|xq| \geq \sigma$ such that 
any $\bar \xi \in \bar{q}' \subset \Sigma_{\bar x} X$ satisfies 
\[
\pi_x^{(o)} \circ \exp_x^{(o)} (\xi) \doteqdot \exp_{\bar x}^{(o)} (\bar \xi)
\]
for each scale $(o)$.

Let us take $\eta \in p'_x \subset \Sigma_x M$. 
Then we have 
\begin{equation*} 
\angle (\xi, \eta) > \pi - \theta(\e).
\end{equation*}
Since $\eta \in H_x$, there exists $\bar \eta \in \Sigma_{\bar x} X$ such that 
\begin{align*}
\pi_x^{(o)} \circ \exp_x^{(o)} (\eta) 
&\doteqdot \exp_{\bar x}^{(o)} (\bar \eta). 
\end{align*}
By Proposition \ref{almost linear}, we obtain
\begin{equation} \label{almost inverse eq0}
\angle (\bar \xi, \bar \eta) > \pi - \theta(\e).
\end{equation}

On the other hand, from Lemma 4.3 in \cite{Y convergence}, $\pi$ is $\theta(\e)$-close to an $\e$-approximation from $(M, p)$ to $(X, p_0)$. 
This implies 
\begin{equation*} 
\wangle \bar q \bar x p_0 > \pi - \theta(\e).
\end{equation*}
We take an arbitrary direction $\bar \zeta \in p_0' \subset \Sigma_{\bar x} X$.
Then, we have 
\begin{equation} \label{almost inverse eq2}
\angle (\bar \zeta, \bar \xi) > \pi -\theta(\e).
\end{equation}
By \eqref{almost inverse eq0} and \eqref{almost inverse eq2}, we have 
\begin{align*}
\bar \xi &\doteqdot \nabla \dist_{p_0}.
\end{align*}

Summarizing the above arguments, we obtain 
\begin{align*}
\lim_\omega \frac{d(p_0, \pi \circ \Phi(x, \e_i)) - d(p_0, \bar x)}{\e_i} 
&= (\dist_{p_0})_{\bar x}' \circ (\exp_{\bar x}^{(o)})^{-1} \circ \pi_x^{(o)} \circ \exp_x^{(o)} (V) \\
&\doteqdot (\dist_{p_0})_{\bar x}' (\bar \xi) \\
&\doteqdot (\dist_{p_0})_{\bar x}' (\nabla \dist_{p_0}) \\
&= 1.
\end{align*}
It follows from Lemma \ref{liminf} that we obtain \eqref{transversality eq0}.
\end{proof}

As mentioned above, by Lemma \ref{transversality}, 
we have $\partial B(p, r) \approx S$. 
This completes the proof of Theorem \ref{flow application}
\end{proof}

\begin{remark} \upshape
Kapovitch proved a statement similar to Theorem \ref{flow application}
for collapsing Riemannian manifolds $M$ (\cite[Theorem 7.1]{Kap restriction}).
\end{remark}

Perelman and Petrunin proved the existence and uniqueness of a gradient flow of any semiconcave function, especially of any distance function (\cite{Pet QG}, \cite{PP QG}).
Note that the gradient ``flow'' is not a flow in the sense of Definition \ref{definition of flow}, because the gradient flow is defined on $M \times [0,\infty)$. 

\begin{remark} \upshape 
One might ask why we do not use the gradient flow of a distance function to prove Theorem \ref{flow application}. 
The reason is the gradient flow may not be injective.

For instance, we consider the cone $X = K(S^1_{\theta})$ over a circle $S^1_\theta$ with length $\theta < 2 \pi$.
$X$ is expressed by the quotient of a set 
\[
X_0 = \{ r e^{i t} \in \mathbb{C} \,|\,
r \geq 0, t \in [0, \theta] \} 
\]
by a relation $r \sim r e^{i \theta}$ for $r \geq 0$.
By $[r e^{i t}] \in X$ denotes the equivalent class of $r e^{i t} \in X_0$.
We fix $r > 0$ and take $p := r e^{i \theta /2}$.
Let $a > 0$ be a sufficiently small number such that $S_a \cap \partial X_0 = \emptyset$. 
Here, we denote by $S_a$ the circle centered at $p$ with radius $a$ in $\mathbb C$.
We take $b$ with $a < b < r$ such that $S_b \cap \partial X_0 \neq \emptyset$ and take $x_1, x_2$ with $x_1 \neq x_2$ in $S_b \cap \partial X_0$ near $p$.
Then $[x_1] = [x_2]$ in $X$.
We put points $y_i \in px_i \cap S_a$ in $X_0$ and set geodesics $\gamma_i := [y_i][x_i]$ in $X$ for $i =1,2$.
In particular, $\gamma_i$ $(i =1,2)$ are the gradient curves for $d_{[p]}$ in $X$.
This case says that $d_{[p]}$-flow does not injectively send from $[S_a] :=\{ [z] \in X \,|\, x \in S_a \}$ to $[S_b]$.
\end{remark}

\subsection{Flow Arguments} 

\begin{theorem} \label{flow theorem 1.5} 
For a positive integer $n$, there is a positive constant $\e_n$ satisfying the following:
Let $M^n$ be an $n$-dimensional Alexandrov space with curvature $\ge -1$.
Let $A_1, A_2, \dots, A_m \subset M$ be closed subsets and $C \subset M$ be an $(n,\e)$-strained compact subset with $A_i \cap C = \emptyset$ for all $i = 1, 2, \dots, m$ and for $\e \le \e_n$.
Suppose the following.
For each $x \in C$ and $1 \le i \le m$, 
there is a point $w = w(x) \in M$ such that 
\begin{equation}
\angle_x (A_i' , w') > \pi - \de.
\end{equation}
Here, $c\, (< \pi / 2)$ is a positive constant bigger than some constant.
Then there exist an open neighborhood $U$ of $C$ and a Lipschitz flow $\Phi$ on $M$ such that
\begin{equation}
\left. \frac{d}{d t} \right|_{t = 0+} d_{A_i} (\Phi(x, t)) > 
1 - 5 \de - \theta(\e)
\end{equation}
for all $i = 1, \dots m$ and $x \in U$.
\end{theorem}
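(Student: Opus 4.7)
The plan is to adapt the smooth-approximation construction of Theorem \ref{flow theorem} to produce a single Lipschitz flow on $M$ whose velocity at each $x \in C$ is close to the common repelling direction $\uparrow_x^{w(x)}$, which by hypothesis is $\delta$-close to antipodal to every $(A_i)_x'$ for all $i$. Fix scales $s \ll t \ll \e\strrad(C)$, a maximal $0.2s$-net $\{x_j\}_{j=1}^{N} \subset C$, points $w_j := w(x_j)$, and $\e$-strainers $\{q_j^{\alpha}\}$ at $x_j$ of length $\geq t$. Running the DC-coordinate construction $f_j : B(x_j, 10 t) \to E_j$, the smoothed transition maps $\tilde F_j^k$ of Lemmas \ref{lemma 5 Otsu} and \ref{lemma 6 Otsu}, the glued $C^{\infty}$-manifold $N$ with Riemannian metric $g_N$, and the almost-isometric $DC^1$-homeomorphism $f : U \to N$ verbatim as in the proof of Theorem \ref{flow theorem}, set $Y_j(x) := \uparrow_x^{w_j} \in \Sigma_x M$, $Z_j(x) := (f_j(w_j) - f_j(x))/|f_j(w_j) - f_j(x)| \in E_j$, and $\tilde W_j := d\tilde f_j^{-1}(Z_j)$ on $\tilde V_j$; the computation \eqref{derivation} gives $df_j(Y_j) = Z_j + \vec v(\theta(\e))$. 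Glue by a smooth partition of unity, $\tilde W := \sum_j \chi_j\, \tilde W_j$, multiply by a cutoff supported near $f(C)$ so it extends by zero, integrate to a smooth flow $\tilde\Phi$ on $N$, and set $\Phi(x, t) := f^{-1}(\tilde\Phi(f(x), t))$ on $U$ and the identity elsewhere.

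\textbf{The core compatibility estimate.} The crucial analogue of Lemma \ref{error lemma} is the bound $|\tilde W_j(x) - \tilde W_k(x)|_N < 4\delta + \theta(\e)$ on each overlap $\tilde V_j \cap \tilde V_k$. The hypothesis $\angle_{x_j}((A_i)', w_j') > \pi - \delta$ together with Lemma \ref{lemma 1.9} yields $\angle_x((A_i)_x', Y_j(x)) > \pi - \delta - \theta(\e)$ for all $x \in B(x_j, s)$ (and likewise for $w_k$). Since such $x$ is $(n, \theta(\e))$-strained, $\Sigma_x$ is Gromov-Hausdorff $\theta(\e)$-close to the round $S^{n-1}$, so the set of directions at angle exceeding $\pi - \delta - \theta(\e)$ from the fixed direction $(A_i)_x'$ is contained in a ball of radius $\delta + \theta(\e)$ about its approximate antipode; hence $\angle(Y_j(x), Y_k(x)) < 2\delta + \theta(\e)$. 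Propagating through the $\theta(\e)$-almost isometric $df_j$ and the chain rule \eqref{chain rule 4} gives $|d\tilde F_\ell^j Z_j - d\tilde F_\ell^k Z_k| < 4\delta + \theta(\e)$ for each $\ell$, whence the gluing bound follows by the same computation as in Lemma \ref{error lemma}.

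\textbf{Transversality and the main obstacle.} For $x \in \tilde V_j \cap U$, the flow velocity $V := \tfrac{d}{dt}\big|_{t=0+}\Phi(x, t) \in T_x M$ satisfies $|V| = 1 + \theta(\e)$ and, after pulling back through $df^{-1}$, $|V - Y_j(x)| < 2\delta + \theta(\e)$, whence $\angle_x((A_i)', V) > \pi - 3\delta - \theta(\e)$. Then
\[
\left.\frac{d}{dt}\right|_{t=0+}\! d_{A_i}(\Phi(x, t)) = -\cos\angle_x((A_i)', V)\cdot |V| > \cos(3\delta + \theta(\e)) > 1 - 5\delta - \theta(\e),
\]
which is the required bound. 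The main obstacle is precisely the cross-chart estimate in the second paragraph: unlike Theorem \ref{flow theorem}, where $\nabla\dist_S$ furnishes a canonical global direction, here the locally defined directions $Y_j(x) = \uparrow_x^{w_j}$ depend on the auxiliary points $w_j$ and have no intrinsic reason to agree at overlaps. The only tool available is the almost-Euclidean structure of $\Sigma_x$ at strained points, and it is precisely antipode-uniqueness on the nearly round sphere $\Sigma_x$ that forces the $Y_j$'s into a common small region.
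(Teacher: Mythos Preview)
Your proposal is correct and follows the same approach as the paper, whose own proof is essentially a three-line remark that the construction of Theorem \ref{flow theorem} carries over verbatim. One small simplification: the cross-chart bound $\angle(Y_j(x), Y_k(x)) < 2\delta + \theta(\e)$ needs no appeal to almost-roundness of $\Sigma_x$; the perimeter inequality $\angle(\xi, Y_j) + \angle(\xi, Y_k) + \angle(Y_j, Y_k) \le 2\pi$, valid in any space of directions (curvature $\ge 1$) and applied with a fixed $\xi \in (A_1)_x'$, gives it directly---this is exactly the mechanism used in the proof of Theorem \ref{flow theorem}.
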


\begin{proof}
We can show the following:
There exists a precompact open neighborhood $U$ of $C$ such that each $x \in U$ is $(n ,\theta(\e))$-strained, and there is a point $w = w(x) \in M$ such that 
\begin{align}
|x w(x)| &> \ell \\
\wangle A_i y w(x) &> \pi - \de - \theta(\e)
\end{align}
for all $y \in B(x, r)$ and $i = 1, \dots, m$. 
Here, 
$r$ and $\ell$ are positive numbers with $r \ll \ell$.

Since $U$ is $(n, \theta(\e))$-strained, there is a smooth approximation $f : U \to N$ which is a $\theta(\e)$-isometry for some Riemannian manifold $N$.
By an argument similar to the proof of Theorem \ref{flow theorem}, we can construct a smooth vector field $\tilde{W}$ and its integral flow $\tilde \Phi$ on $N$ such that 
\begin{align*}
\left. \frac{d}{d t}\right|_{t = 0+} \dist_{A_i} \circ f^{-1} (\tilde \Phi (x, t)) 
&= \dist_{A_i}' \circ d f^{-1} (\tilde W) 
\\
&> 1 - 5 \de - \theta(\e).
\end{align*}
Then, the pull-back flow $\Phi_t := \tilde \Phi_t \circ f$ satisfies the conclusion of Theorem \ref{flow theorem 1.5}.
\end{proof}

\begin{corollary}\label{flow theorem 1.75} 
Let $M^n$, $A_1$, $A_2$, $\dots A_m$ and $C$ be as same as in the assumption of Theorem \ref{flow theorem 1.5} which satisfy the following.
All $d_{A_i}$ is $(1 - \de)$-regular at $x$.
$m \le n$ and 
\begin{equation}
|\angle_x (A_i', A_j') - \pi /2| < \mu
\end{equation}
for any $x \in C$ and $1 \le i \neq j \le m$.

If $\nu := \de + \mu$ is smaller than some constant depending on $m$, 
then there are a Lipschitz flow $\Phi$ and a neighborhood $U$ of $C$ 
satisfying the following.
\begin{equation}
\left. \frac{d}{d t} \right|_{t = 0+} d_{A_i} (\Phi (x, t)) > 1 - 5 \sqrt {\de} - \theta(\e) - \theta_m(\nu) 
\end{equation}
for any $x \in U$ and $i = 1, \dots, m$.
\end{corollary}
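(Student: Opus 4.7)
The plan is to adapt the proof of Theorem~\ref{flow theorem 1.5}. The crucial difference between the two settings is that Theorem~\ref{flow theorem 1.5} furnishes a \emph{single} witness direction $w = w(x)$ simultaneously almost antipodal to every $A_i'$, whereas Corollary~\ref{flow theorem 1.75} supplies $m$ individual gradient-like directions—one for each $A_i$—via the $(1-\de)$-regularity of each $d_{A_i}$. Because these $m$ gradient directions are mutually almost orthogonal by the angle hypothesis, the plan is to construct one smooth ``push-off'' vector field $\tilde V_i$ for each $A_i$ on a common Riemannian approximation, and form their sum $\tilde W := \sum_{i=1}^m \tilde V_i$; the near-orthogonality is precisely what prevents the cross terms from spoiling the individual derivative estimates.

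More precisely, since $C$ is $(n,\e)$-strained with $\e\leq\e_n$, I would first apply the construction of Theorem~\ref{flow theorem} to obtain a precompact open neighborhood $U$ of $C$ and a $\theta(\e)$-almost isometric $DC^1$-homeomorphism $f : U \to N$ onto a smooth Riemannian $n$-manifold $(N,g_N)$, together with its charts $\{(\tilde V_j, \tilde f_j)\}$ and partition of unity $\{\chi_j\}$. Then, for each $i=1,\dots,m$, I would run the same Otsu--KMS averaging procedure of Theorem~\ref{flow theorem} on this fixed atlas, but with the closed set $S$ replaced by $A_i$, producing a smooth vector field $\tilde V_i$ on $N$ that satisfies the analogues of \eqref{error} and Lemma~\ref{flow theorem lemma 2}: namely, $|\tilde V_i|_N = 1+\theta(\e)$ and
\[
d\bigl(d_{A_i}\!\circ\! f^{-1}\bigr)(\tilde V_i) \;>\; 1 - 5\sqrt{\de} - \theta(\e).
\]

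Setting $\tilde W := \sum_{i=1}^m \tilde V_i$ and letting $\tilde\Phi$ be its integral flow on $N$, the desired flow is $\Phi := f^{-1}\circ \tilde\Phi\circ f$ (extended outside $U$ by a standard cutoff). For each $x\in U$ and each $i$, the bound on $\tfrac{d}{dt}|_{t=0+}d_{A_i}(\Phi(x,t))$ transfers through the $\theta(\e)$-almost-isometry $f$ to the Riemannian estimate
\[
d\bigl(d_{A_i}\!\circ\! f^{-1}\bigr)(\tilde W) \;=\; d\bigl(d_{A_i}\!\circ\! f^{-1}\bigr)(\tilde V_i) \;+\; \sum_{j\neq i}\bigl\langle \nabla(d_{A_i}\!\circ\! f^{-1}),\,\tilde V_j\bigr\rangle_N.
\]
Since $\tilde V_j$ is $\theta(\e)$-close in direction to the $df$-image of a unit vector opposite to $A_j'$, the angle $\angle_N(\nabla(d_{A_i}\!\circ\! f^{-1}),\tilde V_j)$ is $\theta(\e)$-close to $\pi-\angle_x(A_i',A_j')=\pi/2\pm\mu$, so every cross-term has absolute value at most $\mu + \theta(\e)$. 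Summing the $m-1$ cross-terms, applying $m\leq n$ (which is what allows the mutually almost-orthogonal system $\{\tilde V_i\}$ to sit inside $T_{f(x)}N$), and absorbing the $(m-1)$-dependence into $\theta_m(\nu)$ for $\nu=\de+\mu$ small enough relative to $m$ yields the claimed bound $1-5\sqrt{\de}-\theta(\e)-\theta_m(\nu)$.

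The main obstacle is bookkeeping: the $m$ vector fields $\tilde V_i$ must be produced on a \emph{shared} chart system of $N$ so that their mutual inner products in $g_N$ can be computed from angles at $x\in M$ via the almost-isometry $f$. This is handled by first building $f:U\to N$ and the partition-of-unity gluing once and for all (as in Theorem~\ref{flow theorem}), and only then running the averaging construction for each $A_i$ on the fixed atlas. The requisite angle-to-inner-product transfer is the analogue of Lemma~\ref{error lemma} with $Y_j$ replaced by the direction from $x_j$ to the nearest point of $A_i$; all error terms that arise in this replacement are of the form $\theta(\e)$ or $\theta_m(\nu)$, consistent with the conclusion.
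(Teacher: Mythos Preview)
Your strategy coincides with the paper's: build the common smooth approximation $f:U\to N$ once, produce on it one gradient-like smooth field $\tilde V_i$ for each $A_i$ via the construction of Theorem~\ref{flow theorem}, and then combine. The only structural difference is that the paper normalizes, setting $\tilde W:=\bigl(\sum_i \tilde W_i\bigr)/\bigl|\sum_i \tilde W_i\bigr|$, and then controls $\angle(W_i,W)$ in order to invoke the $1$-Lipschitz estimate $(d_{A_i})'(W)\ge (d_{A_i})'(W_i)-|W-W_i|$, whereas you keep the unnormalized sum and try to split the derivative linearly.

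The gap is precisely that linear splitting. Your displayed identity
\[
d\bigl(d_{A_i}\circ f^{-1}\bigr)(\tilde W)\;=\;d\bigl(d_{A_i}\circ f^{-1}\bigr)(\tilde V_i)\;+\;\sum_{j\ne i}\bigl\langle \nabla(d_{A_i}\circ f^{-1}),\,\tilde V_j\bigr\rangle_N
\]
presupposes that $d_{A_i}\circ f^{-1}$ has a Riemannian gradient on $N$ and that its directional derivative is linear; but this function is only Lipschitz on $N$ (recall $f^{-1}$ is merely $DC^1$), so neither is available pointwise. The clean repair---which keeps your unnormalized $\tilde W$---is to pull back to $M$: set $W:=df^{-1}(\tilde W)=\sum_j V_j$ with $V_j:=df^{-1}(\tilde V_j)$, and use that the differential $(d_{A_i})'_x:T_xM\to\mathbb R$ of the semiconcave function $d_{A_i}$ is concave and positively homogeneous, hence superadditive. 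This yields
\[
(d_{A_i})'_x(W)\;\ge\;(d_{A_i})'_x(V_i)\;+\;\sum_{j\ne i}(d_{A_i})'_x(V_j),
\]
after which the diagonal term gives $>1-5\sqrt{\de}-\theta(\e)$ as you say, and each off-diagonal term $(d_{A_i})'_x(V_j)=-|V_j|\cos\angle_x(A_i',V_j)$ is bounded below by $-(\mu+\theta(\e,\de))$ via your angle estimate (using that at an $(n,\e)$-strained point $\Sigma_x$ is $\theta(\e)$-close to $S^{n-1}$, so $V_j$ being almost antipodal to $A_j'$ forces $\angle_x(A_i',V_j)$ close to $\pi-\angle_x(A_i',A_j')$). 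With this correction your argument closes and is in fact somewhat cleaner than the paper's normalized version.
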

\begin{proof}
Let us consider a smooth approximation 
\[
f : U \to N
\]
for some neighborhood $U$ of $C$ and a Riemannian manifold $N$.
By Lemmas \ref{flow theorem lemma 2} and \ref{flow theorem lemma 1}, we obtain smooth vector fields $\tilde W_i$ on $N$ such that 
\begin{align}
|\tilde W_i| &\le 1 + \theta(\e), \\
(d_{A_i})' (W_i) &> 1 - 5 \sqrt \de - \theta(\e)
\end{align}
on $N$ for all $i = 1, \cdots, m$.
Here, $W_i := d f^{-1} (\tilde W_i)$.

Let us define $\varphi_m(\nu) \in (\pi /2, \pi)$ by
\[
\cos \varphi_m(\nu) = \frac{1 - (m -1) \cos (\pi /2 - \nu)}{\sqrt m \sqrt {1 + (m-1) \cos(\pi /2 - \nu)}}.
\]
Note that $\cos \varphi_m(\nu) \to 1 / \sqrt{m}$ as $\nu \to 0$.

Let us consider a vector field 
\[
\tilde W := (\tilde W_1 + \tilde W_2 + \cdots + \tilde W_m) / |\tilde W_1 + \tilde W_2 + \cdots + \tilde W_m|.
\]
Since $|\angle (A_i', A_j') - \pi / 2| < \mu$, we have 
\[
|\angle (W_i, W_j) | < 10 \de + \mu + \theta(\e).
\]
Putting $W := d f^{-1} (\tilde W)$, we obtain 
\[
\cos \angle (W_i, W) \ge \cos (\varphi_m(\nu) + \theta(\e))
\]
for $\nu = 10 \de + \mu$.
Then we have 
\begin{align*}
(d_{A_i})' (W) &\ge (d_{A_i})' (W_i) - |W, W_i| \\
&\ge 1 - 5 \sqrt \de - \cos(\varphi_m(\nu)) - \theta(\e).
\end{align*}
We consider the gradient flow $\Phi$ of the vector field $W$ on $U$, which is the desired flow.
\end{proof}


\section{The case that $\dim X = 2$ and $\partial X = \emptyset$}
\label{proof of 2-dim interior}
In this and next sections, we study the topologies of three-dimensional closed Alexandrov spaces which collapse to Alexandrov surfaces. 
First, we exhibit examples of three-dimensional 
Alexandrov spaces (which are closed or open) collapsing to 
Alexandrov surfaces. 

We denote a circle of length $\e$ by $S^1_\e$.
We often regard $S^1_\e$ as $\{x \in \mathbb C \,\mid\, \|x\| = \e / 2 \pi \}$. 
And $\bar x$ denotes the complex conjugate for $x \in \mathbb C$.

\begin{example} \upshape \label{collapse M_pt}
Recall that $M_\pt$ is obtained by the quotient space $M_\pt := S^1 \times \mathbb R^2 / (x, y) \sim (\bar x, -y)$.
$M_\pt$ have collapsing metrics $d_\e$ and $\rho_\e$ as follows.

Recall that a collapsing metric provided Example \ref{M_pt}.
The quotient $(M_\pt, d_\e) := S^1_\e \times \mathbb R^2 / (x, y) \sim (\bar x, -y)$ has a metric $d_\e$ of nonnegative curvature collapsing to $K(S^1_\pi) = \mathbb R^2 / y \sim -y$ as $\e \to 0$.

We consider an isometry defined by 
\[
K(S^1_\e) \ni [t ,v] \mapsto [t, -v] \in K(S^1_\e).
\]
Here, $t \ge 0$ and $v \in S^1_\e$. 
Note that $K(S^1_\e)$ collapses to $\mathbb R_+$ as $\e \to 0$.
We consider a metric $\rho_\e$ on $M_\pt$  of nonnegative curvature defined by taking the quotient of the direct product $S^1 \times K(S^1_\e)$:
\[
(M_\pt, \rho_\e) := S^1 \times K(S^1_\e) / (x, t, v) \sim (\bar x, t, -v).
\]
Then, $(M_\pt, \rho_\e)$ collapses to $[0, \pi] \times \mathbb R_+$ as $\e \to 0$. 
Here, $[0, \pi]$ is provided as $S^1 / x \sim \bar x$.
\end{example}

\begin{example} \label{collapse to corner} \upshape
Let $\Sigma (S^1_\e)$ be the spherical suspension of $S^1_\e$, which has curvature $\ge 1$.
Any point in $\Sigma (S^1_\e)$ is expressed as $[t, v]$ parametrized by $t \in [0, \pi]$ and $v \in S^1_\e$.
We consider an isometry 
\[
\alpha : \Sigma (S^1_\e) \ni [t, v] \mapsto [\pi - t, -v] \in \Sigma(S^1_\e).
\]
Then, we obtain a metric $d_\e$ on $P^2$ of curvature $\ge 1$ defined by taking the quotient $\Sigma (S^1_\e) / \langle \alpha \rangle$.
We set $P^2_\e := (P^2, d_\e)$.
Note that $P^2_\e$ collapses to $[0, \pi /2]$ as $\e \to 0$.
Then, $K(P^2_\e)$ collapses to $K([0, \pi /2]) \equiv \mathbb R_+ \times \mathbb R_+$ as $\e \to 0$.

Remark that $K(P^2_\e)$ is isometric to the quotient space $\mathbb R \times K(S^1_\e) / \langle \sigma \rangle$ defined as follows:
Let $\sigma$ be an involution defined by 
\[
\sigma (x, t v) \mapsto (-x, t (-v))
\]
for $x \in \mathbb R$, $t \ge 0$ and $v \in S^1_\e$.
We sometime use this expression in the paper.
\end{example}

\begin{example} \upshape
Let us consider the direct product $S^1 \times \Sigma(S^1_\e)$ and an isometry
\[
\beta : S^1 \times \Sigma (S^1_\e) \ni (x, t, v) \mapsto (\bar x, t , -v) \in S^1 \times \Sigma(S^1_\e).
\]
Then, the quotient space $N_\e := S^1 \times \Sigma (S^1_\e) / \langle \beta \rangle$ has nonnegative curvature.
And, $N_\e$ collapses to $[0, \pi] \times [0, \pi]$ as $\e \to 0$.
\end{example}

Let us start the proof of Theorem \ref{2-dim interior}.
\begin{proof}[Proof of Theorem \ref{2-dim interior}]
Fix a sufficiently small $\de > 0$.
Then there are only finitely many $(2, \de)$-singular points $x_1, \dots, x_k$ in $X$.
For sufficiently small $r > 0$, consider the set $X' := X - (U(x_1, r)
\cup \cdots \cup U(x_k, r))$.
By Theorem \ref{regular interior}, there exists a $(3,
\theta(i,\de))$-strained closed domain $M_i' \subset M_i$ which is
converging to $X'$.
From Theorem \ref{fibration theorem},  we may assume that there exists
a circle fiber bundle $\pi_i' : M_i' \to X'$ which is 
$\theta(i,\de)$-almost Lipschitz submersion.
Here, $\theta(i, \de)$ is a positive constant such that $\lim_{i \to
  \infty, \de \to 0} \theta(i, \de) = 0$.
We fix a large $i$, and use a notation $\theta(\de) = \theta(i, \de)$
for simplicity.

Fix any $(2, \de)$-singular point $p \in \{x_1, \dots. x_k \} \subset
X$, take a sequence $p_i \in M_i$ converging to $p$.
%
%
Since Flow Theorem implies that  $B(p_i, r)$ is not contractible,
applying the rescaling argument \ref{rescaling argument}, we have points
$\hat{p}_i \in B(p_i, r)$ with $d(p_i, \hat{p}_i) \to 0$ and a scaling
constants  $\de_i$ such that any limit space $(Y, y_0)$ of  
$\lim_{i \to  \infty} (\frac{1}{\de_i} B(\hat{p}_i, r), \hat{p}_i)$ is a
three-dimensional open Alexandrov space of nonnegative curvature.
We may assume that $p_i = \hat{p_i}$.
We denote by $S$ a soul of $Y$.
By Theorem \ref{rescaling argument}, we have $\dim S \leq 1$.

From Theorem \ref{flow application}, the boundary $\partial
B(p_i, r)$ is  homeomorphic to a torus $T^2$ or a Klein bottle
$K^2$.
It follows from Soul theorem \ref{soul theorem} and Stability theorem
\ref{stability theorem} that $B(p_i, r)$ is homeomorphic to
the orbifold $B(\pt)$ if $\dim S = 0$, or a solid torus $S^1 \times
D^2$ or a solid Klein bottle $S^1 \tilde{\times} D^2$ if $\dim S = 1$.

We first consider the case of $\dim S = 1$, namely $S$ is a circle.
In this case, we obtain the following conclusion.

\begin{lemma}\label{ball is not a solid Klein bottle}
If $\dim S = 1$,  then $B(p_i, r)$ is homeomorphic 
to $S^1 \times D^2$. 
\end{lemma}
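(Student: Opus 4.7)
The plan is to argue by contradiction, combining the Soul Theorem structure of the rescaled limit $Y$ with an asymptotic cone comparison against the tangent cone of $X$ at $p$. By Theorem \ref{soul theorem}(2), since $\dim S = 1$, the space $Y$ is isometric to $(\mathbb{R}\times N)/\Lambda$, where $N$ is a two-dimensional nonnegatively curved open Alexandrov space homeomorphic to $\mathbb R^2$ and $\Lambda = \mathbb Z\langle \gamma \rangle$ acts diagonally by $\gamma(t, x) = (t + a, \phi(x))$ for some $a > 0$ and an isometry $\phi$ of $N$. By Theorem \ref{rescaling argument} together with Stability Theorem \ref{stability theorem}, $B(p_i, r)$ is homeomorphic to a large metric ball around the soul $S \subset Y$; its tubular neighborhood is the normal disk bundle over $S \cong S^1$, which is orientable (hence $S^1 \times D^2$) iff the induced action of $\gamma$ on the normal plane preserves orientation --- equivalently, iff $\phi$ preserves orientation on $N$.

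Suppose for contradiction that the neighborhood is a solid Klein bottle, so $\phi$ reverses orientation on $N$; then $\phi$ is either a reflection or a glide reflection, and in each subcase I would compute the asymptotic cone $Y_\infty := \lim_{\lambda \to 0^+}(\lambda Y, y_0)$. If $\phi$ is a reflection, then $\gamma^2$ is a pure $\mathbb R$-translation by $2a$, so $Y = (S^1(2a) \times N)/\mathbb Z_2$ with $\mathbb Z_2$ acting by $(s, x) \mapsto (s+a, \phi(x))$; as $\lambda \to 0$ the $S^1$-factor collapses while the $\mathbb Z_2$-action survives on $N_\infty$, giving $Y_\infty \equiv N_\infty/\phi_\infty$, a half-cone with non-empty boundary along the image of the reflection axis. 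If $\phi$ is a glide reflection, then $N$ must be flat $\mathbb R^2$ (since a non-flat nonnegatively curved open surface homeomorphic to $\mathbb R^2$ has a unique isometry-invariant soul point, ruling out fixed-point-free isometries); after a linear change of coordinates on $\mathbb R \times \mathbb R^2$ absorbing the translation part of $\gamma$, one obtains $Y \equiv \mathbb R \times \Mo$ for a flat Mobius band $\Mo$ whose asymptotic cone is a ray $[0,\infty)$, so $Y_\infty \equiv \mathbb R \times [0,\infty)$, a half-plane with non-empty boundary.

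To close the argument, I would identify $Y_\infty$ with the tangent cone $K(\Sigma_p X)$ (up to a positive rescaling) via a diagonal Gromov--Hausdorff argument: choose $r_i \to 0$ with $r_i/\delta_i \to \infty$; then $(1/r_i)M_i \to K(\Sigma_p X)$ by the definition of the tangent cone, while simultaneously $(1/r_i)M_i = (\delta_i/r_i)\cdot(1/\delta_i)M_i$ converges to $\lim_{\lambda \to 0^+}\lambda Y = Y_\infty$ as a blow-down of $Y$. Since $p$ is an interior point and $\partial X = \emptyset$, $\Sigma_p X$ is a closed one-dimensional Alexandrov space (a circle), so $K(\Sigma_p X)$ has no boundary --- contradicting the non-empty boundary found in $Y_\infty$ in both subcases. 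Hence $\phi$ is orientation-preserving and $B(p_i,r) \approx S^1 \times D^2$. The main obstacle is the rigorous justification of the diagonal identification $Y_\infty \equiv K(\Sigma_p X)$, which requires compatibility of the two simultaneous limits along a common subsequence.
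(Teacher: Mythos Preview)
Your strategy is genuinely different from the paper's. The paper never touches the rescaled limit $Y$ or its asymptotic cone. Instead it chooses $r_i\to 0$ with $\epsilon_i/r_i\to 0$ so that $\bigl(\tfrac{1}{r_i}B_i,p_i\bigr)\to (T_pX,o)$ directly, then passes to the universal cover $\tilde B_i$ and studies the \emph{equivariant} Gromov--Hausdorff limit $\bigl(\tfrac{1}{r_i}\tilde B_i,\tilde p_i,\Gamma_i\bigr)\to(Z,z,G)$, so that $Z/G=T_pX$ by construction. Splitting $Z=\mathbb R\times Z_0$ and using \cite[Lemma~3.10]{FY}, the paper tracks the orientation-reversing generator $\gamma_i$ to an element $(0,\phi)\in\mathbb R\times H$ and concludes that $T_pX=(Z_0/H')/\langle[\phi]\rangle$ is a cone over an arc, contradicting $\partial X=\emptyset$. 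The equivariant framework is exactly what lets one bypass any comparison of the two scales $\delta_i$ and $r_i$.

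The diagonal identification $Y_\infty\equiv K(\Sigma_pX)$ that you flag is not a formality but a genuine gap. Pointed convergence $\bigl(\tfrac{1}{\delta_i}M_i,p_i\bigr)\to Y$ controls only balls of \emph{bounded} radius in the rescaled space; the blow-down with $\lambda_i=\delta_i/r_i\to 0$ asks about balls of radius $r_i/\delta_i\to\infty$, and the rescaling argument (Theorem~\ref{rescaling argument}) gives no quantitative rate for the approximation at growing scales. You do know independently that $\bigl(\tfrac{1}{r_i}M_i,p_i\bigr)\to T_pX$, but the missing half---that the same sequence also converges to $Y_\infty$---is precisely the interchange of limits that fails in general. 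I do not see how to close this without importing extra structure, and the natural extra structure (lifting to the cover and tracking the $\mathbb Z$-action through the limit) is essentially the paper's route.

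Two smaller points in your case analysis also need attention. First, the assertion that a non-flat $N\approx\mathbb R^2$ admits no fixed-point-free isometry deserves a line of justification (for instance: a free $\mathbb Z$-action would make $N/\mathbb Z$ an open nonnegatively curved surface with infinite $\pi_1$, hence flat, forcing $N$ flat). Second, in the reflection case you must verify that $\phi_\infty$ is still orientation-reversing on $N_\infty$ when the latter is two-dimensional; otherwise $N_\infty/\phi_\infty$ could be a boundaryless cone. These are repairable, but the diagonal step is the real obstacle.
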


\begin{proof}
Put $B_i := B(p_i, r)$, $B := B(p, r)$ and $\e_i := d_{GH}(B_i, B)$.
Suppose that $B_i$ is homeomorphic to a solid Klein bottle $S^1 \tilde{\times} D^2$.
Take $r_i \to 0$ with $\e_i / r_i \to 0$ such that
$\lim (\frac{1}{r_i} B_i, p_i) = (T_p X, o)$.
Let $\pi_i : \tilde{B}_i \to B_i$ be a universal covering and
$\tilde{p}_i \in \pi_i^{-1}(p_i)$.
Let $\Gamma_i \cong \mathbb{Z}$ be the deck transformation group of $\pi_i$.
Passing to a subsequence, 
we have a limit triple $(Z, z, G)$ of a sequence of
triples of pointed spaces and isometry groups $(\frac{1}{r_i}
\tilde{B}_i, \tilde{p}_i, \Gamma_i)$ in the equivariant pointed
Gromov-Hausdorff topology (cf. \cite{FY}).
$Z$ is an Alexandrov space of nonnegative curvature because of $r_i
\to 0$,
and $G$ is abelian.
Note that $Z / G = \lim (\frac{1}{r_i} B_i, p_i) = (T_p X, o)$.
Using the $G$-action, we find a line in $Z$ (\cite{CG}).
Then, by the splitting theorem, there is some nonnegatively curved
Alexandrov space $Z_0$ such that $Z$ is isometric to the product
$\mathbb{R} \times Z_0$.
We may assume that $Z_0$ is a cone, by taking a suitable rescaling $\{ r_i \}$.
We denote by $G_0$ the identity component of $G$.
By \cite[Lemma 3.10]{FY}, there is a subgroup $\Gamma_i^0$ of $\Gamma_i$ such that:
\begin{itemize}
 \item[(1)] $(\frac{1}{r_i} \tilde{B}_i , \tilde{p}_i, \Gamma_i^0)$ 
   converges to $(Z, z, G_0)$. 
 \item[(2)] $\Gamma_i / \Gamma_i^0 \cong G / G_0$ for large $i$.
\end{itemize}
Since $\dim T_p X = 2$ and $\dim Z =3$, we have $\dim G = 1$. 
This implies $G \cong \mathbb{R} \times H$ for some finite abelian
group $H$.
Since $T_pX = Z_0/H$, $H$ must be cyclic.
Here, $G$-action is component-wise: $G_0 \cong \mathbb{R}$ acts by
translation of the line $\mathbb{R}$ and $H$ acts on $Z_0$
independently. 
By Stability Theorem \ref{stability theorem}, $Z$ is simply-connected. 
Therefore, $Z_0$ is homeomorphic to $\mathbb{R}^2$.

Take a generator $\gamma_i$ of $\Gamma_i$. 
From our assumption,  $\gamma_i$ is orientation reversing on $\tilde{B}_i$. 
Consider $\Gamma_i' := \langle \gamma_i^2\rangle \cong \mathbb{Z}$.
Then $\Gamma_i'$ acts on $\tilde{B}_i$ preserving orientation. 
Taking a subsequence, we have a limit triple $(Z, z, G')$ of a sequence
$\{ (\frac{1}{r_i} \tilde{B}_i , \tilde{p}_i, \Gamma_i') \}$.
By an argument similar to the above, $G' \cong \mathbb{R} \times H'$ for
some finite cyclic  group $H'$.
Let $\lim_{i \to \infty} \gamma_i =\gamma_{\infty} \in G$, which
implies that $\gamma_i (x_i) \to \gamma_{\infty} (x_{\infty})$ under the 
Gromov-Hausdorff convergence  $\frac{1}{r_i} \tilde{B}_i \ni
x_i \to x_{\infty} \in Z$.
Then $\gamma_{\infty}$ is represented by $(0, \phi) \in \mathbb{R} \times H$. 
Then, for large $i$, we have
\begin{equation}
Z / G = ( Z / G' ) / (G / G') = (Z_0 / H') / \langle [\phi] \rangle = T_p X.
\end{equation}
Since $Z_0 / H'$ is the flat cone over a circle or an interval, 
and $[\phi] \in H / H'$ acts on $Z_0 / H'$ reversing orientation, 
$(Z_0 / H') / \langle \phi \rangle$ can not be $T_p X$.
This is a contradiction.
\end{proof}

By Lemma \ref{ball is not a solid Klein bottle}, $B(p_i, r/2)$ must be
homeomorphic to a solid torus.
From Flow Theorem \ref{flow theorem}, $(\pi_i')^{-1}(\partial B(p,r))$ and $\partial
B(p_i, r/2)$ bound a closed domain homeomorphic to $T^2\times [0,1]$,
and this provides a circle fiber structure on $\partial B(p_i,r/2)$.
By \cite[Lemma 4.4]{SY}, it extends to a topological
Seifert structure on $B(p_i, r/2)$
over $B(p, r/2)$ 
which is compatible to the circle bundle structure on $A(p_i; r/2, r)$.
\par
\medskip
In the case of $\dim S = 0$, $B_i$ is homeomorphic to $B(\pt)$.
We must prove that

\begin{lemma} \label{ball is B(pt)}
 If $\dim S = 0$, then $B_i$ has the structure of circle fibration
  with a singular arc fiber satisfying
 \begin{itemize}
  \item[(1)]   it is isomorphic to the standard fiber structure on
    $B(pt)=S^1\times D^2/\mathbb Z_2 ;$
  \item[(2)]  it is  compatible to the structure of circle fiber bundle
    $\pi_i'$ near the boundary.
 \end{itemize}
\end{lemma}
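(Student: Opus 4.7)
The plan is to transport the standard generalized Seifert structure of $B(\pt) \subset M_\pt$ back to $B_i$ via the rescaling, and then to match it with $\pi_i'$ near $\partial B_i$ by means of Flow Theorem~\ref{flow theorem}.

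First, I would identify the rescaled limit $Y$ as $M_\pt$. Since $\dim S = 0$, Soul Theorem~\ref{soul theorem}(1) offers three candidates: $\mathbb{R}^3$, $K(P^2)$, and $M_\pt$. Metric balls around the soul in these spaces are homeomorphic to $D^3$, $K_1(P^2)$, and $B(\pt)$ respectively. Since $B_i \approx B(\pt)$ by the preceding analysis and $B(\pt)$ has exactly two topologically singular points, while $D^3$ has none and $K_1(P^2)$ has one, Stability Theorem~\ref{stability theorem} applied to the rescaled convergence forces $Y = M_\pt$.

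Second, let $p: M_\pt \to K(S^1_\pi)$ be the standard circle fibration with singular arc fiber from Example~\ref{M_pt}, and choose $R$ so that $B(y_0, R) \approx B(\pt)$ carries the restriction of $p$ as its generalized Seifert structure. By Stability Theorem~\ref{stability theorem} applied to the rescaled convergence $(1/\delta_i) B_i \to (Y, y_0)$, I would obtain a homeomorphism $h_i: B(y_0, R) \to B_i$ for large $i$. Setting $f_i := p \circ h_i^{-1} : B_i \to K_1(S^1_\pi)$ then yields a generalized Seifert fibration on $B_i$ with a single singular arc fiber joining its two topologically singular points, isomorphic to the standard one on $B(\pt)$; this establishes part~(1).

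For part~(2), I would apply Flow Theorem~\ref{flow theorem} to $\dist_{p_i}$ on an annulus $A(p_i; r/2, r)$ to obtain a Lipschitz gradient-like flow whose integral curves give a product collar $\approx \partial B_i \times [0,1]$ of $\partial B_i$ on which $\pi_i'$ restricts to a product circle bundle. The restriction of $f_i$ is likewise a circle bundle on $\partial B_i \approx K^2$. Both are $S^1$-bundles over $S^1$ with total space the Klein bottle; such bundles are classified up to isomorphism by $H^1(S^1;\mathbb{Z}_2) \cong \mathbb{Z}_2$ and are therefore bundle-equivalent. I would realize this equivalence by an isotopy of $\partial B_i$, extend it to an ambient isotopy supported in the thin collar (hence disjoint from the interior singular arc fiber of $f_i$), and compose with $h_i$. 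The modified fibration will coincide with $\pi_i'$ on a neighborhood of $\partial B_i$, proving (2).

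The main obstacle will be the last step: realizing the bundle equivalence of the two circle fibrations on $\partial B_i$ as an \emph{ambient} isotopy supported in a collar disjoint from the singular arc fiber. This is analogous to the use of \cite[Lemma~4.4]{SY} in the $\dim S = 1$ case, but requires additional care here since the ambient isotopy must not disturb the interior singular interval fiber; the product collar structure supplied by Flow Theorem~\ref{flow theorem} allows one to localize the isotopy to a region disjoint from this arc, bypassing the difficulty.
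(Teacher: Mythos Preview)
Your strategy is genuinely different from the paper's, and it is a reasonable one, but the step you flag as ``the main obstacle'' is indeed the crux, and your proposed justification for it does not go through as written.

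The paper does not transport the fibration from $M_{\pt}$ via Stability. Instead it lifts to the $\mathbb Z_2$-double cover $\hat B_i\approx S^1\times D^2$, applies the \emph{equivariant} fibration theorem to obtain a $\hat\sigma_i$-invariant circle fibration $\hat g_i$ on an annular region, glues it with $\pi_i'$ using the flow, and then proves an Assertion: the induced Seifert type on $\partial\hat B_i$ is $(1,1)$, not $(2,1)$, and there is a $\hat\sigma$-equivariant isotopy of $\partial V_{1,1}$ matching the two fiber structures. The $(2,1)$ case is excluded by showing a $(2,1)$-fiber must meet its $\hat\sigma$-image, contradicting equivariance of $\hat g_i$. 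The equivariant isotopy is constructed by hand on the torus (matching a longitude, then a meridian, then the remaining squares), and it descends to the required isotopy on $K^2$; the extension to the interior is then straightforward.

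Your route avoids the double cover entirely, but then the whole weight falls on the claim that the two $S^1$-bundle structures on $\partial B_i\approx K^2$ are \emph{ambient isotopic}. Your argument---``such bundles are classified up to isomorphism by $H^1(S^1;\mathbb Z_2)$ and are therefore bundle-equivalent''---only yields an abstract bundle isomorphism $\Psi:K^2\to K^2$, and you then need $\Psi$ (or some bundle-isomorphism in its coset) to be isotopic to the identity. This is not automatic: on $T^2$ the analogous statement is false, since there are infinitely many non-isotopic $S^1$-fibrations. On $K^2$ it happens to be true, but it requires proof; for instance, one can show that fiber-preserving homeomorphisms of the nontrivial $S^1$-bundle $K^2\to S^1$ already realize all of $\mathrm{MCG}(K^2)\cong\mathbb Z_2\times\mathbb Z_2$, so the bundle isomorphism can be corrected to one isotopic to the identity. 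Alternatively, one proves it exactly as the paper does on the torus cover. Either way, this is the real content of the lemma, and your proposal does not supply it. The analogy with \cite[Lemma~4.4]{SY} is also misleading: that lemma \emph{extends} a boundary fibration to a Seifert structure on a solid torus with some type $(\mu,\nu)$; it does not produce an isotopy between two given boundary fibrations.

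If you fill this gap (uniqueness up to isotopy of $S^1$-bundle structures on $K^2$), the rest of your outline---extending the isotopy over a collar disjoint from the singular arc---is standard and does yield a shorter proof than the paper's.
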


\begin{proof}

Recall that $B(pt) =  S^1\times D^2/\mathbb Z_2$,
where $\mathbb Z_2$-action on $S^1\times D^2$ is given by the 
involution $\hat \sigma$ defined by 
$\hat \sigma(x, y) = (\bar x, -y)$.
Let $p_{+} :=(1,0)$, $p_{-} :=(-1, 0)$ be the fixed points of 
$\hat \sigma$.
Putting $\hat U:=S^1\times D^2\setminus \{ p_{+}, p_{-}\}$, and 
$U:=\hat U/\mathbb Z_2$, let $\hat \pi:\hat U \to U$ be the projection map.
Fix a homeomorphism
$f_i:S^1\times D^2/\mathbb Z_2 \to B_i$, and set
$U_i := f_i(U)$. 
Take a $\mathbb Z_2$-covering $\hat \pi_i: \hat U_i \to U_i$ such that
there is a homeomorphism $\hat f_i:\hat U\to \hat U_i$ 
together with the following commutaitve diagram:
\begin{equation*}
 \begin{CD}
    \hat U @>{\hat f_i}>> \hat U_i \\
    @V{\hat \pi}V V
        @V V{\hat\pi_i}V \\
    U @>>{f_i}> U_i
 \end{CD}
\end{equation*}
Consider the length-metric on $\hat U_i$  induced from that of $U_i$ via
$\hat\pi_i$, and the length-metrics of $U$ and $\hat U$ for which 
both $f_i$ and $\hat f_i$ become isometries.
Note that $U_i=\hat U_i/\hat\sigma_i$, 
where $\hat\sigma_i:= \hat f_i\circ \hat\sigma \circ (\hat f_i)^{-1}$.
$\hat\sigma_i$ extends to an isometry on the completion $\hat B_i$  of 
$\hat U_i$. 
Let $\hat\pi_i:\hat B_i \to B_i$ also denote the 
the projection.
Then we have the following commutative diagram:
\begin{equation} \label{eq:comm-diagram}
 \begin{CD}
   \mathbb R \times D^2 @>{\tilde f_i}>> \tilde B_i \\
    @V{\pi}V V
        @V V{\pi_i}V \\
      S^1\times D^2 @>{\hat f_i}>> \hat B_i \\
    @V{\hat \pi}V V
        @V V{\hat\pi_i}V \\
    S^1\times D^2/\mathbb Z_2    @>>{f_i}> B_i,
 \end{CD}
\end{equation}
where $\pi_i:\tilde B_i \to \hat B_i$ is the universal covering, and
$\tilde f_i$ is an isometry covering $\hat f_i$. Here we consider the metric on 
$\mathbb R\times D^2$ induced by that of $S^1\times D^2$.
Let $\sigma, \lambda :\mathbb R \times D^2 \to \mathbb R \times D^2$
be defined as 
\[
     \sigma(x,y)= (-x, -y), \,\,\, \lambda(x, y)=(x+1, y).
\]
Since $\sigma$ covers $\hat\sigma$, $\sigma$ is an isometry.
Put 
\[
     \sigma_i:= \tilde f_i\circ \sigma \circ (\tilde f_i)^{-1}, \,\,\,
     \lambda_i:= \tilde f_i\circ \lambda \circ (\tilde f_i)^{-1}.
\]
From construction, the group
$\Lambda_i$ generated by $\lambda_i$ is the deck transformation 
group of $\pi_i:\tilde B_i \to \hat B_i$.
Let $\Lambda$ be the group  generated by $\lambda$.
Let $\Gamma_i$ (resp. $\Gamma$) be the group generated by $\sigma_i$
and $\Lambda_i$ (resp. by $\sigma$ and $\Lambda$).
Obviously we have an isomorphism $(\Gamma_i, \Lambda_i)\simeq 
(\Gamma, \Lambda)$.  Note that 
\begin{equation}
\sigma\lambda \sigma^{-1} = \lambda^{-1}.
      \label{eq:gamsig}
\end{equation}
Let us consider the limit of the action of $(\Gamma_i, \Lambda_i)$
on $\tilde B_i$.
We may assume that 
$(\tilde B_i,\tilde p_i,\Gamma_i, \Lambda_i)$ converges to 
$(Z, z_0,\Gamma_{\infty}, \Lambda_{\infty})$,
where $Z = \mathbb R \times L$, $\Lambda_{\infty}=\mathbb R \times H$,
$L$ is a flat cone and $H$ is a finite cyclic group acting 
on $L$. Let $\sigma_{\infty}\in \Gamma_{\infty}$ and
$\lambda_{\infty}\in\Lambda_{\infty}$  be the limits of 
$\sigma_i$ and $\lambda_i$ under the above convergence.
Note that $\sigma_{\infty}:\mathbb R \times L\to \mathbb R \times L$ can be 
expressed as 
$\sigma_{\infty}(x,y)=(-x, \sigma_{\infty}'(y))$,
where $\sigma_{\infty}'$ is a rotation of angle $\ell/2$
and $\ell$  is the length of the space of directions at 
the vertex  of the cone $L$. Note that 
$T_pX=(L/H)/\sigma_{\infty}'$.

As discussed above, from the action of $H$ on $L$, we can put 
a Seifert fibered torus structure on $\partial\hat B_i$.
Namely if 
$\lambda_{\infty}(re^{i\theta}) = re^{i(\theta+\nu\ell/\mu)}$,
then  $\partial\hat B_i$ has a Seifert fibered torus structure
of type $(\mu,\nu)$ that is $\hat\sigma_i$-invariant
(See \cite[Lemma 4.4]{SY}).
From \eqref{eq:gamsig}, we have 
$\sigma_{\infty}\lambda_{\infty}\sigma_{\infty}=\lambda_{\infty}^{-1}$.
This yields that $\lambda_{\infty}^2 = 1$.
Thus $(\mu,\nu)$ is equal to $(1,1)$ or $(2,1)$.

We shall show $(\mu,\nu)=(1,1)$ and extend the fiber structure
on $\partial \hat B_i$ to a $\hat\sigma_i$-invariant 
fiber structure on $\hat B_i$ which projects down to 
the generalized Seifert bundle structure on $B_i$.

Let $B$ and $\hat B$ be the $r$-balls in the 
cone $T_pX = (L/H)/\sigma_{\infty}'$ and 
$L/H$ around the vertices $o_p$ and $\hat o_p$
respectively. 
Consider the metric annuli
\[
       A:= A(o_p;r/4,r), \,\,  \hat A:= A(\hat o_p;r/4,r).
\]
Applying the equivariant fibration theorem (Theorem 18.4 in \cite{Y 4-dim}),
we have a $\mathbb Z_2$-equivariant $S^1$-fibration
$\hat g_i:\hat A_i \to \hat A$ for some closed domain 
$\hat A_i$ of $\hat B_i$, which gives rise to an
$S^1$-fibration
$g_i:A_i \to A$ for some closed domain 
$A_i$ of $B_i$. 

We denote by $B(\pi_i')$ and $B(g_i)$ the closed domain bounded 
by $(\pi_i')^{-1}(S(p, r))$ and  $(g_i)^{-1}(S(o_p, r/2))$
respectively, and set
\[
      A(\pi_i', g_i) := \overline{ B(\pi_i') \setminus B(g_i)}.
\]

By Flow Theorem \ref{flow theorem}, there is a Lipschitz flow
$\Phi:  \partial B(\pi_i')\times [0,1] \to A(\pi_i', g_i)$
such that $\Phi(x, 0) =x$. Let $\Phi_1:\partial B(\pi_i') \to
\partial B(g_i)$
be the homeomorpshism defined by $\Phi_1(x) = \Phi(x,1)$.
Obviously the $\pi_i'$-fibers of $(\pi_i')^{-1}(S(p, r))$ and the $(\Phi_1)^{-1}$-images
of $g_i$-fibers of  $(g_i)^{-1}(S(o_p, r/2))$  are isotopic each
other. Namely we have an isotopy $\varphi_t$ of $\partial B(\pi_i)$,
$0\le t\le 1$, such that $\varphi_0 = id$ and 
$\varphi_1$ sends every  $\pi_i'$-fiber to the  $(\Phi_1)^{-1}$-image
of a  $g_i$-fiber. Define $\Psi:A(\pi_i', g_i) \to A(\pi_i', g_i)$
by
\[
         \Psi(\Phi(x,t)) = \Phi(\varphi_t(x), t).
\]
This joins  the two fiber structures of $\pi_i'$ and $g_i$. Thus we
obtain  a circle fibration 
$\pi'': M_i'' \to X''$ gluing the fibrations 
$\pi_i': M_i' \to X'$ and $g_i$, 
where $X''= X-(U(x_1, r/4)\cup\cdots\cup U(x_k,r/4))$.

Let $V_{\mu,\nu}=S^1\times D^2$ denote the fibered solid torus 
of type $(\mu,\nu)$.

From now on, for simplicity, we denote $B(g_i)$ by $B_i$, and 
use the same notaiton as in \eqref{eq:comm-diagram}.
In particular, we have the $\mathbb Z_2$-equivariant homeomorphism
$\hat f_i:  V_{\mu,\nu} \to \hat B_i$.
Using $\hat f_i$, we have a fiber structure on $\partial V_{\mu,\nu}$
induced from the $\hat g_i$-fibers which is isotopic to 
the standard fiber structure of type $(\mu,\nu)$. 

\begin{assertion}
$(\mu,\nu) = (1,1)$ and there is a $\hat\sigma$-equivariant isotopy 
of $\partial V_{1,1}$ joining the two fiber structures on 
 $\partial V_{1,1}$.
\end{assertion}

\begin{proof}
First suppose $(\mu, \nu) = (1,1)$. 
On the torus $\partial V_{1,1}=S^1\times \partial D^2$, 
let $m=m(t)=(1, e^{it})$ and $\ell=\ell(t)=(e^{it}, 1)$ 
denote the meridian and the longitude.
Fix a meridian $m_i$ and a longitude $\ell_i$ of 
$\partial \hat B_i$ such
that each fiber of $\hat g_i$ transversally meets $m_i$.
Here we may assume that all the longitude of $\partial \hat B_i$
discussed below are $\hat g_i$-fibers.

Set $h_i := (\hat f_i)^{-1}$ for simplicity.
We now show  that $h_i(\ell_i)$ is $\hat \sigma$-equivariantly
ambient isotopic to $\ell$. 
Recall that $\hat \pi: \partial V_{1,1}=S^1\times\partial D^2\to K^2=S^1\times\partial D^2/\hat\sigma$
is the projection. 
Since $h_i(\ell_i)$ is homotopic to $\ell$, $\hat\pi(h_i(\ell_i))$ is homotopic to 
$\hat\pi(\ell)$, and hence is ambient isotopic to $\hat\pi(\ell)$.
Namely, there exists an isotopy $\varphi_t$, $0\le t\le 1$, of $K^2$
such that 
\[
   \varphi_0=id, \,\,\, \varphi_1(\hat\pi(h_i(\ell_i)))=\hat\pi(\ell).
\]
Let $\hat\varphi_t : \partial V_{1,1} \to \partial V_{1,1}$ be the lift of 
$\varphi_t$ such that $\hat\varphi_0=id$. Note that
$\hat\varphi_1(h_i(\ell_i))=\ell$.
Therefore we may assume that $h_i(\ell_i)=\ell$ from the beginning.

Next we claim that $h_i(m_i)$ is $\hat\sigma$-equivariantly
ambient isotopic to $m$ while keeping $\ell$ fixed.
Namely we show that there exists an isotopy $\hat\varphi_t$, $0 \le t \le 1$, of $\partial V_{1,1}$
such that 
\[
     \hat\varphi_0=id,\,\,\, \hat\varphi_1(h_i(m_i))=m, \,\,\,
       \hat\varphi|_{\ell}=1_{\ell}.
\]
To show this, we proceed in a way similar to the above. 
Since $h_i(m_i)$ is homotopic to $m$, $\hat\pi(h_i(m_i))$ is homotopic to 
$\hat\pi(m)$, and hence is ambient isotopic to $\hat\pi(m)$. 
Here the construction of isotopy is local (see \cite{Epstein}).  Hence 
approximating $m$ near the intersection point  $\ell\cap m$ via a
PL-arc for instance,
we can choose such an isotopy $\varphi_t$, $0\le t\le 1$, of $K^2$
that 
\[
   \varphi_0=id, \,\, \varphi_1(\hat\pi(h_i(m_i)))=\hat\pi(m),\,\,\,
     \varphi_t|_{\hat\pi(\ell)}= 1_{\hat\pi(\ell)}.
\]
Let 
$\hat\varphi_t:\partial V_{1,1} \to \partial V_{1,1}$ be the lift of 
$\varphi_t$ such that $\hat\varphi_0=id$.
Note that $\hat\varphi_1$ sends $h_i(m_i)$ to $m$ and $\hat\varphi_t$ is the 
required isotopy.
Therefore we may assume that  $h_i(m_i)=m$ from the beginning.

For a small $\epsilon>0$, let $\ell'=(e^{it}, e^{i\epsilon})$ and 
$\ell''=(e^{it}, e^{-i\epsilon})$ 
(resp. $m'=(e^{i\epsilon}, e^{it})$ and $m''=(e^{-i\epsilon}, e^{it})$)
be longitudes near $\ell$ (resp. meridians near $m$).  
Let $\ell_i'$ and $\ell_i''$ (resp. $m_i'$ and $m_i''$)
be longitudes (resp.  meridians) near $\ell_i$ (resp. near $m_i$)
such that $\ell_i'$, $\ell_i''$, $m_i'$ and $m_i''$
bound a regular neighborhood of $\ell_i\cup m_i$.
In a way similar to the above, taking a $\hat\sigma$-equivariant ambient isotopy,
we may assume that $h_i(\ell_i')=\ell'$, $h_i(\ell_i'')=\ell''$, 
$h_i(m_i')=m'$ and $h_i(m_i'')=m''$.

Let $D$ (resp. $D_i$) be the small domain bounded by
$\ell$, $\ell'$, $m$ and $m'$ (resp. $\ell_i$, $\ell_i'$, $m_i$ and
$m_i'$ ).
Identify $D= I_0\times [0,1]$, $D_i= I_i\times [0,1]$, where
$I_0\subset m$, $I_i\subset m_i$ be arcs, 
and define
$k_i:D\to D$ by $k_i(x, t)=h_i(\hat f_i(x), t)$.
From what we have discussed above, 
 $k_i|_{\partial D} = 1_{\partial D}$.
It is then standard to obtain an isotopy $\psi_t$  of $D$ 
which sends $k_i$ to $1_D$ keeping $\partial D$ fixed.
Extending $\psi_t$ $\hat\sigma$-equivariantly, we obtain a 
$\hat\sigma$-equivariant isotopy of $\partial V_{1,1}$
which sends the $h_i$-image of $I$-fibers of $D_i$
to $I$-fibers of $D$ keeping the outside $D$ fixed.
Applying this argument to the other  domains
bounded by those  longitudes $\ell'$, $\ell''$,  $\hat\sigma(\ell')$,
$\hat\sigma(\ell'')$ 
and meridians $m'$, $m''$ of $\partial V_{1,1}$,
we finally construct  a $\hat\sigma$-equivariant ambient isotopy
$\varphi_t$ of $\partial V_{1,1}$ 
sending  the  $h_i$-images of the  $\hat g_i$-fibers in $\partial B_i$
to the corresponding longitudes of $\partial V_{1,1}$.
\par
\medskip

Finally we show that the case $(\mu,\nu)=(2,1)$ never happens.
Let us fix a $g_i$-fiber, say $k_i$,  and a standard $(2,1)$-fiber,
say $k$, on the fibered torus $\partial V_{2,1}$ of type $(2,1)$.
Since $h_i(k_i)$ is homotopic to $k$ in $T^2$, in a way similar to the above discussuin, 
we have a $\hat\sigma$-equivariant ambient isotopy $\hat\varphi_t$ of 
$\partial V_{2,1}$ such that $\hat\varphi_0 = id$ and 
$\hat\varphi_1$ sends $h_i(k_i)$ to $k$.
In $S^1\times \partial D^2$, $k$ is described as 
$k(t)=(e^{2it}, e^{it})$, and hence
$\hat\sigma\circ k(t) = (e^{-2it}, e^{i(t+\pi)})$.
Therefore the images ${\rm Im}(\hat\sigma\circ k)$ , ${\rm Im}(k)$ of 
$\hat\sigma\circ k$ and $k$ respectively must meet at 
$\hat\sigma\circ k(-\pi) = k(2\pi)$.
On the other hand, 
\[
      \hat\sigma\circ k = \hat\sigma\circ \hat\varphi_1(h_i(k_i)) 
              = \hat\varphi_1\circ\hat\sigma(h_i(k_i)), \,\,\,
                      k= \hat\varphi_1(h_i(k_i)).
\]
It turns out that ${\rm Im}(\hat\sigma(h_i(k_i))) = {\rm  Im}(h_i(\hat\sigma_i(k_i)))$ 
meets ${\rm Im}(h_i(k_i))$.
This implies that ${\rm Im}(\hat\sigma_i(k_i))$ meets ${\rm Im}(k_i)$,
a contradiction to the fact that $g_i$ is a ${\mathbb Z}_2$-equivariant fibration.

This completes the proof of the assertion.
\end{proof}
\par
\medskip

Obviously the standard fiber structure on  $\partial V_{1,1}$
extends to a standard 
$\hat\sigma$-invariant fiber structure on  $V_{1,1}$.
Now it becomes  easy to extend the fiber structure 
defined by $\hat g_i$-fibers on $\hat \partial B_i$
to a $\hat\sigma_i$-equivariant fiber structure on 
$\hat B_i$ of type $(1, 1)$ via $h_i$,
which  projects down to   
a generalized Seifert bundle structure
on $B_i$ and on $M_i$ for large $i$ which is compatible to the fiber
structure of $\pi_i'$.
This completes the proof of  Lemma \ref{ball is B(pt)}.
\end{proof}

This completes the proof of Theorem \ref{2-dim interior}.
\end{proof}






\section{The case that $\dim X = 2$ and $\partial X \neq \emptyset$}
\label{proof of 2-dim boundary}
Let $\{M_i \,|\, i = 1, 2, \dots \}$ be a sequence of 
three-dimensional closed Alexandrov spaces with curvature $\geq -1$ having a uniform diameter bound.
Suppose that $M_i$ converges to an Alexandrov surface $X$ with non-empty boundary.

In this section, we provide decompositions of $X$ into $X' \cup X''$ and of $M_i$ into $M_i' \cup M_i''$ such that $M_i'$ fibers over $X'$ in the sense of a generalized Seifert fiber space and $M_i''$ is the closure of the complement of $M_i'$. 
We will prove that each component of $M_i''$ has the structure of a generalized solid torus or a generalized solid Klein bottle, and the circle fiber structure on its boundary is compatible to the circle fiber structure induced by the generalized Seifert fibration.

From now on, we denote by $C$ one of components of $\partial X$. 
Since a two-dimensional Alexandrov space is a manifold, $C$ is homeomorphic to a circle.
Let us fix a small positive number $\e$.
To construct the desired decompositions of $X$ and $M_i$, 
we define a notion of an $\e$-regular covering of $C$.

\begin{definition} \label{def of regular covering} \upshape
Let $\{B_\alpha, D_\alpha\}_{1 \le \alpha \le n}$ be a covering of $C$ by closed subsets in $X$. We say that $\{B_\alpha, D_\alpha\}_{1 \le \alpha \le n}$ is $\e$-{\it regular} if it satisfies the following. 
\begin{itemize}
\item[(1)]
$\bigcup_{1 \le \alpha \le n} B_\alpha \cup D_\alpha - C$ is $(2, \e)$-strained.
\item[(2)]
Each $B_\alpha$ is the closed metric ball $B_\alpha = B(p_\alpha, r_\alpha)$ centered at $p_\alpha$ with radius $r_\alpha > 0$ such that 
\begin{align*}
&|\nabla d_{p_\alpha}| > 1 -\e \text{ on } B(p_\alpha, 2 r_\alpha) - \{p_\alpha\}, \\
& B_\alpha \cap B_{\alpha'} = \emptyset \text{ for all } \alpha \neq \alpha'. 
\end{align*}
And, the sequence $p_1, p_2, \dots p_n$ is consecutive in $C$. 
\item[(3)]
$D_\alpha$ forms 
\[
D_\alpha := B(\gamma_\alpha, \de) - \mathrm{int} (B_\alpha \cup B_{\alpha + 1}), 
\]
where, $\gamma_\alpha := \widehat{p_\alpha p_{\alpha + 1}}$ with $p_{n + 1} := p_1$.
Here, $\de > 0$ is a small positive number with $\de \ll \min_\alpha r_\alpha$.
\item[(4)]
For any $x \in D_\alpha$, we have
\begin{align*}
& \wangle p_\alpha x p_{\alpha + 1} > \pi - \e.
\end{align*}
For $x \in D_\alpha - C$ and $y \in C$ with $|x y| = |x C|$, we have
\begin{align*}
& |\nabla d_C|(x) > 1 - \e, \\
& |\wangle p_\alpha x y - \pi /2| < \e, \text{ and } \\
& |\wangle p_{\alpha + 1} x y - \pi /2| < \e.
\end{align*}
\end{itemize}
\end{definition}

The existence of an $\e$-regular covering of $C$ will be proved in Section \ref{appendix}.
We fix an $\e$-regular covering 
\[
\{B_\alpha, D_\alpha \mid \alpha = 1, 2, \dots, n \}
\]
of $C$. 

We consider a closed neighborhood $X_C''$ of $C$ defined as 
\begin{equation} \label{X''}
X_C'' := \bigcup_{\alpha = 1}^{n} B_\alpha \cup D_\alpha.
\end{equation}
And we set 
\[
X'' := \bigcup X_C'', \text{ and } X' := \text{ the closure of } X - X''.
\]
This is our decomposition $X = X' \cup X''$.

Since $\mathrm{int}\, X'$ has all interior $(2, \e)$-singular points of $X$, by Theorem \ref{2-dim interior}, we obtain a generalized Seifert fibration 
\begin{equation} \label{fibration 2-dim boundary}
\pi_i' : M_i' \to X'
\end{equation}
for some closed domain $M_i' \subset M_i$.
Let us denote by $X^\reg$ 
the complement of a small neighborhood of the union of $\partial X$ and 
the set of all interior $(2, \e)$-singular points in $X$.
By Theorem \ref{fibration theorem}, we may assume that ${\pi_i'}$ is both a circle fibration and a $\theta(\e)$-Lipschitz submersion on $X^\reg$.
Recall that ${\pi_i'}^{-1}(X^\reg)$ is $(3, \theta(\e))$-regular, for large $i$.

We set $M_i'' := M_i - \mathrm{int}\, M_i'$.
We will determine the topology of $M_i''$ in the rest subsections.

\subsection{Decomposition of $M_i''$} \label{construction of M_i''} 
\mbox{}

Let us denote by $M_i^\reg$ a $(3, \theta(\e))$-regular closed domain of $M_i$ which contains ${\pi_i'}^{-1}(X^\reg)$. 
By Theorem \ref{smooth approximation}, we obtain a smooth approximation 
\begin{equation} \label{smooth approx 2-dim boundary}
f_i : U(M_i^\reg) \to N(M_i^\reg)
\end{equation}
for a neighborhood $U(M_i^\reg)$ of $M_i^\reg$ and some Riemannian manifold $N(M_i^\reg)$.

Let us take $p_{\alpha, i} \in M_i$ converging $p_\alpha \in C \subset \partial X$, and $\gamma_{\alpha, i}$ a simple arc joining $p_{\alpha, i}$ and $p_{\alpha + 1, i}$ converging to $\gamma_\alpha$.
By the definition of regular covering, 
we may assume that 
\[
A \left( \bigcup_{\alpha = 1}^N \gamma_{\alpha, i} ; \de / 100, 10 \max r_\alpha \right) 
\]
is $(3, \theta(\e))$-regular.

From now on, we fix any index $\alpha \in \{1, \dots, N\}$ and use the following notations: $p := p_\alpha$, $p' :=p_{\alpha + 1}$, $B := B_\alpha$, $B' := B_{\alpha + 1}$, $\gamma := \gamma_\alpha$ and $\gamma'' := \gamma_{\alpha - 1}$; and 
$p_i := p_{\alpha, i}$, $p_i' := p_{\alpha + 1, i}$, $\gamma_i := \gamma_{\alpha, i}$ 
and $\gamma_i'' := \gamma_{\alpha -1, i}$. 
To avoid a disordered notation, we assume that 
all $r_\alpha$ are equal to each other, 
and set $r := r_\alpha$.

Let $\de'$ be a small positive number with $\de' \ll \de$.
We will construct an isotopy of $B(p_i, r + \de')$ which deform the metric ball $B(p_i, r-\de')$ to some domain $B_i$  such that 
\begin{align}
B_i &\approx B(p_i, r); \label{B_i eq3} \\
\partial B_i - U(\gamma_i \cup \gamma_i'', 3 \de /2 ) 
&= 
{\pi_i'}^{-1} (\partial B(p, r) - U(\gamma \cup \gamma'', 3 \de /2)); \label{B_i eq1} \\
\partial B_i \cap B(\gamma_i \cup \gamma_i'', \de) 
&= 
\partial B(p_i, r - \de') \cap B(\gamma_i \cup \gamma_i'', \de). \label{B_i eq2}
\end{align}

\begin{figure}[h]
\includegraphics[width=0.7\textwidth]{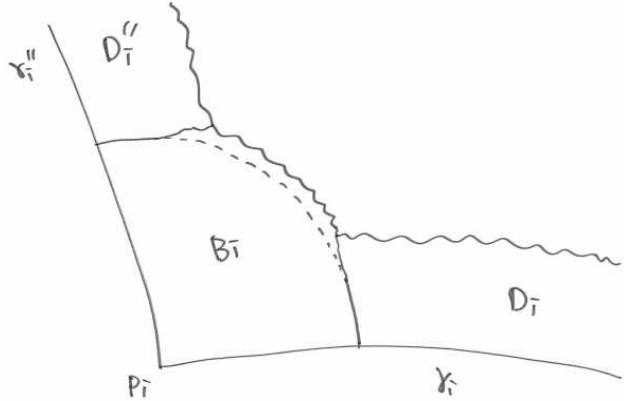}
\caption{: A domain near the corner}
\label{fig_corner}
\end{figure}

In Figure \ref{fig_corner}, 
the broken line denotes the metric sphere $S(p_i, r - \de')$ 
and the wavy line denotes the pull back of metric levels with respect to 
$\gamma''$, $\gamma$ and $p$ in $X$ by $\pi_i'$.

Suppose that we construct such an isotopy and obtain a domain $B_i = B_{\alpha, i}$ satisfying \eqref{B_i eq1} and \eqref{B_i eq2} for a moment. 
We consider the domain 
\begin{equation}
D_i = D_{\alpha, i} :=B(\gamma_{\alpha, i}, 3\de /2 ) \cup {\pi_i'}^{-1} (A(\gamma_\alpha; \de, 2\de)) - \mathrm{int}\, (B_{\alpha, i} \cup B_{\alpha + 1, i}).
\end{equation}
Then we obtain a decomposition of $M_i''$:
\begin{align}
M_{i, C}'' &:= \bigcup_{\alpha =1}^N B_{\alpha, i} \cup D_{\alpha, i}, \\
M_i'' &= \bigcup_{C \subset \partial X} M_{i, C}''.
\end{align}

Now we construct an isotopy which deforms $B(p_i, r -\de')$ to $B_i$ satisfying \eqref{B_i eq1} and \eqref{B_i eq2}.
From now on throughout this paper, we use the following notations.
For any set $A \subset M_i^\reg$, 
we set $\tilde A := f_i(A)$.
We denote by $U(A)$ a neighborhood of $A$ in $U(M_i^\reg)$ and $N(A)$ by the image of $U(A)$ by the approximation $f_i$.
Namely, $N(A) = \widetilde {U(A)}$.
For any point $x \in A$, we set $\tilde{x} := f_i(x) \in \tilde{A}$.
For any function $\phi : A \to \mathbb{R}$, we define $\tilde{\phi} : \tilde{A} \to \mathbb{R}$ by 
\begin{equation}
\tilde{\phi} := \phi \circ f_i^{-1}. \label{tilde map}
\end{equation}

Let $\tilde{V}$ be a gradient-like smooth vector fields for a Lipschitz function $\widetilde{\dist}_{p_i}$ on $N ( (B(p_i, r) \cup B(p_i', r) \cup B(\gamma_i, 2 \de)) \cap M_i^\reg )$ obtained by Lemma \ref{flow theorem lemma 2}.

We take a Lipschitz function $h$ defined on $B(p_i, r + \de')$ such that 
\begin{equation*} 
\begin{aligned}
&\tilde{h} \text{ is smooth}, \\
& 0 \leq h \leq 1, \\
& \mathrm{supp}\, (h) \subset B(p_i, r + \de') - U(\gamma_i \cup \gamma_i'', \de/2), \\
& h \equiv 1 \text{ on } B(p_i, r + \de') - U(\gamma_i \cup \gamma_i'' ,\de).
\end{aligned}
\end{equation*}
We consider a smooth vector field $\tilde{h} \cdot \tilde V$ and its integral flow $\tilde{\Phi}$. And we define the pull-back flow $\Phi_t := f_i^{-1} \circ \tilde{\Phi}_t \circ f_i$. 
Then by construction and Theorem \ref{flow application}, the flow $\Phi$ transversally intersects ${\pi_i'}^{-1} (\partial B(p, r) - U(\gamma \cup \gamma'', \de))$.
Then we can construct an isotopy by using the flow $\Phi$, which provide a closed neighborhood $B_i$ of $p_i$ satisfying \eqref{B_i eq3}, \eqref{B_i eq1} and \eqref{B_i eq2}.

\subsection{The topologies of the balls near corners} \label{topology of ball near boundary}

We first prove that $\partial B_i$ is homeomorphic to a closed 2-manifold.
\begin{lemma} \label{boundary is 2-manifold}
$\partial B_i$ $\approx \partial B(p_i, r)$ 
is a closed 2-manifold.
\end{lemma}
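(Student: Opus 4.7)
The plan is to identify $\partial B_i$ as (homeomorphic to) a regular metric sphere of $\dist_{p_i}$, and to read off the $2$-manifold property from the cone neighborhood of $p_i$ provided by the Stability Theorem.

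First I would invoke the isotopy constructed in Subsection~\ref{construction of M_i''}, which is a homeomorphism on $B(p_i, r+\de')$ sending $B(p_i, r-\de')$ onto $B_i$; restricting to boundaries yields $\partial B_i \approx \partial B(p_i, r-\de')$. The same construction applied with $r$ in place of $r-\de'$ gives $\partial B_i \approx \partial B(p_i,r)$ as well. Hence it suffices to show that the metric sphere $\partial B(p_i, r-\de')$ is a closed topological $2$-manifold.

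For this I would exploit condition~(2) in Definition~\ref{def of regular covering}, which says $|\nabla \dist_{p_\alpha}|>1-\e$ on $B(p_\alpha, 2r)\setminus\{p_\alpha\}$ in $X$. This regularity is encoded by an explicit angle inequality of the form $\wangle p_\alpha q w>\pi/2+c$ for a suitable witness $w\in X$, which is stable under Gromov--Hausdorff $\theta(i^{-1})$-approximations by the semicontinuity of comparison angles. Lifting the witness to $w_i\in M_i$, I transfer the regularity to $\dist_{p_i}$ on $B(p_i, 2r)\setminus\{p_i\}$, with only a $\theta(\e)+\theta(i^{-1})$ loss. Perelman's Morse theory (Theorem~\ref{Morse theory}) then makes $\dist_{p_i}\colon B(p_i,2r)\setminus\{p_i\}\to(0,2r)$ a topological fibre bundle, so all metric spheres $\partial B(p_i,t)$ for $0<t<2r$ are mutually homeomorphic.

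Finally I would identify this fibre: by the Stability Theorem~\ref{stability theorem}, any sufficiently small closed ball $B(p_i,\rho)$ is homeomorphic to the cone $K_1(\Sigma_{p_i})$ via a map sending $\dist_{p_i}/\rho$ to the cone coordinate, so $\partial B(p_i,\rho)\approx \Sigma_{p_i}$. Since $M_i$ is a three-dimensional closed Alexandrov space without boundary, $\Sigma_{p_i}$ is a two-dimensional closed Alexandrov space without boundary, hence a closed topological $2$-manifold by Theorem~\ref{Alex surface}. Combining the three steps gives $\partial B_i\approx\Sigma_{p_i}$, as required.

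The main obstacle I anticipate is the regularity transfer in the middle step, because the convergence $M_i\to X$ collapses the $S^1$-direction and could in principle create critical points of $\dist_{p_i}$ along the collapsing fibres. The point is that the witness $w\in X$ supplied by the covering condition can be chosen in $X^\reg$, so a nearby $w_i\in M_i$ exists by the Gromov--Hausdorff approximation, and the angle bound $\wangle p_i q_i w_i>\pi/2+c-\theta(i^{-1})$ survives uniformly over $q_i\in B(p_i,2r)\setminus\{p_i\}$; this uniform regularity is exactly what Perelman's Morse theory consumes.
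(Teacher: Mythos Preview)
Your middle step has a genuine gap. The regularity transfer from $X$ to $M_i$ fails at scales comparable to or smaller than the Gromov--Hausdorff error $\e_i=d_{GH}(M_i,X)$. The comparison angle $\wangle p_i q_i w_i$ is a function of the three side lengths $|p_iq_i|$, $|q_iw_i|$, $|p_iw_i|$, each of which is perturbed by at most $\theta(\e_i)$ under the approximation; when $|p_iq_i|$ is of order $\e_i$ or smaller, this additive perturbation destroys any control on the angle. So the inequality $\wangle p_iq_iw_i>\pi/2+c-\theta(i^{-1})$ you assert holds only for $q_i$ at a definite distance from $p_i$---say on the annulus $A(p_i;\tau,2r)$ for some fixed $\tau>0$---not on the full punctured ball. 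Your fiber-bundle argument therefore connects $\partial B(p_i,r)$ to $\partial B(p_i,\tau)$, but not to $\partial B(p_i,\rho)$ for the tiny $\rho=\rho_i$ required in your final step; and $\rho_i$ may shrink to $0$ as $i\to\infty$ while critical points of $\dist_{p_i}$ appear at intermediate scales in $(\rho_i,\tau)$.

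This is not a technicality that can be patched: the whole purpose of the Rescaling Argument (Theorem~\ref{rescaling argument}) is that such critical points \emph{do} occur in general under collapse. The paper's proof does not attempt to rule them out. Instead it splits into two cases via Assumption~\ref{rescaling assumption}: either $\dist_{\hat p_i}$ has no critical point on $B(\hat p_i,r)$ for some nearby $\hat p_i$, in which case $\partial B(p_i,r)\approx\Sigma_{\hat p_i}\in\{S^2,P^2\}$ directly; or critical points persist, in which case rescaling produces a three-dimensional open nonnegatively curved limit $Y$, and Stability together with the Soul Theorem~\ref{soul theorem} identify $\partial B(p_i,r)$ with the boundary of a large metric ball in $Y$, hence with one of $S^2$, $P^2$, $T^2$, $K^2$. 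Your argument recovers only the first case.
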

\begin{proof}
If $B_i$ does not satisfy Assumption \ref{rescaling assumption}, we have some sequence $\hat{p}_i$ with $|\hat{p}_i p_i| \to 0$ such that $\partial B(\hat{p}_i, r) \approx \Sigma_{\hat{p}_i}$, where we may assume that $\hat{p}_i = p_i$.
Since $M_i$ has no boundary, $\partial B(p_i, r)$ is homeomorphic to $S^2$ or $P^2$. 

If $B_i$ satisfies Assumption \ref{rescaling assumption}, there exist a sequence $\de_i \to 0$ and $\hat{p}_i$ with $|\hat{p}_i p_i| \to 0$ such that the limit $(Y, y_0)$ of $( \frac{1}{\de_i} B(\hat{p}_i, r), \hat{p}_i )$ has dimension three. 
Here, we may assume that $\hat{p}_i = p_i$.
Then, by Soul Theorem \ref{soul theorem} and Stability Theorem \ref{stability theorem}, $\partial B(p_i, r)$ is homeomorphic to $S^2$, $P^2$, $T^2$ or $K^2$. 
\end{proof}

From \eqref{B_i eq1} and the construction of $B_i$, we have 
\begin{equation}
\partial B_i - U(\gamma_i \cup \gamma_i'', \de) \approx S^1 \times I.
\end{equation}
Now, we put $F_i$ and $F_i''$ as follows.
\begin{equation}
F_i := \partial B_i \cap B(\gamma_i, \de) \text{ and }
F_i'' := \partial B_i \cap B(\gamma_i'', \de).
\end{equation} 
Then, by Lemma \ref{boundary is 2-manifold}, $F_i$ and $F_i''$ are 2-manifolds with boundaries homeomorphic to $S^1$.
By the generalized Margulis lemma \cite{FY}, $F_i$ has an almost nilpotent fundamental group. Hence $F_i$ is homeomorphic to $D^2$ or $\Mo$.

Therefore, we obtain the following assertion: 
\begin{lemma}\label{boundary is not T^2}
$\partial B_i$ is homeomorphic to $S^2$, $P^2$ or $K^2$.
\end{lemma}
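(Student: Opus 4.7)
The plan is to combine Lemma \ref{boundary is 2-manifold} with the explicit decomposition of $\partial B_i$ arising from the construction in Section \ref{construction of M_i''}, and to exclude $T^2$ by a direct orientability argument. From that construction I have, at the topological level,
\[
\partial B_i = F_i \cup A \cup F_i'',
\]
where $A := \overline{\partial B_i - U(\gamma_i \cup \gamma_i'', \delta)}$ is an annulus $\approx S^1 \times I$, whose two boundary circles are identified respectively with $\partial F_i$ and $\partial F_i''$; and where, by the Margulis-lemma discussion just above, each of $F_i$ and $F_i''$ is homeomorphic to either $D^2$ or $\Mo$.

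The key observation is that attaching $A$ to $F_i$ along one of $A$'s boundary circles does not change the homeomorphism type of $F_i$. Hence $\partial B_i$ is obtained topologically by gluing $F_i$ and $F_i''$ along a single circle. This yields only three possibilities: if both pieces are disks, $\partial B_i \approx S^2$; if one is a disk and the other a M\"obius band, $\partial B_i \approx P^2$; and if both are M\"obius bands, one recovers the standard presentation $K^2 = \Mo \cup_\partial \Mo$ of the Klein bottle. Alternatively, and more succinctly, whenever either $F_i$ or $F_i''$ is a M\"obius band then $\partial B_i$ contains an embedded M\"obius band and is therefore non-orientable, ruling out $T^2$; and if both are disks then a direct Mayer--Vietoris (or Euler-characteristic) count gives $\chi(\partial B_i) = 2$, hence $\partial B_i \approx S^2$. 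In either alternative $T^2$ is excluded, and combined with Lemma \ref{boundary is 2-manifold} this gives the claim.

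The only point requiring care is the precise form of the decomposition $F_i \cup A \cup F_i''$, i.e.\ that $A$ is a genuine annulus whose two boundary components are exactly $\partial F_i$ and $\partial F_i''$. This, however, is precisely the conclusion of the isotopy construction recorded in equations \eqref{B_i eq1} and \eqref{B_i eq2}, together with the identification of $\partial B_i - U(\gamma_i \cup \gamma_i'', \delta)$ with $S^1 \times I$ recalled just before the lemma. So I do not expect any serious obstacle once those preceding facts are invoked; the argument is essentially a topological book-keeping after the hard analytic work has been carried out.
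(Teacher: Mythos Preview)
Your proposal is correct and follows essentially the same approach as the paper: the paper establishes, in the paragraphs immediately preceding the lemma, the decomposition $\partial B_i = F_i \cup (S^1\times I) \cup F_i''$ with each of $F_i, F_i''$ homeomorphic to $D^2$ or $\Mo$, and then simply states the lemma as a consequence. Your write-up just makes explicit the elementary step the paper leaves implicit (that the three possible gluings yield $S^2$, $P^2$, $K^2$, equivalently that the presence of a $\Mo$ piece forces non-orientability and hence excludes $T^2$).
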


We now determine the topology of $B_i$.

\begin{lemma} \label{ball near boundary}
$B_i$ is homeomorphic to $D^3$, $\Mo \times I$ or $K_1(P^2)$.

Moreover, if $\diam \Sigma_p > \pi / 2$ then $B_i$ 
is not homeomorphic to $K_1(P^2)$.
\end{lemma}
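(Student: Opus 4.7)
The plan is to apply the rescaling argument (Theorem \ref{rescaling argument}) to the metric ball $B(p_i,r)$, which is homeomorphic to $B_i$ by construction \eqref{B_i eq3}, and combine it with the Soul Theorem \ref{soul theorem}, the Stability Theorem \ref{stability theorem}, and Lemma \ref{boundary is not T^2}.

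First, I would split on whether Assumption \ref{rescaling assumption} is satisfied for $B(p_i,r)$. If it fails, the remark after Theorem \ref{rescaling argument} yields some $\tilde p_i\in M_i$ with $|p_i\tilde p_i|\to 0$ such that $B(\tilde p_i,r)\approx K_1(\Sigma_{\tilde p_i})$. Since $M_i$ has empty boundary, $\Sigma_{\tilde p_i}$ is a closed two-dimensional Alexandrov space of curvature $\ge 1$, hence homeomorphic to $S^2$ or $P^2$ by Theorem \ref{Alex surface}, giving $B_i\approx D^3$ or $B_i\approx K_1(P^2)$ respectively. If Assumption \ref{rescaling assumption} holds, the rescaling produces a three-dimensional open Alexandrov space $Y$ of nonnegative curvature whose soul $S$ satisfies $\dim S\le \dim Y-\dim X=1$. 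By the Soul Theorem \ref{soul theorem} and Stability Theorem \ref{stability theorem}: when $\dim S=0$, $Y\in\{\mathbb{R}^3,K(P^2),M_{\mathrm{pt}}\}$ and $B_i\in\{D^3,K_1(P^2),B(\mathrm{pt})\}$; when $\dim S=1$, $Y=(\mathbb{R}\times N)/\Lambda$ with $N\approx\mathbb{R}^2$, and a tubular neighborhood of the circle soul is either a solid torus $S^1\times D^2$ or a solid Klein bottle $S^1\tilde\times D^2\approx\mathrm{M\ddot{o}}\times I$.

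Two cases then remain to be ruled out. The solid torus case is immediate since $\partial(S^1\times D^2)\approx T^2$, contradicting Lemma \ref{boundary is not T^2}. The case $B_i\approx B(\mathrm{pt})$ is the main obstacle. I would exclude it by exploiting that $p\in\partial X$, so that the rescaled limit base $T_pX$ is a sector with nonempty boundary. In the description $M_{\mathrm{pt}}=S^1\times\mathbb{R}^2/\langle\alpha\rangle$ of Example \ref{M_pt}, the natural 2-dimensional base $K(S^1_\pi)=\mathbb{R}^2/(y\sim -y)$ is a cone without boundary, and tracking this $\mathbb{Z}_2$-quotient structure under the convergence --- by lifting to the $\mathbb{Z}_2$-cover $S^1\times\mathbb{R}^2\to M_{\mathrm{pt}}$ and applying the Splitting Theorem \ref{splitting theorem} to the rescaled cover in the spirit of Lemma \ref{ball is not a solid Klein bottle} --- shows that the 2-dimensional limit is incompatible with the boundary ray of $T_pX$ inherited from $C\subset\partial X$. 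This contradiction rules out $B_i\approx B(\mathrm{pt})$.

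For the moreover statement, suppose $B_i\approx K_1(P^2)$. By Theorem \ref{boundary is top inv} the cone vertex of $K_1(P^2)$ corresponds to $p_i$, and hence $\Sigma_{p_i}\approx P^2$. By the diameter-suspension Theorem \ref{diameter suspension theorem}, any closed two-dimensional Alexandrov space of curvature $\ge 1$ with diameter strictly greater than $\pi/2$ is homeomorphic to a suspension, hence to $S^2$; therefore $\diam\Sigma_{p_i}\le\pi/2$. Continuity of diameter under the Gromov-Hausdorff convergence $\Sigma_{p_i}\to\Sigma_pX$ then yields $\diam\Sigma_p\le\pi/2$, and the contrapositive gives the moreover statement.
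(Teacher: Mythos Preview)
Your overall strategy matches the paper's: split on Assumption \ref{rescaling assumption}, invoke the Soul Theorem for the rescaled limit with $\dim S\le 1$, use Lemma \ref{boundary is not T^2} to kill the solid torus, and exclude $B(\mathrm{pt})$ by an equivariant/covering argument showing incompatibility with $\partial T_pX\neq\emptyset$. Your sketch for the $B(\mathrm{pt})$ exclusion has the right idea but is thin; the paper carries this out carefully via the commutative diagram of Lemma \ref{ball is B(pt)}, showing that the limit involution $\sigma_\infty'$ on the cone factor $L/H$ is orientation-preserving, forcing $L/H$ to have empty boundary and hence $T_pX=(L/H)/[\sigma_\infty']$ to have empty boundary --- the contradiction.

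Your argument for the ``moreover'' statement, however, has two genuine gaps. First, Theorem \ref{boundary is top inv} does not tell you that the cone vertex of $K_1(P^2)$ corresponds to $p_i$; it only says the Alexandrov boundary is a topological invariant. The vertex is the unique topologically singular point of $B_i$, but there is no reason it must coincide with $p_i$. You should instead take the singular point $q_i\in B_i$ (which still satisfies $q_i\to p$) and argue with $\Sigma_{q_i}\approx P^2$. Second, the spaces of directions do \emph{not} Gromov--Hausdorff converge: $\Sigma_{p_i}$ is two-dimensional while $\Sigma_p$ is one-dimensional. What holds is lower semicontinuity, $\diam\Sigma_p\le\liminf_i\diam\Sigma_{q_i}$, which is exactly the inequality you need and is how the proof of Proposition \ref{regular collapsing} proceeds. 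The paper's own route is slightly different: it observes directly that every $x\in B(p,r)$ has $\diam\Sigma_x>\pi/2$ (from the regularity of $d_p$ on the punctured ball together with the hypothesis at $p$), applies Proposition \ref{regular collapsing} pointwise to conclude that every point of $B_i$ is a manifold point, and hence $B_i\not\approx K_1(P^2)$.
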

\begin{proof} 
We first consider the case that $\diam \Sigma_p > \pi /2$.
Then by Proposition \ref{regular collapsing}, $\Sigma_{p_i}$ is topologically a suspension over a one-dimensional Alexandrov space $\Lambda$ of curvature $\geq 1$. 
Since $\partial \Sigma_{p_i} = \emptyset$, $\Lambda$ is a circle. 
Hence $p_i$ is a topologically regular point.
Note that, in this situation, any $x \in B(p,r)$ has $\diam \Sigma_x > \pi / 2$. 
Therefore, $\mathrm{int}\, B_i$ is topologically a manifold, 
and $B_i$ is not homeomorphic to $K_1(P^2)$.

From now on we assume that $\diam \Sigma_p \leq \pi /2$.
If $B_i$ dose not satisfies Assumption \ref{rescaling assumption} 
then, there exists $\hat{p}_i$ such that $\lim|p_i \hat{p}_i| = 0$ and $B(\hat{p}_i, r) \approx K_1(\Sigma_{\hat p_i})$ which is homeomorphic to $D^3$ or $K_1(P^2)$,
where we may assume that $p_i = \hat{p}_i$.

Suppose that $B_i$ satisfies Assumption \ref{rescaling assumption}.
By Theorem \ref{rescaling argument}, there is a sequence $\de_i$ of positive numbers tending to zero and points $\hat{p}_i$ 
(where we may assume that $\hat{p}_i = p_i$) such that 
\begin{itemize}
\item any limit $( Y, y_0 )$ of $( \frac{1}{\de_i} B_i, p_i )$ as $i \to \infty$, is a three-dimensional open Alexandrov space of nonnegative curvature; 
\item denoting by $S$ a soul of $Y$, we obtain $\dim S \leq 1$.
\end{itemize}
Then, by Soul Theorem \ref{soul theorem}, $Y$ is homeomorphic to $\mathbb{R}^3$, $K(P^2)$ or 
$M_\pt$ 
if $\dim S = 0$, or an $\mathbb{R}^2$-bundle over $S^1$ if $\dim S =1$.
Therefore, $B_i$ is homeomorphic to $D^3$, $K_1(P^2)$ or $B(\pt)$ if $\dim S = 0$, or $S^1 \times D^2$ or $S^1 \tilde{\times} D^2 \approx \Mo \times I$ if $\dim S =1$.
By the boundary condition (Lemma \ref{boundary is not T^2}), $B_i$ is actually not homeomorphic to $S^1 \times D^2$.
It remains to show that 
\begin{equation}\label{ball is not B(pt)}
B_i \text{ is not homeomorphic to } B(\pt).
\end{equation}
We prove \eqref{ball is not B(pt)} by contradiction.
Suppose that there is a homeomorphism $f_i : B(\pt) \to B_i$.
We will use the notations in the proof of Lemma \ref{ball is B(pt)}.
Recall that $B(\pt)$ is obtained by the quotient space of $S^1 \times D^2$ by the involution $\hat \sigma$.
We consider the corresponding space $\hat{B}_i$ with an involution $\hat \sigma_i$ such that its quotient is $B_i$. 
By the argument of the proof of Lemma \ref{ball is B(pt)}, 
we obtain the following commutating diagram:
\[
\begin{CD}
\mathbb{R} \times D^2 @> \tilde{f}_i > > \tilde{B}_i \\
@V \pi V V                                                              @V V \pi_i V \\
S^1 \times D^2 @> \hat{f}_i > > \hat{B}_i \\
@V \hat{\pi} V V                                        @V V \hat{\pi}_i V \\
B( \pt )                @>> f_i >                       B_i 
\end{CD}
\]
Here, the horizontal arrows are homeomorphisms, 
$\pi$ and $\pi_i$ are the universal coverings, and 
$\hat{\pi}$ and $\hat{\pi}_i$ are the projections by involutions $\hat \sigma$ and $\hat \sigma_i$, respectively.
We may assume that $(\tilde{B}_i, \tilde{p}_i, \Gamma_i, \Lambda_i)$ converges to $(Z, z_0, \Gamma_\infty, \Lambda_\infty)$ with $Z = \mathbb R \times L$, $\Lambda_\infty = \mathbb R \times H$, $L$ is a flat cone over a circle and $H$ is a finite abelian group acting on $L$.
Note that all elements of $H$ are orientation preserving on $L$.
Recall that $\sigma_\infty$ is expressed as $\sigma_\infty(x, y) = (-x, \sigma_\infty'(y))$ 
and $\sigma_\infty$ is orientation preserving on $L$.
Therefore, $[\sigma_\infty']$ is orientation preserving on $L / H$.
We remark that $(L / H) / [\sigma_\infty'] = T_p X$. 
Then, $L / H$ has no boundary.
Indeed, to check this, we suppose that $L / H$ has non-empty boundary.
Then $L / H$ is the cone over an arc. 
Since $[\sigma_\infty']$ is non-trivial isometry on $L / H$, 
$[\sigma_\infty']$ is the reflection with respect to the center line.
Therefore, $[\sigma_\infty']$ does not preserve orientation.
This is a contradiction.

Thus, $L / H$ is the cone over a circle. 
It turns out that $\sigma_\infty'$ is a half rotation of $L$, and hence so is $[\sigma_\infty']$ for $L / H$. 
This implies $T_p X$ has no boundary, and we obtain a contradiction.
We conclude \eqref{ball is not B(pt)}, 
and complete the proof of Lemma \ref{ball near boundary}
\end{proof}

Next, We will divide $D_i$ into two pieces $D_i = H_i \cup K_i$ depending on the topology of $F_i$.
And we will determine the topology of $H_i$, $K_i$, and $D_i$.

\subsection{The case that $F_i$ is a disk} \label{F_i = D^2}
We consider the case that $F_i \approx D^2$.
Then, we divide $D_i$ into $H_i$ and $K_i$ as follows.
\begin{align*}
H_i &:= D_i - U(\gamma_i, \de),\\
K_i &:= D_i \cap B(\gamma_i, \de).
\end{align*}

\subsubsection{The topology of $K_i$}
We prove that
\begin{assertion} \label{K_i is D^3}
$K_i$ is homeomorphic to $D^3$.
\end{assertion}
$K_i$ is contained in a domain $L_i$ defined by 
\begin{equation} \label{definition of L_i}
L_i := A(p_i; r - \de', |p p'| -r/2) \cap B(\gamma_i, \de ).
\end{equation}
Since $(d_{p_i}, d_{\gamma_i})$ is $(c, \theta(\e))$-regular near $L_i \cap S(\gamma_i, \de)$, 
by Theorem \ref{Morse theory} and Lemma \ref{union lemma corollary}, $L_i$ is homeomorphic to $F_i \times [0,1] \approx D^3$.
On the other hand, we can take a closed domain $A_i \subset \mathrm{int}\,K_i$ such that $A_i \approx D^3$ and  
\begin{equation} \label{A_i is large}
K_i^0 := B(\gamma_i, \de / 2) - \left( U(p_i,2 r) \cup U(p'_i, 2 r)\right) \subset \mathrm{int}\, A_i.
\end{equation}
By Theorem \ref{Morse theory} and Lemma \ref{union lemma corollary}, $K_i \approx K_i^0$.
Remark that $F_i' := \partial B_i' \cap B(\gamma_i, \de)$ is homeomorphic to $D^2$. 
Indeed, if we assume that $F_i' \approx \Mo$, then $\partial K_i \approx P^2$. 
Then, by the embedding \eqref{A_i is large}, we have 
\[
P^2 \approx \partial K_i^0 \subset \mathrm{int}\, A_i \approx \mathbb{R}^3
\]
This is a contradiction.
Therefore, $F_i' \approx D^2$ and $\partial K_i^0 \approx \partial K_i \approx S^2$.
By Theorem \ref{Morse theory}, 
$\partial K_i^0$ is locally flatly embedded in $A_i \approx D^3$.
Therefore, by the generalized Schoenflies theorem, 
we conclude $K_i \approx K_i^0 \approx D^3$.

\subsubsection{The topology of $H_i$}
\begin{assertion}
$H_i$ is homeomorphic to $S^1 \times D^2$ and the circle fiber structure on $H_i$ induced by the standard one on $S^1 \times D^2$ is compatible to $\pi_i'$.
\end{assertion}
Let us define a domain $Q \subset X$ by 
\begin{equation}
Q := A(\gamma; \de - \de', 2 \de + \de') - (U(p, r- 2\de') \cup U(p', r -2 \de')). \label{definition of Q}
\end{equation} 
Note that $Q$ is homeomorphic to a two-disk without $(2, \e)$-singular points.
Then $Q_i := {\pi_i'}^{-1}(Q)$ is topologically a solid torus, and 
$H_i$ is contained in the interior of $Q_i$.

We will construct an isotopy $\varphi : Q_i \times [0, 1] \to Q_i$ 
satisfying
\begin{align}
&\varphi (\cdot, 0) = \mathrm{id}_{Q_i}, \label{isotopy_1} \\
&\varphi (Q_i, 1) = H_i, \label{isotopy_2}  \\
&\varphi : \partial Q_i \times [0,1] \to Q_i - \mathrm{int}\, H_i \text{ is bijective.} \label{isotopy_3}
\end{align}
If we obtain such a $\varphi$, then by \eqref{isotopy_2}, we conclude $H_i \approx Q_i \approx S^1 \times D^2$. 
And by \eqref{isotopy_3}, we can obtain the circle fiber structure of $H_i$ over $Q$ which is compatible to the generalized Seifert fibration ${\pi_i'}$.

Next we use the conventions as in \eqref{tilde map}.
\begin{lemma} \label{Q_i to H_i}
There is a smooth vector field $\tilde{X}$ on $N(Q_i - H_i)$ such that it is gradient-like 
\begin{itemize} 
\item for $\tilde {d}_{p_i}$ and 
$\widetilde {{d}_p \circ \pi_i'}$ on $N(B(p_i, r + \de') \cap Q_i - H_i)$;
\item for $\widetilde {d}_{p_i'}$ and 
$\widetilde {{d}_{p'} \circ \pi_i'}$ on $N(B(p_i', r + \de') \cap Q_i - H_i)$; 
\item for $\widetilde {d}_{\gamma_i}$ 
and $\widetilde {{d}_\gamma \circ \pi_i'}$ 
on $N(B(\gamma_i, \de + \de') \cap Q_i - H_i)$; and 
\item for $- \widetilde {d}_{\gamma_i}$ 
and $- \widetilde {{d}_\gamma \circ \pi_i'}$ 
on $N(Q_i - H_i - U(\gamma_i, 2 \de - \de'))$.
\end{itemize}
\end{lemma}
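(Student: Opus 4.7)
The plan is to build $\tilde X$ by combining four local smooth vector fields, one on a neighborhood of each of the four regions listed, by means of a smooth partition of unity on $N(Q_i - H_i)$. On each piece, the needed local gradient-like vector field will come from Theorem \ref{flow theorem 1.5} or Corollary \ref{flow theorem 1.75}, and the assertion about the composition $\widetilde{d_A \circ \pi_i'}$ with the projection will come for free from Lemma \ref{transversality}.

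First I would check the hypotheses of Theorem \ref{flow theorem 1.5} on each of the four regions separately. For the two pieces involving $d_{p_i}$ and $d_{p_i'}$, the required almost-antipodal point $w$ is provided by $p_i'$ and $p_i$ respectively, thanks to the $\e$-regular covering condition $\wangle p x p' > \pi - \e$ valid for $x \in D_\alpha$. For the piece where we want the flow to move away from $\gamma_i$, an antipodal witness is any point on $C$ realizing $|x C|$, using $|\nabla d_C|(x) > 1 - \e$ together with $|\wangle p x y - \pi/2| < \e$ from the $\e$-regular covering. For the "inward" piece where the flow moves toward $\gamma_i$, the witness is the foot of a shortest segment from $x$ to $\gamma_i$, which is regular because on this region $|xy|$ is bounded below and $d_{\gamma_i}$ has no critical points away from $\gamma_i$ itself.

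Next I would upgrade these local constructions on the overlap regions. The critical overlaps are of the form $B(p_i, r + \de') \cap B(\gamma_i, \de + \de') \cap Q_i - H_i$ and its primed counterpart, where $\tilde X$ must be simultaneously gradient-like for $d_{p_i}$ and $d_{\gamma_i}$. The $\e$-regular covering gives $|\wangle p_\alpha x y - \pi/2| < \e$ for $y$ realizing $|xC|$, which after comparison gives $|\angle_x(p_i', \gamma_i') - \pi/2| < \theta(\e)$, so the hypothesis of Corollary \ref{flow theorem 1.75} holds with $m = 2$ and parameters of order $\e$. The corollary then yields a single smooth vector field which is gradient-like for both distance functions on the overlap. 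The passage to the lifted functions $\widetilde{d_p \circ \pi_i'}$ and $\widetilde{d_\gamma \circ \pi_i'}$ is then automatic: as in the proof of Lemma \ref{transversality}, any smooth vector field whose flow is gradient-like for $d_{p_i}$ (resp.\ $d_{\gamma_i}$) is gradient-like for $d_p \circ \pi_i'$ (resp.\ $d_\gamma \circ \pi_i'$) because $\pi_i'$ is a $\theta(\e)$-almost Lipschitz submersion onto the almost-regular domain $Q$.

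Finally I would glue the local fields via a partition of unity subordinate to an open refinement of the four prescribed regions, chosen so that the "outward" and "inward" prescriptions along $\gamma_i$ are strictly separated (this is possible because $2\de - \de' > \de + \de'$ when $\de' \ll \de$). A convex combination of smooth vector fields each gradient-like for a given distance function with a uniform positive lower bound on its directional derivative is again gradient-like, with a (possibly smaller) uniform positive constant; this is the same computation as in the proof of Corollary \ref{flow theorem 1.75}. The main obstacle I anticipate is the bookkeeping of uniformity of these lower bounds across all the overlap regions, which amounts to controlling the angles supplied by the $\e$-regular covering simultaneously; since every such angle is $\theta(\e)$-close to its ideal value, the choice of $\e$ fixed at the start of the section is enough to make all the required positive lower bounds hold uniformly.
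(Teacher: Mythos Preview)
Your proposal is correct and follows essentially the same strategy as the paper: build local gradient-like fields and glue with bump functions, then check transversality with the fibration $\pi_i'$. The paper's implementation differs only in packaging. Rather than invoking Theorem \ref{flow theorem 1.5} and Corollary \ref{flow theorem 1.75} on each of the four regions in the statement and then handling overlaps separately, the paper begins with three globally defined basic fields $\tilde V,\tilde V',\tilde W$ (gradient-like for $\tilde d_{p_i},\tilde d_{p_i'},\tilde d_{\gamma_i}$ respectively, coming from Theorem \ref{flow theorem}), decomposes $Q_i\setminus\mathrm{int}\,H_i$ into eight explicit pieces $A_1,\dots,A_8$ (your four regions together with their corner overlaps), and sets $\tilde X$ to be an explicit signed combination such as $h_3(\tilde V+\tilde W)+h_4\tilde V+h_5(\tilde V-\tilde W)+\cdots$. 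This replaces your partition-of-unity bookkeeping by a single formula: on each corner piece the sum or difference of two basic fields is automatically gradient-like for both distance functions, because of the near-orthogonality $|\angle_x(p_i',\gamma_i')-\pi/2|<\theta(\e)$ supplied by the $\e$-regular covering. One point to note is that the passage to $\widetilde{d_p\circ\pi_i'}$ and $\widetilde{d_\gamma\circ\pi_i'}$ is not literally ``automatic'' from Lemma \ref{transversality}, since on a corner piece $\tilde X$ is not close to a single gradient direction; the paper carries out the estimate $(d_p\circ\pi_i')'(X)>1/\sqrt 5-\theta(\e)$ explicitly on the representative region $A_3$ using Proposition \ref{almost linear}, which is the same mechanism as in Lemma \ref{transversality} but applied to the combined field.
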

\begin{proof}
Let us take gradient-like smooth vector fields $\tilde{V}$, $\tilde{V}'$ and $\tilde{W}$ for 
$\widetilde d_{p_i}$, $\widetilde  d_{p_i'}$ 
and $\widetilde d_{\gamma_i}$ on $N(Q_i - H_i)$.
We prepare a decomposition of $Q_i - \mathrm{int}\, H_i$ as follows:
\begin{equation} \label{A_a}
Q_i - \mathrm{int}\, H_i = \bigcup_{\alpha = 1}^8 A_\alpha.
\end{equation}
See Figure \ref{fig_Q_i}.
Here, we define
\begin{align*}
A_1 &:= \left( Q_i - \mathrm{int}\, H_i \right) 
\cap \left( B(\gamma_i, \de) - U(\{p_i, p_i'\}, r + \de') \right), \\
A_2 &:= \left( Q_i - \mathrm{int}\, H_i \right) - \left( U(\gamma_i, 2 \de - \de')  \cup U(\{p_i, p_i'\}, r + \de') \right), \\
A_3^\ast &:= 
B(p_i, r + \de') \cap B(\gamma_i, \de + \de'), \\
A_4^\ast &:= 
B(p_i, r + \de') \cap A(\gamma_i; \de + \de', 2 \de - \de'), \\
A_5^\ast &:= 
B(p_i, r + \de') - U(\gamma_i, 2 \de - \de'). 
\end{align*}
Similarly, we put 
\begin{align*}
A_6^\ast &:= 
B(p_i', r + \de') \cap B(\gamma_i, \de + \de'), \\
A_7^\ast &:= 
B(p_i', r + \de') \cap A(\gamma_i; \de + \de', 2 \de - \de'), \\
A_8^\ast &:= 
B(p_i', r + \de') - U(\gamma_i, 2 \de - \de'). 
\end{align*}
And we define $A_3, A_4, \cdots, A_8$ by
\begin{align*}
A_\alpha &:= A_\alpha^\ast \cap Q_i - \mathrm{int}\, H_i \text{ for } \alpha = 3, 4, \dots, 8.
\end{align*}

\begin{figure}[h]
\includegraphics[width=200pt]{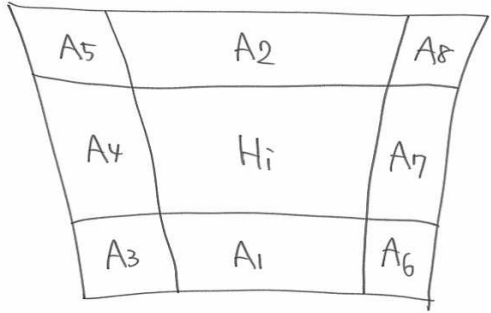}
\caption{: The decomposition of $Q_i$}
\label{fig_Q_i}
\end{figure}

We take smooth functions $h_\alpha$ ($\alpha = 1, \dots, 8$) on $N(Q_i - H_i)$ such that 
\begin{align*}
& 0 \leq h_\alpha \leq 1, \\
& h_\alpha \equiv 1 \text{ on } \tilde A_\alpha, \\
& \mathrm{supp}\, (h_\alpha) \subset B(\tilde A_\alpha, \de' / 100).
\end{align*}
We define a vector field $\tilde X$ as  
\[
\begin{aligned}
\tilde X &:= 
h_1 \tilde W - h_2 \tilde W \\
&\quad + h_3 ( \tilde V + \tilde W ) + h_4 \tilde V + h_5 (\tilde V - \tilde W) \\
&\quad + h_6 (\tilde V' + \tilde W ) + h_7 \tilde V' + h_8 (\tilde V' - \tilde W).
\end{aligned}
\]
Then, we can show that $\tilde X$ satisfies the conclusion of Lemma \ref{Q_i to H_i} as follows.
We will prove it only on $N(A_3)$.

We consider the integral flow $\tilde \Phi$ of $\tilde X$ 
and the pull-back $\Phi_t := f_i^{-1} \circ \tilde \Phi_t \circ f_i$.
It suffices to show that 
\begin{align}
\Phi &\pitchfork \dist_{p_i} \label{transversality eq1} \\
\Phi &\pitchfork \dist_{\gamma_i} \label{transversality eq2}\\
\Phi &\pitchfork \dist_p \circ \pi_i' \label{transversality eq3}\\
\Phi &\pitchfork \dist_\gamma \circ \pi_i' \label{transversality eq4}
\end{align}
on $N(A_3)$.
We can write 
\[
\tilde X = \alpha \tilde V + \beta \tilde W
\]
for smooth functions $\alpha, \beta \ge 0$ with $1 \le \alpha + \beta \le 3$, on $N(A_3)$.
By a direct calculus, we have 
\begin{align*}
|\tilde X| &\ge \sqrt 2 - \theta(\e), \\
\angle (\tilde X, \tilde V) < \gamma + \theta(\e), & \,\,
\angle (\tilde X, \tilde W) < \gamma + \theta(\e),
\end{align*}
on $N(A_3)$. 
Here, $\cos \gamma = 1 / \sqrt {10}$.


Let us set
\[
X := \left. \frac{d}{dt} \right|_{t = 0+} \Phi(x, t) \in T_x M_i.
\]
Then we have 
\[
X(x) = d f_i^{-1} (\tilde X (\tilde x)).
\]
And we set 
\[
V := d f_i^{-1} (\tilde V), \,\, W := d f_i^{-1} (\tilde W).
\]
Then we obtain 
\[
V \doteqdot \nabla d_{p_i}, \,\, W \doteqdot \nabla d_{\gamma_i}.
\]
Here, $A \doteqdot A'$ means $|A, A'| < \theta (\e)$.

Since $f_i$ is a $\theta(\e)$-almost isometry, we have
\[
|X| \doteqdot |\tilde X|, \,\,
|V, X| \doteqdot |\tilde V, \tilde X|, 
\,\, 
\angle (V, X) \doteqdot \angle (\tilde V, \tilde X). 
\]
Hence, we obtain
\begin{align*}
|X| &\ge \sqrt 2 - \theta(\e), \\
\angle (p_i', X) &\ge \angle (p_i', V) - \angle (V, X) > \pi - \gamma - \theta(\e). 
\end{align*}
Therefore, we have 
\[
(d_{p_i})' (X) = - |X| \cos \angle (p_i', X) \ge 1 / \sqrt 5 - \theta(\e).
\]
This implies \eqref{transversality eq1}.

For any fixed scale $(o)$, we set 
\[
d \pi_i' := (\exp_{\pi_i' (x)}^{(o)})^{-1} \circ (\pi_i')_x^{(o)} \circ \exp_x^{(o)}.
\] 
By Proposition \ref{almost linear}, we have
\begin{align*}
\angle (p', d \pi_i' (X)) < \gamma + \theta(\e), 
\,\,
|d \pi_i' (X)| > \sqrt 2 - \theta(\e). 
\end{align*}
Therefore, we obtain
\begin{align*}
(\dist_p \circ \pi_i')_x^{(o)} (\exp_x^{(o)}(X)) 
&= - |d \pi_i' (X) | \cos \angle_{\pi_i'(x)} (p', d \pi_i' (X) ) \\
&> 1 / \sqrt 5 - \theta(\e).
\end{align*}
Thus, we obtain \eqref{transversality eq3}.

In a similar way to above, we can prove \eqref{transversality eq2} and \eqref{transversality eq4}.
\end{proof}

By Lemma \ref{Q_i to H_i}, we obtain an isotopy $\varphi$ based on $\Phi$ satisfying from \eqref{isotopy_1} to \eqref{isotopy_3}.

Therefore, we conclude that if $F_i \approx D^2$ then $D_i \approx D^3$.

\subsection{The case that $F_i$ is a Mobius band} \label{F_i = Mo}
We consider the case that $F_i \approx \Mo$.
We prove that 
\begin{lemma} \label{L_i is regular}
$D_i$ is $(3, \theta(\e))$-strained.
\end{lemma}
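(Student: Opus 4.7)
The plan is to argue by contradiction via a rescaling argument, exploiting the Mobius hypothesis $F_i\approx \Mo$ to force a splitting of the rescaled limit. First I would reduce to points close to $\gamma_i$: for $x\in D_i$ whose image under the generalized Seifert fibration $\pi_i'$ lies in the annulus $A(\gamma;\de,2\de)\subset X$, this region contains no interior $(2,\e)$-singular points by the definition of the $\e$-regular covering in Definition \ref{def of regular covering}, so the admissible version of Fibration Theorem \ref{fibration theorem} applies. The $\theta(\e)$-Lipschitz submersion $\pi_i'$ then produces admissible $(3,\theta(\e))$-strainers at $x$ by lifting a $(2,\theta(\e))$-strainer from the double $\mathrm{dbl}(X)$. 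Hence it suffices to establish the strainer condition for $x_i\in D_i\cap B(\gamma_i,\de)$.

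Suppose for contradiction that along a subsequence there exist $x_i\in D_i\cap B(\gamma_i,\de)$ failing to be $(3,\theta(\e))$-strained. Passing to a further subsequence, $x_i$ converges to some $x_\infty\in\gamma$. I would then apply Theorem \ref{rescaling argument} to $(M_i,x_i)$: either $B(x_i,r)$ violates Assumption \ref{rescaling assumption}, in which case Stability Theorem \ref{stability theorem} presents it as a cone $K_1(\Sigma_{\hat x_i})$ over a two-dimensional link; or it satisfies the assumption, yielding scales $\de_i\to 0$ such that $((1/\de_i)M_i,x_i)$ converges to a three-dimensional open nonnegatively curved Alexandrov space $(Y,y_0)$ whose soul has dimension at most one. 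In the cone case, the $\Mo$-hypothesis forces $\Sigma_{\hat x_i}$ to have diameter larger than $\pi/2$, so Theorem \ref{diameter suspension theorem} makes it a suspension, providing a strainer axis directly; I therefore focus on the rescaled case.

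The heart of the argument is to show, using the hypothesis $F_i\approx\Mo$, that $Y$ splits isometrically as $\mathbb R\times L$ with $\dim L=2$. Unlike the $F_i\approx D^2$ case of Section \ref{F_i = D^2}, where the circle fibers of $\pi_i'$ collapse to a point along $\gamma_i$ and produce a singular interval fiber of the generalized Seifert structure, the non-orientable Mobius structure forces the orbits through $x_i$ to persist as non-collapsed twisted circles up to $\gamma_i$. I would set up an equivariant Gromov-Hausdorff limit in the spirit of Lemma \ref{ball is not a solid Klein bottle}, taking the natural two-fold cover that unfolds the $\Mo$-twist: the rescaled covers carry a free $\mathbb Z$-action whose quotient realizes the original $S^1$-orbit, and the equivariant limit of this action produces a translation axis, hence a line through $y_0$ in $Y$. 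By Splitting Theorem \ref{splitting theorem}, $Y\equiv\mathbb R\times L$ for some two-dimensional nonnegatively curved Alexandrov space $L$. Then $y_0$ admits a $(3,0)$-strainer: the $\mathbb R$-factor contributes one pair, and a $(2,0)$-strainer at the image of $y_0$ in $L$ (coming from the $(2,\e)$-strainer at $\pi_i'(x_i)$ in $X$, which descends to $L$) furnishes the remaining two pairs. Pulling this limit strainer back through the Gromov-Hausdorff approximations yields a $(3,\theta(\e))$-strainer at $x_i$ for large $i$, contradicting the assumed failure.

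The main obstacle I anticipate is the third step: rigorously extracting a line in the limit from the topological hypothesis $F_i\approx\Mo$. The subtlety is that, a priori, the rescaled fibers could converge to circles of bounded length rather than to lines; it is the non-orientable twisting encoded in $F_i\approx\Mo$ that, via the unfolding two-fold cover, forces the limit of the deck-transformation $\mathbb Z$-action to be a translation rather than a rotation. Verifying this is the analogue of the equivariant-limit analysis used in Section \ref{proof of 2-dim interior} to exclude the solid Klein bottle case, but run in the opposite direction: there the Mobius structure was ruled out, whereas here it is used as leverage to produce the desired split factor.
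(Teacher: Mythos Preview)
Your approach and the paper's diverge sharply, and the paper's route is considerably shorter.

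The paper does not rescale to a three-dimensional limit at all. Instead it enlarges $D_i$ slightly to a domain $L_i\approx \Mo\times I$ (this uses only Theorem \ref{Morse theory} and the fibration structure), passes to the orientable double cover $\hat L_i\approx (S^1\times I)\times I$, and studies the two-dimensional Gromov--Hausdorff limit $Y$ of $\hat L_i$. Since the covering map is a local isometry, $(3,\theta(\e))$-regularity of $\hat L_i$ implies that of $L_i$ and hence of $D_i$. The point is then purely to show $\partial Y=\emptyset$: if $Y$ had boundary, the argument of Assertion \ref{K_i is D^3} would force $\hat L_i$ to contain a piece $\approx D^2\times I$ or $\Mo\times I$ cut out by the relevant distance functions, incompatible with $\hat L_i\approx S^1\times I\times I$. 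With $\partial Y=\emptyset$ and $Y$ already $1$-strained, $Y$ is $(2,\theta(\e))$-strained, so $\hat L_i$ is $(3,\theta(\e))$-strained. No equivariant limits, no splitting theorem, no soul analysis.

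By contrast, your plan aims for a three-dimensional rescaled limit $Y$ and then tries to manufacture a line in $Y$ from the $\Mo$ hypothesis via an equivariant argument. You correctly identify this as the crux, and it is a genuine gap: the ``natural two-fold cover that unfolds the $\Mo$-twist'' is not the universal cover of anything with $\pi_1=\mathbb Z$ in the way Lemma \ref{ball is not a solid Klein bottle} uses, and there is no evident reason the limit of your $\mathbb Z$-action must be a translation rather than a rotation. Even granting a splitting $Y\equiv\mathbb R\times L$, you would still need $L$ to be $(2,\theta(\e))$-strained at the basepoint, which is essentially the same issue the paper settles directly for the two-dimensional limit of the double cover. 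In short, your route reintroduces at a higher level of complexity the very obstruction the paper's double-cover trick dissolves in one step.
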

\begin{proof}
We first define a domain $L_i$ similar to \eqref{definition of L_i}:
\begin{equation*}
L_i := A(p_i; r / 2, |p p'| -r/2) \cap B(\gamma_i, 3 \de ).
\end{equation*}
To prove Lemma \ref{L_i is regular}, it suffices to show that 
\begin{equation} \label{L_i eq1}
L_i \text{ is } (3, \theta(\e)) \text{-regular}.
\end{equation}

By Theorem \ref{Morse theory} and Lemma \ref{union lemma corollary}, we have $L_i \approx \Mo \times I$.
Let $\hat L_i$ be the orientable double cover which is homeomorphic to $(S^1 \times I) \times I$.
Since $\hat L_i$ is a covering space of $L_i$, $\hat L_i$ has the metric of Alexandrov space with $L_i \equiv \hat L_i / \langle \sigma \rangle$ for an isometric involution $\sigma$ on $\hat L_i$.

Since the projection $\hat L_i \to L_i$ is a local isometry, 
to prove \eqref{L_i eq1}, it suffices to show that
\begin{equation}\label{L_i eq2}
\hat L_i \text{ is } (3, \theta(\e)) \text{-regular}.
\end{equation}
$L_i$ converges to the following closed domain $L_\infty$ in $X$:
\[
L_\infty = A(p; r / 2, |pp'| - r /2) \cap B(\gamma, 3 \de).
\]
We may assume that $\hat L_i$ converges to some two-dimensional space $Y^2$.
Note that $L_\infty$ is $1$-strained, and hence $L_i$ and $\hat L_i$ is also $1$-strained. 
Therefore, 
\begin{equation} \label{Y is 1-strained}
\text{$Y$ is $1$-strained.}
\end{equation}
From the form of $L_\infty$, we have that $Y^2$ is a two-disk having no $\e$-singular points.
Indeed, if $Y$ has a boundary-point in the sense of Alexandrov space, 
then from an argument similar to the proof of Assertion \ref{K_i is D^3},
$\hat L_i$ contains a domain homeomorphic to $D^2 \times I$ or $\Mo \times I$. 
This is a contradiction, and hence $Y$ has no boundary.
By this and \eqref{Y is 1-strained}, $Y$ is $2$-strained. 
Therefore, $\hat L_i$ is $3$-strained, this is the assertion \eqref{L_i eq2}.
This implies \eqref{L_i eq1}, and completes the proof of Lemma \ref{L_i is regular}
\end{proof}

By Lemma \ref{L_i is regular} and Theorem \ref{flow theorem}, 
we have a Lipschitz flow $\Phi$ 
which is gradient-like for $\dist_{p_i}$ 
near $D_i$. 
We divide $D_i$ into $H_i$ and $K_i$ as follows.
\begin{align*}
K_i :=& \text{ the union of flow curves of $\Phi$} \\
                 & \text{ starting from $F_i$ in } B(\gamma_i, 2\de) - \mathrm{int}\, B_i'. \\
H_i :=&\,\, D_i - \mathrm{int}\, K_i.
\end{align*}
Note that the union of flow curves of $\Phi$ starting from $\partial F_i$ is contained in $A(\gamma_i; \de - \de'', \de + \de'')$ for some small $\de'' > 0$. 
By the construction, $K_i \approx \Mo \times I$.

We will prove that 
\begin{assertion} \label{H_i if F_i = Mo}
$H_i$ is homeomorphic to $S^1 \times D^2$ and 
the circle fiber structure on $H_i$ induced by one on $S^1 \times D^2$ is compatible to $\pi_i'$.
\end{assertion}
\begin{proof}
Let $Q_i$ be a closed neighborhood of $H_i$ obtained in a way similar to the construction of $Q_i$ in the subsection \ref{F_i = D^2}. 
We actually define 
\begin{align*}
Q &:= A(\gamma; \de - \de'', 2 \de + \de') - U(\{p, p'\}, r- 2\de'), \\
Q_i &:= {\pi_i'}^{-1}(Q). 
\end{align*}
We prepare a decomposition of $Q_i - \mathrm{int}\, H_i = \bigcup_{\alpha = 1}^8 A_\alpha$ in a way similar to \eqref{A_a} in Lemma \ref{Q_i to H_i}. 
Actually, we define $A_5$, $A_2$, $A_8$ as same as Lemma \ref{Q_i to H_i}, and other $A_\alpha$'s are defined by
\begin{align*}
A_1 &:= \left( Q_i - \mathrm{int}\, H_i \right) \cap \left( B(\gamma_i, \de + \de'') - U(\{p_i, p_i'\}, r + \de') \right), \\
A_3 &:= (Q_i - \mathrm{int}\, H_i) \cap B(p_i, r + \de') \cap B(\gamma_i, \de + \de''), \\
A_6 &:= (Q_i - \mathrm{int}\, H_i) \cap B(p_i', r + \de') \cap B(\gamma_i, \de + \de''), 
\\
A_4 &:= (Q_i - \mathrm{int}\, (H_i \cup A_3 \cup A_5)) \cap B(p_i, r + \de'), \\
A_7 &:= (Q_i - \mathrm{int}\, (H_i \cup A_6 \cup A_8)) \cap B(p_i', r + \de').
\end{align*}
Since $\nabla \dist_{p_i}$ and $\nabla \dist_{\gamma_i}$ are almost perpendicular to each other on $Q_i - \mathrm{int}\, H_i$, 
we can obtain a flow $\Phi$ which has nice transversality as Lemma \ref{Q_i to H_i}. 
And we can construct an isotopy from the identity to some homeomorphism which deforms $Q_i$ to $H_i$ inside $Q_i$.
Therefore, 
we obtain a circle fibration of $H_i$ over $Q$ which is compatible to the generalized Seifert fibration ${\pi_i'}$. 
This completes the proof of Assertion \ref{H_i if F_i = Mo}.
\end{proof}

Therefore, we conclude that if $F_i \approx \Mo$ then $D_i \approx \Mo \times I$.

\begin{proof}[Proof of Theorem \ref{2-dim boundary}] 
It remain to show that each component $M_{i, C}''$ of $M_i''$ has the structure of a generalized solid torus or generalized solid Klein bottles. 
It is clear from Sections \ref{F_i = D^2} and \ref{F_i = Mo}.
\end{proof}

\subsection{Proof of Corollary \ref{2-dim boundary corollary}}
To prove Corollary \ref{2-dim boundary corollary}, we show elementary lemmas. 
We define the mapping class group $\mathrm{MCG}(F)$ of a topological space $F$ 
to be the set of all isotopy classes of homeomorphisms of $F$.

\begin{lemma} \label{isotopy class}
Let $F$ be a topological space. 
For any element $\gamma$ of the mapping class group $\mathrm{MCG}(F)$, we fix a homeomorphism $\varphi_\gamma : F \to F$. 
such that $\varphi_\gamma \in \gamma$.
Let us set $B = F \times [0,1]$ and $\pi : B \to [0,1]$ a projection.
For any homeomorphisms $f_i : F \to \pi^{-1}(i)$, for $i = 0, 1$, there exist $\gamma \in \mathrm{MCG}(F)$ and a homeomorphism $h : F \times [0,1] \to B$ respecting $\pi$ such that, for every $x \in F$, $h(x,0) = f_0(x)$ and $h(x,1) = f_1 \circ \varphi_\gamma (x)$. 
\end{lemma}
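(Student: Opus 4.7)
The plan is to reduce the statement to the standard fact that an isotopy of $F$ gives rise to a level-preserving self-homeomorphism of $F \times [0,1]$, after choosing $\gamma$ so that the two boundary homeomorphisms become isotopic.

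First I would identify $\pi^{-1}(i)$ with $F$ through the obvious map $x \mapsto (x,i)$, which lets us view both $f_0$ and $f_1$ as homeomorphisms of $F$ onto itself. The natural choice is then $\gamma := [f_1^{-1} \circ f_0] \in \mathrm{MCG}(F)$. With this choice, $\varphi_\gamma$ is a fixed representative that is isotopic to $f_1^{-1} \circ f_0$, so $f_1 \circ \varphi_\gamma$ is isotopic to $f_0$ as a self-homeomorphism of $F$; explicitly, composing any isotopy from $\varphi_\gamma$ to $f_1^{-1} \circ f_0$ on the left with $f_1$ produces an isotopy $\{\psi_t\}_{t \in [0,1]}$ of homeomorphisms of $F$ with $\psi_0 = f_0$ and $\psi_1 = f_1 \circ \varphi_\gamma$.

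Next I would define
\[
h : F \times [0,1] \to F \times [0,1], \qquad h(x,t) := (\psi_t(x), t).
\]
By construction $h$ respects $\pi$ and has the prescribed values at $t=0,1$. Continuity of $h$ is immediate from joint continuity of the isotopy $(x,t) \mapsto \psi_t(x)$. For bijectivity and continuity of the inverse, one notes that $h^{-1}(y,t) = (\psi_t^{-1}(y), t)$ and that $t \mapsto \psi_t^{-1}$ is again continuous in the compact-open topology; this is standard when $F$ is compact Hausdorff, which covers every case in which the lemma is used in this paper (closed surfaces such as $S^2$, $P^2$, $T^2$, $K^2$, etc.).

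The only potentially delicate point is the continuity of $t \mapsto \psi_t^{-1}$, but since we apply this only to compact manifolds (so the compact-open topology on $\mathrm{Homeo}(F)$ is a topological group), this is routine and not a genuine obstacle. Consequently $h$ is the required level-preserving homeomorphism, completing the proof.
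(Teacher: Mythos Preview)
Your proposal is correct and follows essentially the same approach as the paper: choose $\gamma = [f_1^{-1}\circ f_0]$ (after identifying $\pi^{-1}(i)$ with $F$), use the resulting isotopy to build $h(x,t)=(\psi_t(x),t)$. One small slip: composing an isotopy from $\varphi_\gamma$ to $f_1^{-1}\circ f_0$ on the left with $f_1$ gives $\psi_0 = f_1\circ\varphi_\gamma$ and $\psi_1 = f_0$, not the other way around; just reverse the parametrization (or start from an isotopy $f_1^{-1}\circ f_0 \to \varphi_\gamma$). Your remarks on continuity of $t\mapsto \psi_t^{-1}$ are a welcome clarification the paper leaves implicit.
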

\begin{proof}
Let us set $F_t = \pi^{-1}(t) = F \times \{t\}$.
Let us define the translation $\chi_t : F_0 \to F_t$ by $\chi_t(x,0) = (x,t)$, and set
a homeomorphism $\tilde f_t = \chi_t \circ f_0 : F \to F_t$.
Note that $\tilde f_0 = f_0$.
Let us take an element $\gamma \in \mathrm{MCG}(F)$ represented by a homeomorphism $f_1^{-1} \circ \tilde f_1$ of $F$.
Then, there is a homeomorphism $g_t : F \to F$, for $0 \le t \le 1$, such that 
\[
g_0 = \mathrm{id} \text{ and } \tilde f_1 \circ g_1 = f_1 \circ \varphi_\gamma.
\]
Therefore, setting $h_t = \tilde f_t \circ g_t : F \to F_t$, we obtain 
\[
h_0 = f_0 \text{ and } h_1 = f_1 \circ \varphi_\gamma.
\]
Hence, defining $h : F \times [0,1] \to B$ by $h(x,t) = h_t(x)$, $h$ satisfies the desired condition.
\end{proof}


\begin{lemma} \label{gen sol lemma}
Let $Y$ be a generalized solid torus or a generalized solid Klein bottle. 
Let $\pi : Y \to S^1$ be a projection as \eqref{projection of gen solid}.
Then, there is a continuous surjection 
\[
\eta : Y \to [0,1]
\]
such that $\eta^{-1}(1) = \partial Y$ and, setting 
\[
\Phi = (\pi, \eta) : Y \to S^1 \times [0,1], 
\]
$\Phi$ is an $S^1$-bundle over $S^1 \times (0, 1]$.
Further, for every $x \in S^1$, $\Phi^{-1}(x,0)$ is a one point set or a circle, and the homeomorphic type of the fiber $\Phi^{-1}(x,0)$ changes if and only if that of $\pi^{-1}(x)$ changes.
\end{lemma}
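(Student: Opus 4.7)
The plan is to define $\eta$ as a normalized radial coordinate in each fiber of $\pi$: vanishing on the ``core'' of the fiber (a point if the fiber is a disk, a circle if the fiber is a Mobius band) and equal to $1$ on the boundary circle $\partial(\pi^{-1}(x))$.

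First I would define $\eta$ on the model space $K_1(P^2)$, realized as $\bigcup_{t \in [-1,1]} D(t) \subset \mathbb{R}^3/\Gamma$ with the projection $\pi$ in \eqref{projection of K_1}. Set
\[
\eta([x,y,z]) = \begin{cases} |z| & \text{if } t \ge 0, \\ \dfrac{|z|-|t|}{\sqrt{1+t^2}-|t|} & \text{if } t < 0, \end{cases}
\]
where $t \in [-1,1]$ is determined by $[x,y,z] \in D(t)$. This function is $\Gamma$-invariant (since $(x,y,z) \mapsto -(x,y,z)$ preserves $|z|$), continuous across $t=0$, vanishes exactly on the core of $D(t)$ for each $t$, equals $1$ on $\partial K_1(P^2)$, and its level sets $\{\eta=c\} \cap D(t)$ with $c \in (0,1]$ are single circles (obtained by identifying two antipodal circles via $\Gamma$).

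Next I would extend $\eta$ globally to $Y = \bigcup B_j$. Using the homeomorphisms $B_j \cong K_1(P^2)$ given in the construction, each $B_j$ inherits such an $\eta$. The gluing $\phi_j : \pi_j^{-1}(j) \to \pi_{j+1}^{-1}(j)$ identifies two copies of the same topological type (two disks or two Mobius bands). Since any self-homeomorphism of $D^2$ or of $\Mo$ is isotopic through homeomorphisms to one preserving the chosen radial foliation $\{\eta = c\}$, we may replace each $\phi_j$ by an isotopic representative that preserves $\eta$; this modification does not alter the topological type of $Y$. With this compatibility arranged, $\eta$ descends to a continuous surjection $\eta : Y \to [0,1]$. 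For the type-$0$ cases (where $Y$ is $S^1 \times D^2$, $S^1 \tilde{\times} D^2$, $S^1 \times \Mo$, or $K^2 \hat{\times} I$), no gluing is needed and $\eta$ is simply the normalized radial coordinate on the fiber.

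Finally, setting $\Phi := (\pi, \eta)$, I would verify the required properties directly: $\eta^{-1}(1) = \partial Y$ holds by construction. For each $(x,c) \in S^1 \times (0,1]$ the fiber $\Phi^{-1}(x,c)$ is a single circle, namely the parallel circle at radial level $c$ inside $\pi^{-1}(x)$; the local trivializations transferred from the explicit formulas on each $B_j$ then show that $\Phi$ is an $S^1$-bundle over $S^1 \times (0,1]$. The fiber $\Phi^{-1}(x,0)$ is the core of $\pi^{-1}(x)$: a point when $\pi^{-1}(x) \approx D^2$ and a circle when $\pi^{-1}(x) \approx \Mo$. Hence the transition locus where $\Phi^{-1}(x,0)$ changes homeomorphism type coincides with that of $\pi^{-1}(x)$. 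The one nontrivial ingredient is the gluing compatibility of $\eta$, which is handled via the standard fact that self-homeomorphisms of $D^2$ or $\Mo$ can be isotoped to preserve any prescribed radial foliation.
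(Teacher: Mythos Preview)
Your proposal is correct and follows essentially the same approach as the paper: define a radial function on the model $K_1(P^2)$ (the paper uses $\theta(x,y,z)=z^2$ for $t>0$ and $x^2+y^2$ for $t\le 0$, slightly different from your formula but serving the same purpose), transport it to the pieces of $Y$, and patch using the fact that every self-homeomorphism of $D^2$ or $\Mo$ is isotopic to one preserving the radial foliation (the paper phrases this via $\mathrm{MCG}(D^2)\cong\mathrm{MCG}(\Mo)\cong\mathbb{Z}_2$ with explicit reflection representatives $r$ satisfying $\theta'\circ r=\theta'$ and $\theta''\circ r=\theta''$). The only organizational difference is that you work directly with the original pieces $B_j$ and adjust the gluing maps $\phi_j$, whereas the paper refines the decomposition into small neighborhoods $I_k$ of the fiber-transition points and product regions $J_k$ in between, invoking its Lemma~\ref{isotopy class} to interpolate over each $J_k$; the underlying idea is identical.
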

\begin{proof}
Let us take ordered points $t_1, t_2, \dots, t_{2N-1}, t_{2 N} \in S^1$ changing the fiber of $\pi$.
Then, for a small $\varepsilon > 0$, setting $I_k = [t_k - \varepsilon, t_k + \varepsilon] \subset S^1$, $\pi^{-1} (I_k)$ is homeomorphic to $K_1(P^2)$.

We regard $K_1(P^2) = \bigcup_{t \in [-1,1]} D(t)$ as Definition \ref{generalized solid}. 
Let us define a continuous surjection $\theta : K_1(P^2) \to [0,1]$ by 
\[
\theta (x,y,z) = \left\{
\begin{aligned}
& z^2 & \text{if } t > 0 \\
& x^2 + y^2 & \text{if } t \le 0
\end{aligned}
\right.
\]
This is well-defined.
($\theta$ is like the square of the distance function from the center of each surface $D(t)$. If $t \le 0$, then the center means a point $D(t) \cap \{x^2+y^2 = 0\}$ of disk $D(t)$, and if $t > 0$, then the center means a centric circle $D(t) \cap \{z = 0\}$ of a Mobius band $D(t)$).
Let us fix a homeomorphism $\varphi_k : K_1(P^2) \to \pi^{-1}(I_k)$ respecting $\pi$. 
We define a continuous surjection
\[
\eta_k = \theta \circ \varphi_k^{-1} : \pi^{-1}(I_k) \to [0,1].
\]
Thus, a continuous surjection from the disjoint union of $\pi^{-1}(I_k)$'s to $[0,1]$, 
is defined, and satisfies the desired property.

It remain to show that the domain of $\eta_k$'s can extend to the whole $Y$, satisfying the desired property.
Let $J_k := [t_k + \varepsilon, t_{k+1} - \varepsilon] \subset S^1$ be the interval between $I_k$ and $I_{k+1}$.
Let us set $F_k = \pi^{-1}(t_k + \varepsilon)$ which is homeomorphic to $D^2$ or $\Mo$.
Let $G_k = \pi^{-1}(t_{k+1} - \varepsilon)$ which is homeomorphic to $F_k$.

Suppose that $F_k \approx D^2$. 
We recall that $D(-1) \subset \partial K_1(P^2)$ is defined as
\[
\{(x,y,z) \in \mathbb R^3 \,|\, x^2 + y^2 - z^2 = -1, x^2 + y^2 \le 1 \} / (x,y,z) \sim -(x,y,z).
\]
We identify this as $D^2 = \{(x,y) \,|\, x^2 + y^2 \le 1\}$ by a map 
\[
D(-1) \ni [x,y,z] \mapsto (x,y) \in D^2.
\]
Then, via $\varphi_k$, the map $\eta_k : F_k \to [0,1]$ can be identified as the map 
\[
\theta' : D^2 \to [0,1];\,\, (x,y) \mapsto x^2 + y^2.
\]
Namely, $\eta_k = \theta' \circ \varphi_k^{-1}$.
Similarly, $\eta_{k+1} = \theta' \circ \varphi_{k+1}^{-1}$.
Here, $\varphi_k$ and $\varphi_{k+1}$ are restricted on $D^2 \subset \partial K_1(P^2)$.
Let $r : D^2 \to D^2 ; (x,y) \mapsto (x, -y)$ be the reflection with respect to the $x$-axis.
We note that $\theta' \circ r = \theta'$ and $r$ represents a unique non-trivial element of the mapping class group $\mathrm{MCG}(D^2)\, (\cong \mathbb Z_2)$ of $D^2$.
By using Lemma \ref{isotopy class}, we obtain a homeomorphism 
\[
\varphi_k' : D^2 \times J_k \to \pi^{-1}(J_k),
\]
respecting projections $\pi$ and $D^2 \times J_k \to J_k$ such that $\varphi_k'= \varphi_k$ on $F_k$ and either
\[
\begin{aligned}
&\varphi_k'= \varphi_{k+1} \hspace{17pt} \text{ on } G_k, \text{ or} \\
&\varphi_k'= \varphi_{k+1} \circ r \text{ on } G_k.
\end{aligned}
\]
Hence, 
\[
\eta_k' = \theta' \circ (\varphi_k')^{-1} : \pi^{-1}(J_k) \to [0,1]
\] 
satisfies
\begin{align*}
\eta_k' = \eta_k \text{ on } F_k \text{ and } \eta_k' = \eta_{k+1} \text{ on } G_k.
\end{align*}
Therefore, if the fiber of $\pi$ on $J_k$ is a disk, then $\eta_k$ and $\eta_{k+1}$ extend to the map $\eta_k'$ on $\pi^{-1}(J_k)$, satisfying the desired property. 

Next, we assume that $F_k \approx \Mo$.
We recall that $D(1)$ is 
\[
\{(x,y,z) \,|\, x^2 + y^2 - x^2 = 1, |z| \le 1\} / (x,y,z) \sim -(x,y,z).
\]
Let us identify $D(1) \subset \partial K_1(P^2)$ as $\Mo$ defined by
\[
\Mo = S^1 \times [-1,1] / (x,s) \sim (-x,-s)
\]
via a map
\[
D(1) \ni [x,y,z] \mapsto \left[ \frac{(x,y)}{\sqrt{x^2 + y^2}}, z \right] \in \Mo.
\]
Then, $\eta_k$ is identified as a projection
\[
\theta'' : \Mo \ni [x,s] \mapsto s^2 \in [0,1],
\]
via $\varphi_k$. Namely, $\eta_k = \theta'' \circ \varphi_k^{-1}$ on $F_k$.
And we can see $\eta_{k+1} = \theta'' \circ \varphi_{k+1}^{-1}$ on $G_k$.
Let us fix a homeomorphism $r : \Mo \to \Mo$ defined by 
$r[x,s] = [\bar x,s]$, where $\bar x$ is the complex conjugate of $x$ in $S^1 \subset \mathbb C$.
Then, $r$ reverses the orientation of $\partial \Mo$.
Hence, $r$ represents a unique non-trivial element of the mapping class group $\mathrm{MCG}(\Mo) \cong \mathbb Z_2$. 
And, we note that $\theta'' \circ r = \theta''$.
By Lemma \ref{isotopy class}, there exists a homeomorphism 
\[
\varphi_k'' : \Mo \times J_k \to \pi^{-1}(J_k),
\]
respecting projections $\pi$ and $\Mo \times J_k \to J_k$, such that $\varphi_k'' = \varphi_k$ on $F_k$ and either 
\begin{align*}
&\varphi_k'' = \varphi_{k+1} \hspace{16.5pt} \text{ on } G_k, \text{ or} \\
&\varphi_k'' = \varphi_{k+1} \circ r \text{ on } G_k.
\end{align*}
Since $\theta'' = \theta'' \circ r$, we obtain a continuous surjection
\[
\eta_k'' = \theta'' \circ (\varphi_k'')^{-1} : \pi^{-1}(J_k) \to [0,1]
\]
satisfying 
\[
\eta_k'' = \eta_k \text{ on } F_k \text{ and }
\eta_k'' = \eta_{k+1} \text{ on } G_k.
\]


By summarizing above, we obtain a continuous surjection 
\[
\eta : Y \to [0,1]
\]
satisfying the desired condition.
\end{proof}

\begin{proof}[Proof of Corollary \ref{2-dim boundary corollary}]
We may assume that $X$ has only one boundary component $\partial X$.
By Theorem \ref{2-dim boundary}, there are decompositions
\[
M_i = M_i' \cup M_i'' \text{ and } X = X' \cup X''
\]
satisfying the following.
\begin{itemize}
\item[(1)] $X''$ is a collar neighborhood of $\partial X$. We fix a homeomorphism $\varphi : \partial X \times [0,1] \to X''$ such that $\varphi (\partial X \times \{0\}) = \partial X$ and $\varphi (\partial X \times \{1\}) = \partial X'$;
\item[(2)] $M_i'$ is a generalized Seifert fiber space over $X' \approx X$. 
We fix a fibration $f_i' : M_i' \to X'$ of it;
\item[(3)] $M_i''$ is a generalized solid torus or a generalized solid Klein bottle. We fix a projection $\pi_i : M_i'' \to \partial X \approx S^1$ as \eqref{projection of gen solid} in Definition \ref{generalized solid}.
\item[(4)] The maps $f_i'$, $\pi_i$ and $\varphi$ are compatible in the following sense. 
For any $x \in \partial X$, 
\begin{align*} 
\pi_i^{-1}(x) \cap \partial M_i'' = (f_i')^{-1} (\varphi (x,1)) 
\end{align*}
holds.
\end{itemize}
By Lemma \ref{gen sol lemma}, we obtain a continuous surjection 
\[
\eta_i : M_i'' \to [0,1]
\]
such that 
\begin{itemize}
\item[(5)] $\eta_i^{-1}(1) = \partial M_i''$\,; 
\item[(6)] Setting $g_i = (\pi_i, \eta_i) : M_i'' \to \partial X \times [0,1]$, the restriction of $g_i$ on $g_i^{-1}\left(\partial X \times (0, 1]\right)$ is an $S^1$-bundle;
\item[(7)] For every $x \in \partial X$, $g_i^{-1}(x,0)$ is one point set or a circle. 
And, the fiber of $g_i$ changes at $x \in \partial X$ if and only if the fiber of $\pi_i$ changes at $x$.
\end{itemize}
Then, the map 
\[
f_i'' = \varphi \circ g_i : M_i'' \to X'', 
\]
satisfies 
\[
f_i' = f_i'' \text{ on } M_i' \cap M_i''.
\]
Therefore, the gluing $f_i : M_i \to X$ of maps $f_i'$ and $f_i''$ defined by 
\[
f_i = \left\{
\begin{aligned}
& f_i' \text{ on } M_i' \\
& f_i'' \text{ on } M_i''
\end{aligned}
\right.
\]
is well-defined. 
The map $f_i$ satisfies the topological condition desired in Corollary \ref{2-dim boundary corollary}. 

From the proof of Theorem \ref{2-dim boundary} and the construction of $X''$, for any $\varepsilon > 0$ and large $i$, we can take $\pi_i : M_i'' \to \partial X$ as an $\varepsilon$-approximation and $\varphi : \partial X \times [0,1] \to X''$ satisfying
\[
\big| \left|\varphi(x,t), \varphi(x',t')\right| - |x,x'| \big| < \varepsilon
\]
for any $x, x' \in \partial X$ and $t, t' \in [0,1]$.
Then, one can show that $f_i$ is an approximation.
\end{proof}




\section{The case that $X$ is a circle}
\label{proof of 1-dim circle}
Let $\{M_i^3\}$ be a sequence of closed three-dimensional 
Alexandrov spaces with curvature $\geq -1$ and uniformly bounded diameter.
Suppose that $M_i$ converges to a circle $X$.
We will prove Theorem \ref{1-dim circle}.

\begin{proof}[Proof of Theorem \ref{1-dim circle}]
We first show 
\begin{lemma}\label{M_i is manifold}
For large $i$, $\Sigma_{x} \approx S^2$ for all $x \in M_i$.
In particular, $M_i$ is a topological manifold. 
\end{lemma}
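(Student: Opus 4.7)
My plan is to prove the statement by contradiction, leveraging Proposition \ref{regular collapsing} together with the fact that the base space $X = S^1$ is a one-dimensional manifold without boundary. The key observation is that each point of $X$ has a space of directions of diameter exactly $\pi$, so the hypothesis of Proposition \ref{regular collapsing} is automatically satisfied at every limit point.

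Suppose the lemma fails. Then, after passing to a subsequence, there exist points $x_i \in M_i$ with $\Sigma_{x_i}$ not homeomorphic to $S^2$. Since $X$ is compact, we may pass to a further subsequence so that $(M_i, x_i) \to (X, p)$ in the pointed Gromov--Hausdorff topology for some $p \in X$. Because $X \equiv S^1$ is a Riemannian one-manifold without boundary, $\Sigma_p X$ consists of two antipodal directions at distance $\pi$, so $\diam \Sigma_p = \pi > \pi/2$. Applying Proposition \ref{regular collapsing} to the sequence $(M_i, x_i)$, we conclude that for large $i$, $\Sigma_{x_i}$ is homeomorphic to a topological suspension $\Sigma(\Lambda_i)$ over a one-dimensional Alexandrov space $\Lambda_i$ of curvature $\geq 1$.

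Now I use the hypothesis that $M_i$ has no boundary: since $x_i \in \mathrm{int}\, M_i$, the space $\Sigma_{x_i}$ has empty boundary, and consequently the suspension factor $\Lambda_i$ has empty boundary as well. A one-dimensional Alexandrov space with curvature $\geq 1$ and no boundary is necessarily a circle (of length $\leq 2\pi$). Therefore $\Sigma_{x_i} \approx \Sigma(S^1) \approx S^2$, contradicting the choice of $x_i$.

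The preceding argument shows that, for all large $i$, every $x \in M_i$ satisfies $\Sigma_x \approx S^2$. By Stability Theorem \ref{stability theorem}, each point of $M_i$ has a neighborhood homeomorphic to the cone $K(\Sigma_x) = K(S^2) \approx \mathbb{R}^3$, so $M_i$ is a topological three-manifold. I do not expect a serious obstacle here; the only subtle point is to verify that the hypotheses of Proposition \ref{regular collapsing} transfer uniformly to any sequence of points chosen by contradiction, and this is handled by a standard subsequence argument using compactness of $X$.
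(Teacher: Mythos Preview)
Your proof is correct and follows essentially the same approach as the paper: both use Proposition~\ref{regular collapsing} (together with the fact that every point of $X=S^1$ has $\diam\Sigma_p=\pi>\pi/2$) to get a suspension structure on $\Sigma_{x_i}$, then invoke $\partial M_i=\emptyset$ to force the suspension factor to be a circle, and finally apply the Stability Theorem. The only difference is cosmetic: you frame it as a contradiction with an explicit subsequence, while the paper states it directly.
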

\begin{proof}
Indeed, by Proposition \ref{regular collapsing}, we may assume that $\diam \Sigma_{x_i}$ is almost $\pi$ for each $x_i \in M_i$. 
It follows from Theorem \ref{diameter suspension theorem} and $\partial M_i = \emptyset$ that $\Sigma_{x_i}$ is homeomorphic to the suspension over a circle, which is 2-sphere. 
Therefore, by Theorem \ref{stability theorem}, $M_i$ is a topological manifold.
\end{proof}

By taking a rescaling, we may assume that $M_i$ converges to the unit circle $X = S^1 = \{e^{i \theta} \in \mathbb C \,|\, \theta \in [0, 2\pi] \}$. 
We take points $p^+ := 1$ and $p^- := -1 \in S^1$, and prepare points $p_i^+$ and $p_i^- \in M_i$ converging to $p^+$ and $p^-$, respectively.
Let us set $q^+ := \sqrt {-1}$ and $q^- := - \sqrt{ -1} \in S^1$, 
and take $q_i^+, q_i^- \in M_i$ such that $q_i^\pm \to q^\pm$.

Let us take $\de_i$ the diameter of a part of $\partial B(p_i, \pi /2)$ 
which is GH-close to $q^+ \in S^1$.
We consider metric balls
\[
B_i^+ := B(p_i^+, \ell_i - \de_i ) \text{ and } 
B_i^- := B(p_i^-, \ell_i - \de_i ).
\]
Here, $\ell_i = |p_i^+, p_i^-| / 2$.
By the construction, $B_i^+ \cap B_i^- = \emptyset$.
We prove the next
\begin{lemma}\label{B_i is product}
$B_i^\pm$ is homeomorphic to $F_i^\pm \times [0, 1]$. 
Here, $F_i^\pm$ is homeomorphic to $S^2$, $P^2$, $T^2$ or $K^2$.
\end{lemma}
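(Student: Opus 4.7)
The plan is to identify each $B_i^\pm$ as a topological product by applying Perelman's Morse theory (Theorem \ref{Morse theory}) to the distance function from one boundary component. I describe $B_i^+$; the case $B_i^-$ is the same.

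First I would describe $\partial B_i^+$. Since $M_i$ is a topological $3$-manifold (Lemma \ref{M_i is manifold}) and $\de_i$ is the diameter of the near-$q^+$ component of the equatorial metric sphere $\partial B(p_i, \pi/2)$, the radius $\ell_i - \de_i$ lies in the regular regime of $d_{p_i^+}$, so by Theorem \ref{Morse theory}, $\partial B_i^+$ is a topological $2$-manifold. The Gromov--Hausdorff convergence $M_i \to S^1$ forces this manifold to split into exactly two connected components $F_{i,1}, F_{i,2}$ converging to $q^+, q^-$ respectively, each of diameter $O(\de_i) \to 0$. The generalized Margulis Lemma (Theorem \ref{almost nilpotent}) then implies $\pi_1(F_{i,j})$ is almost nilpotent, so each $F_{i,j}$ must be one of $S^2$, $P^2$, $T^2$, $K^2$.

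Next I would show that $\rho_i := d_{F_{i,1}} : B_i^+ \to [0, \pi - 2\de_i + o(1)]$ is $(c, \e)$-regular in Perelman's sense on the open set $V_i := \{x \in B_i^+ : d(x, F_{i,1}) > C\de_i\}$, for a suitable large constant $C$. The crucial estimate is that $\mathrm{diam}\,F_{i,1} = O(\de_i)$, so by angle comparison (Lemma \ref{lemma 1.9}) the direction set $(F_{i,1})'_x \subset \Sigma_x M_i$ has angular diameter $O(1/C)$, concentrated around the direction to the nearest point of $F_{i,1}$; hence taking $w$ opposite this direction yields $\angle(w', (F_{i,1})'_x) > \pi/2 + c$. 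By Theorem \ref{Morse theory}, $\rho_i|_{V_i}$ is a proper topological submersion, hence a topological fiber bundle over its image interval, which is contractible, so the bundle is trivial: $V_i \approx F_i^+ \times J$ for an interval $J$ and a closed $2$-manifold $F_i^+$ (homeomorphic to the parallel surfaces of $F_{i,1}$, hence to $F_{i,1}$ itself).

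Finally I would glue this interior bundle with the boundary collars. Each $F_{i,j} \subset \partial B_i^+$ is a bicollared closed surface in the $3$-manifold $M_i$, giving $B_i^+ \cap B(F_{i,j}, C\de_i) \approx F_{i,j} \times [0, C\de_i]$; the trivializations match at the common fibers, producing the required decomposition $B_i^+ \approx F_i^+ \times [0, 1]$ with $F_i^+ \in \{S^2, P^2, T^2, K^2\}$. The main obstacle I anticipate is the uniform verification of Perelman regularity of $\rho_i$ on $V_i$: one must ensure concentration of $(F_{i,1})'_x$ around the nearest-point direction uniformly for all $x \in V_i$, including points near $p_i^+$ where $d_{p_i^+}$ itself has topology-changing critical values. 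The key point is that this concentration depends only on the ratio $\mathrm{diam}\,F_{i,1}/d(x,F_{i,1})$, and the lower bound $d(x, F_{i,1}) > C\de_i$ on $V_i$ makes this ratio uniformly small independent of where $x$ sits inside $B_i^+$.
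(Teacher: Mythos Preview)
Your strategy is quite different from the paper's: rather than rescaling around $p_i^+$ and invoking the splitting theorem, you attempt a direct Morse--theoretic argument on $d_{F_{i,1}}$.

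The central gap is the regularity step. You appeal to Lemma~\ref{lemma 1.9} to conclude that $(F_{i,1})'_x$ has angular diameter $O(1/C)$, but that lemma applies only at $(n,\delta)$-strained points, and points of $M_i$ collapsing to a one-dimensional limit are in general only $(1,\theta(i))$-strained, not $(3,\delta)$-strained. Without a strainer, Alexandrov comparison gives only the lower bound $\angle \ge \wangle$, so a small ratio $\diam F_{i,1}/d(x,F_{i,1})$ does not force the direction set to concentrate; hence ``take $w$ opposite this direction'' is not justified. The same difficulty undermines your opening claim that $\partial B_i^+$ is a $2$-manifold (this already needs regularity of $d_{p_i^+}$ at level $\ell_i-\de_i$) and your bicollar step at the end. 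One can in fact obtain Perelman regularity of $d_{F_{i,1}}$ on most of $B_i^+$ by a different route---exhibit a global witness such as $p_i^-$ and verify $\wangle\, a\,x\,p_i^- > \pi/2 + c$ directly from the circle limit---but this works only where both $d(x,F_{i,1})$ and $d(x,F_{i,2})$ are bounded below by a fixed positive constant, and filling the remaining collars without circularity is exactly the scale-matching problem that the paper's method is designed to handle. The paper instead observes that $\partial B_i$ is disconnected, applies the rescaling argument (Theorem~\ref{rescaling argument}) to pass to a nonnegatively curved limit $Y$ of dimension $\ge 2$, uses $\wangle q_i^+ p_i q_i^- \to \pi$ to produce a line and split $Y \cong S \times \mathbb{R}$, and then reads off $B_i \approx F_i \times I$ from stability when $\dim S = 2$ or from Theorems~\ref{fibration theorem} and~\ref{2-dim boundary} when $\dim S = 1$.
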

\begin{proof}
We will prove this assertion only for $B_i^+$.
Let us set $B_i := B_i^+$ and $p_i := p_i^+$.

By Lemma \ref{M_i is manifold}, $M_i$ is a manifold. 
We will implicitly use this fact throughout the following argument.

Remark that 
\begin{equation} \label{disconnected boundary}
\partial B_i \text{ is disconnected.}
\end{equation}

If $B_i$ does not satisfy Assumption \ref{rescaling assumption}, then there exists a sequence $\hat{p}_i \in M_i$, where we may assume that $\hat p_i = p_i$, and $\partial B_i \approx \Sigma_{p_i} \approx S^2$. 
Hence $\partial B_i$ is connected. 
This is a contradiction.

Therefore, $B_i$ must satisfy Assumption \ref{rescaling assumption}. 
Then, by Theorem \ref{rescaling argument}, there exists $\e_i \to 0$ and points $\hat p_i \in M_i$, where we may assume that $\hat p_i = p_i$, such that a limit $(Y, y_0) := \lim_{i \to \infty} (\frac{1}{\e_i}B_i, p_i)$ exists and has dimension $\ge 2$. 
We remark that $Y$ has a line, because $\wangle q_i^+ p_i q_i^- \to \pi$.
It follows from Theorem \ref{splitting theorem}, 
$Y$ is isometric to $S \times \mathbb R$ for some nonnegatively cured Alexandrov space $S$ of dimension at least one.

If $\dim S = 2$ then by Theorem \ref{stability theorem}, 
$S$ has no boundary, and the topology of $B_i$ can be determined. 
By the remark \eqref{disconnected boundary}, $S$ is compact, and hence, either $S$ is homeomorphic to $S^2$ or $P^2$, or is isometric to a flat torus or a flat Klein bottle.
Again, by using Theorem \ref{stability theorem}, we conclude that 
$B_i \approx S \times I$.

If $\dim S = 1$ then by Theorems \ref{2-dim interior} and \ref{2-dim boundary}, the topology of $B_i$ can be determined. 
It follows from the remark \ref{disconnected boundary}, $S$ is compact.
Hence $S$ is isometric to a circle or an interval.
If $S$ is a circle, then $Y$ has no singular point.
Then we can use Theorem \ref{fibration theorem}, and therefore we conclude that $B_i$ is homeomorphic to $T^2 \times I$ or $K^2 \times I$.
If $S$ is an interval, then by Theorem \ref{2-dim boundary}, 
$B_i$ is homeomorphic to $S^2 \times I$, $P^2 \times I$ or $K^2 \times I$.

This completes the proof of Lemma \ref{B_i is product}.
\end{proof}

Recall that $q_i^\pm$ are points in $M_i$ converging to $q^\pm = \pm \sqrt {-1} \in S^1$.
Let us consider 
\[
D_i^\pm := B(q_i^\pm, \pi / 2) - \mathrm{int}\, (B_i^+ \cup B_i^-).
\]
Let us set 
\[
S_i^{\pm} := B_i^\pm \cap D_i^+.
\]
By Lemma \ref{B_i is product}, $S_i^\pm \approx F_i^\pm$.

\begin{lemma}\label{D_i is product}
There is a homeomorphism $\phi_i : F_i^+ \times [0,1] \to D_i^+$ such that $\phi_i (F_i^+ \times \{0\}) = S_i^+$ and 
$\phi_i (F_i^+ \times \{1\}) = S_i^-$.
\end{lemma}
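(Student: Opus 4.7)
The plan is to mirror the strategy of Lemma \ref{B_i is product}, now at a point approaching $q^+$ rather than $p^\pm$. The key observation is that for any $x \in D_i^+$, the point $x$ converges to $q^+ \in X=S^1$ as $i \to \infty$, so the comparison angle $\wangle p_i^+ x p_i^-$ tends to $\pi$: the three points $p^+, q^+, p^-$ are aligned along an arc through $q^+$. This almost-antipodal configuration will drive both a rescaled splitting and a gradient-like flow.

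First I would apply Theorem \ref{rescaling argument} at $q_i^+$. Assumption \ref{rescaling assumption} is forced by the disconnectedness of $\partial D_i^+ \supset S_i^+ \sqcup S_i^-$, which prevents small balls around $q_i^+$ from being cones. This produces $\hat q_i \to q^+$, a scale $\e_i \to 0$, and a blow-up limit $(Y, y_0)$ of $(\tfrac{1}{\e_i} M_i, \hat q_i)$ with $\dim Y \ge 2$. The sequences $\uparrow_{\hat q_i}^{p_i^\pm}$ yield a line in $Y$, so by Splitting Theorem \ref{splitting theorem}, $Y = S \times \mathbb R$ for a nonnegatively curved Alexandrov space $S$. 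Since $D_i^+$ collapses to the single point $q^+$ and $M_i$ is a closed topological manifold (Lemma \ref{M_i is manifold}), $S$ must be a closed surface without boundary, hence homeomorphic to one of $S^2, P^2, T^2, K^2$. Stability Theorem \ref{stability theorem} then yields $D_i^+ \cap B(\hat q_i, \rho) \approx S \times [a,b]$ for a suitable interval.

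Next I would extend this local product structure over all of $D_i^+$ via Flow Theorem \ref{flow theorem}. For every $x \in D_i^+$, taking $w = p_i^-$ shows that $\dist_{p_i^+}$ is $(1-\theta(\e))$-regular at $x$, since $\wangle p_i^+ x p_i^- > \pi - \theta(\e)$. The resulting gradient-like Lipschitz flow $\Phi$ is transverse to each level set of $\dist_{p_i^+}$, and in particular to $S_i^+ = \{\dist_{p_i^+} = \ell_i - \de_i\} \cap D_i^+$. Each integral curve starting on $S_i^+$ exits $D_i^+$ in finite time; because $\partial D_i^+ \setminus (S_i^+ \cup S_i^-)$ is contained in $\partial B(q_i^+, \pi/2)$, it is GH-close to $\{p^+, p^-\}$, which lies deep inside $\mathrm{int}(B_i^+ \cup B_i^-)$ for large $i$. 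Hence this extra boundary piece is empty, every flow curve terminates on $S_i^-$, and the time-to-exit map reparametrized to $[0,1]$ yields a homeomorphism $S_i^+ \times [0,1] \to D_i^+$ sending $S_i^+ \times \{0\}$ to $S_i^+$ and $S_i^+ \times \{1\}$ to $S_i^-$. Composing with a fixed identification $F_i^+ \approx S_i^+$ gives $\phi_i$, and in passing shows $F_i^- \approx F_i^+$.

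The main obstacle is reconciling the two independent product descriptions of $D_i^+$: the one from the rescaled splitting (with slice $S$) and the one from the flow foliation (with slice $F_i^+$). A priori, the surface $S$ obtained by blowing up at $q_i^+$ need not be obviously identified with the boundary fiber $F_i^+$ produced by Lemma \ref{B_i is product} (which came from blowing up at $p_i^+$). The resolution is that both blow-ups share the same $\mathbb R$-direction tangent to the collapsing circle $X$, so the transverse slice at the common level $S_i^+$ canonically identifies $S$ with $F_i^+$; a standard continuity argument along $S_i^+$ using the flow $\Phi$ propagates this identification globally. Handling this compatibility is the only delicate point; the rest consists of direct invocations of tools already established.
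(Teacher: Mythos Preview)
Your plan has two real gaps. First, you invoke Flow Theorem \ref{flow theorem} on all of $D_i^+$, but that theorem requires the set $C$ to be $(3,\e)$-strained; here $D_i^+$ collapses to a point of $S^1$, so you only know it is $(1,\theta(\e))$-strained. Nothing rules out metric singularities in the cross-section direction, so the smooth approximation machinery behind the Flow Theorem is unavailable. Regularity of $\dist_{p_i^+}$ alone (Perelman's Morse theory, Theorem \ref{Morse theory}) would give a fiber bundle over an interval, but $S_i^-$ is a level set of $\dist_{p_i^-}$, not of $\dist_{p_i^+}$, so you still need a flow transverse to both, which you have not produced. Second, your rescaling step via Theorem \ref{rescaling argument} yields only $\dim Y\ge 2$; you jump to ``$S$ must be a closed surface'', ignoring the case $\dim Y=2$, where $S$ is one-dimensional and Stability gives you nothing directly. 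Also, the scale $\e_i$ coming from that theorem bears no a priori relation to $\diam D_i^+$, so your local product chart near $\hat q_i$ need not cover $D_i^+$.

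The paper avoids all of this by choosing the scale deliberately: it sets $\de_i:=\diam W_i$, where $W_i$ is the component of $\partial B(p_i^+,\ell_i)$ near $q^+$, and shows (Assertion \ref{W_infty is splitting factor}) that $(\tfrac{1}{\de_i}M_i,q_i)\to W_\infty\times\mathbb R$ with $\diam W_\infty=1$, so the splitting factor is automatically compact and is exactly the limit of the cross-sections. Then it branches on $\dim W_\infty$: if $\dim W_\infty=2$, Stability (with respect to the functions $f_i^\pm\to b^\pm$) gives the product structure on $D_i^+$ directly; if $\dim W_\infty=1$ (circle or interval), one is in a genuine further collapse and must invoke the Fibration Theorem or Theorem \ref{2-dim boundary} plus a flow argument on the now $(3,\theta(\e))$-strained region. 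Your proposal collapses these cases into one and applies a tool whose hypotheses are not met.
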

\begin{proof}
Let $W_i$ be the component of $S(p_i, \ell_i)$ converging to $q = \sqrt{-1} \in S^1$. 
Recall that $\de_i = \diam W_i$.
Then $\de_i \to 0$.

Let us take $q_i \in W_i$ and consider any limit $Y$ of a rescaling sequence: 
\begin{equation} \label{rescale eq1}
(\frac{1}{\de_i} M_i, q_i) \to (Y, q_\infty).
\end{equation}
Let $\gamma_\infty^\pm$ be rays starting at $q_\infty$ which are limits of geodesics $q_i p_i^\pm$.
Since $\wangle p_i^+ q_i z_i^- \to \pi$, $\gamma_\infty := \gamma_\infty^+ \cup \gamma_\infty^-$ is a line in $Y$.

Let $W_\infty$ be the limit of $W_i$ under the convergence \eqref{rescale eq1}.
By the choice of $\de_i$, $\diam W_\infty = 1$.
We will prove that 
\begin{assertion} \label{W_infty is splitting factor}
$Y$ is isometric to $W_\infty \times \mathbb R$. 
In particular, $\dim Y \ge 2$.
\end{assertion}
\begin{proof}[Proof of Assertion \ref{W_infty is splitting factor}]
Let us consider functions 
\begin{align*}
f_i^\pm (\cdot) &:= \tilde d_i (p_i^\pm, \cdot ) - \tilde d_i (p_i^\pm, q_i) \\
b^\pm (\cdot) &:= \lim_{t \to \infty} d(\gamma_\infty^\pm (t), \cdot ) - t.
\end{align*}
Here, $\tilde d_i$ is the original metric of $M_i$ multiplied by $1 / \de_i$.
The functions $b^\pm$ are the Busemann functions of the rays $\gamma_\infty^\pm$.
Then, we can show that $f_i^\pm$ converges to $b^\pm$.
Therefore, we obtain $W_\infty = (b^+)^{-1} (0)$.
This completes the proof of Assertion \ref{W_infty is splitting factor}.
\end{proof}

By Assertion \ref{W_infty is splitting factor}, $\dim W_\infty = 1$ or $2$.
If $\dim W_\infty = 2$ then by Theorem \ref{stability theorem}, we have a homeomorphism 
\[
\phi_i : D_i^+ \approx W_\infty \times [-1, 1]
\]
with respect to functions $f_i^\pm$ and $b^\pm$.
Namely, 
\[
\phi_i ( (f_i^\pm)^{-1}(t) ) = (b^\pm)^{-1} (t) 
\]
whenever $t$ is near $\{-1, 1\}$.
In particular, 
\[
S_i^+ = (f_i^+)^{-1} (1) \approx (b^\pm)^{-1} (0) = W_\infty \approx (f_i^-)^{-1} (1) = S_i^-.
\]
And, in this case, $W_\infty \approx S^2$, $P^2$, $T^2$ or $K^2$.

If $\dim W_\infty = 1$, then $W_\infty$ is a circle or an interval.
If $W_\infty$ is a circle, then by Theorem \ref{fibration theorem} and some flow argument, there is a circle fiber bundle 
\[
\pi_i : D_i^+ \to W_\infty \times [-1, 1]
\]
such that $\pi_i^{-1} (W_\infty \times \{\pm 1\}) = S_i^\pm$.
In this case, $S_i^\pm \approx T^2$ or $K^2$.

If $W_\infty$ is an interval, then by using Theorem \ref{2-dim boundary} and some flow argument, we have a homeomorphism 
\[
\phi_i : D_i^+ \to S_i^+ \times [-1, 1]
\]
such that $\phi_i (S_i^\pm) = S_i^+ \times \{\pm 1\}$.
In this case, $S_i^\pm \approx S^2$, $P^2$ or $K^2$.

This completes the proof of Lemma \ref{D_i is product}.
\end{proof}

Let $F_i$ be a topological space homeomorphic to $F_i^\pm \approx S_i^\pm$.
By Lemmas \ref{B_i is product} and \ref{D_i is product}, we obtain homeomorphisms
\begin{align*}
\varphi_i^\pm : F_i \times [0, 1] &\to B_i^\pm, \\
\psi_i^\pm : F_i \times [0, 1] &\to D_i^\pm
\end{align*}
such that they send the boundaries to the boundaries.
Therefore, $M_i = B_i^+ \cup B_i^- \cup D_i^+ \cup D_i^-$ is $F_i$-bundle over $S^1$.
\end{proof}








\section{The case that $X$ is an interval}
\label{proof of 1-dim interval}
Let $\{M_i\}$ be a sequence of three-dimensional closed Alexandrov spaces of curvature $\ge -1$ with $\diam M_i \le D$.
Suppose that $M_i$ converges to an interval $I$.
Let $\partial I = \{p, p'\}$. 
And let $p_i, p_i' \in M_i$ converge to $p, p'$, respectively.
We divide $M_i$ into $M_i = B_i \cup D_i \cup B_i'$, where $B_i = B(p_i, r)$, $B_i' = B(p_i', r)$ for small $r > 0$, and $D_i := M_i - \mathrm{int}\,(B_i \cup B_i')$.

\begin{proof}[Proof of Theorem \ref{1-dim interval}]
In a way similar to the proof of Lemma \ref{D_i is product},
we can prove that there exists a homeomorphism 
$\phi_i : F_i \times I \to D_i$ such that 
$\phi_i (F_i \times 0) = \partial B_i$ and 
$\phi_i (F_i \times 1) = \partial B_i'$, where
$F_i$ is homeomorphic to one of $S^2$, $P^2$, $T^2$ and $K^2$.

Next, we will find the topologies of $B_i$ (and $B_i'$).
If $B_i$ does not satisfy Assumption \ref{rescaling assumption}, then $B_i$ is homeomorphic to $D^3$ or $K_1(P^2)$.
Hence, we may assume that there exist sequences $\de_i \to 0$ and $\hat p_i$ such that a limit $(Y, y_0) = \lim_{i \to \infty} \frac{1}{\de_i} (B_i, \hat p_i)$ exists, where we may assume that $\hat p_i = p_i$, 
and $Y$ is a noncompact nonnegatively curved Alexandrov space of $\dim Y \geq 2$.

If $\dim Y =3$ with a soul $S \subset Y$, then Theorem \ref{soul theorem} implies $B_i$ is homeomorphic to one of 
\begin{itemize}
\item
$D^3$, $K_1(P^2)$ and $B(\pt)$ if $\dim S = 0$, 
\item 
$S^1 \times D^2$ and $S^1 \tilde{\times} D^2$ if $\dim S =1$, and
\item
$B(N(S))$ and $B(S_2)$ and $B(S_4)$ if $\dim S = 2$.
\end{itemize}
Here, $N(S)$ is a nontrivial line bundle over a closed surface $S$ of
nonnegative curvature and $B(N(S))$ is a metric ball around $S$ in
$N(S)$, and $B(S_i)$ is a metric ball around $S_i$ in $L_i = L(S_i)$
for $i = 2, 4$ (see \ref{subsec:soul}).
$B(N(S))$ is homeomorphic to one of nontrivial twisted $I$-bundles
over a closed surface $S$ with connected boundary. 
We determine the topology of $B(N(S))$ as follows:
If $S\approx S^2$, $N(S)$ is isometric to $S\times \mathbb R$, which
is a contradiction.
If $S\approx P^2$, we have the line bundle $N(\hat S)$ induced by
the double covering $\pi:\hat S \to S$. 
Since $N(\hat S)=\hat S \times \mathbb R$, we find
$N(S)=\hat S\times\mathbb R/(x, t)\sim (\sigma(x), -t)$, where
$\sigma$ is the involution on $\hat S$ with $\hat S/\sigma = S$.
Thus $B(N(S))$ is a twisted $I$-bundle over $P^2$; which is
homeomorphic to $P^3 - \mathrm{int}\, D^3$.
If $S$ is homeomorphic to either $T^2$ or $K^2$, then $N(S)$ is a
complete flat three-manifold.  
By \cite[Theorem 3.5.1]{Wo} we obtain that $B(N(S))$ is a twisted $I$-bundle over $T^2$; which is
homeomorphic to $\Mo \times S^1$, 
an orientable $I$-bundle $K^2 \tilde{\times} I$ over $K^2$, 
and a nontrivial non-orientable $I$-bundle $K^2 \hat{\times} I$ over $K^2$.

\vspace{1em}
If $\dim Y = 2$ and $\partial Y = \emptyset$, then either $Y$ is homeomorphic to $\mathbb{R}^2$ or isometric to a flat cylinder or a flat Mobius strip. 

Suppose that $Y \approx \mathbb{R}^2$.
Let us denote by $m$ the number of essential singular points in $Y$. Then $m \leq 2$.
When $m \leq 1$, Theorem \ref{2-dim interior} together with Lemma \ref{ball is B(pt)} implies that $B_i \approx S^1 \times D^2$ or $B(\pt)$. 
If $m = 2$, then $Y$ is isometric to the envelope $\mathrm{dbl}\,(\mathbb{R}_+ \times [0,\ell])$ for some $\ell > 0$. 
Let $B$ be a closed ball around $\{0\} \times [0, \ell]$ in $Y$.
By Theorem \ref{2-dim interior}, $B_i$ is a generalized Seifert fiber space over $B$ and its boundary $\partial B_i$ is 
homeomorphic to $T^2$ or $K^2$.
We may assume that $B_i$ has actually two singular orbits over two singular points $(0,0)$ and $(0,\ell)$ in $Y$.
Here, a singular orbit 
is either a $(2, 1)$-type fiber corresponding to the core of $U_{2, 1}$ or the interval fiber of $M_{\pt}$ in this case.
The topology of $B_i$ is determined as follows: 
When two singular orbits are both $(2,1)$-type, 
$\mathrm{int}\, B_i$ is homeomorphic to $U_{2,1}' \cup_\partial U_{2,1}'$. 
Since $U_{2,1}'$ is an $\mathbb R$-bundle over $\Mo$, $\mathrm{int}\, B_i$ is an $\mathbb R$-bundle over $K^2$.
By the boundary condition, $B_i$ is homeomorphic to $K^2 \tilde \times I$ if $\partial B_i \approx T^2$ 
or $K^2 \hat \times I$ if $\partial B_i \approx K^2$.
When singular fibers of $B_i$ are $(2,1)$-type and an interval, 
$\mathrm{int}\, B_i$ is homeomorphic to $U_{2,1}' \cup_\partial M_\pt'$. 
Then $B_i$ is homeomorphic to one of $B(S_2) \subset L_{2,1}$ with $S_2 \approx P^2$.
When $B_i$ has two singular interval fibers, $\mathrm{int} B_i$ is homeomorphic to $M_\pt' \cup_\partial M_\pt'$ which is $L_4$.
Then $B_i$ is homeomorphic to $B(S_4)$.

If $Y$ is a flat cylinder, then $\partial B_i$ is not connected, 
and hence this case can not happen.

If $Y$ is isometric to a flat Mobius strip, then $B_i$ is an $S^1$-bundle over $\Mo$.
Therefore, we have $B_i \approx \Mo \times S^1$ or $K^2 \tilde{\times} I$.

\vspace{1em}
If $\dim Y = 2$ and $\partial Y \neq \emptyset$, 
then $Y$ is either isometric to a flat half cylinder $S^1(\ell) \times [0, \infty)$ or $[0, \ell] \times \mathbb R$, or homeomorphic to a upper half plane $\mathbb{R}^2_+ = \mathbb{R} \times [0, \infty)$.

If $Y$ is a flat half cylinder, then $\partial Y$ has no essential singular point.
Therefore, $B_i$ is a fiber bundle over $S^1$ with the fiber homeomorphic to $D^2$ or $\Mo$. 
In other words, this is a generalized solid torus of type $0$ or a generalized solid Klein bottle of type $0$.

If $Y \equiv [0, \ell] \times \mathbb R$, then $\partial B_i$ is not connected, and hence this case can not happen.

Suppose that $Y$ is homeomorphic to $\mathbb{R}^2_+$.
Let us set $m := \sharp \ess\,(\mathrm{int}\, Y)$ and $n := \sharp \ess\, (\partial Y)$.
Then $m \leq 1$ and $n \leq 2$.

If $m = 0$ and $n \le 1$, then by Lemma \ref{ball near boundary}, $B_i$ is homeomorphic to one of $D^3$, $\Mo \times I$ and $K_1(P^2)$.

If $m = 0$ and $n = 2$, then $Y$ is isometric to $\mathbb R_+ \times [0, \ell]$ for some $\ell > 0$.
Let $B := [0, c] \times [0, \ell]$ for some $c >0$.
By Corollary \ref{2-dim boundary corollary}, there is a continuous surjective map 
\[
\pi : B_i \to B.
\]
We may assume that $B_i$ has two topologically singular points converging to the corners $(0, 0)$ and $(0, \ell)$ of $Y$.
We divide $B$ into two domains 
\[
A_j = [0,c] \times \{y \in [0,\ell] \,|\, (-1)^j (y - \ell/2) \ge 0\} \subset B
\]
for $j = 1, 2$.
Since $B_i$ has two topologically singular points, $\pi^{-1} (A_j) \approx K_1(P^2)$.
Then, $B_i$ is homeomorphic to $K_1(P^2) \cup_{D^2} K_1(P^2)$ if $\pi^{-1}(A_1 \cap A_2) \approx D^2$ or $K_1(P^2) \cup_{\Mo} K_1(P^2)$ if $\pi^{-1}(A_1 \cap A_2) \approx \Mo$.
By Lemma \ref{K_1 cup_D^2 K_1} and Remark \ref{K_1 cup K_1}, 
$B_i$ is homeomorphic to $B(\pt)$ or $B(S_2) \subset L_{2,2}$ with $S_2 \approx S^2$. 



If $m = 1$, then $n = 0$ and $Y$ is isometric to a cut envelope $\mathbb R \times [0, h] / (x, y) \sim (-x, y)$ for some $h > 0$.
Let $B := Y \cap \{(x, y) \,|\, x \le r \}$ which is homeomorphic to $D^2$. 
By Theorem \ref{2-dim boundary}, there is a generalized Seifert fibration $\pi_i : W_i \to B$ such that $B_i$ is homeomorphic to a gluing of $W_i$ and $F_i \times [-r ,r]$ via a homeomorphism 
\[
\partial F_i \times [-r , r]  \supset \partial F_i \times \{x\} \mapsto \pi_i^{-1} (x) \subset \pi_i^{-1} (\{(x, h) \in B \mid x \in [-r, r] \})
\]
for all $x \in [-r, r]$.
Here, $F_i$ is $D^2$ or $\Mo$.
We may assume that $W_i$ contains a singular orbit over the singular point $(0,0) \in \mathrm{int} B$.
If the singular orbit is a circle, then $W_i$ is isomorphic to a Seifert solid torus $V_{2,1}$ of $(2,1)$-type. 
Remark that $W_i$ can be regarded as $I$-bundle over $\Mo$, which corresponds to the preimage of the Seifert fibration over $\{0\} \times [0, h] \subset B$. 
Then, $B_i$ is isomorphic 
to an $I$-bundle over $\Mo \cup_\partial F_i$. 
Therefore, it is $P^2 \tilde \times I \approx P^3 - \mathrm{int} D^3$ if $F_i \approx D^2$, or
$K^2 \hat \times I$ if $F_i \approx \Mo$.
If the singular orbit is an interval, then Theorem \ref{2-dim interior} shows that $W_i$ is isomorphic to $M_\pt'$.
Recall that $B_i$ is homeomorphic to the union $W_i \cup F_i \times I$. 
Therefore, $B_i$ is homeomorphic to $B(S_2) \subset L_{2,2}$ with $S_2 \approx S^2$ if $F_i \approx D^2$, 
or $B(S_2) \subset L_{2,3}$ with $S_2 \approx P^2$ if $F_i \approx \Mo$.


This completes the proof of Theorem \ref{1-dim interval}.
\end{proof}

\section{The case that $X$ is a single-point set}
\label{proof of 0-dim}

\begin{lemma}\label{closed 3-dim}
If $M$ is a three-dimensional nonnegatively curved closed Alexandrov space, 
then a finite covering of $M$ is $T^3$, $S^1 \times S^2$ or simply-connected.
\end{lemma}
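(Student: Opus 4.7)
The plan is to combine the generalized Margulis lemma (Theorem \ref{almost nilpotent}) with the splitting theorem (Theorem \ref{splitting theorem}) applied to the universal cover. Rescaling the metric of $M$ by a sufficiently small constant preserves nonnegative curvature and shrinks the diameter below the constant $\e_3$ of Theorem \ref{almost nilpotent}; hence $\pi_1(M)$ contains a nilpotent subgroup $N$ of finite index. Let $\bar M \to M$ denote the corresponding finite cover, so $\pi_1(\bar M)=N$, and let $\tilde M$ be the universal cover of $\bar M$; this is a simply connected, nonnegatively curved $3$-dimensional Alexandrov space on which $N$ acts properly discontinuously, isometrically, and cocompactly.

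Iterating Theorem \ref{splitting theorem}, I would write $\tilde M = \mathbb R^k \times N_0$ with $N_0$ containing no line. Cocompactness of the $N$-action forces $N_0$ to be compact: if $N_0$ contained a ray, a subsequence of its $N$-translates would accumulate and produce a line in $N_0$ in the limit. I then perform a case analysis on $k \in \{0,1,2,3\}$. When $k=0$, $\tilde M = N_0$ is compact and simply connected, hence $N$ is finite and $\tilde M$ itself is the desired simply connected finite cover of $M$. When $k=2$, $N_0$ would be a compact simply connected $1$-dimensional Alexandrov space, i.e.\ a point, forcing $\dim \tilde M = 2$; this case is impossible. When $k=3$, $\tilde M = \mathbb R^3$ and $N$ is a crystallographic group; Bieberbach's theorem supplies a normal finite-index subgroup $\Lambda\cong\mathbb Z^3$ of pure translations, and $\mathbb R^3/\Lambda = T^3$ gives the required finite cover.

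The remaining case $k=1$ is the heart of the argument. By Theorem \ref{Alex surface}, $N_0$ is a compact, simply connected, $2$-dimensional nonnegatively curved Alexandrov space, hence $N_0 \approx S^2$. Thus $\tilde M = \mathbb R \times S^2$ and $\mathrm{Isom}(\tilde M) = \mathrm{Isom}(\mathbb R) \times O(3)$. I would project $N$ onto $\mathrm{Isom}(\mathbb R) = \mathbb R \rtimes \mathbb Z_2$; the kernel embeds into the compact group $O(3)$, hence is finite. Therefore the image is an infinite nilpotent discrete subgroup of $\mathbb R \rtimes \mathbb Z_2$, and the only such subgroup is $\mathbb Z$ (the alternative $D_\infty$ is not nilpotent). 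This yields a short exact sequence $1 \to K \to N \to \mathbb Z \to 1$ with $K$ finite, which splits since $\mathbb Z$ is free.

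To finish, I extract an infinite cyclic subgroup $\mathbb Z_0 \subset N$ of finite index by first selecting a finite-index subgroup of the $\mathbb Z$ factor that centralizes $K$ (available because $\mathrm{Aut}(K)$ is finite and $N$ is nilpotent) and then, if necessary, passing to an index-$2$ subgroup to ensure its generator acts on the $S^2$-factor by an orientation-preserving isometry. Because its $\mathbb R$-component is a pure translation, $\mathbb Z_0$ acts freely on $\tilde M$, and the quotient $\tilde M/\mathbb Z_0$ is the mapping torus of an orientation-preserving homeomorphism of $S^2$; since every such homeomorphism is isotopic to the identity, this mapping torus is $S^1 \times S^2$. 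The main obstacle is precisely this $k=1$ analysis: verifying that the image of $N$ in $\mathrm{Isom}(\mathbb R)$ is infinite cyclic and controlling the residual $O(3)$-action carefully enough to produce $S^1\times S^2$ rather than the twisted $S^2$-bundle over $S^1$.
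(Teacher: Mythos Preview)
Your argument is essentially correct and follows the same splitting-theorem case analysis as the paper, though the paper's proof is much terser: it simply observes that if $|\pi_1(M)|=\infty$ then the universal cover contains a line, splits as $\mathbb R^k\times X_0$ with $X_0$ closed, and then disposes of $k=1,2,3$ in one line each without justification. Your write-up supplies the details the paper omits, especially for $k=1$.

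Two small points. First, in the $k=1$ case you assert $\mathrm{Isom}(\tilde M)=\mathrm{Isom}(\mathbb R)\times O(3)$, but $N_0$ is only \emph{homeomorphic} to $S^2$, not isometric to the round sphere, so its isometry group need not be $O(3)$. This does not damage the argument: all you need is that $\mathrm{Isom}(N_0)$ is compact (it is, since $N_0$ is), so the kernel of the projection $N\to\mathrm{Isom}(\mathbb R)$ is a discrete subgroup of a compact group and hence finite. Second, your detour through Theorem \ref{almost nilpotent} is unnecessary: the paper never invokes nilpotency, and you do not really need it either. In the $k=1$ case the image of $\pi_1$ in $\mathrm{Isom}(\mathbb R)$ could a priori be $D_\infty$, but passing to the index-$2$ subgroup of translations already gives $\mathbb Z$; no nilpotency is required. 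Likewise for $k=3$, Bieberbach applies to any crystallographic group. Finally, in the $k=2$ case a compact simply connected $1$-dimensional Alexandrov space without boundary does not exist at all (a point is $0$-dimensional); your conclusion is right but the phrasing ``i.e.\ a point'' is off.
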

\begin{proof}
We may assume that $|\pi_1(M)| = \infty$. 
Then a universal covering $\tilde{M}$ of $M$ has a line.
Thus, $\tilde{M}$ is isometric to the product $\mathbb{R}^k \times X_0$, 
where $1 \leq k \leq 3$ and $X_0$ is 
a $(3-k)$-dimensional nonnegatively curved closed Alexandrov space.
\begin{itemize}
\item
If $k = 3$ then $\tilde{M}$ is the Euclidean space. 
Then a finite covering of $M$ is $T^3$.
\item
If $k = 2$ then $X_0$ is a circle. 
Then $\tilde{M}$ is not simply-connected, this is a contradiction.
\item
If $k = 1$ then $X_0$ is homeomorphic to $S^2$. 
Then a finite covering of $M$ is homeomorphic to $S^1 \times S^2$.
\end{itemize}

\end{proof}

\begin{proof}[Proof of Corollary \ref{0-dim}]
Let $\{ M_i \}$ be a sequence of three-dimensional closed Alexandrov spaces of curvature $\geq -1$ with $\diam M_i \le D$, 
which converges to a point $\{ \ast\}$.
Let $\de_i := \diam M_i$.
Then the rescaled space $\frac{1}{\de_i} M_i$ is an Alexandrov space 
with curvature $\geq - \de_i^2$ having diameter one.
Then, the limit $Y$ of the rescaled sequence $\frac{1}{\de_i}M_i$ is 
a nonnegatively curved Alexandrov space of dimension $\geq 1$.
If $\dim Y = 1$ then 
$M_i$ is homeomorphic to a space in the conclusion of 
Theorems \ref{1-dim circle} and \ref{1-dim interval}.
If $\dim Y = 2$ and $\partial Y = \emptyset$ then 
$M_i$ is homeomorphic to 
a generalized Seifert fiber space having at most 4 singular fibers.
If $\dim Y = 2$ and $\partial Y \neq \emptyset$ then 
$M_i$ is homeomorphic to 
a space in the conclusion of Theorem \ref{2-dim boundary} with 
at most 4 topologically singular points.
If $\dim Y = 3$ then by Stability Theorem, 
$M_i$ is homeomorphic to $Y$.
In this case, the topology of $Y$ 
is already obtained in Lemma \ref{closed 3-dim}.
\end{proof}

\section{Appendix: $\e$-regular covering of the boundary of an Alexandrov surface} \label{appendix}
Let $X$ be an Alexandrov surface with non-empty compact boundary $\partial X$. 
Let us denote $C$ by a component of $\partial X$.
The purpose of this section is to prove Lemma \ref{lemma regular covering} which state the existence of an $\e$-regular covering of $C$, used in Section \ref{proof of 2-dim boundary}.

We will first prepare a division of $C$ by consecutive arcs $\gamma_1, \gamma_2, \dots, \gamma_n$ with $\partial \gamma_\alpha = \{p_\alpha, p_{\alpha + 1} \}$ and $p_{n + 1} = p_1$.
We next prove that this division makes the desired regular covering $\{B_\alpha, D_\alpha\}_{\alpha = 1, 2, \dots, n}$ of $C$. 



For $\e > 0$, we define 
\[
S_\e (\partial X) := \{p \in \partial X \,|\, L(\Sigma_p) \le \pi - \e \},
\]
where $L(\Sigma_p)$ is the length of $\Sigma_p$.
Note that $S_\e(\partial X)$ is a finite set.
And we set 
\[
R_\e(\partial X) := \partial X - S_\e(\partial X), 
\]
and 
\[
S_\e(C) := S_\e(\partial X) \cap C \text{ and } R_\e(C) := R_\e(\partial X) \cap C.
\]


We review fundamental properties.
\begin{lemma} \label{local regularity of boundary function}
For $\e > 0$ and $p \in R_\e(\partial X)$, there exists $\de > 0$ such that for every $x \in B(p, \de) - \partial X$, we have
\[
|\nabla d_{\partial X}| (x) > \cos \e.
\]
\end{lemma}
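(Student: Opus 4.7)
The plan is to argue by contradiction and reduce the estimate to a planar computation in the tangent cone $T_pX$: since $L(\Sigma_p)>\pi-\e$, $T_pX$ is a nearly flat Euclidean sector in which the gradient of $d_\partial$ is bounded below explicitly. Suppose the conclusion fails, giving $\e>0$, $p\in R_\e(\partial X)$, and $x_i\in X\setminus\partial X$ with $x_i\to p$ and $|\nabla d_{\partial X}|(x_i)\le\cos\e$. The identity $(d_A)'_x(\eta)=-\cos\angle(\eta,A'_x)$ for closed $A$ and $x\notin A$, together with $|\nabla f|(x)=\max_\eta f'_x(\eta)$, translates this into $R_{x_i}\le\pi-\e$, where
\[
R_y:=\max_{\eta\in\Sigma_y}\min_{\xi\in(\partial X)'_y}\angle(\eta,\xi).
\]

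Rescale by $r_i:=d(x_i,p)\to 0$. The pointed spaces $(r_i^{-1}X,p)$ converge in the pointed Gromov--Hausdorff topology to $T_pX=K(\Sigma_p)$. Since $X$ is a surface, $p\in\partial X$, and $L(\Sigma_p)>\pi-\e$, the link $\Sigma_p$ is an arc of length $\alpha:=L(\Sigma_p)\in(\pi-\e,\pi]$, so $T_pX$ is a flat Euclidean sector of angle $\alpha$ with vertex $o$. Passing to a subsequence, the rescaled $x_i$ converge to a point $\hat x$ in the open sector with $d(\hat x,o)=1$. The perpendicular distances from $\hat x$ to the two boundary rays are at most $\sin(\alpha/2)\le 1$, so each nearest point $\hat q\in\partial T_pX$ of $\hat x$ lies on a boundary ray (not at $o$), and the right-angle quadrilateral $o\hat q\hat x\hat q'$ (angle $\alpha$ at $o$, right angles at the feet) forces the set $(\partial T_pX)'_{\hat x}\subset\Sigma_{\hat x}=S^1_{2\pi}$ to have diameter at most $\pi-\alpha$. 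Hence
\[
R_{\hat x}\ge\pi-\tfrac{\pi-\alpha}{2}=\tfrac{\pi+\alpha}{2}>\pi-\tfrac{\e}{2}.
\]
Let $\hat\eta\in\Sigma_{\hat x}$ be a direction realizing $R_{\hat x}$.

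To transfer the estimate back to $X$, use the GH-convergence to choose $z_i\in X$ so that the rescaled $z_i$ converge to a point $\hat z$ with $\uparrow_{\hat x}^{\hat z}=\hat\eta$; take $\hat z=\exp_{\hat x}(s\hat\eta)$ for a small $s>0$ in $T_pX$. Set $\eta_i:=\uparrow_{x_i}^{z_i}\in\Sigma_{x_i}$. Since $\hat x$ is a regular interior point of the flat sector, the noncollapsing GH-convergence yields $\Sigma_{x_i}\to\Sigma_{\hat x}$ with $\eta_i\to\hat\eta$. By the contradiction hypothesis there exist nearest points $q_i\in\partial X$ of $x_i$ with $\angle(\eta_i,\uparrow_{x_i}^{q_i})\le\pi-\e$. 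After a further subsequence, the rescaled $q_i$ converge to a nearest point $\hat q\in\partial T_pX$ of $\hat x$, and the directions $\uparrow_{x_i}^{q_i}$ converge in $\Sigma_{x_i}$ to $\uparrow_{\hat x}^{\hat q}$. Passing distances in $\Sigma$ to the limit gives $\angle(\eta_i,\uparrow_{x_i}^{q_i})\to\angle(\hat\eta,\uparrow_{\hat x}^{\hat q})\ge R_{\hat x}>\pi-\e/2$, contradicting $\le\pi-\e$.

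The main obstacle is the convergence of directions in the last paragraph: general Gromov--Hausdorff convergence of Alexandrov spaces does not yield continuity of the angle function on tangent spaces. What makes it work here is that $\hat x$ is an interior (hence regular) point of $T_pX$, so the rescaled spaces are noncollapsing at the scale of $\hat x$; this gives $\Sigma_{x_i}\to\Sigma_{\hat x}=S^1_{2\pi}$ in GH together with convergence of individual directions originating from sequences of geodesic ends $x_iy_i$ whose endpoints $y_i$ converge to a point $\hat y\ne\hat x$ in $T_pX$. The parallel identification $r_i^{-1}\partial X\to\partial T_pX$ under the convergence sends nearest points to nearest points in the limit. Both facts are standard in Alexandrov-space theory, although they must be invoked with careful bookkeeping in this setting.
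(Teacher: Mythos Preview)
Your overall strategy---contradiction, rescale to the tangent cone at $p$, and compute in the flat sector---is the same as the paper's, and your sector computation and use of lower semicontinuity of angles are fine for the case you treat. However, there is a genuine gap: you assert without justification that the rescaled limit $\hat x$ lies in the \emph{open} sector. Rescaling by $r_i=d(x_i,p)$ guarantees $d(\hat x,o)=1$, hence $\hat x\neq o$, but it does \emph{not} prevent $\hat x\in\partial T_pX$; this occurs precisely when $d(x_i,\partial X)/d(x_i,p)\to 0$, which nothing in your hypotheses excludes. In that case the nearest boundary points $q_i$, after rescaling, converge to $\hat x$ itself, so there is no limit point $\hat q\neq\hat x$ to which lower semicontinuity of angles applies, and your direction argument collapses. (Your sector estimate also fails there: at a boundary point $\hat x$ of the sector one gets only $R_{\hat x}=\pi/2$, not $>\pi-\e/2$.)

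The paper's proof makes exactly this case distinction. When $|x_\infty,\partial T_pX|>0$ it argues essentially as you do, via lower semicontinuity. When $|x_\infty,\partial T_pX|=0$ it abandons the direction-to-nearest-point approach and instead picks a reference point $y_\infty$ well inside the sector with $|\partial T_pX,y_\infty|/|x_\infty y_\infty|>\cos(\e/2)$, lifts it to $y_i\in X$, and uses the $\lambda$-concavity of $d_{\partial X}$ along $x_iy_i$ to bound $|\nabla d_{\partial X}|(x_i)$ from below by the secant slope. You need either this argument or an iterated rescaling (blowing up at $\hat x$ to a half-plane) to close the gap. A minor remark: in your interior case you only need $\liminf\angle(\eta_i,\uparrow_{x_i}^{q_i})\ge\angle(\hat\eta,\uparrow_{\hat x}^{\hat q})$, which is standard lower semicontinuity; the full convergence of directions you invoke is stronger than necessary.
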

\begin{proof}
Suppose the contrary. 
Then, there are a sequence $\de_i \to 0$ and $x_i \in B(p, \de_i) - \partial X$ such that $|\nabla d_{\partial X}|(x_i) \le \cos \e$.
Taking a subsequence, we consider the limit $x_\infty \in B(o_p, 1) \subset T_p X$ of $x_i$ under the convergence $(\frac{1}{\de_i} X, p) \to (T_p X, o_p)$.

If $|\partial T_p X, x_\infty| > 0$, 
then 
\[
|\nabla d_{\partial T_p X}|(x_\infty) > - \cos \left( \pi - \frac{\e}{2} \right) = \cos \left( \frac{\e}{2} \right). 
\]
By the lower-semicontinuity of angles, 
\[
\liminf_{i \to \infty} |\nabla d_{\partial X}|(x_i) \ge |\nabla d_{\partial T_p X}|(x_\infty).
\]
This implies a contradiction.

When $|\partial T_p X, x_\infty| = 0$, 
we take $y_\infty \in B(o_p , 1) - U(\partial T_p X, 1 /2)$
such that 
\[
\frac{|\partial T_p X, y_\infty|}{|x_\infty, y_\infty|} 
= 
\cos \angle x_\infty y_\infty \partial T_p X > \cos \left( \frac{\e}{2} \right).
\]
We take a sequence $y_i \in B(p, \frac{3 \de_i}{2}) - U(\partial X, \frac{\de_i}{4})$ such that $y_i \to y_\infty$ under the convergence $(\frac{1}{\de_i} X, p) \to (T_p X, o_p)$.
Since the distance function $d_{\partial X}$ is $\lambda$-concave for some $\lambda$ on $\mathrm{int} X$, 
\begin{align*}
\frac{|\partial X, y_i| - |\partial X, x_i|}{|x_i, y_i|}
& \le \frac{\lambda}{2} |x_i y_i| + (d_{\partial X})'_{x_i} (\uparrow^{y_i}_{x_i}) \\
& \le \frac{\lambda}{2} |x_i y_i| + |\nabla d_{\partial X}|(x_i).
\end{align*}
Remark that $x_i y_i \subset \mathrm{int}\, X$ (Remark \ref{interior is convex}, later).
It is obvious that
\[
\frac{|\partial X, y_i| - |\partial X, x_i|}{|x_i y_i|} 
\to 
\frac{|\partial T_p X, y_\infty|}{|x_\infty, y_\infty|} \,\,( \text{as } i \to \infty).
\]
Therefore, we conclude 
\[
\cos (\e /2) \le \cos \e
\]
This is a contradiction.
Therefore, we have the conclusion of Lemma.
\end{proof}

\begin{remark} \label{interior is convex} \upshape
The interior of an Alexandrov space is strictly convex.
In fact, let $p, q \in \mathrm{int}\, M$. 
For every $x, y \in \mathrm{int}\, (pq)$ (the relative interior), $\Sigma_x \equiv \Sigma_y$ (\cite{Pet parallel}). 
If $x$ is near $p$ then $x \in \mathrm{int}\, M$, and hence $\partial \Sigma_x = \emptyset$. 
Then $\partial \Sigma_y = \emptyset$.
Therefore, $pq \subset \mathrm{int}\, M$.
\end{remark}

\begin{corollary} \label{global local regularity of boundary function}
For any $\e, s >0$, there is $\de_1 > 0$ such that 
\[
|\nabla d_{\partial X}| > \cos \e 
\]
on $B(\partial X, \de_1) - ( \partial X \cup U(S_\e(\partial X), s))$.
\end{corollary}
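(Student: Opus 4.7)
The plan is to deduce the corollary from Lemma \ref{local regularity of boundary function} by a standard compactness and contradiction argument, using the fact that $\partial X$ is compact and that $S_\e(\partial X)$ is a finite (hence closed) subset of $\partial X$.

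First I would suppose the conclusion fails. Then there exist sequences of positive numbers $\de_i \searrow 0$ and points $x_i \in X \setminus \partial X$ satisfying
\[
d(x_i, \partial X) < \de_i, \qquad d(x_i, S_\e(\partial X)) \geq s, \qquad |\nabla d_{\partial X}|(x_i) \leq \cos \e.
\]
Pick foot points $y_i \in \partial X$ with $|x_i y_i| = d(x_i, \partial X) < \de_i$. Since $\partial X$ is compact, after passing to a subsequence we may assume $y_i \to y_\infty \in \partial X$. The triangle inequality gives $d(y_i, S_\e(\partial X)) \geq s - \de_i$, so in the limit $d(y_\infty, S_\e(\partial X)) \geq s > 0$, hence $y_\infty \notin S_\e(\partial X)$. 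Thus $y_\infty \in R_\e(\partial X)$ is an $\e$-regular boundary point.

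Next I would apply Lemma \ref{local regularity of boundary function} at $y_\infty$ to obtain $\de_\infty > 0$ such that $|\nabla d_{\partial X}|(x) > \cos \e$ for every $x \in B(y_\infty, \de_\infty) \setminus \partial X$. Because $|x_i y_\infty| \leq |x_i y_i| + |y_i y_\infty| \to 0$ as $i \to \infty$, for all sufficiently large $i$ we have $x_i \in B(y_\infty, \de_\infty) \setminus \partial X$, contradicting the assumption $|\nabla d_{\partial X}|(x_i) \leq \cos \e$. Since the argument only uses compactness of $\partial X$, discreteness of $S_\e(\partial X)$, and the already established pointwise statement, I do not anticipate any serious obstacle; the only thing to be careful about is ensuring the limit foot point $y_\infty$ lies in $R_\e(\partial X)$, which is guaranteed by the quantitative separation $d(x_i, S_\e(\partial X)) \geq s$.
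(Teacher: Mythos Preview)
Your argument is correct. The paper's own proof is a one-line appeal to Lemma \ref{local regularity of boundary function} together with the Lebesgue covering lemma, i.e.\ it covers the compact set $\partial X \setminus U(S_\e(\partial X), s)$ by the balls provided pointwise by that lemma and extracts a uniform radius; your sequential-compactness contradiction argument is the other standard way to pass from a pointwise to a uniform statement and is entirely equivalent in spirit and difficulty.
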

\begin{proof}
The proof is provided by Lemma \ref{local regularity of boundary function} and Lebesgue covering lemma.
\end{proof}

\begin{lemma}\label{strained boundary}
For any $\e > 0$, there is $\de_2 > 0$ such that 
\[
B(\partial X, \de_2) - \partial X
\]
is $(2, \e)$-strained.
\end{lemma}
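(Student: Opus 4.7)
The plan is to cover $B(\partial X,\de_2)-\partial X$ by two types of regions and verify $(2,\e)$-strainedness on each. Fix $\e':=\e/100$. The set $S_{\e'}(\partial X)=\{p_1,\ldots,p_m\}$ is finite, so I would choose $s>0$ uniformly small enough that the tangent cone approximations at each $p_j$ are accurate on $U(p_j,2s)$, and then take $\de_2\ll s$.

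For the first region, $y\in B(\partial X,\de_2)-(\partial X\cup U(S_{\e'},s))$, I would construct a strainer explicitly. The nearest boundary point $q=q(y)$ satisfies $L(\Sigma_q)=\pi$ by first variation: the two tangent directions $\tau_1,\tau_2\in\partial\Sigma_q$ of $\partial X$ must each make angle $\ge \pi/2$ with $\uparrow_q^y\in\mathrm{int}\,\Sigma_q$ (else a first-variation derivative along $\partial X$ would be negative, contradicting $q$ being nearest), and on the arc $\Sigma_q$ of length $\le\pi$ with $\tau_i$ at its endpoints, the two angular distances sum to $L(\Sigma_q)\ge\pi$, forcing $L(\Sigma_q)=\pi$. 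Hence $q\in R_{\e'}$, and Corollary \ref{global local regularity of boundary function} yields $|\nabla d_{\partial X}|(y)>\cos\e'$. I then set $p_1^-:=q$; $p_1^+:=\alpha(t_0)$ for a gradient curve $\alpha$ of $d_{\partial X}$ starting at $y$; and $p_2^\pm$ to be boundary points at arc-distance $s_0$ from $q$ on either side, with $r:=d(y,\partial X)\ll t_0\ll s_0\ll \e$. The key estimate $\wangle p_2^\pm y q\approx \pi/2$ follows from the comparison triangle with sides $r$, $s_0$, and $|y p_2^\pm|\approx\sqrt{r^2+s_0^2}$ (the latter using the $\theta(\e')$-perpendicularity of $\uparrow_q^y$ to $\partial X$ at $q$, which is a consequence of $L(\Sigma_q)=\pi$), giving $\cos\wangle\approx r/s_0\to 0$. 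The remaining strainer inequalities follow from the triangle inequality on $\Sigma_y$ together with $\wangle\ge\angle-\theta(r)$ for small triangles under curvature $\ge -1$.

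For the second region, $y\in U(p_j,s)\cap(B(\partial X,\de_2)-\partial X)$, I would work in the tangent cone $T_{p_j}X=K(\Sigma_{p_j})$, a flat sector of opening $L_j:=L(\Sigma_{p_j})\le \pi-\e'$. Rescaling $(|yp_j|^{-1}X,p_j)\to (T_{p_j}X,o)$ sends $y$ to a unit vector $\hat y\in T_{p_j}X$. An elementary sector computation shows that a unit vector at $o$ has maximum distance $\sin(L_j/2)<1$ from $\partial T_{p_j}X$, so $\hat y$ cannot be at the apex. When $\hat y$ is an interior point of the sector (the subcase $\liminf|y,\partial X|/|yp_j|>0$), it is a manifold point with $\Sigma_{\hat y}T_{p_j}X=S^1$ of length $2\pi$; by continuity of $L(\Sigma_\cdot)$ at manifold points of the limit (and lower semicontinuity in general), $L(\Sigma_y X)>2\pi-\e$ for $s$ small. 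When instead $|y,\partial X|/|yp_j|\to 0$, so that $\hat y$ limits to a boundary ray, I perform a further rescaling at the nearest boundary point $q(y)$ (which lies in $R_{\e'}$ by the first-variation argument above); the limit is a half-plane with $y$ at perpendicular unit distance from the boundary line, an interior manifold point, so $L(\Sigma_y)\to 2\pi$ again. In either subcase, $L(\Sigma_y)>2\pi-\e$, from which $(2,\e)$-strainedness follows by distributing four directions evenly in $\Sigma_y$ and extending by geodesics of small length.

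The main obstacle I expect is coordinating the scale parameters $\de_2\ll t_0\ll s_0\ll s\ll \e$ uniformly across the finite set $\{p_1,\ldots,p_m\}$, and handling the degenerate second-region subcase where $y$ collapses onto $\partial X$ faster than onto $p_j$. The finiteness of $S_{\e'}(\partial X)$ secures uniformity in $s$; the degenerate subcase is handled by the iterated rescaling just described, whose rigorous justification requires combining two Gromov--Hausdorff limits and tracking the nearest-boundary-point correspondence throughout.
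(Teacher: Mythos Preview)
Your approach is essentially correct but far more elaborate than the paper's. The paper dispatches the lemma in three lines: for each $p\in\partial X$ there is $\de_p>0$ such that $d_p$ has no $\e'$-critical point on $B(p,\de_p)\setminus\{p\}$ (a general fact valid at any point of an Alexandrov space), so every $x\in B(p,\de_p)\setminus\partial X$ is $(1,\e')$-strained; since such $x$ is an interior point of the surface, Theorem~\ref{regular interior} immediately upgrades this to $(2,\theta(\e'))$-strained; compactness of $\partial X$ then gives a uniform $\de_2$. No decomposition into regular and singular boundary pieces is needed, because the $(1,\e')$-strainer is supplied by $d_p$ for \emph{any} nearby boundary point $p$, whether or not $p\in S_{\e'}(\partial X)$.

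What your route buys is an explicit second strainer pair, whereas the paper hides this inside Theorem~\ref{regular interior}. Note, however, that your claimed estimate $|y\,p_2^\pm|\approx\sqrt{r^2+s_0^2}$ is only half-justified as written: the perpendicularity $\angle_q(\uparrow_q^y,\tau_i)=\pi/2$ together with the Alexandrov inequality $\angle_q\ge\wangle_q$ yields only the \emph{upper} bound $|y\,p_2^\pm|^2\le r^2+s_0^2+o(\cdot)$. The matching lower bound does not follow from comparison at $q$; you must instead use the triangle inequality $|y\,p_2^\pm|\ge |q\,p_2^\pm|-|yq|\approx s_0-r$ and then argue inside the circle $\Sigma_y$ (where $L(\Sigma_y)>2\pi-2\e'$ forces the four directions into an almost-orthonormal configuration) to get all the cross-angles $\wangle p_1^\pm y\, p_2^\pm>\pi/2-\e$. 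This is recoverable, but once you have $L(\Sigma_y)>2\pi-2\e'$ you are effectively reproving Theorem~\ref{regular interior} by hand, and it is cleaner to cite it directly and dispense with both the explicit $p_2^\pm$ and the entire second-region tangent-cone analysis.
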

\begin{proof}
For any $p \in \partial X$, there is $\de_p > 0$ such that 
\[
B(p, \de_p) - \{p\}
\]
has no $\e'$-critical point for $d_p$, where, $\e' \ll \e$.
Therefore, $B(p, \de_p) - \partial X$ is $(1, \e')$-strained, and hence, this is $(2, \e)$-strained.
Since $\partial X$ is compact, there is $\de > 0$ such that,
for any $p \in \partial X$, there exists $q \in \partial X$ with $B(p, \de) \subset B(q, \de_q)$.
Therefore, $B(\partial X, \de) - \partial X$ is $(2, \e)$-strained.
\end{proof}


From now on, we use the notation $\wangle(A; B, C)$ defined as follows. 
Let $A$, $B$ and $C$ be positive numbers satisfying a part of triangle inequality: $B + C \ge A$ and $A + C \ge B$.
If $A +B \ge C$, then taking a geodesic triangle $\triangle abc$ in the hyperbolic plane $\mathbb H^2$ with side lengths $|ab| = C$, $|bc| = A$ and $|ca| = B$, 
we set $\wangle(A; B,C) : = \angle b a c$.
Otherwise, $\wangle (A; B, C) := 0$.

Let us start to construct a division of $C \subset \partial X$ to construct an $\e$-regular covering.
Let us fix a small positive number $\e >0$.
\begin{lemma} \label{lemma angle 1}
For any $p \in \partial X$, there is $s > 0$ such that 
for any $q \in B(p, s) \cap \partial X - \{p\}$ and $x \in \widehat{p q} - (\{q\} \cup U(p, |pq| /2))$, we have 
\[
\wangle (|p q|; |p x|, L(\widehat{x q})) > \pi - \e.
\]
Here, $\widehat{p q}$ is an arc joining $p$ and $q$ in $\partial X$.
In particular, 
\[
\wangle p x q > \pi - \e.
\]
\end{lemma}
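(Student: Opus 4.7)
The plan is to argue by contradiction via a rescaling argument at $p$, exploiting the fact that the tangent cone to a two-dimensional Alexandrov space at a boundary point is a flat Euclidean sector whose boundary is the union of two rays emanating from the apex.

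Suppose the main inequality fails at some $p \in \partial X$. Then there exist sequences $q_n \in \partial X \setminus \{p\}$ with $q_n \to p$ and $x_n \in \widehat{p q_n} \setminus (\{q_n\} \cup U(p, |p q_n|/2))$ such that $\wangle(|p q_n|; |p x_n|, L(\widehat{x_n q_n})) \le \pi - \e$. Set $s_n := |p q_n|$, so that the rescaled pointed spaces $(\frac{1}{s_n} X, p)$ converge in the pointed Gromov--Hausdorff topology to $(T_p X, o)$. Since $\dim X = 2$ and $p \in \partial X$, the space of directions $\Sigma_p$ is a compact interval of length $\le \pi$, so $T_p X = K(\Sigma_p)$ is a flat two-dimensional Euclidean sector whose boundary $\partial T_p X$ consists of two rays joined at $o$. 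Passing to subsequences, $q_n \to \tilde q$ and $x_n \to \tilde x$ in $T_p X$ with $|\tilde p \tilde q| = 1$ and $|\tilde p \tilde x| \ge 1/2$, and $\tilde q, \tilde x \in \partial T_p X$ because $q_n, x_n \in \partial X$. The arc $\widehat{p q_n}$ is connected in $\partial X$, starts at $p$, and contains $x_n$ and $q_n$; its rescaled Hausdorff limit is therefore connected in $\partial T_p X$ and contains $o$, $\tilde x$, $\tilde q$, forcing all three to lie on one of the two boundary rays. Thus $\tilde p$, $\tilde x$, $\tilde q$ are collinear along that ray with $|\tilde p \tilde x| + |\tilde x \tilde q| = 1$ (the upper bound $|\tilde p \tilde x| \le 1$ follows from $|p x_n| \le L(\widehat{p x_n}) \le L(\widehat{p q_n})$ together with the arc length convergence below).

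The rescaled arc lengths also converge: $L(\widehat{p q_n})/s_n \to 1$ and $L(\widehat{p x_n})/s_n \to |\tilde p \tilde x|$, hence $L(\widehat{x_n q_n})/s_n \to |\tilde x \tilde q|$. The hyperbolic comparison angle $\wangle(A; B, C)$ approaches the Euclidean comparison angle with the same side-length ratios as $A \to 0$ (with cosine error $O(A^2)$ from the Taylor expansion of $\cosh$, $\sinh$). Applying the Euclidean law of cosines to the limit sides $1$, $|\tilde p \tilde x|$, $|\tilde x \tilde q|$ subject to the degeneracy $|\tilde p \tilde x| + |\tilde x \tilde q| = 1$ yields limiting comparison angle $\pi$ at the vertex opposite side $1$; the subcase $|\tilde x \tilde q| = 0$ is handled uniformly using the asymptotic equality $|p q_n| - |p x_n| = L(\widehat{x_n q_n})(1 + o(1))$, which follows from collinearity in $T_p X$ together with arc length convergence. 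This contradicts $\wangle \le \pi - \e$ and proves the main inequality. The final assertion $\wangle p x q > \pi - \e$ follows from an identical rescaling argument, with $|x_n q_n|/s_n \to |\tilde x \tilde q|$ replacing the arc length limit (alternatively, from monotonicity of $\wangle(A;B,C)$ in $C$ in the near-degenerate regime).

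The step requiring the most care is the arc length convergence $L(\widehat{p q_n})/s_n \to 1$, equivalently the existence of a unit tangent direction to $\partial X$ at $p$ from each side. This is a standard fact from two-dimensional Alexandrov geometry: a unit-speed arc-length parametrization of $\partial X$ approaching $p$ from a fixed side, rescaled by $1/s_n$, converges to a unit-speed parametrization of one of the two boundary rays of the flat sector $T_p X$, and the rescaled arc length agrees with the Euclidean distance in the limit.
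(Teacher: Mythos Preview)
Your overall strategy---contradiction and rescaling to the tangent cone $T_pX$---matches the paper's. The non-degenerate subcase $\tilde x\neq\tilde q$ is fine: arc-length convergence $L(\widehat{pq_n})/s_n\to 1$ follows from the fact that $\partial X$ is a quasigeodesic with a well-defined direction at $p$, and the comparison angle then tends to $\pi$.

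The gap is in the degenerate subcase $\tilde x=\tilde q$. There both $|pq_n|-|px_n|$ and $L(\widehat{x_nq_n})$ are $o(s_n)$, and your claimed asymptotic $|pq_n|-|px_n|=L(\widehat{x_nq_n})(1+o(1))$ asks for their \emph{ratio} to tend to $1$. ``Collinearity in $T_pX$ together with arc length convergence'' only yields $|p\gamma(t)|-t=o(s_n)$ uniformly for $t\in[0,Cs_n]$, hence $(|pq_n|-|px_n|)-(t_n-u_n)=o(s_n)$; this does \emph{not} give $o(t_n-u_n)$ when $t_n-u_n=o(s_n)$. Equivalently, writing $f(t)=|p\gamma(t)|$, you need $(f(t_n)-f(u_n))/(t_n-u_n)\to 1$ with $t_n/u_n\to 1$, which is a second-order statement not implied by $f(t)/t\to 1$ and $f$ being $1$-Lipschitz.

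The paper sidesteps this by invoking the monotonicity of the comparison angle along a quasigeodesic (the comparison theorem for quasigeodesics, \cite{PP QG}): choose $r_n\in\partial X$ beyond $q_n$ with $r_n\to r_\infty$, $|o_pr_\infty|>3/2$, so that
\[
\wangle(|pq_n|;|px_n|,L(\widehat{x_nq_n}))\ \ge\ \wangle(|pr_n|;|px_n|,L(\widehat{x_nr_n})).
\]
The right-hand side is non-degenerate (since $r_\infty\neq x_\infty$) and tends to $\wangle o_p x_\infty r_\infty=\pi$. This is the missing ingredient; once you use it, your argument and the paper's coincide.
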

\begin{proof}
Suppose the contrary. 
Then, there are $p \in \partial X$, $s_i \to 0$, $q_i \in S(p, s_i) \cap \partial X$ and $x_i \in \widehat{p q_i} - (\{q_i\} \cup U(p, |p q_i| /2))$ such that 
\[
\wangle (|p q_i|; |p x_i|, L(\widehat{x_i q_i})) \le \pi - \e.
\]
Taking a subsequence, we may assume that $q_i$, $x_i$ converges to $q_\infty$, $x_\infty$, respectively, under the convergence $(\frac{1}{s_i} X, p) \to (T_p X, o_p)$.
Then, $q_\infty \in \partial T_p X$, $|o_p, q_\infty| = 1$ and $x_\infty \in o_p q_\infty$.

If $x_\infty \neq q_\infty$, then 
\[
\lim_{i \to \infty} \wangle (|p q_i|; |p x_i|, L(\widehat{x_i q_i})) = \wangle o_p x_\infty q_\infty = \pi.
\]
This is a contradiction.

Otherwise, $x_\infty = q_\infty$.
We take $r_\infty \in \partial T_p X$ such that 
\[
q_\infty \in o_p r_\infty, \, |o_p, r_\infty| > 3 /2.
\]
We choose $r_i \in \partial X$ such that $r_i \to r_\infty$ as $i \to \infty$ under the convergence $(\frac{1}{s_i} X, p) \to (T_p X, o_p)$.
Since $\widehat{x_i r_i}$ is a quasigeodesic containing $q_i$, by the comparison theorem for quasigeodesics \cite{PP QG}, we have 
\[
\wangle (|p q_i|; |p x_i|, L(\widehat{x_i q_i})) \ge 
\wangle (|p r_i|; |p x_i|, L(\widehat{x_i r_i})).
\] 
Since $L(\widehat{x_i r_i}) / s_i \to |x_\infty r_\infty|$ (\cite{PP QG}), we obtain
\[
\wangle (|p r_i|; |p x_i|, L(\widehat{x_i r_i})) \to \wangle o_p x_\infty r_\infty = \pi.
\]
This is a contradiction.
\end{proof}

\begin{lemma} \label{lemma angle 2}
For $p \in R_\e(\partial X)$, there is $s > 0$ such that for any $q \in B(p, s) \cap \partial X - \{p\}$ and $x \in \widehat{p q} - \{p, q\}$, we have
\[
\wangle (|p q|; |p x|, L(\widehat{x q})) > \pi -\e \text{ or } 
\wangle (|p q|; L(\widehat{p x}), |x q|)) > \pi -\e.
\]
In particular, $\wangle p x q > \pi -\e$.
\end{lemma}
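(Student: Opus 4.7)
The plan is to argue by contradiction and to reduce to Lemma \ref{lemma angle 1} applied at the two endpoints of the boundary arc. Suppose the conclusion fails at some $p \in R_\e(\partial X)$. Then there exist sequences $q_i \to p$ in $\partial X - \{p\}$ and $x_i \in \widehat{pq_i} - \{p, q_i\}$ such that both
\[
\wangle(|pq_i|;|px_i|,L(\widehat{x_iq_i})) \le \pi -\e \quad(*)
\text{ and }
\wangle(|pq_i|;L(\widehat{px_i}),|x_iq_i|) \le \pi -\e \quad(**)
\]
for all $i$. Passing to a subsequence, I would split into two cases according to which half of the arc $x_i$ lies in: either $|px_i| \ge |pq_i|/2$ or $|x_iq_i| \ge |pq_i|/2$.

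In the first case, the condition $|px_i| \ge |pq_i|/2$ is exactly $x_i \notin U(p,|pq_i|/2)$, and Lemma \ref{lemma angle 1} applied at the fixed point $p$ with its constant $s_p$ gives $\wangle(|pq_i|;|px_i|,L(\widehat{x_iq_i})) > \pi - \e$ as soon as $|pq_i| < s_p$, directly contradicting $(*)$. The second case is genuinely symmetric: I would apply Lemma \ref{lemma angle 1} instead at $q_i$, with $p' := p$ and $x := x_i$, using that $|x_iq_i| \ge |q_ip|/2$ is the condition $x_i \notin U(q_i,|q_ip|/2)$. Since $\wangle(A;B,C)=\wangle(A;C,B)$, the resulting bound $\wangle(|pq_i|;|q_ix_i|,L(\widehat{x_ip})) > \pi - \e$ contradicts $(**)$. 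The catch is that Lemma \ref{lemma angle 1} provides a constant $s = s_q$ that a priori depends on the basepoint $q$, and here the basepoint varies.

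The main obstacle is therefore a uniform version of Lemma \ref{lemma angle 1}: there exist $s_0 > 0$ and a neighborhood $N$ of $p$ in $\partial X$ such that Lemma \ref{lemma angle 1} holds at every $q \in N$ with constant $s_0$. This is where the hypothesis $p \in R_\e(\partial X)$ is essential. Since $S_\e(\partial X)$ is a finite set, $R_\e(\partial X)$ is open, so a whole neighborhood $N$ of $p$ in $\partial X$ lies in $R_\e(\partial X)$; in particular each $q \in N$ has $L(\Sigma_q) > \pi - \e$ and $T_q X = K(\Sigma_q)$ is a cone over an arc of length $> \pi - \e$, exactly as at $p$. I would then repeat the blow-up argument of Lemma \ref{lemma angle 1} but with a varying basepoint: if the uniform claim failed, there would exist $q_i \in N$ with $q_i \to p$, witnesses $p_i$ with $t_i := |q_ip_i| \to 0$, and $x_i$ with $|q_ix_i| \ge t_i/2$ violating the inequality. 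Passing to the pointed Gromov-Hausdorff limit of $(\tfrac{1}{t_i}X, q_i)$, the rescaled boundary arc $\widehat{q_ip_i}$ converges to a straight segment along one of the two rays of the boundary of the limit cone, forcing the limiting comparison angle to be $\pi$ and contradicting $\le \pi -\e$. The delicate point will be handling the two regimes $|pq_i|/t_i$ bounded and $|pq_i|/t_i \to \infty$ in a unified way, because in the latter regime the limit is the tangent cone at a non-vertex point of $T_pX$ (itself a half-plane by $R_\e$) rather than $T_pX$; openness of $R_\e(\partial X)$ guarantees that the same half-plane structure is available at each $q_i$, which is precisely what is needed to carry the $\wangle \to \pi$ conclusion across both regimes.
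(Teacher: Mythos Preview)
Your approach is workable but takes a genuinely different route from the paper, and the detour you flag as the ``main obstacle'' is exactly what the paper's argument is designed to avoid.

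You split according to which endpoint $x_i$ is far from and want to invoke Lemma~\ref{lemma angle 1} at that endpoint. For the endpoint $p$ this is immediate; for the moving endpoint $q_i$ you need the constant in Lemma~\ref{lemma angle 1} to be uniform in a neighborhood of $p$. That uniformity can indeed be proved along the lines you sketch, but it is essentially a second lemma requiring its own varying-basepoint blow-up with a regime split, and you have not carried it out. The paper bypasses this entirely. It rescales once at the fixed point $p$ and splits according to whether $x_\infty \neq o_p$ or $x_\infty = o_p$ in $T_pX$. The first case is handled exactly as in Lemma~\ref{lemma angle 1}. In the second case $x_\infty = o_p$ (your ``$x_i$ close to $p$'' regime), rather than switching basepoint to $q_i$, the paper extends the boundary quasigeodesic \emph{past} $p$: it picks $r_\infty \in \partial T_pX \cap S(o_p,1) - \{q_\infty\}$ on the other boundary ray and a lift $r_i \in \partial X$, so that the boundary arc $\widehat{x_i r_i}$ passes through $p$. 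Quasigeodesic comparison with observer $q_i$ then gives
\[
\wangle(|pq_i|; L(\widehat{px_i}), |x_iq_i|) \ge \wangle(|r_iq_i|; L(\widehat{r_ix_i}), |x_iq_i|) \to \angle q_\infty o_p r_\infty = L(\Sigma_p) > \pi - \e,
\]
contradicting $(**)$. The hypothesis $p \in R_\e(\partial X)$ is used in a single clean step: it guarantees the second boundary ray exists at angle $> \pi - \e$ from the first. Your approach uses $R_\e$ only to get openness and then re-derives the whole picture at nearby points; the paper's uses it once, at $p$, and never leaves.
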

\begin{proof}
Suppose the contrary. Then there are $p \in \partial X$, $s_i \to 0$, $q_i \in S(p, s_i) \cap \partial X$ and $x_i \in \widehat{p q_i} - \{p, q_i\}$ such that 
\begin{align}
& \wangle (|p q_i|; |p x_i|, L(\widehat{x_i q_i})) \le \pi -\e, \text{ and } \label{angle eq1} \\
& \wangle (|p q_i|; L(\widehat{p x_i}), |x_i q_i|)) \le \pi -\e. \label{angle eq2}
\end{align}
We may assume that $q_i$ and $x_i$ converge to $q_\infty$ and $x_\infty$, respectively, under the convergence $(\frac{1}{s_i} X, p) \to (T_p X, o_p)$.
Then, $q_\infty \in \partial T_p X$, $|o_p q_\infty| = 1$ and $x_\infty \in o_p q_\infty$.

If $q_\infty \neq o_p$, then by the same argument of the proof of Lemma \ref{lemma angle 1}, we have 
\[
\wangle (|p q_i|; |p x_i|, L(\widehat{x_i q_i})) \to \pi.
\]
This is a contradiction to \eqref{angle eq1}.

Otherwise, $q_\infty = o_p$. 
We take $r_\infty \in \partial T_p X \cap S(o_p, 1) - \{q_\infty\}$ and $r_i \in \partial X$ such that $r_i \to r_\infty$.
Since $\widehat{x_i r_i}$ is a quasigeodesic containing $p$, by the comparison theorem for quasigeodesics, we have 
\[
\wangle (|p q_i|; L(\widehat{p x_i}), |x_i q_i|)) \ge 
\wangle (|r_i q_i|; L(\widehat{r_i x_i}), |x_i q_i|)).
\]
Since $L(\widehat{r_i x_i}) / s_i \to |r_\infty o_p|$, we obtain 
\[
\wangle (|r_i q_i|; L(\widehat{r_i x_i}), |x_i q_i|)) \to \wangle q_\infty o_p r_\infty > \pi - \e.
\]
This is a contradiction to \eqref{angle eq2}.
\end{proof}

\begin{definition} \upshape
Let $\gamma = \widehat{pq}$ be an arc joining $p$ and $q$ in $\partial X$.
We say that $\gamma$ is {\it strictly $\e$-strained by $\partial \gamma = \{p ,q\}$} if 
\begin{align} \label{angle eq3}
\wangle p x q > \pi - \e \text{ for all } x \in \mathrm{int}\, \gamma,
\end{align}
and if, setting $\xi$ and $\eta$ the directions of quasigeodesics $\widehat{x p}$ and $\widehat{x q}$ at $x$, respectively, we have
\begin{align}\label{angle eq4}
\angle (\xi, \uparrow_x^p) < \e \text{ and } \angle (\eta, \uparrow_x^q) < \e.
\end{align}
\end{definition}

Remark that an arc $\widehat{p q}$ in Lemma \ref{lemma angle 2} is strictly $\e$-strained by $\{p, q\}$.
Indeed, we assume that $\wangle (|p q|; |p x|, L(\widehat{x q})) > \pi -\e$ for some $x \in \mathrm{int}\, \widehat{p q}$.
We obtain $\wangle p x q \ge \wangle (|p q|; |p x|, L(\widehat{x q})) > \pi -\e$.
Let $\xi$ and $\eta$ be the directions of $\widehat{x p}$ and $\widehat{x q}$ at $x$, respectively.
Since $\dim X = 2$, $\xi$ and $\eta$ attain the diameter of $\Sigma_x$, i.e.,
\[
\angle (\xi, \eta) = L(\Sigma_x).
\]
Hence, we have 
\begin{align*}
\angle (\xi, \eta) &= 
\angle (\xi, \uparrow_x^p) + \angle (\uparrow_x^p, \eta) \\
& = 
\angle (\xi, \uparrow_x^p) + \angle (\uparrow_x^p, \uparrow_x^q) + \angle (\uparrow_x^q, \eta) \\
&\ge \angle (\uparrow_x^p, \uparrow_x^q) 
\ge \wangle p x q > \pi - \e.
\end{align*}
Since $L(\Sigma_p) \le \pi$, we obtain
\[
\angle (\xi, \uparrow_x^p) + \angle (\uparrow_x^q, \eta) < \e.
\]
In particular, \eqref{angle eq4} holds.

Let us fix a component $C$ of $\partial X$.
By Lemma \ref{lemma angle 1} and $\sharp S_\e (C) < \infty$, there is $s > 0$ such that for every $p \in S_\e (C)$, taking $q^+, q^- \in S(p, s) \cap C$, we have 
\begin{align*}
\wangle (|p q^\pm|; |p x|, L(\widehat{x q^\pm})) > \pi -\e 
\end{align*}
for all $x \in \beta_p^\pm - (U(p, s /2) \cup \{q^\pm\})$, where $\beta_p^\pm := \widehat{p q^\pm}$.

Let us consider the set
\begin{equation} \label{division of C eq1}
C - U(S_\e(C), s) = C - \bigcup_{p \in S_\e (C)} \mathrm{int}\, (\beta_p^+ \cup \beta_p^-).
\end{equation}
This consists of finitely many arcs.
We prove that each component $K$ of it is divided into finitely many strictly $\e$-strained arcs.
\begin{lemma} \label{division of K}
Let $K$ be an arc in $R_\e(C)$ with $\partial K = \{p, q\}$.
There are consecutive arcs $\gamma_\alpha = \widehat{p_\alpha p_{\alpha + 1}}$, $\alpha =1, 2, \dots, n$ with $p_1 = p$ and $p_{n + 1} = q$ such that each $\gamma_\alpha$ is strictly $\e$-strained by $\{p_\alpha, p_{\alpha + 1}\}$.
\end{lemma}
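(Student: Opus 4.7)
The plan is to subdivide $K$ into finitely many consecutive subarcs that are short enough for Lemma~\ref{lemma angle 2} (applied at the left endpoint as the base point) to force $\wangle p_\alpha x p_{\alpha+1} > \pi - \e$ on the interior; the remark immediately after Lemma~\ref{lemma angle 2} then promotes this to strict $\e$-strainedness. The key intermediate step is a uniform lower bound on the radius of applicability of Lemma~\ref{lemma angle 2} over the compact set $K$.

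\emph{Uniform radius.} For each $p \in K$, define $s(p) > 0$ to be the supremum of all $s$ for which the conclusion of Lemma~\ref{lemma angle 2} holds on $B(p,s) \cap \partial X$; Lemma~\ref{lemma angle 2} guarantees $s(p) > 0$. I claim $s_0 := \inf_{p \in K} s(p) > 0$. Suppose not; then extract $p_n \in K$ with $s(p_n) \to 0$, along with $q_n \in \partial X$ of distance $|p_n q_n| \le s(p_n) + 1/n \to 0$ and an interior $x_n \in \widehat{p_n q_n}$ witnessing the simultaneous failure of both comparison-angle inequalities in Lemma~\ref{lemma angle 2}. By compactness of $K$ one may assume $p_n \to p_\infty \in K \subset R_\e(\partial X)$, hence also $q_n, x_n \to p_\infty$. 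Rescale by $r_n := |p_n q_n|$; then $(\tfrac{1}{r_n}X, p_\infty)$ converges to the tangent cone $T_{p_\infty}X = K(\Sigma_{p_\infty})$, a flat cone over an arc of length $L(\Sigma_{p_\infty}) > \pi - \e$. After a further subsequence and, if necessary, a reselection of the witness $x_n$ along the arc so that $\bar x$ does not collapse onto an endpoint, the rescaled limits $\bar p, \bar x, \bar q$ are pairwise distinct points of $\partial T_{p_\infty}X$ with $\bar x$ between $\bar p$ and $\bar q$ in the boundary order and $|\bar p \bar q| = 1$. A direct flat-cone computation shows that any such ordered boundary triple satisfies $\wangle \bar p \bar x \bar q \ge L(\Sigma_{p_\infty}) > \pi - \e$; combined with $L(\widehat{p_n x_n})/r_n \to |\bar p \bar x|$ and $L(\widehat{x_n q_n})/r_n \to |\bar x \bar q|$ (valid because $p_\infty$ is a regular boundary point, so boundary arc length and chord length have ratio tending to $1$), passing $\wangle(|p_n q_n|;\,|p_n x_n|,\,L(\widehat{x_n q_n}))$ to the limit produces a value exceeding $\pi - \e$ for large $n$, contradicting its assumed failure.

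\emph{Subdivision.} Choose consecutive points $p = p_1, p_2, \ldots, p_{n+1} = q$ on $K$ so that $|p_\alpha p_{\alpha+1}| < s_0$ for every $\alpha$; finitely many points suffice since $K$ has finite arc length. For each $\alpha$, Lemma~\ref{lemma angle 2} applied at $p_\alpha$ with $q := p_{\alpha+1}$ yields $\wangle p_\alpha x p_{\alpha+1} > \pi - \e$ at every interior $x \in \gamma_\alpha$. The remark following Lemma~\ref{lemma angle 2}, which uses $\dim X = 2$ together with $L(\Sigma_x) \le \pi$, then upgrades this to the angle bounds $\angle(\xi, \uparrow_x^{p_\alpha}) < \e$ and $\angle(\eta, \uparrow_x^{p_{\alpha+1}}) < \e$ on the quasi-geodesic tangent directions $\xi, \eta$ of $\widehat{xp_\alpha}$ and $\widehat{xp_{\alpha+1}}$ at $x$. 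Hence every $\gamma_\alpha$ is strictly $\e$-strained by $\{p_\alpha, p_{\alpha+1}\}$, as required.

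The main obstacle is the tangent-cone analysis in the uniformity step, particularly ensuring that the three limit points $\bar p, \bar x, \bar q$ be pairwise distinct (so that the comparison angle converges meaningfully) and that boundary arc length converges to chord length at the regular boundary point $p_\infty$. Both rely on the regularity encoded in $K \subset R_\e(C)$; once the uniform lower bound $s_0$ is secured, the finite subdivision and the pointwise invocation of Lemma~\ref{lemma angle 2} together with its concluding remark are routine.
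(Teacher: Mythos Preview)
Your overall strategy---establish a uniform lower bound $s_0>0$ for the radius in Lemma~\ref{lemma angle 2} over the compact arc $K$, then subdivide into pieces shorter than $s_0$---is different from the paper's and reasonable in spirit. The paper instead runs a supremum argument: letting $\ell$ be the supremal total arc length over all finite chains of consecutive strictly $\e$-strained subarcs of $K$ starting at $p$, one shows $\ell=L(K)$ and is attained, because Lemma~\ref{lemma angle 2} applied at the limiting endpoint (which lies in $R_\e(\partial X)$) both closes any remaining gap and permits a further extension. No uniform radius is ever needed.

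Your uniformity argument, however, has a genuine gap. You rescale $(\tfrac{1}{r_n}X,p_\infty)$ and correctly obtain $T_{p_\infty}X$ in the limit, but the witnesses $p_n,x_n,q_n$ sit at rescaled distance $|p_np_\infty|/r_n$ from the basepoint, and nothing forces this ratio to stay bounded; if it diverges, the points escape to infinity in the pointed limit and no contradiction is produced. The natural repair is to rescale around $p_n$ instead, but then the limit is not automatically $T_{p_\infty}X$, and you must argue separately that it is a two-dimensional flat cone over an arc of length $>\pi-\e$ (using that every $p_n$ lies in $R_\e(\partial X)$). After that, the endpoint degenerations $\bar x=\bar p$ or $\bar x=\bar q$ should be handled by the quasigeodesic-extension device from the proofs of Lemmas~\ref{lemma angle 1} and~\ref{lemma angle 2} (extend the boundary arc past the collapsed endpoint and invoke the comparison inequality for quasigeodesics), not by ``reselecting'' the witness $x_n$: the point $x_n$ is where \emph{both} comparison-angle inequalities fail simultaneously, and moving it along the arc need not preserve that simultaneous failure.
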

\begin{proof}
By repeatedly using Lemma \ref{lemma angle 2}, we have a set $\Phi$ of consecutive arcs starting from $p$ contained in $K$, 
\[
\Phi = \{ \gamma_1, \gamma_2, \dots, \gamma_n \}
\]
such that each $\gamma_\alpha$ is strictly $\e$-strained by $\partial \gamma_\alpha$.
Here, a word ``consecutive arcs starting from $p$'' means that each $\gamma_\alpha$ forms $\gamma_\alpha = \widehat{p_\alpha p_{\alpha + 1}} \subset K$ and $p_1 = p$.

In what follows, $\Phi$ denotes any such finite sequence of arcs as above.
Let us set
\[
L(\Phi) := \sum_{\alpha =1}^n L(\gamma_\alpha). 
\]
We consider the value $\ell := \sup_\Phi L(\Phi)$.
Since $\gamma_\alpha$ are consecutive and contained in $K$, we have $\ell \le L(K)$.
To prove the lemma, we show that there exists $\Phi$ with $L(\Phi) = L(K)$.

If $\ell = L(K)$ then there is $\Phi = \{\gamma_\alpha \}_{1 \le \alpha \le n}$ such that $p_n$ is arbitrary close to $q$. 
If there is $\Phi$ with $L(\Phi) = L(K)$, then the proof is done. Otherwise, by using Lemma \ref{lemma angle 2} for $q$, we can take $\Phi$ such that $\gamma_{n+1} := \widehat{p_n q}$ is strictly $\e$-strained by $\partial \gamma_{n + 1}$. 
Then we obtain an extension 
\[
\tilde \Phi := \Phi \cup \{ \gamma_{n+1} \}
\]
of $\Phi$ with $L(\tilde \Phi) = L(K)$. This is a contradiction. 
Therefore, if $\ell = L(K)$ then there is $\Phi$ attaining $L(\Phi) = L(K)$.

We assume that $\ell < L(K)$. 
By a similar argument as above, we have $\Phi = \{\gamma_\alpha\}_{1 \le \alpha \le n}$ such that $L(\Phi) = \ell$.
Again, by a similar argument as above, we have an extension $\tilde \Phi$ of $\Phi$. 
Hence $L(\tilde \Phi) > \ell$. 
This is a contradiction.
\end{proof}

By Lemma \ref{division of K} and the decomposition \eqref{division of C eq1}, we obtain a division of $C$:
\begin{equation} \label{nice division}
C = 
\left( \bigcup_{p \in S_\e(C)} \beta_p^+ \cup \beta_p^- \right) \cup 
\left( \bigcup_{K}\, \bigcup_{i = 1}^{n_K} \gamma_\alpha^K \right),
\end{equation}
where, $\beta_p^\pm := \widehat{p q^\pm}$ and $K$ denotes any arc component of $C - U(S_\e(C), s)$.
For each $K$, $\gamma_\alpha^K$ ($1 \le \alpha \le n_K$) expresses a strictly $\e$-strained arc by $\partial \gamma_\alpha^K$, obtained in Lemma \ref{division of K}.


By using a division \eqref{nice division} of $C$, 
we prove that the existence of an $\e$-regular covering of $C$.

\begin{lemma} \label{lemma regular covering}
There is an $\e$-regular covering of $C$.
\end{lemma}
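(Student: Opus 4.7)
The plan is to upgrade the division of $C$ recorded in \eqref{nice division} into the structured covering required by Definition \ref{def of regular covering}. I would first enumerate the endpoints of the arcs in \eqref{nice division} consecutively along $C$ as $p_1, p_2, \ldots, p_n$, so that each successive pair is joined by either one of the short arcs $\beta_p^{\pm}$ near an essential singular point $p \in S_\e(C)$, or by one of the strictly $\e$-strained arcs $\gamma_\alpha^K$ produced by Lemma \ref{division of K}. In both cases the arc $\gamma_\alpha := \widehat{p_\alpha p_{\alpha+1}}$ is strictly $\e$-strained by its endpoints, the case of $\beta_p^{\pm}$ being the content of Lemma \ref{lemma angle 1} together with the discussion immediately preceding Lemma \ref{division of K}. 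If necessary I further subdivide so that each $\gamma_\alpha$ is contained in a small metric ball; the process stops after finitely many steps.

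Next, I would choose the radii $r_\alpha > 0$ small enough that the balls $B_\alpha := B(p_\alpha, r_\alpha)$ are pairwise disjoint and $|\nabla d_{p_\alpha}| > 1 - \e$ on $B(p_\alpha, 2 r_\alpha) \setminus \{p_\alpha\}$. The latter regularity is checked by a blow-up at $p_\alpha$: on the tangent cone $T_{p_\alpha} X = K(\Sigma_{p_\alpha})$ the function $d_{o_{p_\alpha}}$ has gradient of norm $1$ at every non-apex point, and this transfers to a small neighborhood of $p_\alpha$ by lower-semicontinuity of angles. I then fix $\de > 0$ with $\de \ll \min_\alpha r_\alpha$ small enough that (a) $B(C, 2\de) \setminus C$ is $(2,\e)$-strained by Lemma \ref{strained boundary}, (b) $|\nabla d_C|(x) > 1 - \e$ for every $x \in B(C, 2\de) \setminus (C \cup \bigcup_\alpha U(p_\alpha, r_\alpha))$ via Corollary \ref{global local regularity of boundary function}, and (c) $B(\gamma_\alpha, \de) \cap B_\beta = \emptyset$ whenever $\beta \notin \{\alpha, \alpha+1\}$. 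Setting $D_\alpha := B(\gamma_\alpha, \de) \setminus \mathrm{int}(B_\alpha \cup B_{\alpha+1})$, conditions (1), (2), (3) of Definition \ref{def of regular covering} are immediate from these choices.

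The substantive verification is condition (4). Given $x \in D_\alpha$, let $y$ be a nearest point of $\gamma_\alpha$ to $x$; the choice $\de \ll r_\alpha$ forces $y \in \mathrm{int}\,\gamma_\alpha$ and $|xy| < \de$. From strict $\e$-straining of $\gamma_\alpha$ at $y$ we obtain $\wangle p_\alpha y p_{\alpha+1} > \pi - \e$, and since $|xy|/\min\{|y p_\alpha|, |y p_{\alpha+1}|\}$ is of order $\de/r_\alpha$, the comparison angles transfer from $y$ to $x$ up to an error absorbable in $\theta(\de/r_\alpha) \ll \e$, yielding $\wangle p_\alpha x p_{\alpha+1} > \pi - \e$. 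For $x \in D_\alpha \setminus C$ with nearest point $y \in C$, the same smallness argument places $y$ in the interior of $\gamma_\alpha$, so $y \in R_\e(C)$, and $|\nabla d_C|(x) > 1 - \e$ is exactly part (b) above.

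The hard part, and the one I expect to dominate the work, is showing $|\wangle p_\alpha x y - \pi/2| < \e$ together with the analogous inequality for $p_{\alpha+1}$. I would argue by contradiction and compactness: suppose sequences $\e_i \to 0$, $X_i$, $\gamma_{\alpha,i}$, $\de_i \to 0$, $x_i$ and nearest points $y_i$ witnessed a failure. Rescaling by $1/|x_i y_i|$ and blowing up at $y_i$, the ambient space converges to a flat half-plane (because $y_i \in R_{\e_i}(\partial X_i)$ is an almost-smooth boundary point), $C$ blows up to a straight line, the nearest-point condition plus $|\nabla d_C|(x_i) \to 1$ forces the limit of $x_i$ to lie on the interior ray perpendicular to that line at $o_{y_\infty}$, and $p_{\alpha,i}, p_{\alpha+1,i}$ escape to infinity along opposite boundary directions by the direction statement in the definition of strict $\e$-straining. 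In that Euclidean limit both comparison angles are exactly $\pi/2$, contradicting the assumption. The technical care lies in making this blow-up argument uniform over the compact locus $\gamma_\alpha \setminus U(\{p_\alpha, p_{\alpha+1}\}, r_\alpha/2)$ and in coordinating the three small parameters $\de$, $|xy|$, and the defect $1 - |\nabla d_C|(x)$; this is ultimately what dictates the choices $\de \ll r_\alpha$ and the use of Corollary \ref{global local regularity of boundary function}.
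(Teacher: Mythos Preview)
Your overall scheme coincides with the paper's: feed the division \eqref{nice division} into Definition \ref{def of regular covering} by placing a small ball $B_\alpha$ at each division point and a thin tube $D_\alpha$ around each arc, with $\de \ll \min_\alpha r_\alpha$. The paper keeps the essential singular centres $p\in S_\e(C)$ separate, using radius $s/2$ there and a uniform $r$ elsewhere, but your unified enumeration works the same way. One small correction: the arcs $\beta_p^{\pm}$ adjacent to a point $p\in S_\e(C)$ are \emph{not} strictly $\e$-strained by their endpoints on the half near $p$; Lemma \ref{lemma angle 1} only controls the half near $q^{\pm}$. This does no harm because the ball $B_p$ swallows the uncontrolled half, so $D_p^{\pm}$ sits entirely in the good region.

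Where you genuinely diverge from the paper is the verification of the perpendicularity $|\wangle p_\alpha x y - \pi/2| < \e$ in condition (4). The paper does not blow up. It uses \eqref{angle eq4} directly: at the foot $y\in C$ the two boundary directions $\xi,\eta$ span $\Sigma_y$, first variation along the quasigeodesic $C$ forces $\angle(\uparrow_y^x,\xi)=\angle(\uparrow_y^x,\eta)=\pi/2$ exactly, and \eqref{angle eq4} then gives $|\angle p_\alpha y x - \pi/2|<\e$; the thinness $|xy|\ll r_\alpha$ transfers this to the comparison angle at $x$ with a further loss absorbed into the constant (the paper records $2\e$). This is shorter than your rescaling, though your route is also valid.

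Your contradiction setup needs tightening. The lemma fixes $X$ and $\e$; letting $\e_i\to 0$ and $X_i$ vary proves a different (stronger) statement and is not what is needed. The clean version is: fix $X,\e$ and the division, suppose no $\de$ works, extract $\de_i\to 0$ and witnesses $x_i,y_i$; then $y_i\to y_\infty$ in some $\gamma_\alpha$ away from the endpoints, and $\bigl(\tfrac{1}{|x_iy_i|}X,y_i\bigr)\to (T_{y_\infty}X,o)$. To know this limit is an exact half-plane you must argue $L(\Sigma_{y_\infty})=\pi$, not merely $>\pi-\e$; this follows because the limit point $o$ inherits the nearest-point property and a cone of angle $<\pi$ admits no interior point whose boundary foot is the apex. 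With that in hand, the limit angle is within $\e$ of $\pi/2$ (the directions to $p_\alpha,p_{\alpha+1}$ are only $\e$-close to the boundary rays by \eqref{angle eq4}, not exactly along them), giving the bound with a harmless constant, as in the paper's $2\e$.
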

\begin{proof}
Let us fix a division of $C$ obtained as \eqref{nice division}.
Fixing a component $K$, we write $n = n_K$, $\gamma_\alpha = \gamma_\alpha^K$.
Each $\gamma_\alpha$ forms $\gamma_\alpha = \widehat{p_\alpha p_{\alpha + 1}}$.
We take a small positive number $r$ such that 
\begin{align}
& |\nabla d_{p_\alpha}| >1 -\e \text{ on } B(p_\alpha, 2 r) - \{p_\alpha \} \text{ for all } \alpha,  \label{reg cov eq1} \\
& B_\alpha \cap B_{\alpha'} = \emptyset \text{ for all } \alpha \neq \alpha', \label{reg cov eq2}
\end{align}
where $B_\alpha := B(p_\alpha, r)$.

By the condition \eqref{angle eq3}, there is a small positive number $\de$ with $\de \ll r$ such that, setting 
\begin{equation*} 
D_\alpha := B(\gamma_\alpha, \de) - \mathrm{int}\, (B_\alpha \cup B_{\alpha + 1}),
\end{equation*}
we have
\begin{equation*} 
\wangle p_\alpha x p_{\alpha + 1} > \pi - \e
\end{equation*}
for all $x \in D_\alpha$; 
further, by \eqref{angle eq3} and \eqref{angle eq4}, $\de$ can be chosen  that, for every $x \in D_\alpha$ and $y \in C$ with $|x C| = |x y|$, we have
\begin{equation*} 
| \angle p_\alpha x y - \pi /2 | < 2 \e \text{ and }
| \angle p_{\alpha + 1} x y - \pi /2 | < 2 \e. 
\end{equation*}
To use later, we set 
\[
\Phi_K := \{B_\alpha \}_{1 \le \alpha \le n} \cup \{D_\alpha \}_{1 \le \alpha \le n -1}.
\]

For $p \in S_\e(C)$, there are unique components $K^+$ and $K^-$ of $C - U(S_\e(C, s))$ with $\beta_p^\pm \cap K^\pm \neq \emptyset$. 
We take unique elements $q^\pm \in \beta_p^\pm \cap K^\pm$.
Recall that $s > 0$ is a small positive number satisfying the conclusion of Lemma \ref{lemma angle 1} for $p$, and $|\nabla d_p| > 1- \e$ on $B(p, s) - \{p\}$.
For $q^\pm \in K^\pm$, we provided numbers $r^\pm$ satisfying \eqref{reg cov eq1} and \eqref{reg cov eq2}, above.
Let us set 
\begin{equation*}
B_p := B(p, s /2) \text{ and } D_p^\pm := B(\beta_p^\pm, \de) - \mathrm{int}\, (B_p \cup B(q^\pm, r^\pm)).
\end{equation*}
If we retake $\de$ small enough, we have
\begin{equation*}
|\angle p x q^\pm - \pi /2 | < \e
\end{equation*}
for all $x \in D_p^\pm$.

Thus, we obtain an $\e$-regular covering 
\[
\{ B_p, D_p^\pm \}_{p \in S_\e(C)} \cup \bigcup_K \Phi_K
\]
of $C$.
\end{proof}



\begin{thebibliography}{99999999}
\bibitem[BBI]{BBI} D.~Burago, Yu.~Burago, and S.~Ivanov.
A course in metric geometry.
Graduate Studies in Mathematics, 33. 
American Mathematical Society, Providence, RI, 2001. xiv+415 pp.
ISBN: 0-8218-2129-6

\bibitem[BGP]{BGP} Yu.~Burago, M.~Gromov, and G.~Perel'man.
A. D. Aleksandrov spaces with curvatures bounded below, 
Uspekhi Mat. Nauk 47 (1992), no. 2(284), 3--51, 222, 
translation in Russian Math. Surveys 47 (1992), no. 2, 1--58

\bibitem[BH]{BH}M.~Bridson and A.~Haefliger, Metric spaces of non-positive curvature, 
vol. 319 of Grundlehren der Mathematischen Wissenschaften, Springer-Verlag, 1999.


\bibitem[CaGe]{CaGe} J.~Cao and J.~Ge.
A simple proof of Perelman's collapsing theorem for 3-manifolds.
J. Geom. Anal. 
Volume 21, Number 4, 807--869, DOI: 10.1007/s12220-010-9169-5

\bibitem[ChGr]{CG} J.~Cheeger and D.~Gromoll. 
On the structure of complete manifolds of nonnegative
curvature, Ann. of Math. 96 (1972) 413--443.

\bibitem[E]{Epstein} D.~B.~A.~Epstein. 
Curves on 2-manifolds and isotopies.
Acta Math. 115 (1966) 83--107.

\bibitem[FY]{FY} K.~Fukaya and T.~Yamaguchi.
The fundamental groups of almost nonnegatively curved manifolds, 
Ann. of Math. 136 (1992) 253--333.

\bibitem[GP]{GP} K.~Grove and P.~Petersen.
A radius sphere theorem, 
Invent. Math. 112 (1993), no. 3, 577--83. 

\bibitem[Kap Rest]{Kap restriction}V.~Kapovitch. 
Restrictions on collapsing with a lower sectional curvature bound. 
Math. Z. 249 (2005), no. 3, 519--539.

\bibitem[Kap Stab]{Kap stab} V.~Kapovitch. Perelman's stability theorem,
Surveys of Differential Geometry, Metric and Comparison Geometry, vol. XI, 
International press, (2007), 103--136.

\bibitem[KL]{KL} B.~Kleiner and J.~Lott. 
Locally collapsed 3-manifolds.
preprint.

\bibitem[KL orb]{KL orb} B.~Kleiner and J.~Lott. 
Geometrization of three-dimensional orbifolds via Ricci flow.
preprint.

\bibitem[KMS]{KMS} K.~Kuwae, Y.~Machigashira and T.~Shioya.
Sobolev spaces, Laplacian, and heat kernel on Alexandrov spaces, 
Math. Z. 238 (2001), no. 2, 269--316.

\bibitem[L]{Lyt}A.~Lytchak. 
Differentiation in metric spaces. 
Algebra i Analiz 16 (2004), no. 6, 128--161;
translation in St. Petersburg Math. J. 16 (2005), no. 6, 1017--1041 

\bibitem[Milka]{Milka} A.~D.~Milka. 
Metric structure of a certain class of spaces that contain straight lines. (Russian) 
Ukrain. Geometr. Sb. Vyp. 4 1967 43--48. 

\bibitem[M]{Mitsuishi} A.~Mitsuishi. 
A splitting theorem for infinite dimensional Alexandrov spaces with nonnegative curvature and its applications. 
Geom. Dedicata 144 (2010), 101--114.


\bibitem[MT]{MT}
J.~Morgan and G.~Tian. 
Completion of the proof of the geometrization conjecture.
preprint.

\bibitem[N]{Natsheh} 
M.~A.~Natsheh. 
On the involutions of closed surfaces, 
Arch. Math., Vol. 46, 179--183 (1986)

\bibitem[O]{Otsu} Y.~Otsu. On manifolds of small excess, 
Amer. J. Math. 115 (1993), no. 6, 1229--1280.

\bibitem[OS]{OS} Y.~Otsu and T.~Shioya.
The Riemannian structure of Alexandrov spaces, 
J. Differential Geom. 39 (1994), no. 3, 629--658

\bibitem[Per II]{Per Alex II} G.~Perelman. 
A. D. Alexandrov's spaces with curvatures bounded from below. II, Preprint.

\bibitem[Per Elem]{Per Morse} G.~Perelman. 
Elements of Morse theory on Alexandrov spaces, 
St. Petersburg Math. J. 5 (1994) 207--214.

\bibitem[Per DC]{Per DC} G.~Perelman.
DC Structure on Alexandrov Space. Preprint.


\bibitem[Per Ent]{Pr:entropy}  G.~ Perelman,
   {\it The entropy formula for the {R}icci flow and its geometric 
         applications},
   arXiv:math. DG / 0211159.

\bibitem[Per Surg]{Pr:surgery}  G.~ Perelman,
   {\it Ricci flow with surgery on three-manifolds},
   arXiv:math. DG / 0303109.



\bibitem[PP QG]{PP QG} G.~Perelman and A.~Petrunin.
Quasigeodesics and gradient curves in Alexandrov spaces, preprint.

\bibitem[Pet QG]{Pet QG} A.~Petrunin. 
Quasigeodesics in multidimensional Alexandrov spaces.
Thesis (Ph.D.)-University of Illinois at Urbana-Champaign. 1995. 92 pp.

\bibitem[Pet Appl]{Pet Appl} A.~Petrunin.
Applications of quasigeodesics and gradient curves. 
(English summary) Comparison geometry (Berkeley, CA, 1993--94), 203--219,
Math. Sci. Res. Inst. Publ., 30, Cambridge Univ. Press, Cambridge, 1997. 

\bibitem[Pet Para]{Pet parallel} A.~Petrunin. 
Parallel transportation for Alexandrov space with curvature bounded below. 
Geom. Funct. Anal. 8 (1998), no. 1, 123--148.

\bibitem[Pet Semi]{Pet Semi} A.~Petrunin. 
Semiconcave functions in Alexandrov's geometry. 
Surv. Differ. Geom., Vol. XI,  137--201, Int. Press, Somerville, MA, 2007.
        
\bibitem[Pl]{Plaut} C.~Plaut. 
Spaces of Wald-Berstovskii curvature bounded below, J. Geom. Anal. 6, 113--134 (1996)

\bibitem[SY00]{SY} T.~Shioya and T.~Yamaguchi. 
Collapsing three-manifolds under a lower curvature bound. 
J. Differential Geom. 56, 1--66 (2000)

\bibitem[SY05]{SY vol collapse}
T.~Shioya and T.~Yamaguchi. 
Volume collapsed three-manifolds with a lower curvature bound.
Math. Ann., 333 (1), (2005), 131--155.

\bibitem[Sie]{Sie}L.~C.~Seibenmann. 
Deformation of homeomorphisms on stratified sets. I, II. 
Comment. Math. Helv. 47 (1972), 123--136; ibid. 47 (1972), 137--163.

\bibitem[W]{Wo} J. A. Wolf, 
Spaces of constant curvature, 
Publish or Pelish, 1984.

\bibitem[Y91]{Yam collapsing and pinching} T.~Yamaguchi.
Collapsing and Pinching Under a Lower Curvature Bound. 
Ann. of Math. 133 (1991) 317--357.

\bibitem[Y conv]{Y convergence} T.~Yamaguchi.
A convergence theorem in the geometry of Alexandrov spaces. 
Actes de la Table Ronde de Geometrie Differentielle (Luminy, 1992), 
Semin. Congr., vol. 1, Soc. Math. France, Paris, 1996, pp. 601--642

\bibitem[Y 4-dim]{Y 4-dim} T.~Yamaguchi, 
Collapsing 4-manifolds under a lower curvature bound. 
arXiv:1205.0323v1 [math.DG]

\bibitem[Y ess]{Y essential} T.~Yamaguchi. Collapsing and essential covering. 
arXiv:1205.0441v1 [math.DG]
\end{thebibliography}
\end{document}